\theoremstyle{plain}
\newtheorem{thm}{Theorem}[section]
\newtheorem{lem}[thm]{Lemma}
\newtheorem{prop}[thm]{Proposition}
\newtheorem{defi}[thm]{Definition}
\theoremstyle{remark}
\newtheorem{rem}[thm]{Remark}
\numberwithin{equation}{section}
\newcommand{\Y}{\mathcal{Y}}
\newcommand{\de}{\partial}
\newcommand{\R}{\mathbb{R}}
\newcommand{\eps}{\varepsilon}
\newcommand{\N}{\mathbb{N}}
\newcommand{\mres}{\mathop{\hbox{\vrule height 7pt width .5pt depth 0pt
\vrule height .5pt width 6pt depth 0pt}}\nolimits}
\newcommand{\average}{{\mathchoice {\kern1ex\vcenter{\hrule height.4pt
width 6pt depth0pt} \kern-9.7pt} {\kern1ex\vcenter{\hrule
height.4pt width 4.3pt depth0pt} \kern-7pt} {} {} }}
\def\R{\mathbb{R}}
\begin{document}

\title[Lagrangian structure of the Vlasov--Poisson system in domains]{The Lagrangian structure of the Vlasov--Poisson system in domains with specular reflection}

\author{Xavier Fernández-Real}

\address{ETH Z\"{u}rich, Department of Mathematics, Raemistrasse 101, 8092 Z\"{u}rich, Switzerland}

\email{xavierfe@math.ethz.ch}

\keywords{Vlasov--Poisson, transport equations, transport equations in domains, Lagrangian flows, renormalized solutions.}

\begin{abstract}
In this work, we deal with the Vlasov--Poisson system in smooth physical domains with specular boundary condition, under mild integrability assumptions, and $d \ge 3$. We show that the Lagrangian and Eulerian descriptions of the system are also equivalent in this context by extending the recent developments by Ambrosio, Colombo, and Figalli to our setting. In particular, assuming that the total energy is bounded, we prove the existence of renormalized solutions, and we also show that they are transported by a weak notion of flow that allows velocity jumps at the boundary. Finally, we show that flows can be globally defined for $d = 3, 4$.
\end{abstract}

\maketitle
\tableofcontents 
\section{Introduction}
In the last three decades, there has been a growing interest in the existence of solutions to transport and continuity equations under weak regularity assumptions, motivated by physical models where, for example, one can only assume that the total energy of a system is bounded. 

The interest in the relation between continuity and transport equations and their Lagrangian structure arises in this setting. In the late 1980s, DiPerna and Lions in \cite{DL89, DL89b} introduced the notion of renormalized solutions and the idea of strong convergence of commutators. There, they proved existence, uniqueness, and stability of regular Lagrangian flows for Sobolev vector fields with bounded divergence. More than a decade later, Ambrosio extended this result to BV vector fields with bounded divergence, \cite{Amb04}, and in a more recent paper, Bouchut and Crippa were able to consider vector fields whose gradient is given by the singular integral of an $L^1$ function, \cite{BC13}. Finally, we also mention the work of Ambrosio, Colombo, and Figalli, \cite{ACF15}, where they prove the existence of a unique maximal regular flow under very mild assumptions on the vector field, including a local integrability assumption. (We refer the reader to \cite{AC14} for an extensive summary on the theory of continuity equations and Lagrangian flows with weak regularity assumptions on the vector fields.) 

In this work, we deal with the Vlasov--Poisson system, which when posed globally corresponds to a transport equation with vector field given by a singular integral. The Vlasov--Poisson system describes the evolution of a nonnegative charge density under the action of a self-induced electric field under no magnetic field. Given a charge density $f = f_t(x, v):(0, T)\times\R^d\times\R^d\to [0, \infty)$, that is, the density of particles at position $x$ with velocity $v$ at time $t$, the evolution of $f_t$ is given by the transport equation
\begin{equation}
\label{eq.vlasov}
\de_t f_t(x, v) +v\cdot \nabla_x f_t(x, v) + E_t(x) \cdot\nabla_v f_t(x, v) = 0,\quad\textrm{ in }\quad (0, T)\times\R^d\times\R^d.
\end{equation}
Here, $E_t(x)$ denotes the electric field, which is generated by the physical density of particles at the position $x$ at time $t$, $\rho_t(x) = \int f_t(x, v) \, dv$, through the Poisson equation, $-\Delta_x V_t(x) = \rho_t(x)$ and $E_t(x) = \nabla_x V_t(x)$, so that 
\begin{equation}
\label{eq.poisson}
E_t(x) = c_d\int_{\R^d}\rho_t(z) \frac{x-z}{|x-z|^d}\,dz,\quad\textrm{ in }[0, T)\times\R^d.
\end{equation}
Both equations \eqref{eq.vlasov} and \eqref{eq.poisson} form the Vlasov--Poisson system. In our work, we are interested in the Vlasov--Poisson system restricted to a physical domain $\Omega$ with perfect conductor boundary conditions, so that $x\in \Omega\subset\R^d$, $f:(0, T)\times\Omega\times\R^d\to[0,\infty)$, and \eqref{eq.vlasov} is fulfilled in the interior of the domain.  The perfect conductor boundary conditions translate into an electric field with no tangential components at the boundary, or equivalently, the boundary of the domain is grounded (that is, $V_t|_{\de\Omega} = 0$); see also \cite{Guo93}. Then, the potential is computed as the solution to the Poisson equation with zero Dirichlet conditions: $-\Delta_x V_t(x) = \rho_t(x)$ for $x\in \Omega$ and $V_t(x) = 0$ on $\de\Omega$. The electric field is now given by 
\begin{equation}
\label{eq.poisson2}
E_t(x) = c_d\int_{\R^d}\rho_t(z) \nabla_x G(x, z) \, dz,\quad\textrm{ in }[0, T)\times\Omega,
\end{equation}
where $G(x, z)$ denotes the Green function of the domain. To complete the description of our system, we still need to set a boundary condition. Since we will be interested in the Lagrangian structure of the solution, we set a boundary condition given by pure specular reflection, so that we have 
\begin{equation*}
\label{eq.vlasov2}
f_t(x, v) = f_t(x, v - 2 (v\cdot n(x) ) n(x)),\quad  \textrm{ on } (0, \infty)\times\de\Omega\times\R^d,
\end{equation*}
where $n(x)$ denotes the inward unit normal vector at $x\in \de\Omega$.

The Vlasov--Poisson system has been well-studied. Existence and regularity of classical solutions for $\Omega = \R^d$ were obtained under different assumptions in \cite{Hor82, Hor82b, BD85, Sch87, Wol93}. In dimension 3, the existence and uniqueness of classical solutions under a smooth initial datum is a classical result, \cite{Pfa92, LP91}. In general, though, these first results required integrability hypothesis on the initial datum. These integrability hypothesis are needed even to define a weak solution, as one a priori needs $E_t f_t$ to be in $L^1$. In \cite{DL88}, DiPerna and Lions, with the introduction of new concepts, were able to define a new notion of solution and prove its global existence under some mild initial integrability condition and boundedness of the total energy.  

More recently, Ambrosio, Colombo, and Figalli, in \cite{ACF17}, were remarkably able to reduce the initial condition integrability to merely $L^1$ to prove existence, and they also establish that the Lagrangian structure associated to the transport equation in the Vlasov--Poisson system is still valid (in some weaker sense, developed in \cite{ACF15}) for renormalized solutions. That is, they are able to prove the equivalence between the Lagrangian and Eulerian structure in the setting of renormalized solutions. 

Yet, if $\Omega\neq \R^d$ much less is known, even for classical solutions. In \cite{Guo95}, Guo shows that derivatives of solutions cannot be uniformly bounded near the boundary. Classical solutions exist under suitable conditions on the initial datum near the boundary and the domain for both specular and absorption boundary conditions, as shown in \cite{Guo94, Hwa04, HV09, HV10}. On the other hand, in \cite{Guo93}, Guo shows the existence of weak solutions for the more general setting of the Vlasov--Maxwell system in 3 dimensions, with smooth domains (see also \cite{Ars79, Ale93, Abd94, Wec95} for existence and stability of weak solutions). Closer to the weak notion of solution that is considered in this paper, we refer the reader to the work by Mischler \cite{Mis99}, where the author is able to establish existence and weak regularity results for the Vlasov equation in domains with specular reflection, under weak assumptions on a fixed electric field and source term. We also refer to \cite{Sku13},\cite{Guo94}, and \cite{GvP87}, and references therein, for a better understanding of the state-of-the-art of the problem and applications.

The results by Ambrosio, Colombo, and Figalli in \cite{ACF17}  provide a general Lagrangian structure for weak/renormalized solutions. In the present work, we extend their results to the context of smooth physical domains with specular reflection. In particular, we introduce the notion of renormalized solution in a domain and an analogous definition of Maximal Regular Flow that allows jumps in the velocity component, to account for the specular reflection condition. We show that this notion of flow transports renormalized solutions, and, in particular, we show that renormalized and Lagrangian solutions are also equivalent in this new situation. 

Thanks to the generality of the techniques introduced in \cite{ACF15, ACF17}, we can deal with the problem at a local level, and establish that solutions evolve following certain flows far from the boundary. In order to deal with the boundary region, we note that the existence of flows coming from vector fields given by singular integrals is quite rigid, in the sense that, a priori, the theory is not valid for vector fields of the form \eqref{eq.poisson2}, where the Calder\'on--Zygmund theory does not directly apply. Therefore, one cannot immediately use the results from \cite{BBC16}. In order to establish the existence of flows around the boundary, we first show that in small balls centered at the boundary the domain and the electric field are close to the half-space situation. Hence, we reduce the problem, after some fine estimates, to the half-space case. 

For the half-space problem we note that, after a reflection, the electric field has an expression similar to a convolution against an $L^1$ function, and we prove that this is enough to get our result. 
\vspace{0.3cm}

We divide the work as follows. In Section \ref{sec.intro}, we state our results and introduce the problem and main definitions. In Section \ref{sec.vphs}, we study the Vlasov--Poisson system in the half-space, constructing a new problem in the whole space based on the image charge method. In Section~\ref{sec.mrf}, we recall the notion of Maximal Regular Flow, we introduce the notion of Maximal Specular Flow, and prove some equivalences with the half-space problem. In Sections~\ref{sec.PbB} and \ref{sec.mainres}, we prove our main results in the half-space situation, in particular showing the existence of a renormalized solution. Finally, in Sections~\ref{sec.gendom}, \ref{sec.uniq}, and \ref{sec.Main}, we show our main result for the general domains situation. We emphasize that we treat the half-space and the general domain situation separately, as we believe that the half-space approach in a non-local way has its own interest. 

We would like to mention that we deal with the repulsive case; that is, the interaction force between particles is repulsive, and can be used to model, for example, the evolution of positively (or negatively) electrically charged particles under their self-consistent electric field. In the attractive case, the sign of the electric field changes, and most of our results are still valid up to minor modifications of the proofs. As in \cite{ACF17}, however, one would need to consider an effective density instead of the physical one, as we can no longer control the initial energy and some particles might have infinite velocity and disappear from the physical space. Similarly, minor modifications of the current work would also allow to obtain analogous results for the relativistic Vlasov--Poisson system in domains. 
\vspace{0.3cm}

{\bf Acknowledgements:} The author would like to thank Alessio Figalli for his guidance, patience, and useful discussions on the topics of this paper. 

The author acknowledges support of the ERC grant ``Regularity and Stability in Partial Differential Equations (RSPDE)"

\section{Main results}
\label{sec.intro}
Let $\Omega\subset\R^d$ be a $C^{2, 1}$ domain, with $d\ge 3$. We want to study the evolution of a distribution function $f_t(x, v) = f:(0, \infty)\times \Omega\times\R^d$ under a self-consistent electric field generated by the Vlasov--Poisson system, assuming a grounded boundary, (i.e., a zero Dirichlet condition). We will also impose a specular reflection condition on the boundary. 

For any point $x\in \de\Omega$, we will denote by $n(x) = n:\de\Omega\to S^{d-1}$ the \emph{inward-pointing} unit normal vector to $\de\Omega$ at the point $x$ (we remark that for future convenience we have used the non-standard convention of taking the \emph{inward} unit normal). We denote, for each $x\in\de\Omega$, the reflection operator at $x$ as $R_x:\R^d\to\R^d$. With this convention, given a point $x\in\de\Omega$, the reflection of a velocity $v\in\R^d$ is given by 
\begin{equation}
\label{eq.reflv}
R_x v = v - 2 (v\cdot n(x) ) n(x)
\end{equation}

The problem, then, is the following,
\begin{equation}
\label{eq.original_Om}
  \left\{ \begin{array}{ll}
  \de_t f + v\cdot \nabla_x f_t + E_t\cdot\nabla_v f_t=0 & \textrm{ in } (0,\infty)\times\Omega\times\R^d\\
  f_t(x, v) = f_t(x, R_x v) & \textrm{ on } (0, \infty)\times\de\Omega\times\R^d.
  \end{array}\right.
\end{equation}

We define the physical density for $t \ge0 $, $x\in\Omega$, by $\rho_t(x) = \int_{\R^d} f_t(x, v) dv$. The electric field is then given by $E_t(x) = -\nabla_x V_t$, where the potential $V_t$ is the solution to the following Laplace equation with zero Dirichlet conditions,
\begin{equation}
\label{eq.elfield}
  \left\{ \begin{array}{rcll}
  - \Delta_x V_t(x) &= &\rho_t(x) & \textrm{ in } (0,\infty)\times\Omega,\\
  V_t(x) &= &0 & \textrm{ on } (0, \infty)\times\de\Omega,\\
  \lim_{|x|\to \infty} V_t(x) &=& 0 & \textrm{ for } t\in(0, \infty).  
  \end{array}\right.
\end{equation}

The solution to the previous problem can be obtained by means of the Green function of the domain. Thus, if we denote by $G_\Omega(x_1, x_2):\overline{\Omega}\times\overline{\Omega}\to \R_{\ge 0}$ the Green function of $\Omega$, then
\begin{equation}
\label{eq.greenv}
V_t(x) = \int_{\Omega} G_\Omega(x, z) \rho_t(z)\,dz.
\end{equation} 

Thus, we can rewrite our problem as
\begin{equation}
\label{eq.original_Om_2}
  \left\{ \begin{array}{ll}
  \de_t f + v\cdot \nabla_x f_t + E_t\cdot\nabla_v f_t=0 & \textrm{ in } (0,\infty)\times\Omega\times\R^d\\
  \rho_t(x)=\int_{\R^d} f_t(x, v) dv & \textrm{ in } (0,\infty)\times\Omega\\
  E_t(x) = -\int_{\Omega} \nabla_x G_\Omega(x, z)\rho_t(z)\,dz & \textrm{ in } (0,\infty)\times\Omega \\
  f_t(x, v) = f_t(x, R_x v) & \textrm{ on } (0, \infty)\times\de\Omega\times\R^d.
  \end{array}\right.
\end{equation}

The transport structure is clear, as the problem can be written in the simpler way 
\begin{equation}
\label{eq.VPOm}
\left\{ \begin{array}{ll}
  \de_t f_t + b_t\cdot \nabla_{x,v} f_t=0 & \textrm{ in } (0,\infty)\times\Omega\times\R^d\\
  f_t(x, v) = f_t(x, R_x v) & \textrm{ on } (0,\infty)\times\de\Omega\times\R^d,
  \end{array}\right.
\end{equation}
where the vector field $b_t(x, v) = (v, E(x)):\Omega\times\R^d\to \R^{2d}$ is divergence-free and coupled to $f_t$ via \eqref{eq.original_Om_2}.  

This equation can be reinterpreted in a distributional sense when $b_t f_t$ is in $L^1_{\rm{loc}}$. However, this is not true in general, and one needs to introduce the concept of renormalized solution, \cite{DL88}. This is based on the fact that, in the smooth case, one can consider $C^1\cap L^\infty$ functions $\beta:\R\to \R$ and $\beta(f_t)$ still solves the equation. That is, using that the vector field is divergence-free,
\begin{equation}
\label{eq.VPHS_rn_D}
\left\{ \begin{array}{ll}
  \de_t \beta(f_t) + {\rm div}_{x,v} (b_t \beta(f_t))=0 & \textrm{ in } (0,T)\times\Omega\times\R^d\\
  \beta(f_t)(x, v) = \beta(f_t)(x, R_x v) & \textrm{ on } (0,T)\times\de\Omega\times\R^d,
  \end{array}\right.
\end{equation}

Notice also that, in general, one still needs in \eqref{eq.VPHS_rn_D} to make sense of the trace of $\beta(f_t)$ on $(0,T)\times\de\Omega\times\R^d$. Let us define some useful sets that will appear recurrently throughout the work. 

\begin{defi}
\label{defi.spaces_Om}
We define the following sets
\begin{align*}
\gamma^{\pm}_{\Omega, T} & = \{(t, x, v)\in (0, T)\times\de\Omega\times\R^d : \pm v\cdot n(x) >  0\},\\
\gamma^{0}_{\Omega, T} & = \{(t, x, v)\in (0, T)\times\de\Omega\times\R^d : v\cdot n(x)  = 0\},\\
\gamma_{\Omega, T} & = (0,T)\times\de\Omega\times\R^d.
\end{align*}
We will also denote $d\sigma_x^\Omega$ the standard surface measure of $\de\Omega$. 

Also, given $T > 0$, we consider the following function space,
\begin{equation}
\label{eq.testf_Om} 
\begin{split}
\mathcal{T}_{\Omega, T} = \{\phi & \in C^1_c([0, \infty) \times \R^{2d}) :  {\rm supp}~ \phi \Subset \{[0, T) \times \overline{\Omega}\times\R^d \}\setminus\{( 0 \times \de\Omega\times \R^d )\cup \gamma^0_{\Omega,T} \} \}.
\end{split}
\end{equation}
Finally, we will drop the subscript $T$ to denote $T= \infty$, and drop the subscript (or superscript) $\Omega$ to denote $\Omega = \R^d_+ = \{x_1 > 0\}$ (e.g., $\mathcal{T}_T := \mathcal{T}_{\R^d_+, T}$, $\gamma^0_\Omega := \gamma^0_{\Omega, \infty}$, $\gamma^\pm := \gamma^\pm_{\R^d_+, \infty}$). 
\end{defi}

Throughout the work we will usually avoid the set $\gamma_{\Omega, T}^0$, which corresponds to velocities tangent to the boundary, as in \eqref{eq.testf_Om}. We will do that for mathematical convenience (same as in \cite{Guo93}). Notice that, in this set, the reflection property does not have any effect, and also notice that the set is lower dimensional in the $v$ space for each fixed $x\in \de\Omega$. 

We can now define what it means to be a renormalized solution to \eqref{eq.VPOm}.  A similar definition appears in \cite{Mis99} and in \cite{Guo93}.

\begin{defi}[Renormalized solution in a domain with specular reflection]
\label{defi.HS_Om}
Let $T > 0$, let $\Omega$ be a $C^{1, 1}$ domain and let $b\in L^1_{\rm{loc}} ([0, T]\times \Omega\times \R^{d}; \R^{2d})$, be a divergence-free vector field. A Borel function $f_t\in L^1([0,T]\times\Omega \times\R^{d})$ is a \emph{renormalized solution} of \eqref{eq.VPOm} starting from $f_0$ if for every $\beta\in C^1\cap L^\infty(\R)$ there exists $f_{\beta,t}^+\in L^\infty_{\rm loc}(\gamma^+_{\Omega,T})$ such that for every $\varphi \in \mathcal{T}_{\Omega,T}$,
\begin{equation}
\label{eq.defi_HS_Om}
\begin{split}
 \int_{ \Omega\times \R^{d}} \varphi_0(x, v) \beta(f_0(x, v)) \,dx\, dv\,  + &\int_0^T \int_{\Omega\times\R^{d}} [\de_t \varphi_t(x, v) + \nabla_{x, v} \varphi_t(x, v)\cdot  b_t(x, v) ] \beta(f_t(x, v))\,dt \,dx\,dv 
\\ & ~~~~ + \int_{\gamma^+_{\Omega,T}} v\cdot n(x) \big(\varphi_t( x, v) - \varphi_t(x, R_x v)\big) f^+_{\beta,t}(x, v)\, dt\,d\sigma^\Omega_x\,dv = 0.
\end{split}
\end{equation}

A Borel function $f_t \in L^\infty([0, T]\times\Omega\times\R^d)$ is a \emph{distributional solution} to \eqref{eq.VPOm} if it is a renormalized solution with $\beta$ fixed as any smooth bounded continuation of $\beta(x) = x$ for $|x|\leq \|f_t\|_{L^\infty}$.
\end{defi}

\begin{rem}
\label{rem.defi.HS}
The previous definition is equivalent to asking $\beta(f_t)$ to solve the Vlasov--Poisson equation in the Green formula sense, and the reflection property in the distributional sense. That is, we equivalently say that $f_t\in L^1([0,T]\times\Omega \times\R^{d})$ is a \emph{renormalized solution} of \eqref{eq.VPOm} starting from $f_0$ if for every $\beta\in C^1\cap L^\infty(\R)$ there exists $f_{\beta,t}^+\in L^\infty_{\rm loc}(\gamma^+_{\Omega,T}\cup\gamma^-_{\Omega,T})$ such that for every $\varphi \in \mathcal{T}_{\Omega,T}$
\[
\begin{split}
 \int_{ \Omega\times \R^{d}} \varphi_0(x, v) \beta(f_0(x, v))\,dx\,dv  + \int_0^T \int_{\Omega\times\R^{d}}& [\de_t \varphi_t(x, v) + \nabla_{x, v} \varphi_t(x, v)\cdot  b_t(x, v) ] \beta(f_t(x, v))dt \,dx\,dv 
\\  &~~~ + \int_{\gamma^+_{\Omega,T}\cup\gamma^-_{\Omega,T}} v\cdot n(x)\, \varphi_t( x, v) f^+_{\beta,t}(x, v) dt\,d\sigma_x^\Omega \,dv = 0,
\end{split}
\]
and 
\[
\int_{\gamma^-_{\Omega,T}} v\cdot n(x)\,\varphi_t( x, v) \left(f^+_{\beta,t}(x, v) -f^+_{\beta,t}(x, R_x v) \right) \,dt\,d\sigma_x^\Omega\,dv = 0.
 \]
In the following proofs we will be using both definitions.
\end{rem}

\begin{rem}
\label{rem.nondivfree}
We are dealing with divergence free vector fields, $b_t(x, v) = (v, E(x))$. In the more general case where $f_t$ solves \eqref{eq.VPOm} but with a non-divergence free vector field $b_t(x, v)$, the previous definition should be modified in the following way:
A Borel function $f_t\in L^1([0,T]\times\Omega \times\R^{d})$ is a \emph{renormalized solution} of \eqref{eq.VPOm} with a general vector field $b\in L^1_{\rm{loc}} ([0, T]\times \Omega\times \R^{d}; \R^{2d})$ starting from $f_0$ if for every $\beta\in C^1\cap L^\infty(\R)$ there exists $f_{\beta,t}^+\in L^\infty_{\rm loc}(\gamma^+_{\Omega,T})$ such that for every $\varphi \in \mathcal{T}_{\Omega,T}$, \eqref{eq.defi_HS_Om} holds with the following right-hand side,
\[
-\int_0^T \int_{\Omega\times\R^{d}} \varphi_t(x, v) \beta(f_t(x, v))\,{\rm div}_{x, v} b_t(x, v)  \,dt\,dx\,dv.
\]
Notice that, in general, for this definition to make sense one needs ${\rm div}_{x, v} \,b_t \in L^1_{\rm loc}((0, T)\times\Omega\times \R^{d})$.
\end{rem}

\begin{defi}[Commutativity property]
\label{defi.comm2}We say that a renormalized solution in a domain with specular reflection (Definition~\ref{defi.HS_Om}) fulfils the \emph{commutativity property} if there exists a measurable function $f^+_t: \gamma_{\Omega,T}^+\to [0, \infty)$ such that
\[
f^+_{\beta, t} = \beta(f^+_t),
\]
for all $\beta\in C^1\cap L^\infty(\R)$.
\end{defi}

\begin{rem}
It is not a priori true that the trace of renormalized solutions can be taken in the strong sense, i.e., that the commutativity property holds for renormalized solutions. A counterexample can be found, for example, in \cite[Remark 25]{AC14} for traces taken in the temporal domain. Nevertheless, this will become true in this case once we introduce the theory of Lagrangian solutions.
\end{rem}

Our first main result establishes that renormalized solutions are, in fact, transported by a suitable notion of flow. That is, Eulerian solutions are Lagrangian. 

\begin{thm}
\label{thm.main1_Om}
Let $T > 0$, $\Omega\subset\R^d$ a $C^{2, 1}$ domain, and $f_t \in L^\infty((0, T); L^1_+(\Omega\times\R^d))$, a weakly continuous function. Suppose that 
\begin{enumerate}[(i)]
\item either $f_t \in L^\infty((0, T); L^\infty(\Omega\times\R^d))$ is a distributional solution to Vlasov--Poisson system in $\Omega$ with specular reflection, \eqref{eq.original_Om_2};
\item or $f_t$ is a renormalized solution of the Vlasov--Poisson system in $\Omega$ with specular reflection, \eqref{eq.original_Om_2} (see Definition~\ref{defi.HS_Om}). 
\end{enumerate}
Then $f_t$ is a Lagrangian solution transported by the Maximal Specular Flow in $\Omega\times\R^d$ associated to the vector field $b_t$ (see Definition~\ref{defi.msf_dom}); in particular, $f_t$ is renormalized (according to Definition~\ref{defi.HS_Om}), and fulfils the commutativity property (Definition~\ref{defi.comm2}).
\end{thm}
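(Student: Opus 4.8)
The plan is to reduce the general-domain statement, via a localization argument, to two already-available building blocks: the whole-space theory of Ambrosio--Colombo--Figalli (for the region far from $\de\Omega$, where $b_t$ behaves like a vector field with gradient given by a singular integral of an $L^1$ function) and the half-space theory developed in the earlier sections of this paper (for the boundary region, where after reflection the electric field looks like a convolution against an $L^1$ function). First I would set up a covering of $\overline\Omega$ by balls: interior balls $B_r(x_0)$ with $B_{2r}(x_0)\Subset\Omega$, and boundary balls $B_r(x_0)$ centered at $x_0\in\de\Omega$ small enough that, after a $C^{2,1}$ change of coordinates flattening $\de\Omega$, the domain and the electric field are $C^{1}$-close to the half-space model; this closeness, and the consequent fine estimates on $E_t$, are exactly what Sections~\ref{sec.gendom} and~\ref{sec.mrf} are meant to provide. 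Then I would invoke the Maximal Specular Flow (Definition~\ref{defi.msf_dom}) which by construction agrees locally with the Maximal Regular Flow in the interior and with the Maximal Specular Flow of the half-space model near the boundary.

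The core of the proof is then the implication ``renormalized (Eulerian) $\Rightarrow$ transported by the flow (Lagrangian)''. The standard mechanism, following \cite{ACF15, ACF17}, is: (a) given a renormalized solution $f_t$, disintegrate the measure $f_t\,dx\,dv$ along the trajectories of the Maximal Specular Flow and define the candidate Lagrangian representation; (b) show that the superposition principle applies — i.e. that $f_t\,dx\,dv\,dt$ can be written as a superposition of (specularly reflected) integral curves of $b_t$ — using the transport equation \eqref{eq.defi_HS_Om} tested against functions in $\mathcal T_{\Omega,T}$, together with the absence of mass concentration coming from $f_t\in L^\infty((0,T);L^1_+)$ and the bounded-energy bookkeeping; (c) localize: on each interior ball use the ACF whole-space argument verbatim, and on each boundary ball transfer the statement to the half-space via the image-charge reformulation of Section~\ref{sec.vphs}, apply the half-space main result proven in Sections~\ref{sec.PbB}--\ref{sec.mainres}, and pull back; (d) glue the local Lagrangian representations into a global one using that the Maximal Specular Flow is the common local object, and a partition-of-unity / uniqueness argument (Sections~\ref{sec.uniq}, \ref{sec.Main}) to ensure consistency on overlaps. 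Once $f_t$ is known to be Lagrangian, the facts that it is renormalized in the sense of Definition~\ref{defi.HS_Om} and that it satisfies the commutativity property (Definition~\ref{defi.comm2}) follow because the pushforward of the initial measure along a genuine flow automatically renormalizes — $\beta(f_t)$ is the pushforward of $\beta(f_0)$ — and the trace $f^+_{\beta,t}$ is then literally $\beta(f^+_t)$ for $f^+_t$ the trace read off along trajectories hitting $\gamma^+_{\Omega,T}$.

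For case (i), the distributional solution, I would simply note that $f_t\in L^\infty$ implies it is a renormalized solution (choose $\beta$ to be any smooth bounded extension of the identity on $[-\|f_t\|_\infty,\|f_t\|_\infty]$, as in Definition~\ref{defi.HS_Om}), so case (i) reduces to case (ii); no separate argument is needed beyond checking that the a priori bounds used in the localization (in particular the bound on $\rho_t$ and hence on $E_t$ coming from the bounded energy and $L^1\cap L^\infty$ control) are available in this case too — which they are, even more comfortably than in the merely-$L^1$ renormalized setting.

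The main obstacle I expect is step (c) near the boundary: the Calder\'on--Zygmund theory does not apply to $\nabla_x G_\Omega$, so one cannot directly invoke \cite{BBC16} or the ACF machinery for $E_t$ of the form \eqref{eq.poisson2}. The resolution — and the technical heart of the paper — is to show that in a small ball at the boundary the Green-function kernel is, up to an error controlled by the $C^{2,1}$ regularity of $\de\Omega$, the half-space kernel plus a smooth remainder, so that after reflection $E_t$ becomes (convolution against $L^1$) $+$ (bounded smooth field), which is precisely the structure for which the half-space result of Sections~\ref{sec.vphs}--\ref{sec.mainres} was proven. Making the error estimates uniform enough to run the compactness/stability arguments of \cite{ACF15} in this perturbed setting, and matching the specular-reflection jump condition across the coordinate change, is where the real work lies; everything else is a careful but routine adaptation of the whole-space proof of \cite{ACF17}.
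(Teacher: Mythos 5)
Your high-level architecture matches the paper's: localize, flatten the boundary by a change of variables, symmetrize across it to remove the specular-reflection boundary condition, invoke the Ambrosio--Colombo--Figalli superposition machinery to show renormalized $\Rightarrow$ transported by a local Maximal Regular Flow, identify the pullback as a Maximal Specular Flow (Lemma~\ref{lem.mrfmsf}), glue, and then read off renormalization and the commutativity property from the transport structure (Theorem~\ref{thm.LagimpRen}). Case (i) reducing to case (ii) is also right. But there is a substantive gap in your step (c) near the boundary.

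You claim the boundary balls can be handled by writing $E_t$ as ``(convolution against $L^1$) $+$ (bounded smooth field)'' and then ``apply the half-space main result of Sections~\ref{sec.vphs}--\ref{sec.mainres}.'' Neither half of this survives inspection. After the coordinate change, the relevant vector field is $\tilde b_t(y,w)=(w, b_t^{2,o})$ with $b_t^2 = v^T D^2\phi\, v + J(x)E_t(x)$, which is not divergence-free, carries a quadratic-in-$v$ drift from the Hessian of the flattening map, and has $E_t$ built from the Green function of $\Omega$ rather than from the flat half-space kernel $\mathrm{sgn}(x_1)\,\rho^o_t*K$ that Problems A/B treat. The half-space Theorems~\ref{thm.main1} and~\ref{thm.main2} are proved for that specific image-charge field, so there is nothing to ``apply'' directly. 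The actual mechanism is a fresh uniqueness theorem for the transformed field (Theorem~\ref{thm.charact_2}), proved by decomposing $G_\Omega(x_1,x_2)=\Gamma(x_1-x_2)-\Gamma(x_1-Px_2)-H(x_1,x_2)$ where $P$ is the geometric reflection across $\de\Omega$. The first two terms indeed land in the convolution framework of ACF after identifying the reflected density $\rho_\pi$; but the correction $H$ is \emph{not} a bounded smooth remainder. Its second derivatives blow up at $\de\Omega$, and the whole point of the appendix argument is to show, via Lemma~\ref{lem.GreenFct}, the local maximal-function estimate, and a rather delicate fractional-Sobolev computation on $\de\Omega$ (the Runst--Sickel product rule with interpolation, \eqref{eq.prodrule}), that $H(\cdot,x)\in W^{2,p}(B_1\cap\Omega)$ uniformly in $x$ for some $p>1$. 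Without that estimate the log-functional bound \eqref{eq.enoughuniq_2} fails and the superposition uniqueness breaks down. Your sketch treats this as a perturbation that ``the $C^{2,1}$ regularity controls''; in fact it is an exact decomposition in which the remainder is merely $W^{2,p}$ for $p$ near $1$ and needs its own nontrivial proof.

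Two smaller points. First, you invoke ``bounded-energy bookkeeping'' to control $\rho_t$ and $E_t$, but Theorem~\ref{thm.main1_Om} carries no finite-energy hypothesis; all the integrability you have is $f_t\in L^\infty((0,T);L^1_+)$, which is exactly what \eqref{eq.claimE} and Lemma~\ref{lem.GreenFct}(ii) exploit to get $E\in L^\infty((0,T);L^p_{\rm loc})$. The energy is only available (and only needed) in the existence theorem, \ref{thm.main2_d}. Second, the identification of the pulled-back flow as a Maximal Specular Flow in $\Omega\times\R^d$ does not ``hold by construction''; you need the symmetry hypotheses of Lemma~\ref{lem.mrfmsf} and the compatibility relations \eqref{eq.jacob1}--\eqref{eq.jacob2}, which is why the change of variables \eqref{eq.change} (project, flatten, translate along the normal) is chosen the way it is rather than being an arbitrary $C^{2,1}$ diffeomorphism straightening $\de\Omega$.
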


Our second main result establishes the existence of a renormalized (and thus, by the previous theorem, Lagrangian) solution to \eqref{eq.original_Om_2} for a weak initial value $f_0 \in L_+^1(\Omega\times\R^d)$, where $L_+^1$ denotes the space of nonnegative functions in $L^1$.

\begin{thm}
\label{thm.main2_d}
Let $\Omega\subset\R^d$ be a $C^{2, 1}$ domain. Consider $f_0 \in L_+^1(\Omega\times\R^d)$, $\rho_0 (x) = \int_{\R^d} f_0(x, v) dv$, satisfying the finite initial energy condition,
\begin{equation}
\label{eq.hyp_main_d}
\int_{\Omega\times\R^d} |v|^2 f_0(x, v)\, dx\, dv + \int_{\Omega\times\Omega}G_\Omega(x, z)\,\rho_0 (x)\,\rho_0(z)\,dx\,dz < \infty.
\end{equation}
Then, there exists a global Lagrangian solution of the Vlasov--Poisson system in the domain $\Omega$ with specular reflection \eqref{eq.original_Om} with initial datum $f_0$, $f_t \in C([0, \infty) ; L^1_{\rm loc}(\Omega\times\R^d)) $, transported by the Maximal Specular Flow in the domain $\Omega\times\R^d$ (see Definition~\ref{defi.msf_dom}); which is renormalized (Definition~\ref{defi.HS_Om}) and fulfils the commutativity property (Definition~\ref{defi.comm2}). 

Moreover, the physical density $\rho_t = \int f_t dv$ and the electric field $E_t$ are strongly continuous in $L^1_{\rm loc}(\Omega)$; $\rho_t, E_t \in C([0, \infty) ; L^1_{\rm loc}(\Omega))$.
\end{thm}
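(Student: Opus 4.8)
\textbf{Proof proposal for Theorem \ref{thm.main2_d}.}

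The plan is to construct the solution by approximation, smoothing the initial datum, and passing to the limit using the stability of the Lagrangian description established in Theorem \ref{thm.main1_Om}. First I would regularize: choose $f_0^k \in C^\infty_c(\Omega\times\R^d)$, nonnegative, with $f_0^k \to f_0$ in $L^1(\Omega\times\R^d)$, and (using the positivity of $G_\Omega$ and truncation in $|v|$ and near $\de\Omega$) such that the initial energies in \eqref{eq.hyp_main_d} converge, $\int |v|^2 f_0^k \to \int |v|^2 f_0$ and $\int\int G_\Omega \rho_0^k\rho_0^k \to \int\int G_\Omega\rho_0\rho_0$. For each such smooth, compactly supported datum away from $\gamma^0$ one obtains (by the classical theory for $\Omega$ smooth with specular reflection, e.g. \cite{Guo94, Hwa04, HV09, HV10}, together with the flow construction and local arguments of this paper recorded in the earlier sections, cf. Sections \ref{sec.PbB}, \ref{sec.mainres}, \ref{sec.gendom}) a global solution $f_t^k$, which by Theorem \ref{thm.main1_Om} is Lagrangian, transported by the Maximal Specular Flow $X^k$ associated to $b_t^k = (v, E_t^k)$, renormalized, and fulfils the commutativity property. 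The key point is that along this family the total energy is conserved (or at least bounded): the specular reflection is energy-preserving, so
\begin{equation}
\label{eq.energy_conservation}
\mathcal{E}(f_t^k) := \int_{\Omega\times\R^d} |v|^2 f_t^k\,dx\,dv + \int_{\Omega\times\Omega} G_\Omega(x,z)\rho_t^k(x)\rho_t^k(z)\,dx\,dz = \mathcal{E}(f_0^k),
\end{equation}
uniformly in $k$ and $t$.

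Next I would extract compactness. From \eqref{eq.energy_conservation} and the interpolation/Hardy--Littlewood--Sobolev type estimates (exactly as in \cite{DL88, ACF17}, adapted using the decay and boundary properties of $G_\Omega$ for a $C^{2,1}$ domain), the densities $\rho_t^k$ are bounded in $L^\infty((0,\infty); L^p)$ for some $p > 1$ depending on $d$, and the electric fields $E_t^k$ are bounded in $L^\infty((0,\infty); L^q_{\rm loc})$ with $q$ subcritical; moreover $\partial_t \rho_t^k$ is controlled in a negative Sobolev norm via the equation, giving strong $L^1_{\rm loc}$ precompactness of $\rho_t^k$ in $(t,x)$ by an Aubin--Lions argument, and hence of $E_t^k$. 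The distribution functions $f_t^k$, being transported measures of bounded mass along measure-preserving-in-$(x,v)$ flows, are weakly-$*$ precompact in $(t,x,v)$; after passing to a subsequence we get $f_t^k \to f_t$, $\rho_t^k \to \rho_t$, $E_t^k \to E_t$ in the appropriate senses, with $E_t$ still coupled to $\rho_t$ through \eqref{eq.poisson2} by stability of the Poisson problem. One then checks that $f_t$ is weakly continuous with values in $L^1_+$ and has initial datum $f_0$.

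Finally I would pass to the limit in the Lagrangian structure. This is where the work of Theorem \ref{thm.main1_Om} and the stability theory of Maximal (Specular) Flows from Sections \ref{sec.mrf}--\ref{sec.Main} is essential: the Maximal Specular Flows $X^k$ converge, in the sense made precise there (convergence in measure of the trajectories, up to the a.e.\ defined jump set at $\de\Omega$), to the Maximal Specular Flow $X$ associated to the limiting $b_t = (v, E_t)$, and $f_t = (X(t, \cdot))_\# f_0$ is therefore Lagrangian; renormalization and the commutativity property are then inherited in the limit, as in Theorem \ref{thm.main1_Om}. Strong $L^1_{\rm loc}$ continuity of $\rho_t$ and $E_t$ follows from the Lagrangian representation together with the energy bound, arguing as in \cite{ACF17}: weak continuity plus an almost-everywhere-in-time control of the energy upgrades to strong continuity of $\rho_t$, and then $E_t$ inherits it by continuity of the solution operator of \eqref{eq.elfield}. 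I expect the main obstacle to be the compactness/stability step near the boundary: one must ensure no mass concentrates on $\gamma^0_{\Omega}$ or escapes through $\de\Omega$ in the limit (energy conservation under specular reflection is the tool that rules this out, but making it rigorous for the approximating sequence requires the half-space reduction and the fine boundary estimates developed earlier in the paper), and one must show the limiting flow is genuinely the Maximal Specular Flow rather than merely a weak limit of flows — precisely the content that Theorem \ref{thm.main1_Om} and the uniqueness results of Section \ref{sec.uniq} are designed to supply.
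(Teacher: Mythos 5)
The central gap in your proposal is the existence of the approximating solutions $f_t^k$: you invoke classical global well-posedness for the genuine Vlasov--Poisson system in $\Omega$ with specular reflection for smooth compactly supported data, citing \cite{Guo94, Hwa04, HV09, HV10}. But those results do not provide this in a general $C^{2,1}$ domain — they require additional structure (half-space, convexity, flatness and compatibility of the data near $\de\Omega$, etc.), and indeed, by \cite{Guo95}, derivatives of solutions cannot be uniformly controlled near the boundary. This is precisely why the paper does \emph{not} approximate only the initial datum: Theorem~\ref{thm.regvp_d} solves a \emph{doubly regularized} problem in which the Green function is truncated near the diagonal ($G_\Omega^\delta$) \emph{and} the electric field is cut off to vanish in a $\zeta$-neighbourhood of $\de\Omega$ (via $\bar r_\Omega^\zeta$). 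The latter is essential: even after truncating $G_\Omega$, the Lipschitz constant of the regularized field may degenerate at the boundary (cf.\ the discussion before Lemma~\ref{lem.claim.E}), so without killing $E$ near $\de\Omega$ one cannot even run Cauchy--Lipschitz with reflections. The price is that energy is only \emph{approximately} conserved, with an error $\mathcal K_\tau^\Omega(\delta,\zeta,h_0)$ as in \eqref{eq.energy_d}--\eqref{eq.mk_d}, and a delicate choice of $\delta_n,\zeta_n$ (Step 2 of the paper's proof) is needed so that this error vanishes while keeping the kinetic energy uniformly bounded. Your claim of exact conservation \eqref{eq.energy_conservation} is not available for any approximating sequence you can actually construct, and is circular for the true system.

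A secondary but also genuine divergence: you propose to pass to the limit in the \emph{Lagrangian} picture, arguing that the Maximal Specular Flows $X^k$ converge to $X$. The paper does not attempt this and it is not the right strategy here. Instead it passes to the limit in the \emph{Eulerian} (distributional/renormalized) formulation: after the change of variables of Section~\ref{sec.gendom} and an even reflection across $\{y_1=0\}$, it establishes the DiPerna--Lions stability conditions \eqref{eq.1stab}--\eqref{eq.2stab} (requiring the fine Green-function estimates of Lemma~\ref{lem.GreenFct}) and the level-set truncation device of \cite{ACF17}, concluding that the limit is a bounded distributional, hence renormalized, solution; only \emph{then} does Theorem~\ref{thm.main1_Om} supply the Lagrangian representation, commutativity, and time continuity of $f_t$. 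Your Aubin--Lions strong compactness of $\rho^k$ is also not established in the paper (and it is unclear it holds uniformly with only an $L^1$ initial datum); the paper contents itself with weak convergence of densities against test functions, controlled by the kinetic energy bound, which suffices for the DiPerna--Lions stability route. In short: the high-level plan (approximate, bound energy, pass to the limit, recover Lagrangian structure from Theorem~\ref{thm.main1_Om}) is sound, but the choice of approximating scheme and the mechanism of the limit passage are where the real difficulties lie, and your sketch assumes away both.
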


Finally, we show that the initial energy bounds the total energy at all times, and thanks to that the flow is actually globally defined in dimensions $d = 3, 4$.

\begin{thm}
\label{thm.main3_D}
Let $\Omega\subset\R^d$ be a $C^{2, 1}$ domain. Consider $f_0 \in L_+^1(\Omega_+\times\R^d)$, $\rho_0 (x) = \int_{\R^d} f_0(x, v) dv$, satisfying the finite initial energy condition, \eqref{eq.hyp_main_d}. Let $f_t \in C([0, \infty) ; L^1_{\rm loc}(\Omega\times\R^d)) $ be the solution to the Vlasov--Poisson with specular reflection \eqref{eq.original_Om} with initial datum $f_0$ from Theorem~\ref{thm.main2_d}. Let $\rho_t = \int f_t dv$ be the physical density. Then, the following properties hold:
\begin{enumerate}[(i)]
\item for every $t \geq 0$, the energy is bounded by the initial energy,
\begin{equation}
\label{eq.boundenergy__D}
\begin{split}
&\int_{\Omega\times\R^d} |v|^2 f_t\, dx\, dv + \int_{\Omega\times\Omega}G_\Omega(x, z)\rho_t(x) \rho_t(z)\,dx\,dz \leq \\
&~~~~~~~~~~~~~~~~~~~~~~~~~~~~~~~~\le \int_{\Omega\times\R^d} |v|^2 f_0\, dx\, dv + \int_{\Omega\times\Omega}G_\Omega(x, z)\rho_0(x) \rho_0(z)\,dx\,dz ;
\end{split}
\end{equation}
\item if $d = 3, 4$, then the flow is globally defined on $[0, \infty)$ for $f_0$-a.e. $(x, v) \in \Omega\times \R^{d}$; that is, trajectories do not blow up in finite time, and $f_t$ is the image of $f_0$ through an incompressible flow. In particular, the map
\[
t \mapsto \int_{\Omega\times\R^d} \varphi(f_t(x, v)) \, dx\, dv,
\]
is constant for any Borel function $\varphi:[0,\infty)\to [0, \infty)$. 
\end{enumerate} 
\end{thm}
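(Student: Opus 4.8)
\textbf{Proof proposal for Theorem~\ref{thm.main3_D}.}

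\emph{Part (i): energy bound.} The plan is to mimic the classical energy estimate, being careful about the specular reflection at the boundary and the weak regularity of the solution. Working first with the Lagrangian description provided by Theorem~\ref{thm.main1_Om}, I would use that $f_t$ is transported by the Maximal Specular Flow, which is incompressible and for which the velocity jumps $v \mapsto R_x v$ at the boundary preserve $|v|^2$ (since $|R_x v| = |v|$). Hence the kinetic energy $\int |v|^2 f_t \, dx\, dv$ can only change through the electric acceleration $\dot v = E_t(X_t)$, while the potential energy $\tfrac12\int G_\Omega \rho_t \rho_t$ changes through the transport of $\rho_t$; the two variations cancel, formally, exactly as in the whole-space case, because $\partial_t \int G_\Omega \rho_t \rho_t = 2\int V_t \partial_t \rho_t = -2\int V_t \,\mathrm{div}_x(\int v f_t\,dv) = 2\int \nabla_x V_t \cdot \int v f_t \, dv = -2 \int E_t \cdot \int v f_t\, dv$, and the Dirichlet condition $V_t|_{\partial\Omega}=0$ kills the boundary term in the integration by parts. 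Since we only have a weak solution, I would make this rigorous by first proving the estimate for regularized data (smooth approximations with the specular condition, for which classical or near-classical solutions and the exact energy identity are available, as in the references \cite{Guo94, Hwa04}), and then passing to the limit. Because approximation could lose mass or energy at the boundary (particles reaching tangential velocities, or the $\gamma^0$ set), I expect to only get the \emph{inequality} \eqref{eq.boundenergy__D}, not equality, which is exactly what is claimed; the lower semicontinuity of both the kinetic and (by positivity of $G_\Omega$) the potential energy terms under weak-$*$ convergence of $f_t$ and $L^1_{\rm loc}$ convergence of $\rho_t$ gives the direction stated. An alternative, avoiding regularization, is to test the renormalized formulation \eqref{eq.defi_HS_Om} with $\varphi$ approximating $|v|^2$ times a cutoff, using that the boundary term involves $v\cdot n(x)\,(\varphi_t(x,v) - \varphi_t(x,R_xv))$ which vanishes when $\varphi$ depends on $v$ only through $|v|^2$; combined with the continuity equation for $\rho_t, E_t$ from Theorem~\ref{thm.main2_d} this yields the bound directly.

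\emph{Part (ii): global existence of the flow for $d=3,4$.} The mechanism is standard: a trajectory of the Maximal Regular/Specular Flow can only fail to be globally defined if $|X_t|$ or $|V_t|$ blows up in finite time, and since $\Omega$ traps the $X$ component on bounded time intervals only if $E_t$ is controlled, the real issue is a velocity blow-up, which is governed by $\int_0^t \|E_s\|_{L^\infty}\,ds$, or more precisely by an integrated bound on $E$. Using the energy bound from part (i), the physical density satisfies, uniformly in $t$, the interpolation estimate controlling $\|\rho_t\|_{L^{(d+2)/d}}$ in terms of $\|f_t\|_{L^1}$ (conserved mass) and $\int|v|^2 f_t$ (bounded energy); this is the classical bound $\|\rho_t\|_{L^{(d+2)/d}} \lesssim (\int|v|^2 f_t\,dv)^{d/(d+2)} \|f_t\|_{L^1_v}^{2/(d+2)}$ integrated in $x$. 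Feeding this into the representation \eqref{eq.poisson2} via $E_t = -\int \nabla_x G_\Omega(\cdot, z)\rho_t(z)\,dz$ and the Calder\'on--Zygmund/potential estimates for the Green function (here I would use that $|\nabla_x G_\Omega(x,z)| \lesssim |x-z|^{1-d}$, so $E_t$ is morally a Riesz potential of order $1$ applied to $\rho_t$), one gets $E_t \in L^p_{\rm loc}$ with $p$ large enough precisely when $d \le 4$, and in fact a bound on $\int_0^T \|E_t\|_{L^q}\,dt$ with $q > d$ that, through the a~priori estimate for the flow already contained in the Maximal Specular Flow theory (the analogue in \cite{ACF17}), forbids finite-time blow-up for $f_0$-a.e.\ initial datum. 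Once the flow is globally defined and incompressible, $f_t = (X_t,V_t)_\# (f_0 \,\mathcal L^{2d})$ with the push-forward by a measure-preserving map, so $\int \varphi(f_t)\,dx\,dv = \int \varphi(f_0)\,dx\,dv$ for every Borel $\varphi \ge 0$, giving the last assertion.

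\emph{Main obstacle.} The delicate point is controlling the electric field near the boundary well enough to rule out trajectory blow-up: the Green function $G_\Omega$ is only comparable to the fundamental solution away from the diagonal and the boundary, and its normal derivative can be large near $\partial\Omega$, so the Calder\'on--Zygmund theory does not apply directly (as emphasized in the introduction). I would handle this exactly as in the earlier sections of the paper — localize, and near the boundary compare with the half-space, where the image-charge method turns $E_t$ into (essentially) a convolution with an $L^1$ kernel plus a reflected bounded piece — so that the potential estimates survive up to the boundary. The second, more technical, difficulty is that the dimensional restriction $d = 3, 4$ is genuinely sharp at the level of these estimates: for $d = 3$ one obtains $E_t$ bounded (the classical Pfaffelmoser/Schaeffer-type threshold), for $d = 4$ one is at the borderline and must use the sharper form of the a~priori estimate (integrability of $\|E_t\|$ in time rather than pointwise boundedness), and I would have to check carefully that the Maximal Specular Flow theory established in the earlier sections admits that borderline input, which I expect is where most of the work goes. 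The passage to the limit in part (i) to get the energy inequality (rather than equality) should, by contrast, be routine once the approximation scheme of Theorem~\ref{thm.main2_d} is in hand.
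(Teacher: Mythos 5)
Your plan for the energy inequality matches the paper's proof in spirit and in detail: one works with the regularized solutions $f_t^n$ of Theorem~\ref{thm.regvp_d}, for which the exact energy identity (with the small error $\mathcal K^\Omega_\tau$) holds, and then passes to the limit, using lower semicontinuity of the kinetic term and of the potential term (via the nonnegativity and monotonicity of $G_\Omega^{\delta_n}$), to obtain the inequality for a.e.\ $t$; continuity of $f_t$ and $\rho_t$ upgrades this to all $t$. The only technical ingredient you do not mention, but which the paper spends its Step~1 on, is the \emph{weak equicontinuity in time} of the approximate densities $\rho_t^n$, which is what makes the product measure $\rho_t^n\otimes\rho_t^n\otimes dt$ converge narrowly and hence allows the lower-semicontinuity argument for the potential energy (Step~2 of the paper). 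That point needs a one-line proof exploiting the bounded kinetic energy and the specular boundary condition (the flux boundary term $\int_{\partial\Omega} (v\cdot n) f_t^n$ vanishes upon integrating over $v$); the rest of your sketch is correct and essentially the paper's route. Your alternative route (testing the renormalized formulation with $|v|^2\times$cutoff) would likely also work but is not what the paper does.

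\textbf{Part (ii).} Here there is a genuine gap, and in fact your proposed mechanism fails in $d=4$. You invoke the interpolation bound
\[
\|\rho_t\|_{L^{(d+2)/d}} \lesssim \Bigl(\int |v|^2 f_t\,dv\Bigr)^{d/(d+2)}\|f_t\|_{L^1_v}^{2/(d+2)},
\]
but this bound requires an $L^\infty$ (or at least $L^p$, $p>1$) control on $f_t$, which is not available here: the solution is only in $L^1$, and a concentration of mass in $x$ with velocity spread shows that $L^1$ plus bounded kinetic energy does \emph{not} control $\rho_t$ in any $L^q$ with $q>1$. Moreover, even if one had $\rho_t\in L^{(d+2)/d}$, the Riesz-type estimate $E_t\in L^q$ with $q>d$ would only close in $d=3$: for $d=4$, $\rho_t\in L^{3/2}$ gives, by Hardy--Littlewood--Sobolev applied to the kernel $|x-z|^{1-d}$, only $E_t\in L^{12/5}$, and $12/5 < 4$. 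So you cannot reach the borderline dimension. The paper's actual route is different and weaker: in Step~3 it integrates by parts in \eqref{eq.pecond_d}, using the Dirichlet condition $V_t|_{\partial\Omega}=0$, to show that the potential energy term $\int G_\Omega\rho_t\rho_t$ directly \emph{dominates} $\|E_t\|_{L^2(\Omega)}^2$. Combined with part~(i) this gives a uniform $L^2$ bound on the electric field, with no interpolation needed, and this bare $L^2$ bound is exactly what the averaged no-blow-up criterion of \cite[Theorem 2.3]{ACF17} requires in $d=3,4$: one shows
\[
\int_0^T\int_{\Omega\times\R^d} \frac{|b_t(x,v)|}{(1+|(x,v)|)\log(2+|(x,v)|)}\,f_t\,dx\,dv\,dt <\infty,
\]
which bounds the $f_0$-average of the $\log\log$-displacement of trajectories and thereby rules out blow-up on a set of positive $f_0$-measure. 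So the dimensional restriction $d\le 4$ comes from closing this averaged criterion with only $E_t\in L^2$ in hand, not from a pointwise or high-$L^q$ field bound; you should replace the interpolation-plus-Riesz step with the direct $L^2$ estimate on $E_t$ and the weaker averaged criterion.
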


\subsection{Notation} Throughout the whole work we will denote $\R^d_+ := \{x\in\R^d : x_1 > 0\}$, $\overline{\R^d_+} := \{x\in\R^d : x_1 \geq 0\}$ and $\de\R^d_+ := \{x\in\R^d : x_1 = 0\}$ (and analogously with $\R^d_-$). We also denote the application 
\begin{equation}
':\R^d \to \R^d,\quad (x_1,x_2,\dots,x_d)' \mapsto (-x_1, x_2, \dots,x_d),
\end{equation}
corresponding to switching the first coordinate (notice that it is an involution, $x'' = (x')' =x$). 

We will say that a function $f = f(x,v )$ is even (resp. odd) with respect to $(x_1, v_1)$ if $f(x, v) = f(x', v')$ (resp. $f(x, v) = -f(x', v')$). We will similarly say that $f = f(x)$ is even (resp. odd) with respect to $x_1$ or $\{x_1 = 0\}$ if $f(x) = f(x')$ (resp. $f(x) = -f(x')$ ). 

\section{Vlasov--Poisson in the half-space}
\label{sec.vphs}
In the following sections we consider a simpler version of the problem in hand. In particular, we will be considering the half-space case, that is, $\Omega = \R^d_+$.

By doing so, we explore the main ideas of the final result and deduce some intermediate lemmas that will be very useful in the final proof. We also follow a slightly different approach by directly dealing with a problem in the whole domain, that has interest in its own. 

Our problem considers the case of a grounded boundary, and in the half-space the field $E$ is found through the method of image charges imposing a zero potential condition on $x_1 = 0$ (given that we assume a perfect boundary conductor; see also \cite{Guo93}). The image charges method is what inspired the choice of the extended problem we consider in this section. When dealing with general domains, we will proceed with a change of variables to reduce the problem to a half-space situation locally. 

We consider the $d$-dimensional Vlasov--Poisson system in the half-space: the evolution of a distribution function $f_t(x, v) = f:(0,\infty)\times\overline{\R_+^d}\times\R^d \to [0,\infty)$ under the self-consistent field generated by the Poisson's equation in the half-space with grounded boundary. We also impose a boundary condition given by specular reflection. The problem looks as follows, where now the Green function of the domain is explicit,
\begin{equation}
\label{eq.original}
  \left\{ \begin{array}{ll}
  \de_t f + v\cdot \nabla_x f_t + E_t\cdot\nabla_v f_t=0 & \textrm{ in } (0,\infty)\times\R_+^d\times\R^d\\
  \rho_t(x)=\int_{\R^d} f_t(x, v) dv & \textrm{ in } (0,\infty)\times\overline{\R_+^d}\\
  E_t(x) = -\nabla_x V_t & \textrm{ in } (0,\infty)\times\R_+^d \\
  V_t = c_d (d-2)^{-1}\int_{\R^d_+} \rho_t(y) \left(|x-y|^{2-d}-|x'-y|^{2-d}\right)dy & \textrm{ in } (0,\infty)\times\R_+^d\\
  f_t(x, v) = f_t(x, v') & \textrm{ in } (0, \infty)\times\de\R^d_+\times\R^d,\\
  \end{array}\right.
\end{equation}
where $c_d > 0$ is a dimensional constant such that $c_d {\rm div}\left(x|x|^{-d}\right) = \delta_0$. 

We will start by proving the statement of our main results, Theorems~\ref{thm.main1_Om} and \ref{thm.main2_d}, for the half-space. 

\begin{thm}
\label{thm.main1}
Theorem~\ref{thm.main1_Om} holds for $\Omega = \R^d_+$.
\end{thm}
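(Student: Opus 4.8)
The plan is to reduce the specular-reflection problem in the half-space to a transport problem on the whole space $\R^d$ via the image-charge reflection, and then invoke (a suitable localized version of) the Ambrosio--Colombo--Figalli theory of maximal regular flows. Concretely, given a renormalized (or distributional) solution $f_t$ on $(0,T)\times\R^d_+\times\R^d$, I would define its even reflection $\tilde f_t(x,v) := f_t(x,v)$ for $x_1>0$ and $\tilde f_t(x,v) := f_t(x',v')$ for $x_1<0$ (using the involution $'$ from the Notation subsection). The first key computation is to check that $\tilde f_t$ is a renormalized solution on the whole space $(0,T)\times\R^d\times\R^d$ of a transport equation $\de_t \tilde f_t + \tilde b_t\cdot\nabla_{x,v}\tilde f_t = 0$, where $\tilde b_t(x,v) = (v,\tilde E_t(x))$ and $\tilde E_t$ is the odd-in-$x_1$ reflection of $E_t$. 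The point of the image-charge formulation in \eqref{eq.original} is precisely that this reflected field $\tilde E_t$ is the Coulomb field generated by the reflected density $\tilde\rho_t$, i.e. $\tilde E_t = c_d\int_{\R^d}\tilde\rho_t(z)\frac{x-z}{|x-z|^d}\,dz$ up to the usual constant — so that the whole-space problem is genuinely a (reflected) Vlasov--Poisson system, and one can borrow the estimates of \cite{ACF17}. The specular-reflection boundary condition $f_t(x,v)=f_t(x,v')$ on $\de\R^d_+$ is exactly the statement that $\tilde f_t$ has no singular part of its transport on $\{x_1=0\}$; conversely, the velocity jump allowed by the Maximal Specular Flow corresponds to trajectories of the reflected whole-space flow crossing the hyperplane $\{x_1=0\}$ and being folded back by the map $'$.

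The main steps, in order, would be: (1) Establish the reflection dictionary carefully: show that $f_t$ renormalized on the half-space with specular reflection (Definition~\ref{defi.HS_Om}, equivalently the Green-formula version in Remark~\ref{rem.defi.HS}) is equivalent to $\tilde f_t$ being renormalized on the whole space in the sense of \cite{ACF17}, tracking the boundary terms $\int_{\gamma^+}v\cdot n(x)(\varphi_t(x,v)-\varphi_t(x,R_xv))f^+_{\beta,t}$ and how they vanish upon symmetrization against even/odd test functions. Here the test-function class $\mathcal{T}_{\Omega,T}$, which excludes $\gamma^0_{\Omega,T}$ and the initial slice of the boundary, has to be matched with the whole-space test functions, and the exclusion of tangential velocities $v_1=0$ is what allows the folding to be well-defined a.e. (2) Verify the integrability and energy hypotheses transfer: $\tilde f_t\in L^\infty((0,T);L^1_+(\R^d\times\R^d))$ with finite energy (the Green-function energy on $\R^d_+$ controls the Coulomb energy of $\tilde\rho_t$ on $\R^d$ by the image-charge identity), so that the structural results of \cite{ACF15,ACF17} apply to $\tilde b_t$. (3) Invoke the existence of the Maximal Regular Flow $\tilde X$ for $\tilde b_t$ and the fact (from \cite{ACF17}) that it transports $\tilde f_t$; then define the Maximal Specular Flow in $\R^d_+\times\R^d$ by projecting $\tilde X$ back through the folding map — a trajectory that hits $\{x_1=0\}$ continues with $x_1\mapsto -x_1$ replaced by reflection, which on the physical side is the velocity jump $v\mapsto R_xv$. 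One must check this projected object satisfies Definition~\ref{defi.msf_dom} (in particular that the set of trajectories spending positive time on $\{x_1=0\}$ is $\tilde f_0$-negligible, using the transversality forced by $v_1\neq 0$ and the regularity of $\tilde E_t$ near the hyperplane). (4) Conclude that $f_t$ is Lagrangian, hence renormalized and satisfies the commutativity property, transferring these from the whole-space statement.

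The hard part, I expect, is step (3) together with the measure-theoretic analysis of trajectories near $\{x_1=0\}$: the whole-space reflected field $\tilde E_t$ is only the convolution of $\tilde\rho_t\in L^1$ with the Coulomb kernel, and moreover it is genuinely singular/non-smooth across the hyperplane (it is odd in $x_1$, hence typically discontinuous there unless $\rho_t$ vanishes on the boundary), so the Calderón--Zygmund structure used in \cite{BC13,BBC16} does not directly apply — this is exactly the obstruction flagged in the introduction. One must instead argue, as the introduction promises, that near the boundary the field is ``close to a convolution against an $L^1$ function'' and that this suffices: the delicate points are (a) showing the maximal regular flow does not get trapped on $\{x_1=0\}$ for a.e. initial datum — equivalently, that almost every trajectory crosses the hyperplane transversally in view of $\dot x_1 = v_1\neq 0$ off $\gamma^0$ — and (b) controlling the behavior of the flow precisely at the crossing times so that the folding produces a well-defined velocity jump and a bona fide flow in the sense of Definition~\ref{defi.msf_dom}. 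The weak continuity of $f_t$ and the strong $L^1_{\rm loc}$ continuity of $\rho_t,E_t$ will be needed to pass the reflected objects through these limits. A secondary technical point is making the equivalence in step (1) rigorous for the two cases (i) bounded distributional solutions and (ii) general renormalized solutions simultaneously, since in case (i) one fixes $\beta$ to be a truncation of the identity and must check the reflected solution stays in $L^\infty$.
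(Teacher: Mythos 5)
Your proposal takes essentially the same route as the paper: even reflection of $f_t$ to the whole space (Proposition~\ref{prop.AimpB}/Lemma~\ref{lem.trace}), verification of the ACF uniqueness condition {\bf (A2)} for the reflected field (Theorem~\ref{thm.charact}, which handles the ${\rm sgn}(x_1)$ discontinuity by exploiting the odd symmetry of $\tilde\rho_t$ and the $L^1$ integrability of the kernel near the hyperplane), application of the ACF maximal regular flow theory (Theorem~\ref{thm.22}), and folding of the whole-space flow into a Maximal Specular Flow (Lemma~\ref{lem.mrfmsf}, whose content is exactly your point (b) about the BV structure of $V_1$ and the well-definedness of the velocity jump at crossing times). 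Your identification of the two delicate points matches the paper's treatment precisely, and the commutativity property is handled in the same way — by inverting the flow in the $x_1$-variable near $\{x_1=0\}$ where $|v_1|>0$.
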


\begin{thm}
\label{thm.main2}
Theorem~\ref{thm.main2_d} holds for $\Omega = \R^d_+$.
\end{thm}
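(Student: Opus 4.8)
The plan is to reduce the half-space problem with specular reflection to a problem on the whole space $\R^d$ via the image-charge / even-reflection method, apply the (known) theory of Maximal Regular Flows and renormalized solutions for Vlasov--Poisson on $\R^d$ — here one must be careful, since the reflected electric field is not globally the singular integral of an $L^1$ density, but rather (as the introduction hints) something closer to a convolution against an $L^1$ kernel — and then transfer the conclusions back to $\R^d_+$. Concretely, given a renormalized (or distributional) solution $f_t$ on $(0,\infty)\times\R^d_+\times\R^d$, I would define its \emph{even reflection} $\tilde f_t(x,v) := f_t(x,v)$ for $x_1>0$ and $\tilde f_t(x,v) := f_t(x',v')$ for $x_1<0$, with the analogous definition for $\tilde\rho_t$. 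The key algebraic point is that the potential $V_t$ in \eqref{eq.original} is exactly $V_t(x)=c_d(d-2)^{-1}\int_{\R^d_+}\rho_t(y)(|x-y|^{2-d}-|x'-y|^{2-d})\,dy$, which is the restriction to $\R^d_+$ of the Newtonian potential $\tilde V_t$ of the \emph{odd} reflection of $\rho_t$ across $\{x_1=0\}$; hence $\tilde E_t := -\nabla_x \tilde V_t$ has $\tilde E_1$ odd and $\tilde E_2,\dots,\tilde E_d$ even in $x_1$, so the vector field $\tilde b_t(x,v)=(v,\tilde E_t(x))$ is \emph{equivariant} under the involution $(x,v)\mapsto(x',v')$. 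One then checks that $\tilde f_t$ is a renormalized solution of the whole-space Vlasov--Poisson system $\de_t \tilde f + \tilde b_t\cdot\nabla_{x,v}\tilde f = 0$ on $(0,\infty)\times\R^d\times\R^d$ (with no boundary), where the contribution across $\{x_1=0\}$ is encoded precisely by the boundary integral $\int_{\gamma^+}v\cdot n(\varphi_t(x,v)-\varphi_t(x,R_xv))f^+_{\beta,t}$ in Definition~\ref{defi.HS_Om}: testing the whole-space equation against an (even) test function decomposes into the two half-space pieces, and the reflection symmetry $\tilde f_t(x,v)=\tilde f_t(x',v')$ together with $R_xv=v'$ on $\de\R^d_+$ makes the boundary terms cancel in the right way.

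Having done this, Theorem~\ref{thm.main1} follows by invoking the theory of \cite{ACF15,ACF17}: the even-reflected solution $\tilde f_t$ is transported by the Maximal Regular Flow $X$ of $\tilde b_t$ on $\R^d\times\R^d$, which by uniqueness of the flow and the equivariance of $\tilde b_t$ is itself equivariant, $X(t,s,x',v')=(X_x(t,s,x,v)',X_v(t,s,x,v)')$; restricting $X$ to trajectories started in $\overline{\R^d_+}\times\R^d$ and recording the (discrete) times at which $X_x$ hits $\{x_1=0\}$, at which the velocity's first component flips sign, produces exactly the Maximal Specular Flow of Definition~\ref{defi.msf_dom}. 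Pushing $f_0$ forward along this flow gives a Lagrangian solution, and the general transport-equals-renormalized machinery then yields the commutativity property. For Theorem~\ref{thm.main2}, starting from $f_0\in L^1_+(\R^d_+\times\R^d)$ with finite energy \eqref{eq.hyp_main_d}, I would even-reflect to get $\tilde f_0\in L^1_+(\R^d\times\R^d)$, apply the existence result for global renormalized/Lagrangian solutions of Vlasov--Poisson on $\R^d$ to obtain $\tilde f_t$, verify (via uniqueness and equivariance, or by reflecting a symmetric approximation sequence) that $\tilde f_t$ stays even in $(x_1,v_1)$ for all $t$, and then restrict to $\R^d_+$; strong $L^1_{\rm loc}$ continuity of $\rho_t$ and $E_t$ is inherited directly from the whole-space statement.

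The hard part will be making rigorous the claim that the whole-space theory \emph{actually applies} to $\tilde b_t$. The issue, flagged in the introduction, is that $\tilde\rho_t$ is the odd reflection of a \emph{nonnegative} density and hence changes sign, so $\tilde E_t = -\nabla\tilde V_t$ is the Riesz-type singular integral of a signed $L^1$ function; the Calderón--Zygmund/ACF17 framework is stated for genuine Vlasov--Poisson fields, and one must argue that the relevant estimates (the local integrability of $E_t$, the bound on the superlevel sets used to build the Maximal Regular Flow, and the approximation/stability arguments) survive. The clean way around this — and the one the introductory remarks point to — is to split $\tilde V_t$ as the Newtonian potential of $\rho_t\mathbf{1}_{\R^d_+}$ \emph{minus} that of its reflection: the first piece is a bona fide Vlasov--Poisson field, while the difference $x\mapsto \int_{\R^d_+}\rho_t(y)\,\nabla_x(|x'-y|^{2-d})\,dy$ restricted to a bounded region far enough into $\R^d_+$ has a kernel that is smooth (no singularity, since $x'\ne y$ stays bounded away from $0$), effectively a convolution of $\rho_t\in L^1$ against an $L^\infty_{\rm loc}$ kernel, hence Lipschitz on compacts with a bound depending only on $\|\rho_t\|_{L^1}$; near the boundary the two singularities combine but one still gets the needed one-sided control. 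Verifying carefully that this decomposition feeds correctly into the ACF construction of the flow and into the renormalization/stability estimates — and that the resulting Maximal Specular Flow has the stated blow-up-only behavior — is the technical core, and is presumably what Sections~\ref{sec.PbB}--\ref{sec.mainres} are devoted to.
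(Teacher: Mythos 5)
Your overall strategy — extend $f_t$ evenly in $(x_1,v_1)$, extend $\rho_t$ oddly, reduce to a whole-space transport problem, apply the Maximal Regular Flow machinery, and pull back via the symmetry — is exactly the paper's route (Problem~A $\leftrightarrow$ Problem~B, then Theorem~\ref{thm.mainws}). But there are two substantive gaps in the proposal as written.

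First, a genuine parity error that invalidates the reduction. If $\tilde V_t$ is the Newtonian potential of the \emph{odd} extension $\rho_t^o$, then $\tilde V_t$ is odd in $x_1$, so $\partial_{x_1}\tilde V_t$ is \emph{even} and $\partial_{x_i}\tilde V_t$ ($i\ge2$) is \emph{odd}. Hence $\tilde E := -\nabla \tilde V_t = K*\rho_t^o$ satisfies $\tilde E(x')=-(\tilde E(x))'$, i.e.\ $\tilde E_1$ is even and $\tilde E_i$ ($i\ge2$) odd — the \emph{opposite} of what you claim. For the evenly-reflected $\tilde f_t$ to solve $\partial_t\tilde f + v\cdot\nabla_x\tilde f + \tilde E\cdot\nabla_v\tilde f = 0$ on all of $\R^{2d}$, one needs the equivariance $\tilde E(x') = (\tilde E(x))'$ ($\tilde E_1$ odd, $\tilde E_i$ even), which $K*\rho_t^o$ does not have. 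This is precisely why Problem~B in \eqref{eq.B} carries the extra factor $\mathrm{sgn}(x_1)$: the correct field is $\tilde E_t = \mathrm{sgn}(x_1)\,K*\rho_t^o$, and without that factor the equation fails in $\{x_1<0\}$.

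Second, the $\mathrm{sgn}(x_1)$ factor is not cosmetic, and the phrase ``apply the existence result for global renormalized/Lagrangian solutions of Vlasov--Poisson on $\R^d$'' conceals the real work. The resulting field $b_2(x) = \mathrm{sgn}(x_1)\,K*\rho$ is \emph{not} a Vlasov--Poisson-type convolution, so neither \cite[Theorem 4.4]{ACF17} (uniqueness of the continuity equation) nor the existence/stability machinery of \cite[Corollary 2.7]{ACF17} applies off the shelf. The paper reproves uniqueness in Theorem~\ref{thm.charact}, exploiting the odd symmetry of $\rho$ and the elementary inequality $|\gamma^1-\xi^1|\ge|\gamma^1_1|+|\xi^1_1|$ on opposite sides of $\{x_1=0\}$ together with the slab estimate $\|K_1\|_{L^1(\{0\le x_1\le 2^{-k}\}\cap B_1)}\le C2^{-k}$; and it reproves existence in Theorems~\ref{thm.regvp} and \ref{thm.mainws}, building a regularized problem whose approximate energy identity carries an extra error term $\mathcal K_\tau$ (coming from regularizing $\mathrm{sgn}(x_1)$) that has to be sent to zero, and showing that the weak limit of the densities has no mass concentration on $\{x_1=0\}$. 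Your proposed decomposition — Newtonian potential of $\rho_t\mathbf 1_{\R^d_+}$ plus a reflected piece that is smooth away from the boundary — does not handle the region near $\{x_1=0\}$ where both singularities interact, which is exactly where the delicate estimates are needed; the paper does not use it. So while the reflection philosophy and the end goal (combine the analogue of Lemma~\ref{lem.mrfmsf} with Propositions~\ref{prop.AimpB}/\ref{prop.BimpA} and a whole-space existence theorem) match the paper, the proposal both gets the symmetrized field wrong and leaves the actual technical core to be filled in.
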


As already mentioned, we want to reduce the problem to a whole space situation. This will be accomplished by a symmetrisation. The aim of this section is to prove that \eqref{eq.original} (or alternatively, Problem A) can be equivalently stated as Problem B (see below). 
\\[0.1cm]
{\bf Problem A:} This problem corresponds to \eqref{eq.original}.
\begin{equation}
\label{eq.A}
  \left\{ \begin{array}{ll}
  \de_t f_t + v\cdot \nabla_x f_t + E_t\cdot\nabla_v f_t=0 & \textrm{ in } (0,\infty)\times\R_+^d\times\R^d\\
  \rho_t^o(x)=\int_{\R^d} f_t(x, v) dv & \textrm{ in } (0,\infty)\times\overline{\R_+^d}\\
  \rho_t^o(x)=-\rho_t^o(x')& \textrm{ in } (0,\infty)\times\R_-^d\\
  E_t(x) = c_d \int_{\R^d} \rho_t^o(y) \frac{x-y}{|x-y|^d}dy = \rho_t^o * K& \textrm{ in } (0,\infty)\times\R_+^d \\
  f_t(x, v) = f_t(x, v') & \textrm{ in } (0, \infty)\times\de\R^d_+\times\R^d,\\
  \end{array}\right.
\end{equation}
where we have expressed the electric field as a single convolution of a new density $\rho^o_t$ against the kernel $K = \frac{x}{|x|^d}$. As defined, $\rho_t^o$ is the odd reflection of $\rho_t$ with respect to $\{x_1 = 0\}$.
\\[0.1cm]
{\bf Problem B:} In the spirit of the previous definition, we define a new problem in the whole $\R^d$. We consider the evolution of a function $g_t(x, v) = g: (0,\infty)\times\R^d\times\R^d \to [0,\infty)$, even with respect to $(x_1, v_1)$, via the following transport problem,
\begin{equation}
\label{eq.B}
  \left\{ \begin{array}{ll}
  \de_t g_t + v\cdot \nabla_x g_t + \tilde E_t\cdot\nabla_v g_t=0 & \textrm{ in } (0,\infty)\times\R^d\times\R^d,\\
  \tilde \rho_t(x)=\int_{\R^d} g_t(x, v) dv & \textrm{ in } (0,\infty)\times\overline{\R_+^d},\\
  \tilde \rho_t(x)=-\int_{\R^d} g_t(x, v) dv & \textrm{ in } (0,\infty)\times\R_-^d,\\
  \tilde E_t(x) = c_d \int_{\R^d} \tilde \rho_t(y) \frac{x-y}{|x-y|^d}dy & \textrm{ in } (0,\infty)\times\overline{\R_+^d}, \\
  \tilde E_t(x) = - c_d \int_{\R^d} \tilde \rho_t(y) \frac{x-y}{|x-y|^d}dy & \textrm{ in } (0,\infty)\times\R_-^d, \\ 
    g_t(x, v) = g_t(x', v') & \textrm{ in } (0,\infty)\times\R^d\times\R^d.\\
  \end{array}\right.
\end{equation}

This problem is posed so that any solution induces an electric field $\tilde E$ such that $\tilde E_1$ is odd with respect to $x_1$, and $\tilde E_i$ for $i \ge 2$ is even with respect to $x_1$; that is, 
\begin{equation}
(\tilde E_t(x))' = \tilde E_t(x'), \quad \textrm{in} \quad (0, \infty)  \times \R^d.
\end{equation}
Similarly, given a solution with even initial datum such that the electric field is odd in the first component and even in the others, then the solution must be even. This should reflect the specular reflection property from the previous problem.

We want to see that solutions to Problem~A in \eqref{eq.A} correspond to solutions of Problem~B in \eqref{eq.B}, and vice-versa. Notice, firstly, that Problem B also presents a transport structure (similarly to Problem A via \eqref{eq.VPOm}) given by 
\begin{equation}
\label{eq.PbB}
\de_t g_t + \tilde b_t\cdot \nabla_{x,v} g_t=0.
\end{equation}
where vector field $\tilde b_t(x, v) = (v, \tilde E(x)): \R^{2d}\to \R^{2d}$ is divergence-free and coupled to $g_t$ via \eqref{eq.B}. 

For Problem B we need to deal with the notion of renormalized solution in the whole space (see the analogy with Definition~\ref{defi.HS_Om}). We recall it here for the reader convenience.

\begin{defi}[Renormalized solution]
\label{defi.WS}
Let $T > 0$, and let $\tilde b\in L^1_{\rm{loc}} ([0, T]\times \R^{2d}; \R^{2d})$ be a divergence-free vector field. A Borel function $g_t\in L^1([0,T]\times\R^{2d})$ is a \emph{renormalized solution} of \eqref{eq.PbB} starting from $g_0$ if for every $\beta\in C^1\cap L^\infty(\R)$ and for every $\phi \in C^1_c ([0, T)\times \R^{2d})$, 
\begin{equation}
\label{eq.defi_WS}
 \int_{\R^{2d}} \phi_0(x, v) \beta(g_0(x, v))\,dx\,dv 
 + \int_0^T \int_{\R^{2d}} [\de_t \phi_t(x, v) + \nabla_{x, v} \phi_t(x, v) \cdot \tilde b_t(x, v) ] \beta(g_t(x, v))\,dx\,dv\,dt = 0.
\end{equation}

In the case of Problem B, a function $g_t\in L^\infty((0, T); L^1(\R^{2d}))$ even with respect to $(x_1, v_1)$ is a renormalized solution of \eqref{eq.B} starting from $g_0$ if setting $\tilde b_t(x, v) = (v, \tilde E_t)$ defined as in \eqref{eq.B} then $g_t$ solves \eqref{eq.defi_WS}. 
\end{defi}
\begin{rem}
\label{rem.distr.gen}
For more general vector fields, $\tilde b\in L^1_{\rm{loc}} ([0, T]\times \R^{2d}; \R^{2d})$, not necessarily divergence free, we say that a function $g_t\in L^1([0, T]\times \R^{2d})$ is a renormalized solution to the transport equation in the whole space $(0,T)\times\R^d\times\R^d$, $\de_t g_t + \tilde b_t\cdot \nabla_{x,v} g_t=0$, starting at $g_0$, if for every $\beta\in C^1\cap L^\infty(\R)$ and for every $\phi \in C^1_c([0, T)\times\R^{2d})$, \eqref{eq.defi_WS} holds with the following right-hand side,
\[
-\int_0^T \int_{\R^{2d}}  {\rm div}_{x, v}( \tilde b_t(x, v) ) \varphi_t(x, v)\beta(g_t(x, v)) \,dx\,dv\,dt.
\]
Notice that, in general, for this definition to make sense one needs ${\rm div}_{x, v} \,b_t \in L^1_{\rm loc}((0, T)\times\R^{2d})$.
\end{rem}

\subsection{Equivalence between problems} Let us now prove that there exists a relation between renormalized solutions in the half-space and in the whole space under symmetry conditions. 

\label{sec.equiv}
\begin{lem}
\label{lem.testfunctions}
In the statement of Definition~\ref{defi.WS}, in the setting of Problem~B, \eqref{eq.B}, for a solution $g_t \in L^\infty([0, T); L^1(\R^{2d}))$ it is enough to consider test functions $\phi \in C^1_c([0, T)\times \R^{2d})$ such that 
\begin{equation}
\label{eq.suppphi}
{\rm supp}~\phi \Subset \{[0, T) \times {\R^d}\times\R^d \}\setminus\{( 0 \times \de\R^d_+\times \R^d )\cup \gamma^0_T \} \}.
\end{equation}
Equivalently, $g_t \in L^\infty([0, T); L^1(\R^{2d}))$ is a renormalized solution of \eqref{eq.B} starting from $g_0$ if \eqref{eq.defi_WS} holds for $\tilde b_t(x, v) = (v, \tilde E_t)$ and for any $\beta\in C^1\cap L^\infty(\R)$ and for any $\phi$ fulfilling \eqref{eq.suppphi}.
\end{lem}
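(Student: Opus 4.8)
The plan is to show that the class of test functions in Definition~\ref{defi.WS} can be shrunk from all of $C^1_c([0,T)\times\R^{2d})$ to those supported away from the two "bad" sets $0\times\de\R^d_+\times\R^d$ and $\gamma^0_T$, without losing any information. The key point is that, in the setting of Problem~B, the density $\tilde\rho_t$ is odd across $\{x_1=0\}$ and the vector field $\tilde b_t=(v,\tilde E_t)$ has the symmetry $(\tilde b_t(x,v))'=\tilde b_t(x',v')$, so that for a symmetric (even in $(x_1,v_1)$) solution $g_t$, the integrand in \eqref{eq.defi_WS} is a sum of a piece that is even under the involution $(x,v)\mapsto(x',v')$ and pieces that change sign. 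Thus if $\phi$ is replaced by its \emph{odd} part $\tfrac12(\phi_t(x,v)-\phi_t(x',v'))$, the identity \eqref{eq.defi_WS} is unchanged for even $g_t$, and this odd part automatically vanishes on $\{x_1=0\}$. This handles removing a neighbourhood of $0\times\de\R^d_+\times\R^d$ (and of $\de\R^d_+$ at later times), and the point $t=0$ on the boundary is absorbed into it.

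Concretely, I would proceed as follows. First, note that since $g_t$ is even in $(x_1,v_1)$ and $\tilde b_t$ has the stated reflection symmetry, for any $\phi\in C^1_c([0,T)\times\R^{2d})$ the quantity
\[
\int_0^T\!\!\int_{\R^{2d}}[\de_t\phi_t+\nabla_{x,v}\phi_t\cdot\tilde b_t]\beta(g_t)\,dx\,dv\,dt
\]
and the corresponding $t=0$ term are invariant under replacing $\phi_t(x,v)$ by $\phi_t(x',v')$ — this is just the change of variables $(x,v)\mapsto(x',v')$, using $\beta(g_t(x',v'))=\beta(g_t(x,v))$ and the chain rule for how $\de_t$, $\nabla_{x,v}$, and $\tilde b_t$ transform under the involution. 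Hence \eqref{eq.defi_WS} for $\phi$ is equivalent to \eqref{eq.defi_WS} for $\phi^{\mathrm{odd}}_t(x,v):=\tfrac12(\phi_t(x,v)-\phi_t(x',v'))$, which is still $C^1_c$ and vanishes identically on $\{x_1=0\}$; multiplying by a fixed cutoff that is $1$ outside a small neighbourhood of $\{x_1=0\}$ and letting the neighbourhood shrink (dominated convergence, using $g_t\in L^1$ and $\tilde b_t\in L^1_{\mathrm{loc}}$, $\beta$ bounded) shows it suffices to test against $\phi$ with support disjoint from $[0,T)\times\de\R^d_+\times\R^d$, in particular from $0\times\de\R^d_+\times\R^d$.

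Second, to remove $\gamma^0_T=\{(t,x,v):x\in\de\R^d_+,\ v\cdot n(x)=v_1=0\}$: since we have already arranged $x$ stays away from $\de\R^d_+$, the set $\gamma^0_T$ is in fact already avoided, so in this non-local formulation the $\gamma^0_T$ piece of \eqref{eq.suppphi} is automatically satisfied once the $\de\R^d_+$ neighbourhood has been excised. (If one prefers to argue directly, one can alternatively use a cutoff $\chi_\delta(v_1/\mathrm{dist}(x,\de\R^d_+))$; but the symmetrisation route makes it free.) Finally, I would check the converse direction — that a $g_t$ satisfying \eqref{eq.defi_WS} only for the restricted class is a renormalized solution in the full sense — by the reverse approximation: given arbitrary $\phi\in C^1_c([0,T)\times\R^{2d})$, pass to $\phi^{\mathrm{odd}}$ (legitimate since we only need to verify the identity, and for even $g_t$ both sides are unchanged), then cut off near $\{x_1=0\}$ and pass to the limit; the boundary terms produced by the cutoff vanish because $\phi^{\mathrm{odd}}$ is $O(|x_1|)$ there while the relevant integrals are absolutely convergent.

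The main obstacle I anticipate is the limiting argument near $\{x_1=0\}$: one must show that cutting $\phi^{\mathrm{odd}}$ off in a shrinking neighbourhood of the boundary hyperplane produces an error that tends to zero. Because $\phi^{\mathrm{odd}}_t(x,v)=O(|x_1|)$ and $\nabla_{x,v}\phi^{\mathrm{odd}}$ is bounded, the dangerous term is $\nabla\phi^{\mathrm{odd}}\cdot\tilde b_t\,\beta(g_t)$ paired against $\nabla\chi_\delta$ (size $\sim 1/\delta$ on a slab of width $\delta$), together with the $v$-component of $\tilde b_t$ which is just $v$ — unbounded — so one needs the decay of $\phi^{\mathrm{odd}}$ to beat it; the vanishing of $\phi^{\mathrm{odd}}$ on $\{x_1=0\}$ gives a factor $|x_1|\le\delta$ on the slab, exactly compensating $|\nabla\chi_\delta|\sim1/\delta$, and then $v\mapsto v\,\beta(g_t)\in L^1_{\mathrm{loc}}$ together with $g_t\in L^1$ lets dominated convergence close the estimate. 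Getting this bookkeeping clean (and uniform in $\delta$) is the technical heart; everything else is the symmetry change of variables.
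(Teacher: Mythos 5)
The symmetrisation step at the heart of your proof does not work, and as a result the estimate you sketch cannot be carried out. You correctly observe that, for even $g_t$ and for $\tilde b_t$ satisfying $(\tilde b_t(x,v))'=\tilde b_t(x',v')$, the functional $\tilde A(\phi,\beta(g_t))$ appearing in \eqref{eq.defi_WS} is invariant under $\phi_t(x,v)\mapsto\bar\phi_t(x,v):=\phi_t(x',v')$. But this means $\tilde A(\phi,\beta(g_t))=\tilde A(\phi^{\mathrm{even}},\beta(g_t))$ with $\phi^{\mathrm{even}}=\tfrac12(\phi+\bar\phi)$, and $\tilde A(\phi^{\mathrm{odd}},\beta(g_t))=0$ \emph{identically} for any even $h$, whether or not $g_t$ solves the equation. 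So replacing $\phi$ by $\phi^{\mathrm{odd}}$ does not ``leave the identity unchanged'' --- it replaces a possibly nontrivial quantity by one that vanishes by pure symmetry, so no information about $g_t$ survives. The correct replacement is $\phi^{\mathrm{even}}$, which however does not vanish anywhere on $\{x_1=0\}$. Independently, the claim that $\phi^{\mathrm{odd}}$ ``vanishes identically on $\{x_1=0\}$'' is false: the involution $(x,v)\mapsto(x',v')$ fixes only $\{x_1=0,\,v_1=0\}$, so $\phi^{\mathrm{odd}}_t(0,\bar x,v_1,\bar v)=\tfrac12\big(\phi_t(0,\bar x,v_1,\bar v)-\phi_t(0,\bar x,-v_1,\bar v)\big)$ is generically nonzero, and one only has $\phi^{\mathrm{odd}}=O(|x_1|+|v_1|)$, not $O(|x_1|)$. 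Consequently the cutoff bookkeeping that closes your argument --- ``$\phi^{\mathrm{odd}}=O(|x_1|)$ compensates $|\nabla\chi_\delta|\sim1/\delta$ on the slab'' --- collapses: the correct candidate $\phi^{\mathrm{even}}$ gains no decay, and $\phi^{\mathrm{odd}}$ gains decay only towards the codimension-two set $\{x_1=0,v_1=0\}$, but $\tilde A(\phi^{\mathrm{odd}})$ is trivially zero anyway.

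The paper avoids symmetrisation entirely. It first establishes the quantitative bound $\tilde E\in L^\infty\big([0,T);L^p_{\mathrm{loc}}(\R^d)\big)$ for some $p\in\big(1,\tfrac{d}{d-1}\big)$ by a local Young-type inequality (this is the content of \eqref{eq.claimE}). Then, rather than excising a full $\delta$-slab around $\{x_1=0\}$, it multiplies $\phi$ by the product of two cutoffs $\xi^\eps_{xv}=\xi\big(\tfrac{|x_1|+|v_1|}{\eps}\big)$ and $\xi^\eps_{xt}=\xi\big(\tfrac{|x_1|+t}{\eps}\big)$, which excise tubular neighbourhoods of the two \emph{codimension-two} bad sets $\{x_1=0,v_1=0\}$ (i.e.\ $\gamma^0_T$) and $\{x_1=0,t=0\}$, matching exactly the support condition \eqref{eq.suppphi}. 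The error terms produced by $\nabla\xi^\eps$ are of size $1/\eps$ but supported on sets of measure $\eps^2$ in the relevant pairs of variables, and the term carrying $\tilde E$ is controlled by H\"older using the $L^p_{\mathrm{loc}}$ bound, giving an error of order $\eps^{(p-1)/p}\to0$. You would need to adopt this type of localised, quantitative cutoff estimate rather than the symmetrisation route.
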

\begin{proof}
We start by claiming that, if $\tilde E_t$ is defined as in \eqref{eq.B}, then 
\begin{equation}
\label{eq.claimE}
\tilde E \in L^\infty([0, T); L^p_{\rm loc}(\R^d))~~\textrm{for some } p > 1.  
\end{equation} 
Indeed, notice that $|\tilde E_t(x)| \leq C(|\tilde \rho_t|*|K|)(x)$, where $K = x/|x|^d$. Take any $p \in \left(1, \frac{d}{d-1}\right)$ so that $K \in L^p_{\rm loc}(\R^d)$. Use a local version of Young's inequality for convolutions in any ball $B_R(0)\subset \R^d$. That is, by means of Hölder inequality,
\begin{align*}
\|\tilde E_t\|_{L^p(B_R)}^p & \leq C\int_{B_R}\left(\int_{\R^d}|\tilde \rho_t(y)| |K(x-y)| dy \right)^pdx \\
& = C\int_{B_R}\left(\int_{\R^d} \big(|\tilde\rho_t(y)||K(x-y)|^p\big)^{\frac{1}{p}}|\tilde\rho_t(y)|^{\frac{p-1}{p}}dy\right)^p dx\\
& \leq C\int_{B_R}\left(\int_{\R^d} |\tilde\rho_t(y)||K(x-y)|^pdy\right) \left(\int_{\R^d} |\tilde\rho_t(y)|dy\right)^{p-1} dx\\ 
& = C \left(|\tilde \rho_t|(\R^d)\right)^{p-1} \int_{\R^d}|\tilde{\rho_t}(y)| \left(\int_{B_R} |K(x-y)|^p\,dx \right) dy\\
& \leq  C \left(|\tilde \rho_t|(\R^d)\right)^{p}\|K\|_{L^p(B_R)}^p  <  \infty,
\end{align*}
as we wanted. We are using also that $|\tilde \rho_t| \in L^\infty([0, T); L^1(\R^d))$.

Let us define, for $h \in L^\infty([0, T); L^\infty(\R^{2d}))$,  $\psi \in C^1_c([0,T)\times\R^{2d})$, and in the setting of Problem~B, \eqref{eq.B},
\begin{equation}
\label{eq.defopa2} \tilde A(\psi, h ) := \int_{ \R^{2d}} \psi_0(x, v) h(0, x, v)\,dx\,dv  + \int_0^T \int_{\R^{2d}} [\de_t \psi_t(x, v) + \nabla_{x, v} \psi_t(x, v) \cdot\tilde b_t(x, v) ] h(t, x, v)\, dt \,dx\,dv.
\end{equation}

Let $\xi \in C^\infty([0,\infty))$ be a monotone increasing function, with $\xi \equiv 1$ in $[1, \infty)$, $\xi \equiv 0$ in $[0, 1/2]$ and $\xi' \leq 3$ in $[1/2, 1]$. Let us define $\xi^\eps\in C^1(\R^2)$ as
\[
\xi^\eps(u, v) = \xi\left(\frac{|u|+|v|}{\eps}\right).
\]

Notice that, given any test function $\psi\in C^1_c([0, T)\times\R^{2d})$, then $\psi\, \xi^\eps(x_1, v_1)\xi^\eps(x_1, t)$ fulfils the condition \eqref{eq.suppphi} for any $\eps > 0$. Let $\xi^\eps_{xv} :=\xi^\eps(x_1, v_1)$ and $\xi^\eps_{xt} :=\xi^\eps(x_1, t)$. Our lemma is then equivalent to seeing that 
\begin{equation}
\label{eq.lemequiv}
\tilde A \Big(\psi\, \big(1-\xi^\eps_{xv}\xi^\eps_{xt}\big) , h\Big) = \tilde A(\psi, h) - \tilde A(\psi\, \xi^\eps_{xv}\xi^\eps_{xt}, h) \to 0 \textrm{ as } \eps \downarrow 0,
\end{equation}
for any $h \in L^\infty([0, T); L^\infty(\R^{2d}))$. Let us check it by taking limits term by term in the definition of $\tilde A$, \eqref{eq.defopa2}, and assuming ${\rm supp}\,\psi \subset [0, T)\times B_R \times B_R$ for $B_R \subset \R^d$. 

For the first term, notice that $\big(1-\xi^\eps_{xv}\xi^\eps_{xt}\big) = 0$ whenever $|x_1| \geq \eps$, and therefore
\begin{align*}
\int_{ \R^{2d}} \psi_0(x, v) \big(1-\xi^\eps_{xv}\xi^\eps_{x0}\big) h(0, x, v)\,dx\,dv \leq C \int_{B_R\cap \{|x_1|\leq \eps\}} dx = C\eps \to 0 \textrm{ as }\eps \downarrow 0,
\end{align*}
for some constant $C$ depending on the $L^\infty$ norm of $\psi_0$ and $h$, and the volume of $B_R$. 

For the term corresponding to $\de_t \psi_t$ we have 
\begin{align*}
\int_0^T & \int_{\R^{2d}} \de_t \big(\psi_t (1-\xi^\eps_{xv}\xi^\eps_{xt})\big)  h\,dt \, dx\, dv \leq \\
& \leq \int_0^T \int_{B_R}\int_{B_R\cap \{|x_1|\leq \eps\}} |\de_t (\psi_t) |h\,dt \, dx\, dv+\int_0^T \int_{\R^{2d}} \psi_t \xi^\eps_{xv}|\de_t \xi^\eps_{xt}| h\,dt \, dx\, dv = I_1 +{II}_1.
\end{align*}
The term $I_1$ can be treated as before, and $I_1 \to 0$ as $\eps \downarrow 0$. For the second term, notice that 
\[
|\de_t \xi^\eps_{xt}| = \frac{1}{\eps}\left|\xi'\left(\frac{|x_1|+t}{\eps}\right)\right| = 0 \textrm{ if } |x_1| > \eps \textrm{ or } t > \eps,
\]
and is bounded by $3/\eps$ otherwise. Therefore,
\[
II_1 \leq \frac{C}{\eps} \int_0^\eps  \int_{B_R\cap \{|x_1|\leq \eps\}} dt dx \leq C\eps \to 0 \textrm{ as }\eps \downarrow 0.
\]

We now check the terms corresponding to $v_1\de_{x_1}\psi_t$ and $\tilde E_{t, 1} \de_{v_1} \psi_t$. The rest of the terms follow in a similar manner. 
\begin{align*}
\int_0^T & \int_{\R^{2d}} v_1 \de_{x_1} \big(\psi_t (1-\xi^\eps_{xv}\xi^\eps_{xt})\big)  h\,dt \, dx\, dv \leq \\
& \leq \int_0^T \int_{B_R}\int_{B_R\cap \{|x_1|\leq \eps\}} v_1 |\de_{x_1} (\psi_t) |h\,dt \, dx\, dv\\
&~~~~~~~~+\int_0^T \int_{\R^{2d}} v_1 \psi_t \xi^\eps_{xv}|\de_{x_1} \xi^\eps_{xt}| h\,dt \, dx\, dv+\int_0^T \int_{\R^{2d}} v_1 \psi_t |\de_{x_1} \xi^\eps_{xv}|\xi^\eps_{xt} h\,dt \, dx\, dv.
\end{align*}
Proceeding as before, the three terms go to 0 as $\eps \downarrow 0$. 

Finally, let 
\begin{align*}
\int_0^T & \int_{\R^{2d}} \tilde E_{t, 1} \de_{v_1} \big(\psi_t (1-\xi^\eps_{xv}\xi^\eps_{xt})\big)  h\,dt \, dx\, dv \leq \\
& \leq \int_0^T \int_{B_R}\int_{B_R\cap \{|x_1|\leq \eps\}} |\tilde E_{t, 1}|\cdot |\de_{v_1} (\psi_t) |h\,dt \, dx\, dv +\int_0^T \int_{\R^{2d}} |\tilde E_{t, 1}|\cdot \psi_t\cdot|\de_{v_1}\xi^\eps_{xv}| \xi^\eps_{xt} h\,dt \, dx\, dv\\
& = I_2 +II_2.
\end{align*}

We have that, by Hölder's inequality and thanks to the initial claim \eqref{eq.claimE},
\[
I_2  \leq C \int_0^T \int_{B_R\cap \{|x_1|\leq \eps\}} |\tilde E_{t, 1}|(x)\,dt\,dx  \leq  C\|\tilde E_{t, 1}\|_{L^p((0, T)\times B_R)} \cdot \left(T\mathscr{L}^d \left( B_R\cap \{|x_1|\leq \eps\} \right) \right)^{\frac{p-1}{p}} = C\eps^{\frac{p-1}{p}}.
\]
Therefore, $I_2\to 0 $ as $\eps \downarrow 0 $. For the second term, 
\begin{align*}
II_2&  \leq \frac{C}{\eps} \int_0^T \int_{B_R\cap \{|x_1|\leq \eps\}} \int_{B_R\cap \{|v_1|\leq \eps\}}  |\tilde E_{t, 1}|(x) dt dv dx \\
& \leq \mathscr{L}^d \left( B_R\cap \{|v_1|\leq \eps\} \right) \frac{C}{\eps} \int_0^T \int_{B_R\cap \{|x_1|\leq \eps\}} |\tilde E_{t, 1}|(x) dt dv dx \\
& \leq  C\|\tilde E_{t, 1}\|_{L^p((0, T)\times B_R)} \cdot \left(T\mathscr{L}^d \left( B_R\cap \{|x_1|\leq \eps\} \right) \right)^{\frac{p-1}{p}} = C\eps^{\frac{p-1}{p}} \to 0 \textrm{ as }\eps \downarrow 0,
\end{align*}
so that \eqref{eq.lemequiv} follows and the result is proved. 
\end{proof}
\begin{rem}
\label{rem.moregen}
Notice that the previous lemma holds true for more general vector fields, not necessary built as an electric field. That is, we only require $E\in L^\infty((0, T); L^p_x(\R^d))$ for some $p> 1$. Moreover, the condition can actually be reduced to asking 
\[
\eps^{-1} \int_0^T \int_{x\in D^1_\eps}\int_{v\in D^2_\eps} |E_t(x, v)| \, dt\,dx\,dv \to 0,\quad \textrm{ as } \eps \to 0,
\]
where $D^1_\eps$ and $D^2_\eps$ are domains of volume $\eps$. Other fields, like $E\in L^\infty((0, T); L^q_{x, v}(\R^{2d}))$ for $q > 2$ or linear combinations of the previous ones, also work.
\end{rem}

\begin{prop}
\label{prop.AimpB}
Let $f_t\in L^\infty((0, T); L^1(\R^d_+\times \R^{d}))$ be a renormalized solution of Problem A according to Definition~\ref{defi.HS_Om} starting from $f_0$. Let $g_t$ be the even extension with respect to $(x_1, v_1)$ of $f_t$ to the whole space. That is, 
\begin{equation}
\label{eq.evenext}
\left\{ \begin{array}{ll}
 g_t(x, v) = f_t(x, v) & \textrm{ in } (0,T)\times\overline{\R^d_+}\times \R^d \\
g_t(x, v) = f_t(x', v') & \textrm{ in } (0,T)\times\R^d_-\times \R^d.
  \end{array}\right.
\end{equation}
Then $g_t$ is a renormalized solution of Problem B according to Definition~\ref{defi.WS}.
\end{prop}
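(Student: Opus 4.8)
The plan is to transfer the weak formulation of Problem~A to the whole space via symmetrization, the crucial point being that the surface term in Definition~\ref{defi.HS_Om} — which encodes the specular reflection — is annihilated by test functions that are even in $(x_1,v_1)$. First I would record the structural facts. The even extension $g_t$ in \eqref{eq.evenext} is even in $(x_1,v_1)$ and lies in $L^\infty((0,T);L^1(\R^{2d}))$ with $\|g_t\|_{L^1}=2\|f_t\|_{L^1}$. By construction the density $\tilde\rho_t$ of Problem~B is the odd reflection of $\rho_t=\int f_t\,dv$ across $\{x_1=0\}$, so the image-charge identity yields $\tilde E_t=E_t$ on $\R^d_+$, and the two formulas defining $\tilde E_t$ in \eqref{eq.B} give $(\tilde E_t(x))'=\tilde E_t(x')$ on all of $\R^d$; hence $\tilde b_t(x,v)=(v,\tilde E_t(x))$ coincides with $b_t$ on $\R^d_+\times\R^d$ and is equivariant under the involution $(x,v)\mapsto(x',v')$. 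The estimate \eqref{eq.claimE} gives $\tilde E\in L^\infty((0,T);L^p_{\rm loc})$ for some $p>1$, so $\tilde b\in L^1_{\rm loc}$ and Definition~\ref{defi.WS} applies; and by Lemma~\ref{lem.testfunctions} it suffices to verify \eqref{eq.defi_WS} for all $\beta\in C^1\cap L^\infty(\R)$ and all $\phi\in C^1_c([0,T)\times\R^{2d})$ whose support satisfies \eqref{eq.suppphi}.

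Fixing such $\beta$ and $\phi$ and writing $L(\phi)$ for the left-hand side of \eqref{eq.defi_WS}, I would decompose $\phi=\phi^e+\phi^o$ into its even and odd parts in $(x_1,v_1)$; both remain in $C^1_c$ and their supports still satisfy \eqref{eq.suppphi}, because the excluded set $(0\times\de\R^d_+\times\R^d)\cup\gamma^0_T$ is invariant under $(x,v)\mapsto(x',v')$. Applying the change of variables $(x,v)\mapsto(x',v')$ (Jacobian one) to $L(\phi^o)$ and using the evenness of $g_t$, the equivariance of $\tilde b_t$, and the corresponding sign change of $\nabla_{x,v}\phi^o$, every term reverses sign, so $L(\phi^o)=0$. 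Performing the same change of variables only on the region $\{x_1<0\}$ inside $L(\phi^e)$, and invoking $g_t=f_t$ and $\tilde b_t=b_t$ on $\{x_1>0\}$ together with the evenness of $\phi^e$, I would obtain $L(\phi)=L(\phi^e)=2\,L^+(\phi^e)$, where $L^+(\phi^e)$ is the analogous functional with the spatial integrals restricted to $\R^d_+$ and with $(f_t,b_t)$ in place of $(g_t,\tilde b_t)$.

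Then I would feed $\phi^e$ into the renormalized formulation \eqref{eq.defi_HS_Om} of Problem~A (its support avoids the excluded sets, so this is legitimate): there exists $f^+_{\beta,t}\in L^\infty_{\rm loc}(\gamma^+_T)$ with $L^+(\phi^e)+\int_{\gamma^+_T}v\cdot n(x)\big(\phi^e_t(x,v)-\phi^e_t(x,R_xv)\big)f^+_{\beta,t}\,dt\,d\sigma_x\,dv=0$. On $\gamma^+_T$ one has $x\in\de\R^d_+$, hence $x'=x$ and $n(x)=e_1$, so $R_xv=v'$; the evenness of $\phi^e$ then gives $\phi^e_t(x,v)=\phi^e_t(x',v')=\phi^e_t(x,v')=\phi^e_t(x,R_xv)$, and the surface integral vanishes. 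Therefore $L^+(\phi^e)=0$, so $L(\phi)=0$; since $\beta$ and $\phi$ were arbitrary (subject to \eqref{eq.suppphi}), Lemma~\ref{lem.testfunctions} lets me conclude that $g_t$ is a renormalized solution of Problem~B in the sense of Definition~\ref{defi.WS}.

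The main obstacle — and the conceptual core of the argument — is the handling of the boundary term in the last step: the weak formulation of Problem~A carries a surface integral over $\gamma^+_T$ recording the specular reflection, and the entire reduction succeeds precisely because this term is cancelled by even test functions, which is the analytic manifestation of the reflection symmetry. The reduction to test functions supported away from $\{t=0,\,x_1=0\}$ and from the tangential set $\gamma^0_T$ provided by Lemma~\ref{lem.testfunctions} is equally indispensable, since a generic $\phi\in C^1_c([0,T)\times\R^{2d})$ is not admissible in \eqref{eq.defi_HS_Om}. One should also not overlook the (classical but genuinely needed) verification that $\tilde E_t$ restricted to $\R^d_+$ equals the half-space field $E_t$, where the odd reflection is exactly what enforces the zero Dirichlet condition; the explicit form of the half-space Green function, and hence $d\ge 3$, enters here.
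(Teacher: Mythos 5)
Your proof is correct and follows essentially the same route as the paper: reduce \eqref{eq.defi_WS} to the half-space formulation via the evenness of $g_t$ and the equivariance $(\tilde E_t(x))'=\tilde E_t(x')$, then use the specular condition in Definition~\ref{defi.HS_Om} to kill the boundary term. The only difference is cosmetic: you decompose $\phi=\phi^e+\phi^o$ into even and odd parts (so $L(\phi^o)=0$ by symmetry and the $\gamma^+_T$-integral for $\phi^e$ vanishes pointwise because $\phi^e(x,v)=\phi^e(x,R_xv)$), whereas the paper splits $\phi=\varphi^+ +\varphi^-$ by half-space, reflects $\varphi^-$, and cancels the two boundary terms upon adding the two instances of \eqref{eq.defi_HS_Om}; since $\varphi^+ + \overline{\varphi}^- = 2\phi^e\big|_{\R^d_+}$, the two presentations are the same computation.
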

\begin{proof}
First of all, for the sake of readability we will assume that $f_t \in L^\infty((0, T); L^\infty(\R^d_+\times \R^{d}))$. Thus, instead of considering $\beta(f_t)$ we can consider directly $f_t$ (analogously with $g_t$). 

Let us define $\tilde b_t(x, v) = (v, \tilde E(x))$ from $g_t$ as in Problem B, \eqref{eq.B}. We similarly define $b_t(x, v) = (v, E(x))$ from $f_t$ as in Problem~A, \eqref{eq.A}. Notice that by definition of $g_t$, we have 
\begin{equation}
\label{eq.Eproperty}
\left. \begin{array}{ll}
 \tilde E_t(x) = E_t(x) & \textrm{ in } (0,T)\times\overline{\R^d_+} \\
(\tilde E_t(x))' =  E_t(x') & \textrm{ in } (0,T)\times\R^d_-.
  \end{array}\right.
\end{equation}
That is, $\tilde E_t$ is just an extension to the whole $\R^d$ of $E_t$ fulfilling the symmetry property $(\tilde E_t(x))' =  \tilde E_t(x')$.

Let us also define, for $h_1 \in L^\infty([0, T); L^\infty(\R^d_+\times \R^{d}))$,  $\varphi \in \mathcal{T}_T$, and in the setting of Problem~A,
\begin{align}
\nonumber A(\varphi, h_1 ) :=& \int_{ \R^d_+\times \R^{d}} \varphi_0(x, v) h_1(0, x, v)\,dx\,dv\\
\label{eq.defopa}&  + \int_0^T \int_{\R^{d}_+\times\R^{d}} [\de_t \varphi_t(x, v) + \nabla_{x, v} \varphi_t(x, v) \cdot b_t(x, v) ] h_1(t, x, v)\, dt \,dx\,dv.
\end{align}
Similarly, for $h_2 \in L^\infty([0, T); L^\infty(\R^{2d}))$,  $\psi \in C^1_c([0,T)\times\R^{2d})$, we define $\tilde A(\psi, h_2)$ as in \eqref{eq.defopa2}. 

Now fix any $\phi \in C^1_c([0,T)\times\R^{2d})$. Thanks to Lemma~\ref{lem.testfunctions} we can assume that ${\rm supp}~ \phi \Subset \{[0, \infty) \times \R^d\times\R^d \}\setminus\{( 0 \times \de\R^d_+\times \R^d )\cup \gamma^0 \}$.  We want to check $\tilde A(\phi, g_t) = 0$.

Let $\varphi^+ = \phi 1_{\{x_1 \geq 0\}}$, and $\varphi^- = \phi 1_{\{x_1 \leq 0\}}$. Consider $\overline{ \varphi}^-$ defined as $\overline{ \varphi}^-(t, x, v) = \varphi^-(t, x', v')$ in $[0, T]\times\R^{2d}$. Thus, $\varphi^+$ and $\overline{\varphi}^-$ can be thought as elements of $\mathcal{T}_T$. Now, since $f_t$ is a renormalized solution of Problem~A, we know from Definition~\ref{defi.HS_Om} that there exists $f_t^+\in L^\infty(\gamma_T^+)$ such that
\[
A(\varphi^+, f_t) + \int_{\gamma^+_T} \left( \varphi_t^+( x, v)-  \varphi_t^+( x, v')\right) f^+_{t}(x, v) v_1 dt\,d\sigma_x\,dv = 0.
\]
Similarly for $\overline{\varphi}^-$, 
\[
A(\overline{\varphi}^-, f_t) + \int_{\gamma^+_T} \left( \overline{\varphi}^-_t( x, v)-  \overline{\varphi}^-_t( x, v')\right) f^+_{t}(x, v) v_1 dt\,d\sigma_x\,dv = 0.
\]

Now notice that on $\gamma_T^+$, $\overline{\varphi}^-_t( x, v) = \varphi^-_t( x, v') = \varphi^+_t( x, v')$, so that adding the previous two equalities we obtain 
\[
A(\varphi^+, f_t) +A(\overline{\varphi}^-, f_t) = 0.
\]

On the other hand, 
\begin{align*}
\int_0^T & \int_{\R^{d}_-\times\R^{d}} [\de_t \phi_t(x, v) + v\cdot \nabla_{x} \phi_t(x, v) + \tilde E_t(x)\cdot \nabla_v \phi_t(x, v)  ] g_t( x, v)\, dt \,dx\,dv = \\
& = \int_0^T  \int_{\R^{d}_+\times\R^{d}} [\de_t \phi_t(y', w') + w' \cdot (\nabla_{x} \phi_t)(y', w') + \tilde E_t(y')\cdot (\nabla_v \phi_t)(y', w')  ] g_t( y', w')\, dt \,dy\,dw\\
& = \int_0^T  \int_{\R^{d}_+\times\R^{d}} [\de_t \overline{\varphi}_t^-(x, v) + v \cdot \nabla_{x} \overline{\varphi}_t^-(x, v) + E_t(x)\cdot \nabla_v \overline{\varphi}_t^-(x, v)  ] g_t( x, v)\, dt \,dx\,dv \\ & = A(\overline{\varphi}_t^-, f_t).
\end{align*}
We have used here the symmetry property on $\tilde E$,  \eqref{eq.Eproperty}, and \eqref{eq.evenext}. Thus, recalling the definition of $\tilde A$, \eqref{eq.defopa2}, we get
\[
\tilde A(\phi, g_t) =  A(\varphi^+, f_t)+ A(\overline{\varphi}_t^-, f_t) = 0, 
\]
as we wanted.
\end{proof}

We also want the converse result. Before proving it, let us state the following lemma, regarding the existence of a trace for the solution to Problem~B. 

\begin{lem}
\label{lem.trace}
Let $g_t\in L^\infty((0, T); L^\infty(\R^{2d}))$ be a distributional solution of Problem B starting from $g_0$, (in particular, even with respect to $(x_1, v_1)$). Then $g_t$ admits a unique trace function $\Gamma(g_t) \in  L^\infty(\gamma_T)$ in the sense of the Green formula,
\begin{equation}
\label{eq.check0} 
\begin{split}
& \int_{ \R^d_+\times \R^{d}} \varphi_0(x, v) g_0(x, v)\,dx\,dv + \int_0^T \int_{\R^{d}_+\times\R^{d}} [\de_t \varphi_t(x, v) + \nabla_{x, v} \varphi_t(x, v) \cdot \tilde b_t(x, v) ] g_t( x, v)\, dt \,dx\,dv\\
 & ~~~~~~~~~~~~~~~~~~~~~~~~~~~~~~~~~~~~~~~~~~~~~~~~~~~~~~~~~~~~~~~~~~~+ \int_{\gamma_T} v_1 \varphi_t( x, v) \Gamma(g_t)(x, v) dt\,d\sigma_x\,dv = 0,
 \end{split}
\end{equation}
for every test function $\varphi\in C^1_c([0, T)\times \overline{\R^d_+}\times\R^d)$.
\end{lem}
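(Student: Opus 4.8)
The plan is to recover the trace $\Gamma(g_t)$ on $\gamma_T = (0,T)\times\partial\R^d_+\times\R^d$ by testing the equation against functions that localize near $\{x_1 = 0\}$, exploiting the transport structure in the $x_1$-direction. First I would observe that since $g_t \in L^\infty((0,T);L^\infty(\R^{2d}))$ is a distributional solution of Problem B, the vector field $\tilde b_t = (v, \tilde E_t)$ satisfies $\tilde E_t \in L^\infty((0,T);L^p_{\rm loc}(\R^d))$ for some $p > 1$ by the computation \eqref{eq.claimE} in Lemma~\ref{lem.testfunctions}; moreover $\tilde b_t$ is divergence-free. The key point is that the $x_1$-component of $\tilde b_t$ is simply $v_1$, which is smooth and, away from $\gamma^0_T$ (i.e.\ for $|v_1|$ bounded below), is transverse to the hyperplane $\{x_1=0\}$. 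So the relevant transport direction does not degenerate, and one can mimic the standard DiPerna--Lions / Ambrosio trace theory for transport equations: a bounded distributional solution of a transport equation with a vector field that is transverse to a hypersurface admits a strong normal trace on that hypersurface.

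Concretely, I would proceed as follows. Fix $\delta > 0$ and work on the region $\{|v_1| > \delta\}$. For a test function $\varphi \in C^1_c([0,T)\times\overline{\R^d_+}\times\R^d)$ supported in $\{|v_1|>\delta\}$, consider the family $\varphi(t,x,v)\,\chi_\eta(x_1)$ where $\chi_\eta$ is a cutoff equal to $1$ on $[\eta,\infty)$, $0$ on $[0,\eta/2]$, with $|\chi_\eta'|\le C/\eta$. Plugging $\varphi\,\chi_\eta$ into the distributional formulation of Problem B on $\R^d_+$ (which holds because $g_t$ is even, so its restriction to $\R^d_+$ solves the one-sided equation with the right boundary contribution), the only new term is $\int_0^T\!\!\int v_1\,\varphi\,\chi_\eta'(x_1)\,g_t\,dt\,dx\,dv$. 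Since $\tilde E_t\in L^p_{\rm loc}$ and $v\cdot\nabla_x\varphi + \tilde E_t\cdot\nabla_v\varphi$, together with $\partial_t\varphi$ and $g_t$, are all in $L^1$ on the support, and $g_t\in L^\infty$, the terms not involving $\chi_\eta'$ converge to their obvious limits as $\eta\downarrow 0$ by dominated convergence. Hence $-\int_0^T\!\!\int_{\R^d_+\times\R^d} v_1\,\varphi\,\chi_\eta'(x_1)\,g_t\,dt\,dx\,dv$ converges to a limit $L(\varphi)$ which is a bounded linear functional of $\varphi$. One then shows $L(\varphi) = -\int_{\gamma_T}v_1\,\varphi\,\Gamma(g_t)\,dt\,d\sigma_x\,dv$ for a function $\Gamma(g_t)\in L^\infty(\{|v_1|>\delta\})$: boundedness of $\Gamma(g_t)$ by $\|g_t\|_{L^\infty}$ follows because $\chi_\eta'(x_1)\,dx_1$ is a probability-like measure concentrating at $x_1=0$, so $|\int v_1\varphi\chi_\eta' g_t| \le \|g_t\|_\infty \sup_{x_1\in[0,\eta]}\int |v_1\varphi(t,x,v)|\,dt\,dx'\,dv + o(1)$, and the normal-trace character (i.e.\ that it is a measure on $\{x_1=0\}$ of the stated form) follows from a standard argument identifying the weak-$*$ limit. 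Letting $\delta\downarrow 0$ and using that $\gamma^0_T$ is $\mathscr{L}$-null in $v$ for each fixed $x$ patches these together into a global $\Gamma(g_t)\in L^\infty(\gamma_T)$, and \eqref{eq.check0} holds for all $\varphi\in C^1_c([0,T)\times\overline{\R^d_+}\times\R^d)$ after removing the support restriction by the cutoff trick of Remark~\ref{rem.moregen}/Lemma~\ref{lem.testfunctions}. Uniqueness is immediate: if $\Gamma_1,\Gamma_2$ both satisfy \eqref{eq.check0}, subtraction gives $\int_{\gamma_T}v_1\varphi(\Gamma_1-\Gamma_2) = 0$ for all such $\varphi$, and since $v_1\ne 0$ off $\gamma^0_T$ and test functions are dense, $\Gamma_1 = \Gamma_2$ a.e.

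The main obstacle I anticipate is the rigorous justification that the weak-$*$ limit of the measures $v_1\,g_t\,\chi_\eta'(x_1)\,\mathscr{L}$ is absolutely continuous with respect to the surface measure on $\gamma_T$ with an $L^\infty$ density, rather than merely a bounded measure. This is exactly where transversality of the $x_1$-flow is used: along characteristics with $v_1$ bounded away from zero, $g_t$ is (formally) constant, so the one-sided limit $\lim_{x_1\downarrow 0} g_t(t,x,v)$ exists in a suitable averaged sense; making this precise requires either a commutator/renormalization estimate near the boundary or a direct approximation argument showing the family $x_1\mapsto \fint_0^{x_1} g_t(t,y_1,x',v)\,dy_1$ has a Lebesgue limit for a.e.\ $(t,x',v)$. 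An alternative, cleaner route — and the one I would actually pursue — is to invoke the abstract trace results already available for transport equations with $L^1_{\rm loc}$ vector fields whose component normal to the hypersurface is Lipschitz and non-degenerate (as in the DiPerna--Lions theory, cf.\ the references in the introduction), applied in the slab $\{0 < x_1 < 1,\ |v_1|>\delta\}$, which directly yields the strong trace with the $L^\infty$ bound; then only the gluing over $\delta$ and the removal of the support restriction remain, both of which are the routine arguments sketched above.
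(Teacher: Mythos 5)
Your overall strategy matches the paper's: localize near $\{x_1=0\}$ with a cutoff, pass to the limit to isolate a boundary functional, restrict to $|v_1|>\delta$ to stay away from $\gamma^0_T$, and finally remove that restriction. You also correctly identify the genuine crux: the limiting boundary functional is easily seen to be a bounded \emph{measure}, but one must show it has an $L^\infty$ density with respect to $d\sigma_x\,dv\,dt$. This is exactly where your proposal stops short.

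Your two suggested resolutions are both incomplete. The ``abstract trace theorem'' route (b) is not an off-the-shelf invocation: the usual trace theorems for transport equations require BV regularity of the solution, higher integrability of the drift, or entropy/kinetic structure; for a merely bounded distributional solution with an $L^1_{\rm loc}$ drift, the existence of a strong $L^\infty$ normal trace is precisely what must be established here, so invoking ``the DiPerna--Lions theory'' risks being circular. Your route (a) — showing $x_1\mapsto \fint_0^{x_1}g_t\,dy_1$ has a Lebesgue limit — is too strong and not what is needed (and harder than what the paper proves). What the paper does instead, and what you should supply, is the following test-function trick: for a test function $\rho=\rho(t,\bar x,v)$ supported in $\{v_1\neq 0\}$, define the $x_1$-averaged quantity $G_\rho(x_1)=\int_0^T\!\int\rho\,g_t\,dt\,d\bar x\,dv$ and show $G_\rho\in W^{1,1}_{\rm loc}(\R)$. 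The key computation: plug $\tilde\rho = \rho_{x_1}\,\rho/v_1$ into the distributional formulation of Problem B (this is where the non-vanishing of $v_1$ and the transversality of the flow are used \emph{quantitatively}), which expresses $\int\partial_{x_1}\rho_{x_1}\,G_\rho\,dx_1$ as $-\int\rho_{x_1}\bigl[\int_0^T\!\int (g_t/v_1)(\partial_t\rho+\bar v\cdot\nabla_{\bar x}\rho+\tilde E_t\cdot\nabla_v\rho)\,dt\,d\bar x\,dv\bigr]dx_1$, with the bracketed integrand in $L^1_{\rm loc}$ because $g_t/v_1\in L^\infty$ on $\operatorname{supp}\rho$ and $\tilde E\in L^p_{\rm loc}$. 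Hence $G_\rho$ has a continuous representative $\tilde G_\rho$, and $\rho\mapsto\tilde G_\rho(0)$ is a bounded linear functional on $L^1(\gamma_T^+\cup\gamma_T^-)$ with $|\tilde G_\rho(0)|\le\|g_t\|_{L^\infty}\|\rho\|_{L^1}$, which represents $\Gamma(g_t)\in L^\infty$ by $(L^1)^*\cong L^\infty$. Without some such concrete mechanism, your sketch only delivers a bounded boundary measure, not the $L^\infty$ trace claimed in the lemma.
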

\begin{proof}
Notice that we can rewrite the expression \eqref{eq.check0} using the operator $A$ defined in \eqref{eq.defopa} with $\tilde b$ restricted to $(0, T)\times\R^d_+\times\R^d$. Thus, we have to prove that there is some $\Gamma(g_t) \in L^\infty(\gamma_T)$ such that
\begin{equation}
\label{eq.true}
A(\varphi_t, g_t) + \int_{\gamma_T} v_1 \varphi_t( x, v) \Gamma(g_t)(x, v) dt\,d\sigma_x\,dv = 0,
\end{equation}
for any test function $\varphi\in C^1_c([0, T)\times \overline{\R^d_+}\times\R^d)$. One can see, as in Lemma~\ref{lem.testfunctions}, that it is enough to consider $\varphi\in \mathcal{T}_T$ (defined in Definition~\ref{defi.spaces_Om}).

Thus, given any test function $\varphi\in \mathcal{T}_T$, let us define now for any $\varepsilon > 0$, a test function in the whole space as
\begin{equation}
\label{eq.testeps}
\varphi_t^\varepsilon(x, v) = \left\{ \begin{array}{ll}
 \varphi_t(x, v) & \textrm{ in } (0,T)\times\overline{\R^d_+}\times \R^d \\
\left(1+\frac{x_1}{\varepsilon}\right)\,\varphi_t((0, x_2, \dots, x_d), v) & \textrm{ in } (0,T)\times[-\varepsilon, 0)\times\R^{d-1}\times \R^d\\
0 & \textrm{ in } (0,T)\times(-\infty, -\varepsilon)\times\R^{d-1}\times \R^d.
  \end{array}\right.
\end{equation}
Notice that $\varphi^\varepsilon_t$ being Lipschitz and compactly supported can be used as a test function for Problem B. That is,
\begin{align}
\nonumber 0 & =  \tilde A(\varphi^\varepsilon, g_t) = A(\varphi, g_t) + 
\int_{ D_\varepsilon} \varphi^\varepsilon_0(x, v) g_0( x, v)\,dx\,dv\\
&\label{eq.bimpa1}~~~~ + \int_0^T \int_{D_\varepsilon} [\de_t \varphi^\varepsilon_t(x, v) + \nabla_{x, v} \varphi^\varepsilon_t(x, v) \cdot\tilde b_t(x, v) ] g_t( x, v)\, dt \,dx\,dv,
\end{align}
where $D_\varepsilon := \{(x, v) \in \R^{2d} : -\varepsilon \leq x_1 \leq 0 \}$. We are using here again the notation introduced in the proofs of Lemma~\ref{lem.testfunctions} and Proposition~\ref{prop.AimpB}, \eqref{eq.defopa2}-\eqref{eq.defopa}.

Now letting $\varepsilon \downarrow 0$ and using that $g_t\in L^\infty$, and $\tilde E \in L^p_{\rm loc}$ for $p = \frac{d}{d-1/2}$ (see \eqref{eq.claimE} in Lemma~\ref{lem.testfunctions}), we get that, for \eqref{eq.true} to be true we must have 
\begin{equation}
\label{eq.bimpa2}
\lim_{\varepsilon\downarrow 0} \frac{1}{\varepsilon}\int_0^T\int_{D_\varepsilon} v_1  \varphi_t((0, x_2, \dots, x_d), v) g_t(x, v) \, dt \,dx\,dv   = \int_{\gamma^+_T\cup\gamma^-_T} v_1\varphi_t(x, v) \Gamma(g_t)(x, v) \, dt \,d\sigma_x\,dv,
\end{equation}
for some $\Gamma(g_t) \in L^\infty(\gamma_T^+\cup\gamma_T^-)$.

In order to do so, define, for any test function $\rho = \rho(t, x_2, \dots, x_d, v) \in C^1_c((0, T)\times \R^{d-1}\times \R^d)$ compactly supported in $\{v_1 \neq 0\}$, 
\[
G_\rho(x_1) = \int_0^T \int_{\R^{d}}\int_{\R^{d-1}} \rho(t, x_2,\dots,x_d, v) g_t(x, v) dx_2\dots dx_d \, dv\, dt,\quad\quad  \textrm{a.e. in }\R.
\]
We claim that $G_\rho$ has a continuous representative and, in particular, there exists a function $\Gamma(g_t) \in L^\infty(\gamma_T^+\cup\gamma_T^-)$ satisfying \eqref{eq.bimpa2}. 

Indeed, it is enough to check for $\rho$ of the form $\rho = \rho_t(t) \rho_{\bar x} (x_2,\dots,x_d) \rho_{v_1}(v_1) \rho_{\bar v}(v_2,\dots,v_d)$ with $\rho_t(0) = \rho_{v_1}(0) = 0 $, where we denote $\bar x = (x_2,\dots,x_d)$ and $\bar v = (v_2,\dots,v_d)$. Let $\rho_{x_1}(x_1)\in C^1_c(\R)$ be a test function for $G_\rho$, and compute 
\[
\int_\R \de_{x_1}\rho_{x_1} G_\rho dx_1.
\]
By using that $g_t$ is a solution for Problem~B with $\tilde\rho = \rho_t\rho_{x_1}\rho_{\bar x} \tilde\rho_{v_1}\rho_{\bar v}$ as test function, where $\tilde\rho_{v_1} = \rho_{v_1}/v_1$, and we get
\begin{align*}
\int_\R \de_{x_1}\rho_{x_1} G_\rho dx_1 & = \int_{0}^T\int_{\R^{2d}} v_1g_t(x, v) \de_{x_1}\tilde \rho\,dt\,dx\,dv\\
& = - \int_{\R} \rho_{x_1} \left\{\int_0^T\int_{\R^{2d-1}} \frac{g_t}{v_1} \left(\de_t \rho + \bar v\cdot  \nabla_{\bar x} \rho + \tilde E_t \cdot \nabla_v \rho\right)dt\,d\bar x\,dv\right\}\,dx_1,
\end{align*}
where we are using that $\tilde \rho = \rho_{x_1}\rho /v_1$.
Notice that 
\[
\left\{\int_0^T\int_{\R^{2d-1}} \frac{g_t}{v_1} \left(\de_t \rho + \bar v\cdot  \nabla_{\bar x} \rho + \tilde E_t \cdot \nabla_v \rho\right)dt\,d\bar x\,dv\right\}(x_1) \in L^1_{\rm loc}(\R),
\]
because $g_t / v_1  \in L^\infty({\rm supp}\,\rho)$ and $\tilde E\in L^p_{\rm loc}$. Thus, $G_\rho\in W^{1,1}_{\rm loc}$, and in particular, it is continuous almost everywhere. Let $\tilde G_\rho$ be the continuous representative.  One can then define the linear operator $\mathcal{L}_\Gamma(g_t) : L^1(\gamma_T^+\cup\gamma_T^-) \to \R$ via $\mathcal{L}_\Gamma(g_t)(\rho) = \tilde G_\rho(0)$, and noticing that $|\tilde G_\rho(0)| \leq \|g_t\|_{L^\infty}\|\rho\|_{L^1}$ we get that $\mathcal{L}_\Gamma(g_t)\in \big(L^1(\gamma_T^+\cup\gamma_T^-)\big)^*\cong L^\infty(\gamma_T^+\cup\gamma_T^-)$. This implies that, for some $\Gamma(g_t) \in L^\infty(\gamma_T^+\cup\gamma_T^-)$, \eqref{eq.bimpa2} holds by taking $\rho = v_1 \varphi$ as the test function.
\end{proof}

\begin{prop}
\label{prop.BimpA}
Let $g_t\in L^\infty((0, T); L^1(\R^{2d}))$ be a renormalized solution of Problem B according to Definition~\ref{defi.WS} starting from $g_0\in L^1(\R^{2d})$. Let $f_t$ be the restriction of $g_t$ to $[0, T)\times\overline{\R^d_+}\times\R^{d}$.  

Then, for every $\beta \in C^1\cap L^\infty(\R)$ there exists $f^+_{\beta,t}\in L^\infty(\gamma_T^+)$, such that $\beta(f_t)$ and $f_{\beta, t}^+$ are a distributional solution to Problem A; i.e., $f_t$ is a renormalized solution according to Definition~\ref{defi.HS_Om}.
\end{prop}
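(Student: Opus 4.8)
\medskip

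The plan is to reverse the argument of Proposition~\ref{prop.AimpB}, using the trace $\Gamma(g_t)$ supplied by Lemma~\ref{lem.trace} as the candidate for $f^+_{\beta,t}$. I reduce, as in that proof, to the case $g_t\in L^\infty((0,T);L^\infty(\R^{2d}))$ by applying the renormalization $\beta$ (the general renormalized case then follows by replacing $g_t$ with $\beta(g_t)$, which is again an even renormalized solution of the transport equation with the same vector field, and applying Lemma~\ref{lem.trace} to $\beta(g_t)$ viewed as a distributional solution of the transport part). So fix $\beta$ and work with $h_t := \beta(g_t)$, even with respect to $(x_1,v_1)$; let $\Gamma(h_t)\in L^\infty(\gamma_T^+\cup\gamma_T^-)$ be its trace on the boundary of the half-space, which exists by Lemma~\ref{lem.trace}. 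Define $f^+_{\beta,t}:=\Gamma(h_t)|_{\gamma_T^+}$.

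\medskip

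Next I verify the renormalized identity in the form of Remark~\ref{rem.defi.HS}. Take any test function $\varphi\in\mathcal T_T$; I must show
\[
A(\varphi,h_t)+\int_{\gamma^+_T\cup\gamma^-_T} v_1\,\varphi_t(x,v)\,\Gamma(h_t)(x,v)\,dt\,d\sigma_x\,dv=0,
\]
together with the reflection relation $\int_{\gamma^-_T} v_1\varphi_t(x,v)\big(\Gamma(h_t)(x,v)-\Gamma(h_t)(x,R_xv)\big)\,dt\,d\sigma_x\,dv=0$. The first identity is exactly \eqref{eq.check0} of Lemma~\ref{lem.trace} restricted to test functions in $\mathcal T_T$ (which suffices, by the same cutoff argument used in Lemma~\ref{lem.testfunctions}), so it holds for free. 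For the reflection relation, I exploit the evenness of $h_t$ and of the domain: the reflection $(x,v)\mapsto(x',v')$ is a measure-preserving involution of $\R^{2d}$ that swaps $\R^d_+$ and $\R^d_-$, swaps $\gamma^+_T$ and $\gamma^-_T$, and, by \eqref{eq.Eproperty}-type symmetry of $\tilde E$, conjugates the vector field $\tilde b_t$ to itself. Applying the whole-space Green identity \eqref{eq.check0} once with the test function $\varphi$ and once with the reflected test function $\varphi^\sharp_t(x,v):=\varphi_t(x',v')$ — and using that $h_t$ is invariant under the reflection while $v_1$ changes sign — I get two identities whose difference, after the change of variables $(x,v)\mapsto(x',v')$ on the $\R^d_-$ half, isolates precisely the boundary term $\int_{\gamma^-_T} v_1\varphi_t(x,v)\big(\Gamma(h_t)(x,v)-\Gamma(h_t)(x,R_xv)\big)$ and forces it to vanish; here I use that on $\de\R^d_+$ the reflection $R_x$ is just $v\mapsto v'$, and that $\Gamma$ is the \emph{unique} such trace (again by Lemma~\ref{lem.trace}), so the reflected trace of $h_t$ must coincide with $\Gamma(h_t)\circ R_x$ up to the sign bookkeeping. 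Nonnegativity of $f^+_{\beta,t}$ when $\beta\ge 0$ (so that it is genuinely of the form required in Definition~\ref{defi.HS_Om}) follows because $\Gamma(h_t)$ is an $L^\infty$-weak limit of averages of $h_t=\beta(g_t)\ge 0$.

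\medskip

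Finally I record that $f^+_{\beta,t}\in L^\infty_{\rm loc}(\gamma^+_T)$ (in fact $L^\infty$, inherited from Lemma~\ref{lem.trace}), and that the passage from the ``Remark~\ref{rem.defi.HS}'' formulation back to the form \eqref{eq.defi_HS_Om} is purely algebraic: combining the two displayed identities and relabeling $v\mapsto R_xv=v'$ on $\gamma^-_T$ (which sends it bijectively to $\gamma^+_T$ with $v\cdot n(x)=v_1$ changing sign) collapses the two boundary integrals into the single term $\int_{\gamma^+_T} v_1\big(\varphi_t(x,v)-\varphi_t(x,v')\big)f^+_{\beta,t}(x,v)$. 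I expect the main obstacle to be the bookkeeping in the second paragraph: carefully checking that the reflection conjugates $\tilde b_t$ to itself on $\R^d_-$ (this is where \eqref{eq.Eproperty} and the construction of Problem~B are essential), that the boundary measure $d\sigma_x\,dv$ is preserved while $v_1$ flips sign, and that the uniqueness clause of Lemma~\ref{lem.trace} legitimately identifies the reflected trace — so that no spurious interior or initial-time terms survive when one forms the difference of the two Green identities.
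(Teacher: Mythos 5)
Your overall plan tracks the paper's proof: Lemma~\ref{lem.trace} supplies the candidate trace $f^+_{\beta,t}$ and the identity \eqref{eq.check1}, the problem reduces to verifying the reflection identity \eqref{eq.check2}, and the evenness of $h_t$ together with the symmetry of $\tilde b_t$ is the mechanism. However, the central step has a gap. You describe applying ``\eqref{eq.check0} once with $\varphi$ and once with $\varphi^\sharp(x,v)=\varphi(x',v')$ and taking the difference,'' but \eqref{eq.check0} is the Green formula on the \emph{half-space} $\R^d_+$, not a whole-space identity, and $\varphi^\sharp$ is supported in $\overline{\R^d_-}\times\R^d$, so it is not admissible there. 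To run your argument one needs the analogous $\R^d_-$ trace identity, which involves a priori a \emph{different} trace $\Gamma_-(g_t)$ (computed from the $\R^d_+$ side of $\{x_1=0\}$), and then the crucial identification $\Gamma(g_t)=\Gamma_-(g_t)$. Your appeal to ``uniqueness of the trace'' does not supply this: Lemma~\ref{lem.trace} asserts uniqueness of the trace satisfying the $\R^d_+$ Green formula, whereas $\Gamma_-(g_t)$ is defined via the $\R^d_-$ Green formula, so the two uniqueness statements live on opposite sides of the interface. The matching of the two one-sided traces is precisely where the whole-space distributional equation must enter, and your write-up never names that ingredient.

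The paper closes this gap differently and avoids $\Gamma_-$ entirely: it fixes an arbitrary $C^1$ extension $\phi$ of $\varphi$ to $[0,T)\times\R^{2d}$, sets $\overline{\varphi}^-_t(x,v):=\phi_t(x',v')\,1_{\{x_1\geq0\}}$ (so both $\varphi$ and $\overline{\varphi}^-$ live in $\mathcal{T}_T$), applies Lemma~\ref{lem.trace} to each, and then \emph{adds} the two resulting identities, cancelling the interior terms by means of the key observation $\tilde A(\phi,g_t)=A(\varphi^+,f_t)+A(\overline{\varphi}^-_t,f_t)=0$, which uses the whole-space renormalization of $g_t$ together with the symmetry computation of Proposition~\ref{prop.AimpB}. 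What remains is exactly \eqref{eq.bimpa4}, which collapses to the reflection identity after the boundary change of variables $v\mapsto v'$. Your difference-of-identities route would also work, but only after you explicitly extract the equality $\Gamma(g_t)=\Gamma_-(g_t)$ from inside the proof of Lemma~\ref{lem.trace} (via the continuity of the one-dimensional slices $G_\rho$), or replace that step with the paper's whole-space-extension trick. As written, that ingredient is missing.
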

\begin{proof}
As in the proof of Proposition~\ref{prop.AimpB}, we assume that $g_t \in L^\infty((0, T); L^\infty(\R^{2d}))$ to avoid the use of $\beta$ throughout the proof. We also recall that, by definition, $g_t$ is even with respect to $(x_1, v_1)$, i.e., $g_t(x, v) = g_t(x', v')$ in $[0, T)\times\R^{2d}$.

We will see that $f_t$ is a renormalized solution according to Remark~\ref{rem.defi.HS}. That is, we want to check that there exists some $f_t^+\in L^\infty(\gamma^+_T\cup\gamma^-_T)$ such that for any $\varphi \in \mathcal{T}_T$,
\begin{equation}
\label{eq.check1}
A(\varphi, f_t) + \int_{\gamma^+_T\cup\gamma^-_T} v_1 \varphi_t( x, v) f^+_{t}(x, v) dt\,d\sigma_x\,dv = 0,
\end{equation}
and
\begin{equation}
\label{eq.check2}
\int_{\gamma^-_T} v_1 \varphi_t(x, v) \left(f^+_{t}(x, v) -f^+_{t}(x, v') \right)  \,dt\,d\sigma_x\,dv = 0.
\end{equation}
We keep using the notation introduced in the proofs of Lemma~\ref{lem.testfunctions} and Proposition~\ref{prop.AimpB}, \eqref{eq.defopa2}-\eqref{eq.defopa}. The first part, \eqref{eq.check1}, corresponds to the result in Lemma~\ref{lem.trace}. 

To see \eqref{eq.check2}, let us consider a smooth and compactly supported extension of $\varphi$ to the whole $[0, T) \times \R^{2d}$. That is, we consider $\phi \in C^1([0, T) \times \R^{2d})$ such that $\varphi = \phi$ in $[0, T) \times \overline{\R^{d}_+}\times \R^d$. Let $\varphi^- = \phi 1_{\{x_1\leq 0\}}$, so that defining $\overline{\varphi}^-_t(x, v) = \varphi^-_t(x', v')$ in $[0, T) \times \overline{\R^{d}_+}\times \R^d$, then we can treat $\overline{\varphi}^-_t$ as a test function for Problem~A, since $\overline{\varphi}^-_t\in \mathcal{T}_T$. By Lemma~\ref{lem.trace} again,
\begin{equation}
\label{eq.bimpa3}
0 = A(\overline{\varphi}^-, f_t) + \int_{\gamma^+_T\cup\gamma^-_T} v_1\overline{\varphi}_t^-(x, v) \Gamma(g_t)(x, v) \, dt \,d\sigma_x\,dv.
\end{equation}

Combining this with \eqref{eq.check1},
\begin{equation}
\label{eq.bimpa4}
\int_{\gamma^+_T\cup\gamma^-_T} v_1\overline{\varphi}_t^-(x, v) \Gamma(g_t)(x, v) \, dt \,d\sigma_x\,dv + \int_{\gamma^+_T\cup\gamma^-_T} v_1\varphi_t(x, v) \Gamma(g_t)(x, v) \, dt \,d\sigma_x\,dv = 0.
\end{equation}

We have used here that, as in the proof of Proposition~\ref{prop.AimpB} and since $g_t$ is an even solution to Problem~B, 
\[
\tilde A(\phi, g_t) =  A(\varphi^+, f_t)+ A(\overline{\varphi}_t^-, f_t) = 0.
\]

From \eqref{eq.bimpa4}, using $\overline{\varphi}_t^-(x, v) = \varphi_t(x, v')$ on $\gamma_T^+\cup\gamma_T^-$,
\begin{equation}
\label{eq.bimpa5}
\int_{\gamma^+_T\cup\gamma^-_T} v_1\varphi_t(x, v) \left(\Gamma(g_t)(x, v)- \Gamma(g_t)(x, v')\right) \, dt \,d\sigma_x\,dv = 0,
\end{equation}
and assuming $\varphi_t$ is supported on $\gamma^-_T$ we have shown that \eqref{eq.check2} holds.
\end{proof}

\section{Maximal regular flows with specular reflection:  the Maximal Specular Flow}
\label{sec.mrf}

We would like to establish a suitable notion of flow associated to \eqref{eq.A}, and more generally to the problem in a general domain with specular reflection, \eqref{eq.original_Om_2}. Let us start by recalling the definition of a Maximal Regular Flow in $\R^{d}$ associated to a Borel vector field $b = b_t(x):(0, T)\times \R^{d}\to\R^{d}$ (see \cite{ACF15}), which contrary to Regular Flows, allows blow-ups at any time after the initial condition. This definition will be useful in the context of Problem~B, \eqref{eq.B}, but we will require another notion of flow to deal with the specular reflection.

We recall here the definition of \emph{local Maximal Regular Flow} from \cite[Definition 4.4]{ACF15}. That is, we consider the trajectories of a given Maximal Regular Flow (see \cite[Definition 4.2]{ACF17}) inside an open set $A\subset \R^d$. There is no specular behaviour at this point.

As in \cite{ACF15}, in order to identify the boundary of $A$ we define a potential function $P_A : A \to [0, \infty)$ by 
\[
P_A(x) = \max\left\{ [\textrm{dist}(x, \R^d\setminus A)]^{-1}, |x| \right\},
\]
which satisfies
\[
\lim_{x\to \de A} P_A(x) = \infty,
\]
in the sense that, for any $M >0 $, there exists $K \Subset A$ such that $P_A(A\setminus K) \ge M$.

We also have to define the notion of hitting time and entering time with respect to $A$.

\begin{defi}[Hitting and entering time in $A$ at time $s$]
\label{defi.hitenttime}
Let $\tau > 0$, $s > 0$, $A\subset \R^d$ open, and $\eta: (s-\tau, s+\tau)\to \R^d$ continuous. The \emph{hitting time} of $\eta$ in $A$ at time $s$ is given by 
\[
h_{A, s}^+(\eta):= \sup\{t\in[s, s+\tau) : \max_{[s, t]} P_A(\eta) < \infty\},
\]
while the \emph{entering time} of $\eta$ in $A$ at time $s$ is 
\[
h_{A, s}^-(\eta):= \inf\{t\in(s-\tau, s] : \max_{[t, s]} P_A(\eta) < \infty\}.
\]
We put $h_A^+(\eta)= h_A^-(\eta) = s$ if $\eta(s)\notin A$.
\end{defi}

We note that we will refer as functions of bounded variation (or BV functions) to the functions whose distributional derivative is a finite Radon measure. We will refer as absolutely continuous functions (AC) to the functions of bounded variation whose distributional derivative does not have a singular part (with respect to the Lebesgue measure). Finally, given two metric space $X$, $Y$, and a Borel map $f: X\to Y$, we denote the push-forward of a measure $\mu \in \mathcal{M}(X)$ as $f_\#\mu\in\mathcal{M}(Y)$, which is given by $f_\#\mu(B) := \mu(f^{-1}(B))$ for any $B\subset Y$ Borel, and fulfils the change of variables formula 
\begin{equation}
\label{eq.pf_cv}
\int_Y \phi\,d(f_\#\mu) = \int_X \phi\circ f\,d\mu,
\end{equation}
for any $\phi: Y \to [0, \infty]$ Borel (see \cite{AC14}).

Now we are able to provide the definition of local Maximal Regular Flow in $A$, given by 

\begin{defi}[Local Maximal Regular Flow in $A$, \cite{ACF15}]
\label{defi.mrf_A}
Let $b = b_t: (0, T)\times A\to \R^d$ be a Borel vector field. Let $s\in (0,T)$, and let $X = X(t, s, x):(0,T)\times(0,T)\times A\to\R^{d}$ be a Borel map. We call $X(\cdot,s,\cdot)$ a \emph{local Maximal Regular Flow} (associated to $b = b_t$) in $A$ starting at time $s$ if there exist two Borel maps $t_{s,X}^+:\R^d\to (s,T]$ and $t_{s,X}^-:\R^d\to [0,s)$ such that $X(\cdot,s,x):(t^-_{s,X}(x), t^+_{s,X}(x))\to \R^{d}$ satisfies 
\begin{enumerate}[(i)]
 \item $X(\cdot,s,x)\in AC_{\rm loc}((t^-_{s,X}(x), t^+_{s,X}(x));\R^d)$ for a.e. $x\in\R^d$. Moreover, $X(\cdot,s,x)$ solves the ODE $  \frac{d}{dt}X(t,s,x) = b_t(X(t,s,x))$ for a.e. $ t\in (t^-_{s,X}(x), t^+_{s,X}(x))$, with $X(s, s, x) = x$,
 
 \item for all $t\in[0,T]$, and for any $A'\Subset A$, 
 \[
  X(t, s,\cdot)_\#\left(\mathscr{L}^d \mres \{h^-_{A', s}(X(\cdot, s, x)) <t< h^+_{A', s}(X(\cdot, s, x))\}\right)\leq C\mathscr{L}^d\mres A',
 \]
 for some constant $C$ depending only on $X$, $A'$, and $s$, and where $h^-_{A', s}$ and $h^+_{A', s}$ are defined in Definition~\ref{defi.hitenttime}.
\item for a.e. $x\in \R^d$ the following dichotomy follows:
\begin{itemize}
 \item either $t_{s,X}^+ = T$ (resp. $t_{s,X}^- = 0$) and $X(\cdot, s, x)$ can be continuously extended to $t = T$ (resp. $t= 0$) and therefore $X(\cdot, s, x) \in C([s,T];\R^d)$ (resp. $X(\cdot, s, x) \in C([0,s];\R^d)$),
 \item or
 \[
  \lim_{t\uparrow t_{s,X}^+} P_A\left(|X(t, s, x)|\right) = \infty,\quad\quad ({\rm resp.}\lim_{t\downarrow t_{s,X}^-	} P_A\left(|X(t, s, x)|\right) = \infty).
 \]
\end{itemize}
\end{enumerate}
We will simply refer to a \emph{Maximal Regular Flow} whenever $A = \R^d$. Moreover, when the set $A$ is clear from the context we will simply write $t^\pm_{s, X}$. 
\end{defi}

The previous definition is consistent with what we intuitively understand as a flow that might blow up, requiring local in time boundedness with respect to the Lebesgue measure, local absolute continuity, and the fulfilment of an ODE at those points. Condition (ii) ensures that there is not \emph{too much} production of mass, and that sets with finite Lebesgue measure go to sets with controlled Lebesgue measure. In our case, the divergence-free condition will ensure that the constant $C = 1$, and the inequality in (ii) is actually an equality (mass is transported). 

In a similar manner we can define what we call a \emph{Maximal Specular Flow}. In this case, we would like a definition analogous for flows in $\Omega\times\R^d$ that may have \emph{jumps} in a velocity coordinate, corresponding to specular reflections on the boundary. Thanks to the restrictions imposed on these jumps we can still talk about a solution being transported by the Maximal Specular Flow. 

However, since we are now talking about \emph{velocities} and \emph{positions} at which reflections occur, we need a particular structure for the vector field considered. In this case, we will talk about a \emph{Maximal Specular Flow} in a domain $\Omega\times\R^d$ associated to a Borel vector field $b = b_t(x,v) : (0, T) \times\Omega\times\R^d \to \R^d\times\R^d $ of the form $b_t(x, v) = (v, E_t(x, v))$, for some Borel vector field $E = E_t(x, v) : (0, T) \times\Omega\times\R^d \to \R^d$. Thus, the velocity coordinate will correspond to the temporal derivative of the position, while the \emph{force field} might still depend on all the variables. 

Still, the following definition is analogous to Definition~\ref{defi.mrf_A} of Maximal Regular Flow introduced in \cite{ACF15}. Namely, the only difference is that in the velocity component the absolute continuity in time holds everywhere except at the boundary. This is encoded in (ii) from the following definition: the singular part of the temporal derivative of the velocity exists only at the boundary, and has a very particular structure due to the specular reflection condition. Away from the boundary, $(X, V)$ is absolutely continuous, and the following definition coincides exactly with the definition of Maximal Regular Flow, and one could still have trajectories blowing up in finite time.

\begin{defi}[Maximal Specular Flow in a domain]
\label{defi.msf_dom}
Let $s\in (0,T)$, and let us consider a Borel map
\[
(X, V) = (X, V)(t, s, x, v):(0,T)\times(0,T)\times\Omega\times\R^d\to\Omega\times\R^d.\]
Let $b = b_t(x, v) = (v, E_t(x, v)): (0, T) \times\Omega\times\R^d \to \R^d\times\R^d $ be a Borel vector field with $E = E_t(x, v) : (0, T) \times\Omega\times\R^d \to \R^d$. Let $\Omega\subset \R^d$ be a $C^{1, 1}$ domain. We call $(X, V)(\cdot,s,\cdot, \cdot)$ a \emph{Maximal Specular Flow in $\Omega\times\R^d$} (associated to $b = b_t$) starting at time $s$ if there exist two Borel maps $t_{s,X, V}^+:\Omega\times\R^d\to (s,T]$ and $t_{s,X, V}^-:\Omega\times\R^d\to [0,s)$ such that $(X, V)(\cdot,s,x, v):(t^-_{s,X, V}(x,v), t^+_{s,X, V}(x, v))\to \Omega\times\R^d$ satisfies 
\begin{enumerate}[(i)] 
 \item for a.e. $(x, v)\in \Omega\times\R^d$
 \begin{align*} X(\cdot,s,x, v)& \in AC_{\rm loc}((t^-_{s,X, V}(x, v), t^+_{s,X, V}(x, v));\Omega)\\
 V(\cdot, s, x, v)& \in BV_{\rm loc}((t^-_{s,X, V}(x, v), t^+_{s,X, V}(x, v));\R^d). 
 \end{align*}
Moreover, $(X, V)(\cdot,s,x, v)$ solves the ODE 
 \begin{equation}
 \label{eq.ODE_MSF}
  \frac{d}{dt}(X, V)(t,s,x, v) = b_t(X(t,s,x, v), V(t, s, x, v)),\quad \textrm{for a.e. } t\in (t^-_{s,X, V}(x, v), t^+_{s,X, V}(x, v)),
 \end{equation}
 with $(X, V)(s, s, x, v) = (x, v)$,
 \item for a.e. $(x, v) \in \Omega\times\R^d$,
 \[
 D_t V(\cdot, s, x, v) = \dot V(\cdot, s, x, v)dt + D_t^s V(\cdot, s, x, v),
 \]
 where $\dot Vdt$ denotes the absolutely continuous part with respect to the (temporal) Lebesgue measure, and $D_t^s V$ is the singular part. Moreover, 
 \[
 \textrm{supp}~D_t^s V(\cdot, s, x, v) \subset \{\tau \in (t^-_{s,X, V}(x, v), t^+_{s,X, V}(x, v)): X(\tau, s, x, v) \in \de\Omega\}
 \]
 that is, $|D^s_t V|$ is concentrated on $X(\cdot, s, x, v) \in\de\Omega$ (jumps in velocity only occur at the boundary), and is of the form 
 \[
 D_t^s V (\cdot, s, x, v)= 2\sum_{j\in\N} A_j \delta_{t_j},
 \]
 for some $(t_j)_{j\in \N} \subset (t^-_{s,X, V}(x, v), t^+_{s,X, V}(x, v))$, where $\delta_{t_j}$ denotes the Dirac delta at $t_j$, and 
 \begin{equation}
 \label{eq.specular_condition}
 A_j = - \lim_{\tau \downarrow 0}  n(X(t_j, s, x, v)) \left[ V(t_j - \tau, s, x, v) \cdot n(X(t_j, s, x, v))\right].
\end{equation}
 \item for all $t\in[0,T]$,
 \[
  (X, V)(t, s,\cdot, \cdot)_\#\left(\mathscr{L}^{2d} \mres \{t_{s,X, V}^- <t< t_{s,X, V}^+\}\right)\leq C\mathscr{L}^{2d},
 \]
 for some constant $C$ depending only on $(X, V)$ and $s$.
\item for a.e. $(x, v)\in \Omega\times\R^d$ the following dichotomy follows:
\begin{itemize}
 \item either $t_{s,X, V}^+ = T$ (resp. $t_{s,X, V}^+ = 0$) and $(X, V)(\cdot, s, x, v)$ can be continuously extended to $t = T$ (resp. $t= 0$) and therefore $(X, V)(\cdot, s, x, v) \in C([s,T];\R^d\times \R^d)$ (resp. $(X, V)(\cdot, s, x, v) \in C([0,s];\R^d\times\R^d)$),
 \item or
 \[
  \lim_{t\uparrow t_{s,X, V}^+} |(X, V)(t, s, x, v)| = \infty,\quad\quad ({\rm resp.}\lim_{t\downarrow t_{s,X, V}^-	} |(X, V)(t, s, x, v)| = \infty).
 \]
\end{itemize}
\end{enumerate}
\end{defi}
\begin{rem}
\label{rem.msf}
Note that for the half-space situation, $\Omega = \R^d_+$, in condition (i) above we also have that 
\[
 V_i(\cdot, s, x, v) \in AC_{\rm loc}((t^-_{s,X, V}(x, v), t^+_{s,X, V}(x, v));\R) \quad\textrm{ for } i \in \{2,\dots,d\}.
\]
Moreover, a quick check shows that in condition (ii) above the singular part of $D_t V_1$ is concentrated on $X_1 = 0$, that is $X_1(\cdot, s, x, v) |D_t^s V_1| \equiv 0$, and 
\[
 D_t^s V_1 (\cdot, s, x, v)= 2\sum_{j\in\N} \alpha_j \delta_{t_j},
 \]
 for some $(t_j)_{j\in \N} \subset (t^-_{s,X, V}(x, v), t^+_{s,X, V}(x, v))$, where $\alpha_j = \lim_{\tau \downarrow t_j} V_1(\tau, s, x, v)$.
\end{rem}

The previous definition is constructed according to what we intuitively perceive as a flow with a specular reflection condition on a boundary in the $x$-coordinates: it coincides with the definition of Maximal Regular Flow at all points away from the boundary. At boundary points, moreover, we impose specular reflection of the vector field (namely, \eqref{eq.specular_condition}), so that the resulting vector fields are absolutely continuous everywhere, except at reflection points where the vector field is at most BV. Notice, moreover, that the jump induced only occurs in the velocity components, and it occurs in the component normal to the boundary (i.e., if the boundary is a half-space with normal vector $\boldsymbol{e}_1$, then the velocity will be AC in all components except in $V_1$, which will be BV).  

Notice that, if $b_t$ is Lipschitz, then the previous notion coincides with building the standard flow away from the boundary, and imposing specular reflection boundary conditions. Indeed, away from the boundary \eqref{eq.ODE_MSF} holds pointwise and when reaching a boundary point, \eqref{eq.specular_condition} ensures that the velocity component instantly changes in the right specular way.

It is a priori not clear whether the restriction of even (with respect to $(x_1, v_1)$) Maximal Regular Flows to $\{x_1 > 0\}$ induces a Maximal Specular Flow in $\R^d_+\times\R^d$: one needs to check that the $V_1$ component of the velocity flow behaves in the expected way. That is why we need the following lemma.

\begin{lem}
\label{lem.mrfmsf}
Let $s\in (0,T)$, and let 
\[
(X, V) = (X, V)(t, s, x, v):(0,T)\times(0,T)\times\R^{d}\times\R^d\to\R^{d}\times\R^d
\]
be a Borel map such that $(X, V)(\cdot,s,\cdot, \cdot)$ is a Maximal Regular Flow starting at time $s$ associated to $b = b_t(x, v) = (v, E_t(x, v)): (0, T) \times\R^d\times\R^d \to \R^d\times\R^d $ for a Borel vector field $E = E_t(x, v) : (0, T) \times\R^d\times\R^d \to \R^d$.

Suppose that for a.e. $(x, v)\in \R^d\times\R^d$ we have that
\begin{enumerate}[(i)]
\item $t^-_{s,X, V}(x,v)= t^-_{s,X, V}(x',v')$ and $t^+_{s,X, V}(x,v)= t^+_{s,X, V}(x',v')$ ,
\item for $t \in (t^-_{s,X, V}(x,v), t^+_{s,X, V}(x, v))$, and for a.e. $(x, v) \in \R^d\times\R^d$,
\begin{equation}
\label{eq.evencondition}
(X', V')(t, s, x, v) = (X, V)(t, s, x', v').
\end{equation}
\end{enumerate}
Then, the map
\[
(\tilde X, \tilde V) = (\tilde X, \tilde V)(t, s, x, v):(0,T)\times(0,T)\times\R^{d}_+\times\R^d\to\R^{d}_+\times\R^d
\]
defined as 
\begin{align*}
\tilde X_1(t, s, x, v) &= |X_1(t, s, x, v)|\\
\tilde X_i(t, s, x, v) &= X_i(t, s, x, v),\quad\textrm{for }\quad i\in \{1, \dots,d\},\\
\tilde V(t, s, x, v)& = \frac{d}{dt} \tilde X(t, s, x, v),
\end{align*}
is well defined, and $(\tilde X, \tilde V)(\cdot, s, \cdot, \cdot)$ is a Maximal Specular Flow in $\R^d_+\times\R^d$ (see Definition~\ref{defi.msf_dom}) starting at time $s$ associated to $b = b_t(x, v) = (v, E_t(x, v))$. 
\end{lem}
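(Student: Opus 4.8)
The strategy is to verify the four defining conditions of a Maximal Specular Flow (Definition~\ref{defi.msf_dom}) one by one for $(\tilde X,\tilde V)$, using the evenness hypotheses (i)--(ii) to transfer the corresponding properties of the Maximal Regular Flow $(X,V)$. First I would check that $(\tilde X,\tilde V)$ is well defined: by the evenness condition \eqref{eq.evencondition} we have $X_1(t,s,x,v)=-X_1(t,s,x',v')$, so the value of $|X_1|$, and hence of $\tilde X_1$, is insensitive to the choice of representative $(x,v)$ vs.\ $(x',v')$ on $\R^d_+$; moreover $\tilde X_1\ge 0$ by construction, so $(\tilde X,\tilde V)$ indeed takes values in $\overline{\R^d_+}\times\R^d$. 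The maps $t^\pm_{s,\tilde X,\tilde V}$ are simply the restrictions of $t^\pm_{s,X,V}$, which by hypothesis (i) descend well to $\R^d_+\times\R^d$.

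Next, condition (i) of Definition~\ref{defi.msf_dom}: away from $\{X_1=0\}$, the map $t\mapsto X_1(t,s,x,v)$ is $AC_{\rm loc}$ and does not vanish, so locally $|X_1|$ equals either $X_1$ or $-X_1$ and is therefore $AC_{\rm loc}$ with $\frac{d}{dt}|X_1|=\pm\dot X_1=\pm V_1$; in particular $\tilde V$ is well defined there and the ODE \eqref{eq.ODE_MSF} holds, using that $E_t$ restricted to $\R^d_+$ together with its even extension gives the right force. The zero set $\{t:X_1(t,s,x,v)=0\}$ is where the only subtlety lies. I would argue that for a.e.\ $(x,v)$ this set is at most countable: at a zero $t_0$ of $X_1(\cdot,s,x,v)$, if also $V_1(t_0^-)=0$ then by evenness $(X_1,V_1)$ and $(-X_1,-V_1)$ both solve the ODE from the same data, which (combined with the a.e.-uniqueness built into the Maximal Regular Flow, cf.\ condition (ii) of Definition~\ref{defi.mrf_A} and the divergence-free structure) forces $X_1\equiv 0$ on an interval, a lower-dimensional event one can discard; hence generically every zero of $X_1$ is a \emph{transversal} crossing with $V_1(t_0^-)\ne 0$, and such crossings accumulate only at $t^\pm_{s,X,V}$. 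At each transversal crossing $t_j$, $|X_1|$ has a corner, $\tilde V_1=\frac{d}{dt}|X_1|$ jumps from $V_1(t_j^-)$ to $-V_1(t_j^-)$, i.e.\ $D^s_t\tilde V_1=2\sum_j\alpha_j\delta_{t_j}$ with $\alpha_j=\lim_{\tau\downarrow t_j}V_1(\tau,\dots)=-V_1(t_j^-,\dots)$, matching Remark~\ref{rem.msf}; for $i\ge 2$, $\tilde X_i=X_i$ is $AC_{\rm loc}$ with $\tilde V_i=V_i$, so the BV/AC splitting of condition (ii) holds, and the support of the singular part is contained in $\{X_1=0\}=\{\tilde X_1\in\de\R^d_+\}$, with the specular form \eqref{eq.specular_condition} since $n(x)=\boldsymbol e_1$ on $\de\R^d_+$.

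For condition (iii) (the push-forward / incompressibility bound), I would use that the ``folding'' map $\pi:\R^d\times\R^d\to\overline{\R^d_+}\times\R^d$, $\pi(x,v)=((|x_1|,x_2,\dots,x_d),(\mathrm{sgn}(x_1)v_1,v_2,\dots,v_d))$ satisfies $\pi\circ(X,V)(t,s,\cdot,\cdot)=(\tilde X,\tilde V)(t,s,\pi(\cdot))$ a.e., together with $\pi_\#(\mathscr L^{2d}\mres\R^d_-\!\times\R^d)=\mathscr L^{2d}\mres\R^d_+\!\times\R^d$; pushing the bound in condition (iii) of Definition~\ref{defi.mrf_A} forward through $\pi$ and combining the contributions of $\{x_1>0\}$ and $\{x_1<0\}$ (which coincide by evenness) gives the desired estimate, with the same constant $C$. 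Finally condition (iv) (the blow-up dichotomy) is immediate: $|(\tilde X,\tilde V)|$ and $|(X,V)|$ agree (the folding is an isometry on each fiber), $t^\pm_{s,\tilde X,\tilde V}=t^\pm_{s,X,V}$, and continuous extendability of $(X,V)$ to the endpoint transfers to $(\tilde X,\tilde V)$ because $x\mapsto|x_1|$ is continuous — note that $\tilde V$ may fail to extend continuously, but that is allowed since the definition only asks for continuous extension of $(X,V)$ \emph{when} $t^+=T$ and in that regime no blow-up occurs, while the BV structure already handles interior corners.

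\textbf{Main obstacle.} The delicate point is showing, for a.e.\ initial datum, that the crossing set $\{t:X_1(t,s,x,v)=0\}$ is discrete with only transversal crossings, so that $D^s_t\tilde V_1$ is a locally finite sum of Dirac masses of the prescribed form rather than a wilder singular measure; this requires combining the a.e.-uniqueness intrinsic to the Maximal Regular Flow with the evenness symmetry \eqref{eq.evencondition} to exclude a positive-measure set of trajectories that ``slide along'' $\{x_1=0\}$ or touch it tangentially, and carefully checking that such degenerate behaviour is confined to a Lebesgue-null set of $(x,v)$.
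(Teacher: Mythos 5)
Your handling of well-definedness and of conditions (iii), (iv) of Definition~\ref{defi.msf_dom} is sound and follows the same soft reasoning the paper relegates to ``the rest of assumptions follow trivially.'' The divergence from the paper is in the key step, establishing that $\tilde V_1=\frac{d}{dt}|X_1|\in BV_{\rm loc}$ with the prescribed atomic singular part, and there your approach has a genuine gap. You try to show that for a.e.\ $(x,v)$ every zero of $X_1(\cdot,s,x,v)$ is transversal, via the claim that a tangential touch ($X_1(t_0)=V_1(t_0)=0$) would, by evenness and ``the a.e.-uniqueness built into the Maximal Regular Flow,'' force $X_1\equiv 0$ on an interval. But the uniqueness encoded in Definition~\ref{defi.mrf_A} (via condition {\bf (A2)}) is a statement about compactly supported bounded solutions of the continuity equation; it is measure-theoretic uniqueness from time $s$, not pointwise uniqueness of the ODE at a later time $t_0$, and it does not let you deduce anything about a single trajectory that reaches $\{x_1=0,v_1=0\}$. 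It is also not explained why the initial data producing a tangential touch at \emph{some} time form a Lebesgue-null set; the preimage of $\{x_1=0,v_1=0\}$ at a fixed time has measure zero by the no-concentration bound, but a union over an uncountable time interval is not controlled by that bound. Finally, even granting transversality a.e., you would still need summability of the jump magnitudes $|\alpha_j|$ on each $(a,b)\Subset(t^-,t^+)$ to get local BV, and the ``crossings accumulate only at $t^\pm$'' assertion silently presupposes the tangential case has already been excluded.

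The paper's proof sidesteps all of this: it fixes $h:=X_1(\cdot,s,x,v)\in W^{2,1}_{\rm loc}$ and shows directly that $\partial_t|h|={\rm sgn}(h)\,\partial_t h\in BV_{\rm loc}$ by estimating $\int_a^b\big|\partial_t\big(\partial_t h\cdot\varphi_\eps(h)\big)\big|$ uniformly in $\eps$ for a smooth approximation $\varphi_\eps$ of ${\rm sgn}$. The problematic term $\int_a^b\varphi_\eps'(h)(\partial_t h)^2$ is controlled by decomposing $\{|\partial_t h|>0\}\cap(a,b)$ into countably many intervals where $h$ is strictly monotone, changing variables $s\mapsto h(s)$ on each, and using $\partial_t h=0$ at the interval endpoints to bound each contribution by $\int|\partial_{tt}h|$ over that interval. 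Summing gives a bound by $2\|\partial_{tt}h\|_{L^1(a,b)}$, independent of $\eps$. Passing $\eps\downarrow 0$ then identifies the singular part of $D_t|h|'$ as $2\sum_i|\partial_t h(r_i)|\delta_{r_i}$ at the (automatically countable and summable) roots $r_i$ with $\partial_t h(r_i)\ne 0$; tangential touches simply contribute nothing. This analytic route is both simpler and more robust than a geometric transversality argument, and I recommend you adopt it rather than try to repair the a.e.-uniqueness claim.
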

\begin{proof}
In order to check that $(\tilde X, \tilde V)(\cdot, s, \cdot, \cdot)$ is a Maximal Specular Flow, it is enough to check that 
\begin{equation}
\label{eq.tildeV1}
\tilde V_1(\cdot, s, x, v) \in BV_{\rm loc}((t^-_{s,X, V}(x, v), t^+_{s,X, V}(x, v));\R),
\end{equation}
and that Definition~\ref{defi.msf_dom}(ii) (more precisely, Remark~\ref{rem.msf}) holds for $\tilde V_1$; the rest of assumptions follow trivially from the definition of Maximal Regular Flow and the symmetry of $(X, V)$, \eqref{eq.evencondition}.

Let us fix $s \in (0, T)$, $(x, v)\in \R^d\times\R^d$, and let $t^\pm := t^\pm_{s, X, V}(x, v)$. Let
\[
h(t) := X_1(t, s, x, v),
\]
so that $h, \de_t h \in AC_{\rm loc}(t^-,t^+)$ for a.e. $(x, v)\in \R^d\times\R^d$ by assumption (note that $\de_t h = V_1(t, s, x, v)$). We first want to show \eqref{eq.tildeV1}, that is, we want to prove
\begin{equation}
\label{eq.provemsf}
\de_t |h| = {\rm sgn} (h)\de_t h \in BV_{\rm loc}(t^-,t^+).
\end{equation}

Let $\varphi_\eps\in C^\infty(\R)$ be an approximation of the sign function, 
\begin{align*}
& \varphi_\eps(t) \equiv -1 \textrm{ for } t \leq -\eps,\quad\varphi_\eps(t) \equiv 1 \textrm{ for } t \geq \eps,\\
& \varphi_\eps(t) \textrm{ is non-decreasing, }0\leq \de_t \varphi_\eps  \leq \frac{2}{\eps}.
\end{align*}

In order to check \eqref{eq.provemsf} it is enough to prove the following bound,
\begin{equation}
\label{eq.provemsf2}
\int_a^b \left|\de_t \left\{\de_t h (s) \varphi_\eps(h (s))\right\} \right|ds \leq C(h, a, b),\textrm{ for all } (a, b)\Subset (t^-, t^+),
\end{equation}
for some constant $C(h, a, b)$ depending on $h$ and the endpoints $a$ and $b$, but independent of $\eps$. Notice that, since $\de_{tt} h\in L^1_{\rm loc}$ and $|\varphi_\eps|\leq 1$,
\begin{equation}
\label{eq.abscontpartb}
\int_a^b |\de_{tt} h|\,|\varphi_\eps(h)| \leq \|\de_{tt} h \|_{L^1(a, b)},
\end{equation}
where in the limit $\eps \downarrow 0$ the equality is attained, and it would correspond to the absolutely continuous part with respect to the Lebesgue measure. 

Let us now bound, independently of $\eps$, the other term in \eqref{eq.provemsf2}, $ \int_a^b \varphi_\eps' (h) (\de_t h)^2$. Since $\de_t h $ is continuous, $\{|\de_t h | > 0\}\cap (a, b)$ is an open set in $\R$, and in particular, it is a countable union of open disjoint intervals $I_i = (a_i, b_i)$; that is, 
\[
\{|\de_t h | > 0\}\cap (a, b) = \bigcup_{i \in \N} I_i,\quad\textrm{ such that } \de_t h > 0 \textrm{ or }\de_t h < 0 \textrm{ in } I_i, \textrm{ for each } i \in \N. 
\]

Now consider a fixed interval $I_i = (a_i, b_i)$ for some $i\in \N$, and suppose that $\de_t h > 0$ in $I_i$. Let $I_i^\eps = I_i\cap \{|h|< \eps\}$, which is still a single open interval because $h$ is continuous and $\de_t h$ has constant sign in $I_i$. Then compute, by changing variables $s \mapsto r = h(s)$,
\begin{align*}
\int_{I_i} \varphi_\eps' (h(s)) (\de_t h(s))^2 ds & = \int_{I_i^\eps}\varphi_\eps' (h(s)) (\de_t h(s))^2 ds\\
& = \int_{h(I_i^\eps)} \varphi_\eps'(r) \de_t h (h^{-1}(r)) dr\\
& \leq \sup_{s \in I_i^\eps}\,\de_t h(s) \int_{-\eps}^\eps \varphi_\eps'(r) dr = 2 \de_t h(\bar s_i),
\end{align*}
for some $\bar s_i \in \overline{I_i^\eps}$. We are using here that $\varphi_\eps'\circ h \equiv 0$ outside $\{|h|<\eps\}$. If $\de_t h < 0 $ in $I_i$ we would have obtained $2|\de_t h(\bar s_i)|$ instead, so that in general,
\[
\int_{I_i} \varphi_\eps' (h(s)) (\de_t h(s))^2 ds \leq 2|\de_t h(\bar s_i) - \de_t h(b_i)|,
\]
where we are also using that by continuity $\de_t h(b_i) = 0$. Thus, using that the integrand is non-negative
\[
\int_a^b\varphi_\eps' (h) (\de_t h)^2  = \sum_{i \in \N} \int_{I_i}\varphi_\eps' (h) (\de_t h)^2  \leq 2\sum_{i \in \N} |\de_t h(\bar s_i) - \de_t h(b_i)| = 2\sum_{i\in \N} \left|\int_{\bar s_i}^{b_i} \de_{tt}h (s) ds\right|,
\]
and thanks to the absolute continuity of $\de_t h$,
\begin{equation}
\label{eq.singpartb}
\int_a^b\varphi_\eps' (h) (\de_t h)^2 \leq 2 \sum_{i\in \N} \int_{\bar s_i}^{b_i} |\de_{tt}h (s) | ds \leq 2 \int_a^b |\de_{tt} h(s)|\, ds \leq C(h, a, b).
\end{equation}
Combining \eqref{eq.abscontpartb} and \eqref{eq.singpartb}, yields \eqref{eq.provemsf2} which gives the desired result \eqref{eq.provemsf}. We now want to understand the structure of the distributional derivative of $\de_t |h|$.

Notice that we can consider two different cases according to whether $h(I_i)$ contains the value $0$ or not (that is, $h$ has a root in $I_i$). On the one hand, if $|h|> 0$ in $I_i$, then for some $\eps $ small enough, $I_i^\eps = \varnothing$ and $\int_{I_i} \varphi_\eps' (h) (\de_t h)^2 = 0$. On the other hand, if $0 \in h(I_i)$ then $\sup_{s\in I_i^\eps} \de_th(s) \to \de_th(r_i)$ as $\eps\downarrow 0$ for the unique $r_i\in I_i$ such that $h(r_i) = 0$. That is, if $(r_i)_{i\in \N}$ denotes the set of roots of $h$ in $(a, b)$, then 
\[
\int_{a}^b  \varphi_\eps' (h) (\de_t h)^2 \to 2\sum_{i\in \N}|\de_t h(r_i)|,\quad\textrm{as}\quad \eps \downarrow 0.
\]
This corresponds to the singular part of the measure. Combining with \eqref{eq.abscontpartb} this gives
\[
 \int_a^b \left|\de_{tt} |h|(s) \right|\, ds = \|\de_{tt} h\|_{L^1(a, b)} + 2\sum_{i\in \N}|\de_t h(r_i)|,
\]
for any $(a, b)\Subset (t^-,t^+)$, and for $(r_i)_{i\in \N}$ the set of roots of $h$ in $(a, b)$.

Interpreting this in terms of $X_1(t, s, x, v)$ and $V_1(t, s, x, v)$ we obtain the desired result. 
\end{proof}

\section{Uniqueness of the continuity equation}
\label{sec.4}
In this section we address the problem of uniqueness of the continuity equation associated to Problem~B, \eqref{eq.B}. Namely, we want to check that the vector field in the mentioned problem satisfies the condition {\bf (A2)} from \cite[Section 4.1]{ACF17}.

Let us rewrite the problem here in a more convenient way,
\begin{equation}
\label{eq.B2}
  \left\{ \begin{array}{ll}
  \de_t g_t + v\cdot \nabla_x g_t + \tilde E_t\cdot\nabla_v g_t=0 & \textrm{ in } (0,\infty)\times\R^d\times\R^d\\
  \tilde \rho_t(x)={\rm sgn}(x_1)\int_{\R^d} g_t(x, v) dv & \textrm{ in } (0,\infty)\times\R^d\\
  \tilde E_t(x) = {\rm sgn}(x_1) c_d \int_{\R^d} \tilde \rho_t(y) \frac{x-y}{|x-y|^d}dy & \textrm{ in } (0,\infty)\times\R^d, \\ 
    g_t(x, v) = g_t(x', v')& \textrm{ in } (0,\infty)\times\R^d\times\R^d.\\
  \end{array}\right.
\end{equation}
In particular, $\tilde \rho_t(x) = -\tilde \rho_t(x')$, and $\tilde E_t(x') = (\tilde E_t(x))'$.

The existence and uniqueness of a Maximal Regular Flow (see Definition~\ref{defi.mrf_A}) transporting the solution of the continuity equation with vector field $b$ is discussed in \cite{ACF17} under the assumptions on $b$ induced by the results from the same authors in \cite{ACF15}. 

More precisely, in order to construct a Maximal Regular Flow associated to a general Borel vector field $b: (0, T)\times\R^d\to \R^d$, it is sufficient that the following assumptions are satisfied (see \cite{ACF15}):
\begin{enumerate}
\item[\bf (A1)] $\int_0^T\int_{B_R} |b_t(x)| dx dt <\infty$ for any $R > 0$,
\item[\bf (A2)] for any nonnegative $\bar \mu \in L^\infty_+(\R^d)$ with compact support and any closed interval $I = [a, b]\subset[0, T]$, the continuity equation 
\begin{equation}
\label{eq.A2}
\frac{d}{dt}\mu_t + {\rm div}(b\mu_t)  = 0,\quad\textrm{in }\quad (a, b)\times \R^d 
\end{equation}
has at most one weakly* continuous solution $I\ni t \mapsto \mu_t$ such that $\mu_a = \bar\mu$ and $\cup_{t\in [a, b]} {\rm supp}\,\mu_t \Subset\R^d$.
\end{enumerate}

We want to check that assumption {\bf (A2)} is satisfied whenever the vector field $b$ is of the form $b  =\tilde b_t(x, v) = (v, \tilde E_t(x))$ coming from \eqref{eq.B2}.

\begin{thm}
\label{thm.charact}
Let $b:(0, T)\times\R^{2d}\to \R^{2d}$ be a vector field given by $b_t(x, v) = (b_{1, t}(v), b_{2, t}(x))$, where
\begin{align*}
&b_{1, t}(v)  \in L^\infty((0, T); W^{1, \infty}_{\rm loc} (\R^d; \R^d))\\
& b_{2, t}(x)  = {\rm sgn}(x_1) K * \rho_t,
\end{align*}
with $K = x/|x|^d$ and $|\rho|\in L^\infty((0, T); \mathcal{M}_+(\R^d))$ an odd measure with respect to $x_1$; that is, for any Borel set $E \subset \R^d$, $\rho(E) = -\rho(E')$ (where $x\in E'\Leftrightarrow x' \in E$). In particular, $\rho(\{x_1 = 0\}) = 0$. 

Then $b$ satisfies assumption {\bf (A2)}, that is, the uniqueness of bounded compactly supported nonnegative distributional solutions of the continuity equation. 
\end{thm}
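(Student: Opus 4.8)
The plan is to verify assumption \textbf{(A2)} by adapting the now-standard strategy for vector fields whose spatial part is a singular integral of an $L^1$ (or measure) density, as developed in \cite{ACF17} building on \cite{BC13, BBC16}. The key structural feature we must exploit is that $b_{2,t}(x) = \mathrm{sgn}(x_1)\, (K*\rho_t)(x)$, which away from the hyperplane $\{x_1 = 0\}$ coincides locally with the gradient of a Newtonian potential of the measure $\rho_t$, hence with a singular-integral representation to which the quantitative Lagrangian estimates apply; moreover, by oddness of $\rho_t$ the potential $V_t = c_d' |x|^{2-d} * \rho_t$ vanishes on $\{x_1=0\}$, so the sign flip is \emph{consistent} with $\nabla V_t$ being genuinely $C^\infty$ away from $\mathrm{supp}\,\rho_t$ and across $\{x_1 = 0\}$ — there is no jump of $b_2$ created by the $\mathrm{sgn}$, only by the structure of $\rho$ itself. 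The first step, therefore, is to record that $\tilde E_t := b_{2,t}$ satisfies $\tilde E_t \in L^\infty((0,T); L^p_{\mathrm{loc}}(\R^d))$ for some $p>1$ (exactly the computation \eqref{eq.claimE} from Lemma~\ref{lem.testfunctions}, using $|\rho| \in L^\infty_t \mathcal M_+$), and that $\mathrm{div}_{x,v} b = 0$ in the distributional sense, since $\mathrm{div}_x(\mathrm{sgn}(x_1)\nabla V_t) = \mathrm{sgn}(x_1)\Delta V_t = -\mathrm{sgn}(x_1)\rho_t$ — wait, this is nonzero; rather, one checks directly from the odd-reflection construction that $b$ is divergence-free on each half-space and the normal components match across $\{x_1=0\}$, so no distributional divergence is produced on the hyperplane.

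The second, and main, step is the uniqueness argument itself. Following \cite{ACF17}, I would argue by contradiction: suppose $\mu_t^1, \mu_t^2$ are two weakly-$*$ continuous, bounded, compactly supported nonnegative solutions of \eqref{eq.A2} on $[a,b]$ with $\mu_a^1 = \mu_a^2 = \bar\mu$. Consider $\sigma_t = \mu_t^1 - \mu_t^2$ (or work with the superposition of the two and a suitable decomposition). The heart of the matter is the quantitative stability estimate of \cite{BC13}-\cite{BBC16} type, controlling a logarithmic functional $\int \log\big(1 + \tfrac{|X^1_t - X^2_t|}{\delta}\big)$ along the respective flows by $\int_0^T \int M(\nabla_{x} b_t)\,d(\text{pushforward})$, where $M$ denotes a maximal function; this requires $\nabla_x \tilde E_t$ to be (the singular integral of) an $L^1_{\mathrm{loc}}$ — here a \emph{measure} — which is exactly why the result is delicate and why one cannot invoke off-the-shelf Calderón--Zygmund theory. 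The correct move, as the introduction of the paper signals ("after a reflection, the electric field has an expression similar to a convolution against an $L^1$ function"), is: the obstacle at $\{x_1=0\}$ is only apparent; localize so that either the ball is far from $\{x_1=0\}$ (pure singular-integral case, \cite{ACF17} applies verbatim) or the ball straddles $\{x_1=0\}$, in which case one uses that $K*\rho_t$ with $\rho_t$ odd can be rewritten, after folding the half-space, as a convolution of the restricted density $\rho_t|_{\{x_1>0\}} \in L^1$ against a kernel that is still Calderón--Zygmund away from the diagonal but whose relevant bounds survive the reflection. One then runs the Crippa--De Lellis / Bouchut--Crippa estimate with this representation.

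Concretely, the key steps in order are: (1) establish the integrability and structural facts on $\tilde E_t$ ($L^p_{\mathrm{loc}}$, divergence-free, the reflection representation); (2) reduce, via a partition of unity adapted to a covering of any compact set by balls that are either disjoint from or centered on $\{x_1=0\}$, to the two model situations; (3) in the interior balls invoke the uniqueness result of \cite{ACF17} directly; (4) in the boundary balls, after the odd reflection, apply the superposition principle (Ambrosio's theory, cf.\ \cite{AC14}) to represent any solution $\mu_t$ as a superposition of integral curves of $b$, and then run the quantitative estimate showing that two such superpositions starting from the same $\bar\mu$ must coincide, using the $L^1$-convolution representation of the force field to bound the relevant maximal-function integral; (5) patch the local uniqueness statements together to get global uniqueness on $\R^{2d}$, using that solutions are compactly supported and that the velocity part $b_{1,t} \in L^\infty_t W^{1,\infty}_{\mathrm{loc}}$ is Lipschitz, hence contributes no difficulty and in fact provides the needed control of the $x$-component's growth.

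The main obstacle I anticipate is precisely step (4): making rigorous that the $\mathrm{sgn}(x_1)$ factor together with the oddness of $\rho$ does not spoil the singular-integral estimates near $\{x_1 = 0\}$. One must check carefully that the folded kernel $z \mapsto c_d\big(\tfrac{x-z}{|x-z|^d} - \tfrac{x-z'}{|x-z'|^d}\big)$ (or its $\mathrm{sgn}$-twisted version) still satisfies the cancellation and size bounds needed for the maximal-function estimate of \cite{BC13}, and that the gradient in $x$ of the resulting field is genuinely the singular integral of an $L^1_{\mathrm{loc}}$ function plus a controlled error supported near the hyperplane; the latter error is where the geometry (that $V_t$ vanishes on $\{x_1=0\}$, so no spurious surface charge appears) does the essential work. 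Once that representation is secured, the remainder is a faithful transcription of the \cite{ACF15, ACF17} machinery — the superposition principle, the stability of regular Lagrangian flows, and the contradiction argument — and should go through without new ideas.
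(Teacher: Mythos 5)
The paper does not localize in space at all; it runs a single global argument. After invoking the superposition principle to reduce to showing that the disintegration $\eta_x$ is a Dirac mass, it considers the functional
\[
\Phi_{\delta,\zeta}(t) = \iiint \log\Bigl(1 + \frac{|\gamma^1(t)-\xi^1(t)|}{\zeta\delta} + \frac{|\gamma^2(t)-\xi^2(t)|}{\delta}\Bigr)\,d\mu
\]
and splits $\frac{d}{dt}\Phi_{\delta,\zeta}$ not according to where a ball sits relative to $\{x_1=0\}$, but according to whether the two trajectories $\gamma^1(t)$, $\xi^1(t)$ are currently on the same or opposite sides. On the set $\gamma^1_1\xi^1_1>0$ the $\mathrm{sgn}$ factors cancel and one quotes the convolution estimate of \cite[Theorem 4.4]{ACF17} verbatim; for components $j\ge 2$ on opposite sides, the symmetry $b_2^{(j)}(x)=b_2^{(j)}(x')$ and $|\gamma^1-\xi^1|\ge|\gamma^1-(\xi^1)'|$ reduce again to the same convolution estimate. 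The genuinely new term is the first component on opposite sides, where $b_2^{(1)}(x)=-b_2^{(1)}(x')$ produces a \emph{sum}; there the paper observes $|\gamma^1-\xi^1|\ge|\gamma^1_1|+|\xi^1_1|$ and is left to bound
\[
\int_{B_R}\frac{|K_1*\rho(x)|}{\zeta\delta+|x_1|}\,dx \le C\bigl(1+\log\tfrac{1}{\zeta\delta}\bigr),
\]
which it does by a dyadic decomposition of the slab $\{0\le x_1\le 1\}$ into $A_k=[0,2^{-k}]\times B_1^{(d-1)}$ and the explicit estimate $\|K_1\|_{L^1(A_k)}\le C\,2^{-k}$, exploiting that the kernel $K_1=x_1/|x|^d$ itself vanishes linearly at $\{x_1=0\}$. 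No maximal function, no folded kernel, no partition of unity.

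Your step (4) is exactly where you anticipate the "main obstacle," and you do not resolve it. The route you sketch — rewrite the field near $\{x_1=0\}$ as a convolution of the restricted density against the folded kernel $z\mapsto c_d\bigl(\tfrac{x-z}{|x-z|^d}-\tfrac{x-z'}{|x-z'|^d}\bigr)$ and then try to run the Bouchut--Crippa maximal-function estimate — is precisely the one the paper's introduction flags as \emph{not} directly applicable: the folded kernel is not a global Calder\'on--Zygmund kernel (its gradient blows up against the reflecting hyperplane in a way that is not a singular integral of an $L^1$ density plus a controlled error, and making this precise is not a transcription). So as written the proposal leaves the decisive estimate as a conjecture. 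The localization-plus-patching framing is also more work than you acknowledge: you would need to argue in the local-Maximal-Regular-Flow framework of \cite{ACF15} (condition \textbf{(b-A)} on each open set, then glue flows) rather than simply "patch the local uniqueness statements together," since uniqueness of solutions to the continuity equation is not a naively local statement. Finally, your divergence aside is a small red herring: $\mathrm{div}_{x,v}\,b = \mathrm{div}_x b_1(v) + \mathrm{div}_v b_2(x) = 0$ trivially since $b_1$ depends only on $v$ and $b_2$ only on $x$, and in any case the uniqueness argument via superposition does not require $b$ to be divergence-free. The essential missing idea is the trajectory-pair splitting and the $L^1(A_k)$ estimate on $K_1$, which substitutes a soft, kernel-geometry observation for the Calder\'on--Zygmund machinery you were hoping to repair.
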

\begin{proof}
This proof follows along the lines of \cite[Theorem 4.4]{ACF17} where Ambrosio, Colombo, and Figalli, deal with a vector field $b_2$ given by a full convolution. At the same time, it is a variant of a result by Bohun, Bouchut, and Crippa, \cite{BBC16}. 

Since our vector field $b_2$ is \emph{almost} a convolution, we will be able to repeat the proof in \cite[Theorem 4.4]{ACF17} in many steps. For the sake of completeness we repeat the main steps of their proof here, which after small adaptations, help us bound all but the first component of $b_{2}$. In order to get the results for the first component, we need to proceed differently and use the precise form of $b_2$ and the symmetry of the problem with respect to $x_1$. 

For the sake of readability, we do not explicitly write the time dependence on the vector field $b$. Let us denote $\mathscr P(X) $ the space of probability measures on $X$, and $e_t: C([0, T]; \R^k) \to \R^k$ the evaluation map at $t$, $e_t(\eta) := \eta(t)$, where $k = d$ or $2d$ depending on the context. 

Let us denote $\Gamma_T := C([0, T); \R^{2d})$, and $\Gamma_T^x :=\{\gamma\in \Gamma_T : \gamma(0) = x\}$, for $x\in \R^{2d}$. The proof is based on the superposition principle, \cite[Theorem 8.2.1]{AGS05}, which says that nonnegative solutions to the continuity equation $\mu_t^\eta$ (with a suitable vector field) starting from $\bar\mu$ can be represented by probability measures $\eta\in \mathscr P(\Gamma_T)$ concentrated on curves $\gamma$ solutions to the $\dot\gamma(t) = b_t(\gamma(t))$. More precisely, we can write 
\[
\int_{\R^{2d}} \varphi~d\mu_t^\eta = \int_{\Gamma_T} \varphi \circ\gamma (t)~d\eta(\gamma) = \int_{\R^{2d}}\left(\int_{\Gamma_T^x} \varphi(\gamma(t)) d\eta_x(\gamma)\right)d\bar\mu(x),
\]
where $\bar\mu = {e_0}_\#\eta$. We have also considered the disintegration of $\eta$ with respect to the map $e_0$ (i.e., the initial value), so that $\eta = \eta_x\otimes d\bar\mu(x)$. Notice that, in particular, if there was a unique solution to the ODE starting from $x$, then $\eta_x = \delta_{\gamma_x}$, and the unique solution to the continuity equation starting from $\bar \mu$ would be given by $\mu_t = {e_t}_\#\eta$, with $\eta = \delta_{\gamma_x}\otimes d\bar\mu(x)$. 

To prove uniqueness, suppose that $\mu_t^1$ and $\mu_t^2$ are compactly supported solutions to the continuity equation, and let $\eta_1$ and $\eta_2$ be the corresponding measures in $\mathscr{P}(\Gamma_t)$ given by the superposition principle. Define $\eta = \frac{\eta_1+\eta_2}{2}$. If we can prove that $\eta_x$ (the disintegration with respect to $e_0$) is a Dirac delta for $\bar\mu$-a.e. $x$, then this implies that $(\eta_1)_x$ and $(\eta_2)_x$ are also a Dirac delta and hence, $\eta_1 = \eta_2$ and $\mu_1 = \mu_2$.


In our case, we need to consider the extended superposition principle under local integrability bounds, \cite[Theorem 5.1]{ACF17}. Let $B_R \subset \R^d$ and $\eta\in \mathscr P(C([0, T); B_R\times B_R))$ be concentrated on integral curves of the vector field $b$ with the no concentration condition $(e_t)_\# \eta \leq C_0 \mathscr{L}^{2d}$ for any $t\in [0, T]$. Then, arguing as before, by \cite[Theorem 5.1]{ACF17}, in order to show that assumption {\bf (A2)} holds, it is enough to prove that $\eta_x$ is a Dirac delta for ${e_0}_\#\eta$-a.e. $x$.


Let $\delta, \zeta \in (0, 1)$ be two small parameters to be chosen. Let $t \in [0, T]$, $\bar \mu := (e_0)_\# \eta$, and we denote $\gamma(t) = (\gamma^1(t), \gamma^2(t)) \in \R^d\times\R^d$. Define 
\begin{equation}
\label{eq.PHI}
\Phi_{\delta, \zeta}(t) := \iiint \log \left(1+\frac{|\gamma^1(t) -\xi^1(t)|}{\zeta\delta}+\frac{|\gamma^2(t)-\xi^2(t) |}{\delta}\right) d\eta_x(\gamma) d\eta_x(\xi) d\bar \mu(x).
\end{equation}
In order to simplify the notation, we denote $d\mu (x, \xi, \gamma) := d\eta_x (\xi) d\eta_x(\gamma) d\bar \mu(x) $ for $\mu \in \mathscr P (\R^d\times C([0, T) ; \R^d)^2)$. As seen in \cite[Theorem 4.4]{ACF17}, if we assume that $\eta_x$ is not a Dirac delta for $\bar \mu$-a.e. $x$ then there exists some constant $0<a < 2T$ and some $t_0 \in (0, T]$ such that 
\begin{equation}
\label{eq.chicont}
\Phi_{\delta, \zeta}(t_0 ) \geq \frac{a}{2T}\log \left(1+\frac{a}{2\delta T} \right),
\end{equation}
and we want to get a contradiction. By differentiating \eqref{eq.PHI}, we get 
\begin{equation}
\label{eq.ts1}
\frac{d\Phi_{\delta,\zeta}}{dt}(t) \leq \iiint \left(\frac{|b_1 (\gamma^2(t))-b_1 (\xi^2(t))}{\zeta(\delta +|\gamma^2(t) - \xi^2(t)|)}+\frac{\zeta |b_2 (\gamma^1(t))-b_2 (\xi^1(t))}{\zeta\delta +|\gamma^1(t) - \xi^1(t)|}\right) d\mu (x, \xi, \gamma).
\end{equation}
The first term in the previous sum can be bounded by means of the Lipschitz regularity of $b_1$ in $B_R$ as 
\begin{equation}
\label{eq.ts2}
\iiint \frac{|b_1 (\gamma^2(t))-b_1 (\xi^2(t))}{\zeta(\delta +|\gamma^2(t) - \xi^2(t)|)}d\mu (x, \xi, \gamma) \leq \frac{\|\nabla b_1\|_{L^\infty(B_R)}}{\zeta}.
\end{equation}
For the second term, we will see that, as in the proof of \cite[Theorem 4.4]{ACF17}, in order to get a contradiction it is enough to show 
\begin{equation}
\label{eq.toshow}
\iiint \frac{|b_2 (\gamma^1(t)) - b_2 (\xi^1(t))|}{\zeta\delta + |\gamma^1(t) - \xi^1(t)|} d\mu(x, \xi, \gamma) \leq C\left(1+\log\left(\frac{1}{\zeta\delta}\right)\right)
\end{equation}
for some constant $C$ depending only on $d$, $|\rho|(\R^d)$, and $R$. Notice that we just need to bound
\begin{equation}
\label{eq.toshow_2}
\iiint_{\gamma^1_1(t) \xi_1^1(t) < 0} \frac{|b_2 (\gamma^1(t)) - b_2 (\xi^1(t))|}{\zeta\delta + |\gamma^1(t) - \xi^1(t)|} d\mu(x, \xi, \gamma),
\end{equation}
where $\gamma^1_1(t)$ and $\xi_1^1(t)$ denote the first component of $\gamma^1(t)$ and $\xi^1(t)$ respectively. Indeed, if $\gamma^1_1(t)\xi_1^1(t) > 0$ then 
\[
|b_2 (\gamma^1(t)) - b_2 (\xi^1(t))| = |K * \rho (\gamma^1(t)) - K * \rho (\xi^1(t))|,
\]
and we are in the situation treated in the proof of \cite[Theorem 4.4]{ACF17}, where the authors deal with vector fields given by full convolutions. 

Now suppose $j \in \{2,\dots,d\}$ fixed. Let us show that,
\begin{equation}
\iiint_{\gamma^1_1(t) \xi_1^1(t) < 0} \frac{|b_2^{(j)} (\gamma^1(t)) - b_2^{(j)} (\xi^1(t))|}{\zeta\delta + |\gamma^1(t) - \xi^1(t)|} d\mu(x, \xi, \gamma)\leq C\left(1+\log\left(\frac{1}{\zeta\delta}\right)\right),
\end{equation}
where $b_2(x) = (b_2^{(1)}(x), \dots, b_2^{(d)}(x))$. We will also denote $K = (K_1, \dots, K_d)  = (x_1/|x|^d,\dots,x_d/|x|^d)$ (as an abuse of notation, here $x\in \R^d$, while in the integral, $x\in\R^{2d}$).

It follows by noticing that under these hypotheses, $b_2^{(j)}(x) = b_2^{(j)}(x')$, $|b_2^{(j)} (\gamma^1(t)) - b_2^{(j)} (\xi^1(t))| = |b_2^{(j)} (\gamma^1(t)) - b_2^{(j)} \big(\xi^1(t)'\big)| = |K_j * \rho (\gamma^1(t)) - K_j * \rho (\xi^1(t)')|$, and $|\gamma^1(t) - \xi^1(t)| \geq |\gamma^1(t) - \xi^1(t)'|$, so that 
\begin{align*}
\iiint_{\gamma^1_1(t) \xi_1^1(t) < 0}&  \frac{|b_2^{(j)} (\gamma^1(t)) - b_2^{(j)} (\xi^1(t))|}{\zeta\delta + |\gamma^1(t) - \xi^1(t)|} d\mu(x, \xi, \gamma)\leq
\\ & \leq \iiint_{\gamma^1_1(t) \xi_1^1(t) < 0} \frac{|K_j * \rho (\gamma^1(t)) - K_j * \rho (\xi^1(t)')|}{\zeta\delta + |\gamma^1(t) - \xi^1(t)'|} d\mu(x, \xi, \gamma),
\end{align*}
and it follows again as in \cite[Theorem 4.4]{ACF17}. We have critically used here that $\rho$ is odd with respect to $x_1$. The fact that we are integrating with respect to $\xi_1$ and not $\xi_1'$ does not play a role in the proof of \cite[Theorem 4.4]{ACF17}.

In all, we just need to bound 
\begin{align*}
& \iiint_{\gamma^1_1(t) \xi_1^1(t) < 0}  \frac{|K_1 * \rho (\gamma^1(t)) +  K_1 * \rho (\xi^1(t))|}{\zeta\delta + |\gamma^1(t) - \xi^1(t)|} d\mu(x, \xi, \gamma) \leq\\
& ~~~~~~~~~~\leq \iiint_{\gamma^1_1(t) \xi_1^1(t) < 0}  \frac{|K_1 * \rho (\gamma^1(t))|}{\zeta\delta + |\gamma^1_1(t)|} d\mu + \iiint_{\gamma^1_1(t) \xi_1^1(t) < 0}  \frac{|K_1 * \rho (\xi^1(t))|}{\zeta\delta + |\xi^1_1(t)|} d\mu.
\end{align*}
We are using here that, since $\gamma_1^1(t) \xi_1^1(t) < 0$ then $ |\gamma^1(t) - \xi^1(t)| \geq |\gamma_1^1(t)|+|\xi_1^1(t)|$. By symmetry, it will be enough to bound the first term in the previous sum. Thanks to the no-concentration condition $(e_t)_\# \eta \leq C_0 \mathscr{L}^d$,
\[
\iiint_{\gamma^1_1(t) \xi_1^1(t) < 0}  \frac{|K_1 * \rho (\gamma^1(t))|}{\zeta\delta + |\gamma^1_1(t)|} d\mu \leq C_0 \mathscr{L}^d(B_R) \int_{B_R}\frac{|K_1 * \rho(x)|}{\zeta\delta + |x_1|} dx.
\]

Let us find a bound of the kind \eqref{eq.toshow} for $R = 1$ (other values of $R > 0$ follow analogously). We define $I := \int_{B_1} |K_1* \rho(x)| (\zeta\delta + |x_1|)^{-1}dx$, and let $A_k := [0, 2^{-k}]\times B_1^{(d-1)}$, where $B_1^{(d-1)}$ denotes the $d-1$ dimensional unit ball. Define also $U_k := [2^{-k}, 2^{-k+1}]\times B_1^{(d-1)}$. Then, for any $N\in \N$, 
\begin{align}
\label{eq.charact1}I \leq \int_{A_0}  \frac{|K_1 * \rho(x)|}{\zeta\delta + |x_1|} dx & = \left\|\frac{K_1*\rho}{\zeta\delta+|x_1|}\right\|_{L^1(A_0, \mathscr{L}^d)}\\
& \nonumber\leq \frac{1}{\zeta\delta}\left\|K_1*\rho\right\|_{L^1(A_N, \mathscr{L}^d)}+\sum_{k = 1}^N \frac{1}{\zeta\delta + 2^{-k}}\left\|K_1*\rho\right\|_{L^1(U_k, \mathscr{L}^d)}.
\end{align}

Notice that $\left\|K_1*\rho\right\|_{L^1(U_k, \mathscr{L}^d)} \leq \left\|K_1*\rho\right\|_{L^1(A_{k-1}, \mathscr{L}^d)}$. We will see that it is enough to bound $\left\|K_1*\rho\right\|_{L^1(A_{k}, \mathscr{L}^d)}$ for any $k\in \N\cup\{0\}$. Indeed,
\begin{align*}
\left\|K_1*\rho\right\|_{L^1(A_{k}, \mathscr{L}^d)} = \int_{A_k\times\R^d} |K_1(x-z)\rho(z)|\,dz\,dx  \leq 2\int_{\R^d}|\rho|\int_{A_k} |K_1| = 2|\rho|(\R^d)\|K_1\|_{L^1(A_k,\mathscr{L}^d)}.
\end{align*}
Also, with the notation $\|K_1\|_{L^1_k}:= \|K_1\|_{L^1(A_k,\mathscr{L}^d)}$,
\begin{equation}
\label{eq.K1ineq}
\begin{split}
\|K_1\|_{L^1_k}  = \int_0^{2^{-k}}x_1\int_{B_1^{(d-1)}} \frac{dx_2\dots dx_d}{|x|^d} dx_1 = \int_{[0,2^{-k}]\times B_{1/x_1}^{(d-1)}} \frac{dy_2\dots dy_d}{\left(1+y_2^2+\dots+y_d^2\right)^{d/2}} dx_1 \leq C 2^{-k},
\end{split}
\end{equation}
for some constant $C_d$ that depends only on the dimension $d$. Thus, for any $N \geq 1$,
\begin{equation}
\label{eq.charact2}
 I \leq \frac{C}{2^N\zeta\delta}+ C\sum_{k = 1}^N \frac{2^{-k+1}}{\zeta\delta + 2^{-k}} \leq \frac{C}{2^N\zeta\delta} + CN.
\end{equation}
Choose $N$ such that $2^N \zeta\delta = 1$, so that $N = C\log\left(\frac{1}{\zeta\delta}\right)$, then
\[
 I \leq C\left(1 + \log\left(\frac{1}{\zeta\delta}\right)\right).
\]
This proves the bound \eqref{eq.toshow}. From here, one can proceed as in \cite[Theorem 4.4]{ACF17} to get a contradiction with \eqref{eq.chicont}. Indeed, combining \eqref{eq.ts1}, \eqref{eq.ts2}, and \eqref{eq.toshow} we have 
\begin{equation}
\label{eq.enoughuniq}
\frac{d\Phi_{\delta, \zeta}}{dt}(t) \leq C\left(\frac{1}{\zeta}+\zeta+\zeta\log\left(\frac{1}{\zeta\delta}\right)\right),
\end{equation}
for some $C$ that depends only on $d$, $R$, $|\rho|(\R^d)$, and $\|\nabla b_1\|_{L^\infty(\R^d)}$. Integrating from $0$ to $t_0$, and using $\Phi_{\delta,\zeta}(0) = 0$ ($\eta$ is concentrated on curves with fixed initial datum), we reach 
\[
 \Phi_{\delta, \zeta}(t_0) \leq Ct_0 \left(\frac{1}{\zeta}+\zeta+\zeta\log\left(\frac{1}{\delta}\right)+\zeta\log\left(\frac{1}{\zeta}\right)\right).
\]
Now choosing $\zeta > 0$ such that $Ct_0 \zeta < \frac{a}{2T}$, and letting $\delta \downarrow 0$ we get a contradiction with \eqref{eq.chicont}. 
\end{proof}

\section{Existence of solutions for Problem B}
\label{sec.PbB}
In this section we want to show the existence of renormalized solutions to Problem B, and its Lagrangian structure. In particular, we will show the analogous of Theorems~\ref{thm.main1_Om} and \ref{thm.main2_d} for Problem B, \eqref{eq.B}. 

\subsection{A regularised problem}
\label{ssec.regpb}
In this subsection we prove the existence and conservation of energy of solutions to a regularisation of Problem B, \eqref{eq.B}. 

In order to find a solution to Problem B we will need to solve regularised versions of the same problem to generate an approximating sequence. In the classical Vlasov--Poisson in the whole space, the existence and conservation of energy of solutions to a regularised problem (obtained by regularisation of the convolution kernel) is known (see, e.g., \cite{Dob79}), and follows by a fixed point argument in the Wasserstein metric. 

In this case, the same approximation also works. The only detail one has to consider is the choice of the regularisation of Problem B. Moreover, a small error will appear on the conservation of energy coming from the regularisation of the odd density.

\begin{thm}
\label{thm.regvp}
Let $h_0(x, v) = h_0\in C^\infty_c(\R^d\times \R^d)$ be even with respect to $(x_1, v_1)$, that is, $h_0(x, v) =  h_0 (x', v')$. Let $\bar H\in C^\infty(\R^d)$ be a rotationally invariant ($\bar H(x) = \bar H (|x|)$) regularisation of $H(x) = \frac{c_d}{d-2}|x|^{2-d}$, and let $\bar s\in C^\infty(\R)$ be an odd regularisation of ${\rm sgn}(x_1)$, the sign function. Then, the following problem has an even (with respect to $(x_1, v_1)$) distributional solution $\bar g_t= \bar g(t, x, v)$,
\begin{equation}
\label{eq.B_reg}
  \left\{ \begin{array}{ll}
  \de_t \bar g_t + v\cdot \nabla_x \bar g_t + \bar E_t\cdot\nabla_v \bar g_t=0 & \textrm{ in } (0,\infty)\times\R^d\times\R^d\\
    \bar \rho_t(x)=\int_{\R^d} \bar g_t(x, v) dv,\quad \quad \bar \rho_t^o(x) = {\rm sgn}(x_1) \bar\rho_t(x) & \textrm{ in } (0,\infty)\times\R^d,\\
\bar E_t(x)= -\bar s (x_1) \big(\nabla \bar H * (\bar \rho^o_t) \big)(x)  \\
~~~~~~~~~~~~~~~~~~= - \bar s (x_1)  \int \nabla \bar H (x-y)\bar \rho_t^o(y)dy   & \textrm{ in } (0,\infty)\times\R^d\\
\bar g_0 = h_0& \textrm{ in } \R^d\times\R^d.\\
  \end{array}\right.
\end{equation}
Moreover,
\begin{align}
\label{eq.energy}\int_{\R^d\times\R^d} & |v|^2 \bar g_t(x, v)\,dx\,dv  + \int_{\R^d} \Big[\bar H * \bar \rho_t^o\Big](x)\bar \rho_t^o(x)\,dx = \\
& \nonumber = \int_{\R^d\times\R^d} |v|^2 \bar g_0(x, v)\,dx\,dv + \int_{\R^d} \Big[\bar H * \bar \rho_0^o\Big](x) \bar \rho_0^o(x)\,dx + \int_0^t \mathcal{K}_\tau(\bar s, \bar H, h_0)\,d\tau
\end{align}
 for any $t >0$, where $\mathcal{K}_\tau$ is defined as
 \begin{equation}
 \label{eq.mk}
 \mathcal{K}_\tau(\bar s, \bar H, h_0) = 2 \int_{\R^d\times\R^d} \big({\rm sgn}(x_1)- \bar s(x_1) \big) \,v\cdot \Big[\nabla \bar H * \bar \rho_\tau^o\Big](x)\bar g_\tau \,dx\,dv,
 \end{equation}
for $\bar g_\tau$ the solution to \eqref{eq.B_reg} with initial datum $h_0$. 
\end{thm}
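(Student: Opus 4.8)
The plan is to run the classical Dobrushin fixed--point scheme for regularised Vlasov--Poisson, adapted to the odd/even symmetry of Problem~B, and then to carry out the energy computation, which is where the error term $\mathcal{K}_\tau$ comes from.

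\textbf{Existence.} The first point is that $\bar g_t\mapsto\bar E_t$ is linear and, on configurations supported in a fixed ball, bounded into the vector fields that are bounded and locally Lipschitz in $x$: writing
\[
\bar E_t(x) = -\bar s(x_1)\int_{\R^{2d}}\nabla\bar H(x-y)\,{\rm sgn}(y_1)\,\bar g_t(y,w)\,dy\,dw,
\]
and using that $\nabla\bar H\in C^\infty$ is bounded and decaying while $\bar s\in C^\infty\cap W^{1,\infty}$, one gets $\bar E_t$ bounded and locally Lipschitz in $x$, uniformly in $t$, with constants controlled by $\|\bar g_t\|_{L^1}$ and the size of the support. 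Since the force component of $b_t(x,v)=(v,\bar E_t(x))$ is bounded, $|V(t)|\le|v|+t\|\bar E\|_{L^\infty}$, so on every $[0,T]$ the characteristic flow keeps ${\rm supp}\,h_0$ inside a fixed ball and the problem is self--contained there. One then runs the standard contraction: the map $\Lambda$ that, given a curve of measures $\mu$, solves the ODE generated by the field associated with $\mu$ and pushes $h_0\,\mathscr{L}^{2d}$ forward is, by Gronwall and the Dobrushin estimate $W_1(\Lambda\mu_t,\Lambda\nu_t)\le C\int_0^t W_1(\mu_s,\nu_s)\,ds$, a contraction on short time intervals, hence has a unique fixed point $\bar g_t$; here the $W_1$--Lipschitz dependence of $\bar E$ on the density is clean once one restricts to even measures, because for even $\bar g_t$ the odd reflection turns $\nabla\bar H(x-y)\,{\rm sgn}(y_1)$ into the \emph{smooth} kernel $\nabla\bar H(x-y)-\nabla\bar H(x-y')$ integrated over $\{y_1>0\}$. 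Iterating in time gives a global solution, and since $b$ is divergence--free in $(x,v)$ (as ${\rm div}_v\bar E=0$) the flow is measure preserving, so $\bar g_t$ is the push--forward of $h_0\,\mathscr{L}^{2d}$; in particular it is an $L^\infty$ (indeed $C^1$, since $\bar E$ is $C^\infty$ in $x$) distributional solution of \eqref{eq.B_reg}, weakly$^\ast$ continuous in time, with $\|\bar g_t\|_{L^1}=\|h_0\|_{L^1}$ and $\|\bar g_t\|_{L^\infty}=\|h_0\|_{L^\infty}$ for all $t$.

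\textbf{Symmetry.} The involution $(x,v)\mapsto(x',v')$ is an exact symmetry of \eqref{eq.B_reg}: if $\bar g_t$ solves it, so does $(x,v)\mapsto\bar g_t(x',v')$, because the induced odd density transforms as $\bar\rho^o\mapsto-\bar\rho^o(\cdot')$ and, since $\bar H$ is rotationally invariant and $\bar s$ is odd, the induced field satisfies $(\bar E_t(x))'=\bar E_t(x')$. By uniqueness of the fixed point and evenness of $h_0$, $\bar g_t(x,v)=\bar g_t(x',v')$ for every $t$. Hence $\bar\rho_t$ is even in $x_1$, $\bar\rho_t^o$ is odd in $x_1$, and the current $\bar j_t(x):=\int_{\R^d}v\,\bar g_t(x,v)\,dv$ has $\bar j_{t,1}$ odd in $x_1$ (so $\bar j_{t,1}\equiv0$ on $\{x_1=0\}$) and $\bar j_{t,i}$ even in $x_1$ for $i\ge2$.

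\textbf{Energy identity.} Differentiating the kinetic energy using \eqref{eq.B_reg} and integrating by parts in $v$ (the $v\cdot\nabla_x$ term is a perfect $x$--divergence and drops, and $\bar E_t$ is $v$--independent),
\[
\frac{d}{dt}\int_{\R^{2d}}|v|^2\bar g_t\,dx\,dv = 2\int_{\R^{2d}}v\cdot\bar E_t(x)\,\bar g_t\,dx\,dv = -2\int_{\R^{2d}}\bar s(x_1)\,v\cdot\big(\nabla\bar H*\bar\rho_t^o\big)(x)\,\bar g_t\,dx\,dv.
\]
For $W(t):=\int_{\R^d}\big(\bar H*\bar\rho_t^o\big)\,\bar\rho_t^o\,dx$, integrating \eqref{eq.B_reg} in $v$ gives $\partial_t\bar\rho_t=-{\rm div}_x\bar j_t$, and multiplying by ${\rm sgn}(x_1)$ produces \emph{no} Dirac mass on $\{x_1=0\}$ — precisely because $\bar j_{t,1}$ vanishes there and the other components carry no $x_1$--derivative — so $\partial_t\bar\rho_t^o=-{\rm div}_x\bar j_t^o$ with $\bar j_t^o:={\rm sgn}(x_1)\bar j_t$. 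Using the symmetry of $\bar H$ (so $\int(\bar H*f)g=\int(\bar H*g)f$) and that $\nabla\bar H$ is odd,
\[
\frac{dW}{dt} = 2\int_{\R^d}\big(\bar H*\partial_t\bar\rho_t^o\big)\,\bar\rho_t^o\,dx = -2\int_{\R^d}\big(\nabla\bar H*\bar j_t^o\big)\,\bar\rho_t^o\,dx = 2\int_{\R^d}\bar j_t^o\cdot\big(\nabla\bar H*\bar\rho_t^o\big)\,dx,
\]
and since $\bar j_t^o(x)={\rm sgn}(x_1)\int v\,\bar g_t\,dv$ this equals $2\int_{\R^{2d}}{\rm sgn}(x_1)\,v\cdot\big(\nabla\bar H*\bar\rho_t^o\big)(x)\,\bar g_t\,dx\,dv$. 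Adding the two contributions, the total energy $\mathcal E(t)=\int|v|^2\bar g_t+W(t)$ satisfies
\[
\frac{d\mathcal E}{dt} = 2\int_{\R^{2d}}\big({\rm sgn}(x_1)-\bar s(x_1)\big)\,v\cdot\big(\nabla\bar H*\bar\rho_t^o\big)(x)\,\bar g_t\,dx\,dv = \mathcal{K}_t(\bar s,\bar H,h_0),
\]
and integrating from $0$ to $t$ yields \eqref{eq.energy}. (If $\bar s$ were the true sign function, $\mathcal K\equiv0$ and the energy would be conserved; the error is exactly the discrepancy ${\rm sgn}-\bar s$ from regularising the odd density.)

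\textbf{Main obstacle.} Existence itself is Dobrushin's scheme essentially verbatim once $\bar E$ is recognised as a bounded linear functional of the density; the genuinely new points are (i) checking that the even symmetry propagates, so that $\bar\rho_t^o$, $\bar j_{t,1}$, $\bar E_t$ have the parities used above, and (ii) the energy computation, whose only delicate step is that multiplying the continuity equation for $\bar\rho_t$ by the discontinuous ${\rm sgn}(x_1)$ produces no boundary (Dirac) term — this uses decisively the oddness of $\bar\rho_t^o$, equivalently the vanishing of $\bar j_{t,1}$ on $\{x_1=0\}$. The integrations by parts above are legitimate because $\bar E$ is $C^\infty$ in $x$ and, on each finite time interval, everything is compactly supported in $(x,v)$.
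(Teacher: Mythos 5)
Your proof is correct and follows essentially the same route as the paper: a Dobrushin/Picard iteration in the $W_1$ metric for existence (with the same observation that evenness of the density lets the odd reflection be absorbed into a smooth, $n$-independent kernel), propagation of the $(x_1,v_1)$-parity by uniqueness, and the energy identity by differentiating and integrating by parts. The one point where you say more than the paper is the energy step, which the paper defers to the domain analogue (Theorem~\ref{thm.regvp_d}, Step~2); your direct computation is valid, and you correctly isolate the key fact that ${\rm sgn}(x_1)\,\partial_t\bar\rho_t = -{\rm div}_x\bigl({\rm sgn}(x_1)\bar j_t\bigr)$ with no Dirac surface term because $\bar j_{t,1}$ is odd in $x_1$.
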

\begin{proof}
We divide the proof into the two parts of the statement. In the first step we prove the existence of a distributional solution, while in the second step we check that the energy defined in \eqref{eq.energy} is conserved in time up to an error. 
\\[0.2cm]
{\bf Step 1: Existence.} To prove existence we proceed with a standard fixed point argument where we build functions iteratively that converge to a distributional solution to the previous problem. 

Let $T_0 > 0$ to be chosen, and let $\mu_t^{n+1}:(0, T_0)\times\R^d\times\R^d$ for $n \in \N\cup\{0\}$ defined iteratively as the solution to  
\begin{equation}
\label{eq.B_reg_n}
  \left\{ \begin{array}{ll}
  \de_t \mu^{n+1}_t + v\cdot \nabla_x \bar \mu^{n+1}_t + E_t^n\cdot\nabla_v \mu^{n+1}_t=0 & \textrm{ in } (0,T_0)\times\R^d\times\R^d\\
    \rho_t^n(x)=\int_{\R^d} \mu^{n}_t(x, v) dv,\quad  \rho_t^{o,n}(x)={\rm sgn}(x_1)\rho_t^n(x) & \textrm{ in } (0,T_0)\times\R^d\\
E^n_t(x)= -\bar s (x_1) \big(\nabla \bar H * (\rho^{o,n}_t) \big) & \textrm{ in } (0,T_0)\times\R^d\\
\mu^{n+1}_0 = h_0& \textrm{ in } \R^d\times\R^d,\\
  \end{array}\right.
\end{equation}
with $\mu^0_t = h_0$ for $t \in (0, T_0)$. By standard Cauchy-Lipschitz theory, if $b^n_t(x, v) = (v, E^n_t(x))$, then there exists a regular flow $Z_n : [0, T_0]\times\R^d\times\R^d\to \R^d\times\R^d$, $Z_n(t) = (X_n(t), V_n(t))$, such that it solves
\begin{equation}
  \left\{ \begin{array}{ll}
\frac{d}{dt}Z_n(t) = b_t^n(Z_n(t))& \quad \textrm{in}\quad (0, T_0)\times\R^d\times\R^d\\
    Z_n(0)(x, v) = (x, v),& \quad \textrm{in}\quad \R^d\times\R^d.\\
  \end{array}\right.
\end{equation}
and the solution $\mu_t^{n+1}$ is given by the push-forward $\mu_t^{n+1} = Z_n(t)_\# h_0$.

We will prove that $\mu_t^{n+1}$ converge to some $\mu_t$ a distributional solution to the continuity equation. Since each $\mu_t^{n+1}$ is even with respect to $(x_1, v_1)$  by construction and uniqueness of \eqref{eq.B_reg_n}, if the limit $\mu_t$ exists, it  must also be even with respect to $(x_1, v_1)$.

To do so, we study the convergence of the flows in the $L^1$ norm. Before doing that, let us define the following distance from the Wasserstein metric $W_1$. That is, given $\nu_1, \nu_2 \in L^\infty((0, T_0); \mathcal{M}_+(\R^d\times\R^d))$ such that $\nu_1(t)(\R^d\times\R^d) = \nu_2(t)(\R^d\times\R^d) = C$ with $C$ independent of time, we define 
\begin{align*}
W_1^{T_0}(\nu_1, \nu_2) & := \sup_{t\in [0, T_0]} W_1(\nu_1(t), \nu_2(t))\\
& = \sup_{t\in [0, T_0]} \sup\left\{\int_{\R^d\times\R^d} \varphi (x, v) d(\nu_1 - \nu_2)(x, v) : {\rm Lip}(\varphi) \leq 1\,\textrm{in}\,\,\R^d\times\R^d\right\}.
\end{align*}
Analogously we also define $W_1^{T_0}(\rho_1,\rho_2)$ for $\rho_1, \rho_2 \in L^\infty((0, T_0); \mathcal{M}_+(\R^d))$. We want to compute $W_1^{T_0}(\mu^{n+1}, \mu^n)$ (notice that conservation of mass for $\mu^n$ follows from the fact that the vector field $(v, E_t^n)$ is divergence-free). Let us call $L_1^d := \{\varphi \in C^0(\R^d) : {\rm Lip}(\varphi) \leq 1\}$, and fix $t\in [0, T_0]$.

Notice that
\begin{align}
\label{eq.w1pp} W_1( \mu^{n+1}_t ,\mu^n_t)   
 & = \sup_{\varphi\in L_1^{2d}} \int \left\{\varphi(Z_n(t))-\varphi(Z_{n-1}(t)) \right\}dh_0 \leq \int |Z_n(t)-Z_{n-1}(t)|dh_0,
\end{align}
so that, in particular, 
\begin{equation}
\label{eq.w1pp2} 
\mathscr{Z}_n^{T_0}:= \sup_{t\in[0,T_0]} \int |Z_n(t)-Z_{n-1}(t)|dh_0 \geq W_1^{T_0}( \mu^{n+1}_t ,\mu^n_t).
\end{equation}

This is the term whose convergence we want to study. On the other hand, 
\begin{align*}
\int |   Z_n  (t) -Z_{n-1}&(t)|\,dh_0  \le \int \left\{\int_0^t |b_s^n(Z_n(s))-b_s^n(Z_{n-1}(s))|ds\right\}dh_0\\
& \leq t \mathscr{Z}_n^{t} + \int_0^t \left\{ \int\left|E_s^n(X_{n}(s)) - E_s^{n}(X_{n-1}(s)) \right|\,dh_0 +\int \left|E_s^n - E_s^{n-1}\right| d\mu_s^{n}\right\}ds,
 \end{align*}
from which
\begin{equation}
\label{eq.supbs}
\int |   Z_n (t) -Z_{n-1}(t)|\,dh_0 \leq C t \mathscr{Z}_n^{t}+ th_0(\R^{2d})\sup_{s\in [0, t], x\in \R^d} |E_s^n(x) - E_s^{n-1}(x)|.
\end{equation}
We have used here that $E_s^n$ are uniformly Lipschitz independently of $s$ and $n$ by construction, and that the total mass is fixed for any $n$ and for all times in $[0, T_0]$. The constant $C$, then, is fixed depending only on the regularised functions $\bar H$ and $\bar s$. 

We can now compute  
\begin{align*}
&\sup_{s\in [0, t], x\in \R^d} |E_s^n(x)  - E_s^{n-1}(x)| = \\
& ~~~~~~~~~ = \sup_{s\in [0, t], x\in \R^d} \left|\int \bar s(x_1)\nabla \bar H(x-y)\left[\rho^{o,n}_s(y)-\rho^{o,n-1}_s(y)\right] dy\right| \leq CW_1^t(\rho^{o,n}, \rho^{o,n-1}),
\end{align*}
where we have used that $\bar s$ and $\nabla \bar H$ are globally Lipschitz, and they do not depend on $n$. Let us now see that, for any $s\in [0,t]$,
\begin{equation}
\label{eq.arg1}
W_1(\rho^{o,n}(s), \rho^{o,n-1}(s)) \leq W_1(\rho^{n}(s), \rho^{n-1}(s)).
\end{equation}

Indeed, since $\rho^{o,n}_s-\rho^{o,n-1}_s$ is odd with respect to $x_1$, then for any $\varphi \in L_1^d$, 
\begin{align*}
\int_{\R^d}\varphi(y)\left(\rho^{o,n}_s-\rho^{o,n-1}_s\right)(y) dy & = \int_{\R^d}-\varphi(y')\left(\rho^{o,n}_s-\rho^{o,n-1}_s\right)(y) dy \\
& = \int_{\R^d}\frac{\varphi(y)-\varphi(y')}{2}\left(\rho^{o,n}_s-\rho^{o,n-1}_s\right)(y) dy,
\end{align*}
and therefore
\begin{align*}
W_1(\rho^{o,n}(s), \rho^{o,n-1}(s)) 
& = \sup_{\varphi\in L_1^d} \left\{\int_{\R^d}{\rm sgn}(y_1)\frac{\varphi(y)-\varphi(y')}{2}\left(\rho^{n}_s-\rho^{n-1}_s\right)(y) dy\right\}  \leq W_1(\rho^{n}(s), \rho^{n-1}(s)),
\end{align*}
where we have used that if $\varphi\in L_1^d$ then ${\rm sgn}(y_1)\frac{\varphi(y)-\varphi(y')}{2}\in L_1^d$. Thus,
\[
\sup_{s\in [0, t], x\in \R^d} |E_s^n(x)  - E_s^{n-1}(x)|\leq C W_1^t(\rho^{o,n}, \rho^{o,n-1}) \leq C W_1^t(\mu^{n}, \mu^{n-1})\leq C\mathscr{Z}_{n-1}^t,
\]
where in the last inequality we have used \eqref{eq.w1pp2}.

Putting all together and taking the supremum over $t\in [0, T_0]$ in \eqref{eq.supbs} we get 
\[
\mathscr{Z}^{T_0}_n \leq \frac{CT_0}{1-CT_0} \mathscr{Z}^{T_0}_{n-1}
\]
for some constant $C$ depending only on the choice of regularisation functions $\bar s$ and $\bar H$, and on $h_0(\R^{2d})$. Thus, for some $T_0>0$ small enough we have constructed a contraction for the sequence $(\mathscr{Z}^{T_0}_n)_n$, and since $T_0$ is independent of $n$ we can repeat the argument to reach any positive time. In particular, this yields the $L^1$ convergence of flows with respect to $n$. Using the bound \eqref{eq.w1pp2}, we get a limiting measure $\mu_t$ of the sequence $\mu^n_t$. 

One can easily check that $E^n_t(x)$ converge uniformly (in time and space) to 
\[
E_t(x) = \bar s(x_1) \left\{\nabla \bar H * \rho_t^o\right\}(x),
\]
where $\rho_t^o = {\rm sgn}(x_1)\int \mu_t dv$. Thus, taking limits in \eqref{eq.B_reg_n} we obtain that $\mu_t$ solves \eqref{eq.B_reg} in the distributional sense, and is even with respect to $(x_1, v_1)$ by construction, so that we have constructed our solution $\bar g_t$. Note, moreover, that by Cauchy-Lipschitz theory, $\bar g_t$ is smooth.
\\[0.2cm]
{\bf Step 2: Conservation of energy.} This is standard. We refer the reader to the proof of Step 2 in Theorem~\ref{thm.regvp_d} for a similar situation.
\end{proof}

\subsection{Existence of solutions}
We start by introducing a rather general result involving either bounded or renormalized solutions to the reflected Vlasov--Poisson problem \eqref{eq.B}. 

This theorem essentially uses the results of \cite{ACF17}, where the authors establish a general principle on the conditions necessary to have equivalence between renormalized and Lagrangian solutions. They present an analogous statement for the Vlasov--Poisson system without boundary in \cite[Theorem 2.2]{ACF17}.

In particular we use \cite[Theorem 5.1]{ACF17}. This result proved by Ambrosio, Colombo, and Figalli, states that bounded or renormalized solutions to a continuity equation whose vector field satisfies certain conditions, are transported by the Maximal Regular Flow. Thus, we just need to check that solutions to our reflected Vlasov--Poisson system \eqref{eq.B} fulfil the hypotheses from \cite[Theorem 5.1]{ACF17}.

\begin{thm}
\label{thm.22}
Let $T > 0$, and $g_t \in L^\infty((0, T); L^1_+(\R^{2d}))$ a weakly continuous function. Suppose that 
\begin{enumerate}[(i)]
\item either $g_t \in L^\infty((0, T); L^\infty(\R^{2d}))$ is a distributional solution to the reflected Vlasov--Poisson system, Problem B \eqref{eq.B}.
\item or $g_t$ is a renormalized solution of the reflected Vlasov--Poisson system, Problem B \eqref{eq.B} (see Definition~\ref{defi.WS}). 
\end{enumerate}
Then $g_t$ is a Lagrangian solution transported by the Maximal Regular Flow associated to the vector field $b_t$; and in particular, $g_t$ is renormalized. 
\end{thm}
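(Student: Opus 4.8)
The goal is to verify that solutions to Problem B satisfy the hypotheses of \cite[Theorem 5.1]{ACF17}, and then quote that theorem. The key structural facts we already have in hand are: the uniqueness of the continuity equation for the relevant vector field (Theorem~\ref{thm.charact}, applied with $b_{1,t}(v)=v$ and $\rho=\tilde\rho_t$ the odd density, which is admissible since $|\tilde\rho_t|\in L^\infty((0,T);\mathcal M_+(\R^d))$ because $g_t\in L^\infty((0,T);L^1_+(\R^{2d}))$); the local integrability $\tilde E\in L^\infty((0,T);L^p_{\rm loc}(\R^d))$ for some $p>1$ (the initial claim \eqref{eq.claimE} in the proof of Lemma~\ref{lem.testfunctions}); and the fact that $\tilde b_t=(v,\tilde E_t)$ is divergence-free, so any regular or maximal regular flow associated to it is incompressible. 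So the first step is simply to collect these and check them against the list of assumptions (A1)--(A2) plus the growth/integrability conditions required in \cite[Theorem 5.1]{ACF17}: (A1) follows from \eqref{eq.claimE} together with the trivial bound $\int_0^T\int_{B_R}|v|\,dx\,dv<\infty$; (A2) is exactly Theorem~\ref{thm.charact}.

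\textbf{Main steps.} First I would record that by Theorem~\ref{thm.charact} the vector field $\tilde b_t(x,v)=(v,\tilde E_t(x))$ coming from \eqref{eq.B2} satisfies (A2), and by \eqref{eq.claimE} it satisfies (A1); hence by \cite[Theorem 4.2]{ACF17}/\cite{ACF15} there exists a unique Maximal Regular Flow $(X,V)$ associated to $\tilde b_t$, which is incompressible because $\mathrm{div}_{x,v}\tilde b_t=0$ (so the constant $C$ in Definition~\ref{defi.mrf_A}(ii) equals $1$ and mass is exactly transported). Second, in case (i) the function $g_t$ is a bounded distributional solution, which is the hypothesis of \cite[Theorem 5.1]{ACF17} directly; in case (ii), $g_t$ is a renormalized solution in the sense of Definition~\ref{defi.WS}, which again matches the renormalized-solution hypothesis of that theorem. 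Third, I would invoke \cite[Theorem 5.1]{ACF17} to conclude that $g_t$ is a Lagrangian solution transported by the Maximal Regular Flow: $g_t=(X(t,0,\cdot,\cdot),V(t,0,\cdot,\cdot))_\# g_0$ on the set of $(x,v)$ whose trajectory is defined up to time $t$. Finally, being transported by an incompressible flow, $\beta(g_t)$ is transported as well for every $\beta\in C^1\cap L^\infty$, so $g_t$ is renormalized (this last implication is the standard consequence of the Lagrangian representation and incompressibility).

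\textbf{The main obstacle.} The only genuine content beyond bookkeeping is making sure Theorem~\ref{thm.charact} really applies with the correct choice of parameters: one needs $b_{1,t}(v)=v\in L^\infty((0,T);W^{1,\infty}_{\rm loc})$ (immediate) and, more delicately, that $\tilde\rho_t={\rm sgn}(x_1)\int g_t\,dv$ is an odd measure with $|\tilde\rho_t|\in L^\infty((0,T);\mathcal M_+(\R^d))$ uniformly in $t$ — this follows from $g_t\in L^\infty((0,T);L^1_+(\R^{2d}))$ and Fubini, with $\tilde\rho_t(\{x_1=0\})=0$ automatic since it is absolutely continuous in $x$. I also need to check that the ``effective'' vector field used in the continuity-equation formulation (in the sense of the extended superposition principle under the no-concentration bound $(e_t)_\#\eta\le C_0\mathscr L^{2d}$) is the same $\tilde b_t$; this is where the incompressibility of the flow and the $L^1$ bound on $g_t$ are used to produce the no-concentration estimate. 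None of this is hard, but it is the place where an argument rather than a citation is needed, and it is essentially the reason Section~\ref{sec.4} was set up. The remaining subtlety — whether the trace issues at $\{x_1=0\}$ interfere — does not arise here because Problem B is posed on the whole space with no boundary, so the notion of Maximal Regular Flow (not Maximal Specular Flow) is the right one and \cite[Theorem 5.1]{ACF17} applies verbatim.
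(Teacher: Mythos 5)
Your proposal is correct and follows essentially the same route as the paper: verify that the vector field $\tilde b_t=(v,\tilde E_t)$ coming from \eqref{eq.B2} satisfies \textbf{(A1)} (via \eqref{eq.claimE}) and \textbf{(A2)} (via Theorem~\ref{thm.charact}), then invoke \cite[Theorem 5.1]{ACF17} to obtain the Lagrangian representation, and conclude renormalization from the transport by an incompressible flow. The paper cites \cite[Theorem 4.10]{ACF17} for that last step rather than rederiving it from incompressibility, but this is a cosmetic difference; your extra remarks about freezing the vector field and verifying the hypotheses of Theorem~\ref{thm.charact} in detail are correct bookkeeping that the paper leaves implicit.
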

\begin{proof}
To prove it we simply apply \cite[Theorem 5.1]{ACF17} noting that the vector field $b_t$ fulfils the conditions {\bf (A1)} from Section~\ref{sec.4} and {\bf (A2)}, as proved in Theorem~\ref{thm.charact}; and therefore, the solution is transported by the Maximal Regular Flow. 

In particular, by \cite[Theorem 4.10]{ACF17}, a solution transported by the Maximal Regular Flow is renormalized. 
\end{proof}

\begin{thm}
\label{thm.mainws}
Let $d \geq 3$, and consider $g_0 \in L_+^1(\R^{2d})$ even with respect to $(x_1, v_1)$, $\rho_0^o (x) = {\rm sgn}(x_1) \int_{\R^d} g_0(x, v) dv$, satisfying
\begin{equation}
\label{eq.hyp}
\int_{\R^d\times\R^d} |v|^2 g_0(x, v)\, dx\, dv + \int_{\R^d}H*\rho_0^o \,\rho_0^o\,dx < \infty,\quad\quad H(x) = \frac{c_d}{d-2} |x|^{2-d}.
\end{equation}
Then, there exists a global Lagrangian solution (transported by the Maximal Regular Flow) even with respect to $(x_1, v_1)$, $g_t \in C([0, \infty) ; L^1_{\rm loc}(\R^{2d})) $, of the reflected Vlasov--Poisson system \eqref{eq.B} with initial datum $g_0$. Moreover, the physical density $\rho_t = \int g_t dv$ and the electric field $E_t = {\rm sgn}(x_1)\, \rho_t^o * K $ are strongly continuous in $L^1_{\rm loc}(\R^d)$; $\rho_t, E_t \in C([0, \infty) ; L^1_{\rm loc}(\R^{d}))$.
\end{thm}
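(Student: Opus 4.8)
The plan is to adapt the existence scheme of \cite[Theorem~2.2]{ACF17} to Problem~B via a double regularization — of the kernel $H$ and of $\mathrm{sgn}(x_1)$ — together with the energy identity of Theorem~\ref{thm.regvp}, and then to upgrade the limiting renormalized solution using Theorem~\ref{thm.22}. First I would fix a regularization scale $\delta>0$ for $H$, so that $\nabla\bar H_\delta\in L^\infty$, and approximate $g_0$ by even, nonnegative $g_0^{k}\in C^\infty_c(\R^{2d})$ with $g_0^{k}\to g_0$ in $L^1$ and with the energies in \eqref{eq.hyp} (for the kernel $\bar H_\delta$) converging; this is possible because the right-hand side of \eqref{eq.hyp} is finite and the interaction term is a nonnegative quadratic form. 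For each $k$ and each sign-regularization scale $\varepsilon$, Theorem~\ref{thm.regvp} provides a smooth even global solution $\bar g^{\delta,\varepsilon,k}_t$ together with the identity \eqref{eq.energy} and the error $\int_0^t\mathcal K_\tau$ of \eqref{eq.mk}. Since $\delta$ is fixed, $\nabla\bar H_\delta\ast\bar\rho^o_\tau$ is bounded by $C(\delta)\|g_0^k\|_{L^1}$, and $\int_{\R^d}\!\int_{\R^d}|v|\,\bar g^{\delta,\varepsilon,k}_\tau\,dv\,dx\le \big(\int|v|^2\bar g_\tau\,dx\,dv\big)^{1/2}\|g_0^k\|_{L^1}^{1/2}$ is bounded uniformly by the (conserved up to $o(1)$) energy; as $\{|x_1|<\varepsilon\}$ shrinks to the $\rho^o$-null set $\{x_1=0\}$, dominated convergence yields $\int_0^t\mathcal K_\tau\to 0$ as $\varepsilon\downarrow0$. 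Letting $\varepsilon\downarrow0$ and then $k\to\infty$, a weak $L^1_{\rm loc}$ compactness argument (as below) produces, for each $\delta$, an even global renormalized solution $g^\delta_t$ of the problem with kernel $\bar H_\delta$ and \emph{exact} sign, satisfying the clean energy bound by twice the right-hand side of \eqref{eq.hyp}.

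\textbf{A priori estimates.} For the final limit $\delta\downarrow0$ the uniform bounds are: mass conservation $\|g^\delta_t\|_{L^1}=\|g_0\|_{L^1}$ (the field is divergence-free); the energy bound, giving $\int|v|^2 g^\delta_t\le C$ and hence $v$-tightness of $\{g^\delta_t\}$ uniformly in $\delta,t$; and $E^\delta_t={\rm sgn}(x_1)\,\nabla\bar H_\delta\ast\rho^{o,\delta}_t$ bounded in $L^\infty\big((0,\infty);L^p_{\rm loc}(\R^d)\big)$ for every $p<d/(d-1)$ — this is precisely the computation \eqref{eq.claimE} in Lemma~\ref{lem.testfunctions}, applied with $\rho^{o,\delta}_t$ bounded in $L^1$ and $\nabla H\sim K\in L^p_{\rm loc}$. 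Moreover each $g^\delta_t$ is transported by an incompressible flow, so the distribution function of $g^\delta_t(t,\cdot)$ is that of $g_0$ for every $t$; since $g_0\in L^1$, this transfers equi-integrability of $\{g^\delta_t\}_{\delta,t}$ on bounded sets. By Dunford--Pettis, $\{g^\delta\}$ is relatively weakly compact in $L^1_{\rm loc}$ on $(0,\infty)\times\R^{2d}$.

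\textbf{Passing to the limit.} Extract a subsequence $g^\delta\rightharpoonup g$ weakly in $L^1_{\rm loc}$; evenness in $(x_1,v_1)$ is preserved. The only nonlinearity is the term $E^\delta_t\cdot\nabla_v\phi$, so I need \emph{strong} $L^1_{\rm loc}$ convergence of $\rho^\delta_t$: this I would obtain from a velocity-averaging / compactness argument for the kinetic transport equation solved by $g^\delta_t$ with the equi-bounded-in-$L^p_{\rm loc}$ field $(v,E^\delta_t)$ and the equi-integrability above (as in \cite{ACF17}), so that the averages $\int g^\delta_t\psi(v)\,dv$ are precompact in $L^1_{\rm loc}((0,\infty)\times\R^d)$; letting $\psi\uparrow1$ with the $v$-tightness gives $\rho^\delta_t\to\rho_t$ in $L^1_{\rm loc}$. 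Then $E^\delta_t\to E_t={\rm sgn}(x_1)\,K\ast\rho^o_t$ in $L^1_{\rm loc}$ by splitting $K=K\mathbf 1_{\{|\cdot|<r\}}+K\mathbf 1_{\{|\cdot|\ge r\}}$ (the near part contributes $\lesssim r^{\theta}\|\rho\|_{L^1}$ uniformly for some $\theta>0$, the far part being a bounded continuous kernel), using $\bar H_\delta\to H$ and $\bar s_\delta\to{\rm sgn}$ on compacta and the oddness of $\rho^o_t$, exactly as in the estimates of Theorem~\ref{thm.charact}. These convergences let me pass to the limit in the renormalized formulation \eqref{eq.defi_WS}, so $g_t$ is a renormalized solution of Problem~B with initial datum $g_0$; by Theorem~\ref{thm.22} it is then a Lagrangian solution transported by the Maximal Regular Flow of $\tilde b_t$. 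Finally, weak continuity of $t\mapsto g_t$ passes to the limit (and can be read off the equation, $E_t$ being in $L^\infty_tL^p_{\rm loc,x}$), and combined with the strong $L^1_{\rm loc}$ compactness of $\rho^\delta_t,E^\delta_t$ one obtains $\rho_t,E_t\in C([0,\infty);L^1_{\rm loc}(\R^d))$; a diagonal argument over exhausting time intervals, together with $g_0^k\to g_0$ in $L^1$, yields the global $g\in C([0,\infty);L^1_{\rm loc}(\R^{2d}))$.

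\textbf{Main difficulty.} I expect two substantial points. The first is specific to the domain setting: making the energy error $\int_0^t\mathcal K_\tau$ vanish without destroying the uniform bounds. The double-limit ($\varepsilon\downarrow0$ at fixed $\delta$, then $\delta\downarrow0$) is designed so that when the sign is regularized the field is bounded and the error is killed by dominated convergence off the hyperplane, on which $\rho^o$ carries no mass — whereas a single diagonal sequence would not obviously work since $\|\nabla\bar H_\delta\|_{L^\infty}$ blows up. The second, exactly as in \cite{ACF17}, is the strong $L^1_{\rm loc}$ compactness of $\rho^\delta_t$ with only $L^1$-in-phase-space and finite-energy bounds and a field that is merely $L^p_{\rm loc}$ and carries a discontinuous $\mathrm{sgn}(x_1)$; one must check that the cancellation keeping $K\ast\rho^o$ finite near $\{x_1=0\}$ survives the limit, which again rests on the oddness exploited in Theorem~\ref{thm.charact}. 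Everything else is a transcription of the whole-space argument of \cite{ACF17}.
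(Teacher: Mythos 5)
Your global scheme (triple regularization, energy identity from Theorem~\ref{thm.regvp}, equi-integrability from incompressibility of the approximating flows, upgrade via Theorem~\ref{thm.22}) is the right one, and your observation that $\|\nabla\bar H_\delta\|_{L^\infty}$ and $r_\varepsilon^{-1}$ compete in $\int_0^t\mathcal K_\tau$ is genuine. But there are two gaps in the passage to the limit. The first is the velocity-averaging step. Contrary to your attribution, this is not what \cite{ACF17} (or the paper) does, and it is not available here: the classical averaging lemmas require $L^p$ control with $p>1$, whereas $g^\delta_t$ is only bounded in $L^1$ with equi-integrability, $E^\delta_t$ is only bounded in $L^q_{\rm loc}$ for $q<d/(d-1)$, and the source ${\rm div}_v(E^\delta_t g^\delta_t)$ is not even manifestly the divergence of an $L^1_{\rm loc}$ function with these bounds. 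The paper never needs strong compactness of $\rho^n_t$ for the existence part: Step 4 establishes only weak convergence $\rho^n\rightharpoonup\rho$ (using the kinetic-energy bound to tame the $v$-tails), and Steps 5--7 instead prove the second DiPerna--Lions stability condition — uniform-in-$n,t$ translation equicontinuity of $E^n_t$ in $L^1_{\rm loc}$, \eqref{eq.l1conv} — which by the Fr\'echet--Kolmogorov criterion yields strong $L^1_{\rm loc}$ convergence of the vector fields $b^n$. That is the actual mechanism; it avoids velocity averaging entirely. (Strong $L^1_{\rm loc}$ continuity of $\rho_t$ and $E_t$ is established afterwards, in Step~8, once the Lagrangian structure is known.)

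The second gap is the passage to the limit in the renormalized formulation. Weak $L^1_{\rm loc}$ convergence $g^\delta_t\rightharpoonup g_t$ does \emph{not} give $\beta(g^\delta_t)\rightharpoonup\beta(g_t)$ weakly-$*$ for nonlinear $\beta$, and strong compactness of the full phase-space density is out of reach. The paper handles this by the level-set decomposition (Step~3): $g^{n,k}:=1_{\{k\le g^n_0<k+1\}}g^n_0$ is bounded by $k+1$, converges weak-$*$ in $L^\infty$ to some $\bar g^k$, the partial sums $G^m_t=\sum_{k\le m}\bar g^k_t$ are bounded distributional solutions of the limit continuity equation by DiPerna--Lions stability, Theorem~\ref{thm.22}(i) then makes each $G^m$ renormalized and transported by the Maximal Regular Flow, and finally $G^m\uparrow g$ in $L^1_{\rm loc}$ upgrades $g$ via Theorem~\ref{thm.22}(ii). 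Without this truncation your argument does not show $g$ is renormalized. Lastly, on the order of limits: the paper's Step~2 shows a single diagonal sequence does work, with $M_n=\eps_n^{-1/4}$ (support and Lipschitz scale) and $r_n=n^{-2}\eps_n^{(2d-1)/4}$, so that $M_n^{2d+1}r_n\to 0$ — the equi-integrability gain $\eps_n$ wins the competition you flagged. Your iterated limit ($\varepsilon\downarrow 0$ at fixed $\delta$, then $\delta\downarrow 0$) is a plausible alternative but makes the intermediate step (a problem with smooth kernel but exact $\mathrm{sgn}(x_1)$, hence a discontinuous field) need its own justification, which you defer; the paper's rate choice avoids it.
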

\begin{proof}
The proof follows along the lines of the proof of \cite[Corollary 2.7]{ACF17} with the modifications introduced until now for our vector field. In this case, the choice of the approximating sequence plays a more relevant role. We divide the proof into several steps.
\\[0.2cm]
{\bf Step 1: Approximating sequence.} Let $(H_n)_{n\in \N}$ with $H_n(x) = (\psi_n * H)(x) \in C^\infty(\R^d)$ be a sequence of even functions approximating $H(x)$ with $\psi_n(x) = n^d\psi(nx)$ and $\psi$ a standard rotational invariant convolution kernel in $C^\infty_c(\R^d)$, $\psi(x) = \psi(|x|)$, decreasing with respect to $|x|$. Let $(g_0^n)_{n\in \N}$ with $g_0^n \in C^\infty_c(\R^d)$ a sequence of nonnegative even functions with respect to $(x_1, v_1)$ approximating $g_0$; that is
\[
\begin{split}
H_n \to H\quad\quad & \textrm{in}~L^1(\R^{d}),
\\
g_0^n \to g_0\quad\quad  & \textrm{in}~L^1(\R^{2d}).
\end{split}
\]
Suppose also that $(\bar s_n)_{n\in \N}$ with $\bar s_n = \bar s_n(x_1)\in C^\infty(\R)$ is a sequence of functions approximating ${\rm sgn}(x_1)$, such that, for a positive sequence $r_n \downarrow 0$ as $n\to \infty$ to be chosen later, we have
\begin{align}
\nonumber & |\bar s_n| \leq 1,\quad \textrm{and}\quad \bar s_n (x_1) = - \bar s_n(-x_1),\quad\textrm{for}~ x_1\in \R\\
\label{eq.rn}& \bar s_n (x_1) \equiv 1,\quad\textrm{for}~ x_1\geq r_n,\quad\quad\bar s_n (x_1) \equiv -1,\quad\textrm{for}~ x_1\leq -r_n,\\
\nonumber & \bar s_n(x_1)\to {\rm sgn}(x_1)\quad\textrm{in}~L^1(\R).
\end{align}

Denote by $g_t^n$ the solutions (even with respect to $(x_1, v_1)$) of the regularised reflected Vlasov--Poisson system constructed in Theorem~\ref{thm.regvp},
\begin{equation}
\label{eq.B_reg_2}
  \left\{ \begin{array}{ll}
  \de_t g_t^n + v\cdot \nabla_x g_t^n + E_t^n\cdot\nabla_v g_t^n=0 & \textrm{ in } (0,\infty)\times\R^d\times\R^d\\
    \rho_t^n(x)=\int_{\R^d} g_t^n(x, v) dv,\quad \quad \rho_t^{n,o}(x) = {\rm sgn}(x_1) \rho_t^n(x) & \textrm{ in } (0,\infty)\times\R^d,\\
E_t^n(x)= -\bar s_n (x_1) \big(\nabla H_n * \rho^{n,o}_t \big)(x) & \textrm{ in } (0,\infty)\times\R^d,\\
  \end{array}\right.
\end{equation}
with initial datum $g_0^n$. Using the notation of Theorem~\ref{thm.regvp}, we are taking $\bar H = H_n$ and $\bar s = \bar s_n$. 

Notice that the vector field $b^n_t(x, v) = (v, E_t^n(x))$ is Lipschitz and divergence-free, and therefore, by standard Cauchy-Lipschitz theory, there exists a well defined and incompressible flow $Z^n(t): \R^{2d}\to \R^{2d}$ transporting the solution,
\begin{equation}
\label{eq.ex1}
g_t^n = g_0^n\circ Z^n(t)^{-1},\quad\textrm{for}~t\in (0, \infty),
\end{equation}
and
\begin{equation}
\|\rho_t^n\|_{L^1(\R^d)} = \|g_t^n\|_{L^1(\R^{2d})} = \|g_0^n\|_{L^1(\R^{2d})}. 
\end{equation}

In particular, by assuming that $g_0^n$ are equiintegrable with respect to $n$, we have that $g_t^n$ are equiintegrable independently of $n\in \N$, $t\in (0, \infty)$; but more importantly, independently of the choice $H_n$ and $\bar s_n$. That is, there exists a sequence $(\eps_m)_{m\in \N}$ with $\eps_m \downarrow 0$ as $m \to \infty$ such that
\begin{equation}
\label{eq.ei}
\int_{\R^{d}\times\R^d} g_t^n 1_{\{g_t^n > m\}} \,dx\, dv = \int_{\R^{d}\times\R^d} g_0^n 1_{\{g_0^n > m\}} \,dx\, dv \leq \eps_m\to 0,\quad\textrm{as}~m\to \infty,
\end{equation}
for all $t\in (0, \infty)$ and  $n\in \N$. The sequence $(\eps_m)_{m\in \N}$ depends only on the initial datum, $g_0$.
\\[0.2cm]
{\bf Step 2: Choice of the approximating sequence.} In this step we choose the approximating sequence in such a way that we keep a control on the kinetic energy of the system. Recall that, from \eqref{thm.regvp}, we have 
\begin{align}
\label{eq.energy2}\int_{\R^d\times\R^d} & |v|^2 g_t^n(x, v) dx dv  + \int_{\R^d} \Big[H_n * \rho_t^{n,o}\Big](x)\rho_t^{n,o}(x) dx = \\
& \nonumber = \int_{\R^d\times\R^d} |v|^2 g_0^n(x, v) dx dv + \int_{\R^d} \Big[H_n * \rho_0^{n,o}\Big](x) \rho_0^{n,o}(x) dx + \int_0^t \mathcal{K}_\tau(\bar s_n, H_n, g_0^n) d\tau
\end{align}
 for any $t >0$, where $\mathcal{K}_\tau$ for $\tau \in (0, t)$ is defined as
 \begin{equation}
 \label{eq.mk2}
 \mathcal{K}_\tau(\bar s_n,H_n, g_0^n) = 2 \int_{\R^d\times\R^d} \big({\rm sgn}(x_1) - \bar s_n(x_1) \big)\,v\cdot \Big[\nabla H_n * \rho_\tau^{n,o}\Big](x)g_\tau^n \,dx\,dv.
 \end{equation}

Proceeding as in \cite[Lemma 3.1]{ACF17} there exists a sequence $g_0^n \in C^\infty_c(\R^{2d})$ and $H_{k_n}$ for $k_n\to\infty$ as $n\to \infty$, such that 
\begin{equation}
\label{eq.lem31}
\lim_{n\to \infty} \left(\int_{\R^{2d}} |v|^2 g_0^n\,dx\,dv + \int_{\R^d} H_{k_n} * \rho_0^{n,o} \rho_0^{n,o} dx \right) = \int_{\R^{2d}} |v|^2 g_0\,dx\,dv + \int_{\R^d} H * \rho_0^{o}\,\rho_0^{o}\,dx.
\end{equation}
Notice that we can assume, without loss of generality after relabelling the indexes, that ${\rm supp}\, g_0^n \subset B_{M_n}(0)$ and $\|\nabla H_{k_n}\|_{L^\infty(\R^d)} \leq M_n$ for a given sequence $M_n \to \infty$ as $n\to \infty$. Moreover, since
\[
\frac{d}{dt} Z^n(t) = b_t^n(Z^n(t)),
\]
with $b_t^n(x, v) = (v, E_t^n(x))$, and $\|E_t^n\|_{L^\infty(\R^d)} \leq \|g_0^n\|_{L^1(\R^{2d})} M_n$, we have 
\[
\frac{d}{dt} |Z^n(t)| \leq |Z^n(t)| + M_n.
\]

Together with \eqref{eq.ex1} and ${\rm supp}\, g_0^n \subset B_{M_n}(0)$, this implies ${\rm supp}\, g_t^n \subset B_{2M_n e^t}(0)$. We can now bound $|\mathcal{K}_\tau|$,
\begin{align*}
| \mathcal{K}_\tau(\bar s_n,H_n, g_0^n) | & \leq 2 \int_{{\rm supp}\,g_\tau^n} \big|\bar s_n(x_1) - {\rm sgn}(x_1)\big|\cdot |v|\cdot \Big|\nabla H_n * \rho_\tau^{n,o}\Big|(x)|g_\tau^n| \,dx\,dv\\
& \leq 2 \|\nabla H_n \|_{L^\infty}\|\rho_\tau^{n,o}\|_{L^1}\int_{{\rm supp}\,g_\tau^n} |v|\cdot |g_\tau^n| \,dx\,dv\\
& \leq 4M_n^2 e^\tau \|g_0^n\|_{L^1(\R^{2d})}\int_{\{-r_n\leq x_1\leq r_n\}\cap\{{\rm supp}\,g_\tau^n\}} |g_\tau^n|\, dx\, dv.
\end{align*}
Now notice that, for every $m\in\N$, there exists $\eps_m\downarrow 0$ as $m\to \infty$ coming from \eqref{eq.ei} such that
\begin{align*}
\int_{\{-r_n\leq x_1\leq r_n\}\cap\{{\rm supp}\,g_\tau^n\}} |g_\tau^n|\, dx\, dv & \leq m\mathscr{L}^{2d} \left(\{-r_n\leq x_1\leq r_n\}\cap\{{\rm supp}\,g_\tau^n\}\right)+\eps_m\\
& \leq 2^{2d} m r_n e^{(2d-1)\tau} M_n^{2d-1}+\eps_m.
\end{align*}

Fix $m = n$, and putting all together, 
\[
| \mathcal{K}_\tau(\bar s_n,H_n, g_0^n) | \leq 4e^\tau \|g_0^n\|_{L^1{(\R^{2d})}} \left(2^{2d} n r_n e^{(2d-1)\tau} M_n^{2d+1}+M_n^2\eps_n\right).
\]

Now choose $M_n = \eps_n^{-1/4} \to \infty$ as $n\to \infty$, and $r_n = n^{-2}\eps_n^{\frac{2d-1}{4}} \to 0$ as $n\to \infty$, so that, for $\tau \in (0,t)$,
\begin{equation}
\label{eq.limk}
| \mathcal{K}_\tau(\bar s_n,H_n, g_0^n) | \to 0,\quad\textrm{as}\quad n \to \infty,
\end{equation}
uniformly for $\tau \in (0, t)$.

In particular, from \eqref{eq.energy2}, we have that for the sequence of functions constructed 
\begin{equation}
\label{eq.boundenergy_0}
\int_{\R^d\times\R^d} |v|^2 g_t^n(x, v)\,dx\,dv  + \int_{\R^d} \Big[H_n * \rho_t^{n,o}\Big](x)\rho_t^{n,o}(x)\,dx \leq C+ \int_0^t \mathcal{K}_\tau(\bar s_n, H_n, g_0^n) d\tau,
\end{equation}
for some constant $C$ independent of $n$ and $t$, thanks to \eqref{eq.lem31}, and the hypothesis \eqref{eq.hyp}. From here, a uniform bound on the kinetic energy follows by noting that 
\[
{\rm sgn} \left( \Big[H_n * \rho_t^{n,o}\Big](x) \right) = {\rm sgn} (x_1) = {\rm sgn} \left( \rho_t^{n,o}(x)\right),
\]
so that the second term of the sum in the left-hand side of \eqref{eq.boundenergy_0} is nonnegative, and therefore, 
\begin{equation}
\label{eq.boundenergy_2}
\int_{\R^d\times\R^d} |v|^2 g_t^n(x, v)\,dx\,dv \leq C+ \int_0^t \mathcal{K}_\tau(\bar s_n, H_n, g_0^n) d\tau,
\end{equation}
for some $C$ independent of $n$ and $t$. Thus, for any fixed $t$, using \eqref{eq.limk} we obtain
\begin{equation}
\label{eq.boundke}
\liminf_{n\to\infty} \int_{\R^d\times\R^d} |v|^2 g_t^n(x, v)\,dx\,dv  \leq C,
\end{equation}
for some constant $C$ independent of $n$ and $t$. 
\\[0.2cm]
{\bf Step 3: Limiting solution.}
Once we have the approximating sequence, we need to build the limiting solution. To do so, we proceed as in the proof of \cite[Theorem 2.6]{ACF17}, so that we can look at each approximating solution as a transport for each level set. That is, for every $k \in \N$ (without loss of generality, $\mathscr{L}^{2d} (\{f_0 = k\} )= 0$) we have 
\begin{equation}
\label{eq.g0k}
g_0^{n,k} := 1_{\{k \leq g_0^n < k+1\}}g_0^n \to \bar g_0^k := 1_{\{k \leq g_0 < k+1\}}g_0,\quad\textrm{in}~L^1(\R^{2d}),
\end{equation}
so that, from \eqref{eq.ex1}, for any $n, k \in \N$, $t\in (0, \infty)$, 
\[
g_t^{n,k} := 1_{\{k\leq g_0^n\circ Z^n(t)^{-1} < k+1\}}g_0^n\circ Z^n(t)^{-1}
\]
is a distributional solution of the continuity equation with vector field $b_t^n$; and 
\[
\|g_t^{n,k}\|_{L^1(\R^{2d})} = \|g_0^{n,k}\|_{L^1(\R^{2d})}.
\]

By construction, for each $n, k\in \N$, $g^{n, k}$ is nonnegative and bounded by $k+1$, so that there exists some $\bar g^k \in L^\infty((0, \infty)\times \R^{2d})$ nonnegative such that, up to subsequences,
\begin{equation}
\label{eq.gk}
g^{n,k}\rightharpoonup \bar g^k\quad\quad \textrm{weakly}^*\textrm{ in } L^\infty((0, \infty)\times\R^{2d}) \textrm{ as } n\to \infty,\quad\textrm{for all }k\in \N.
\end{equation}
Proceeding as in \cite[Theorem 2.6]{ACF17},
\[
\|\bar g^k_t \|_{L^1(\R^{2d})} \leq \|g_0^k \|_{L^1(\R^{2d})}\quad\textrm{ for all } t\in [0, \infty).
\]

Defining 
\[
g := \sum_{k = 0}^\infty \bar g^k \quad \textrm{ in } (0, \infty)\times\R^{2d},
\]
then, again as in \cite[Theorem 2.6]{ACF17}, we have 
\begin{equation}
\label{eq.convseries}
\|g_t\|_{L^1(\R^{2d})} \leq \sum_{k \geq 0} \|\bar g_t^k\|_{L^1(\R^{2d})} \leq \sum_{k \geq 0} \|g_0^k\|_{L^1(\R^{2d})} = \|g_0\|_{L^1(\R^{2d})}\quad\textrm{ for a.e. } t\in [0, \infty),
\end{equation}
and 
\begin{equation}
\label{eq.weakg}
g^n \rightharpoonup g\quad\textrm{ weakly in }L^1_{\rm loc}([0, T]\times\R^{2d}),
\end{equation}
for every $T > 0$. 
\\[0.2cm]
{\bf Step 4: Limiting densities.} Let us now study what happens in the limit of the sequence of densities $\{\rho^n\}_{n\in \N}$. Since $\rho^n$ are bounded in $L^\infty((0, \infty);\mathscr{M}_+(\R^d))$, we already know that they converge, up to subsequences, weakly$^*$ in $L^\infty((0, \infty);\mathscr{M}_+(\R^d))$, to some $\rho^* \in L^\infty((0, \infty);\mathscr{M}_+(\R^d))$. This is not enough, as we would like to identify the limit. 

Let us define
\[
\rho_t(x) = \int_{\R^d} g_t(x, v)\,dv,\quad\textrm{ for } x\in \R^d, t\in [0, \infty),
\]
and let us show that for some subsequence, the limit $\rho^*$ coincides with $\rho$. We will prove a stronger result, namely, 
\begin{equation}
\label{eq.limdens}
\lim_{n\to\infty} \int_0^T \int_{\R^d} \varphi \rho_t^n\,dx\,dt = \int_0^T \int_{\R^d} \varphi\rho_t \, dx\,dt,\quad \textrm{ for all } \varphi \in L^\infty_c ([0, T]\times\R^{d}),
\end{equation}
for any $T> 0$, and where $L^\infty_c$ denotes the space of $L^\infty$ functions with compact support.

To do so, we have to exploit that we already know $g_t^n$ is weakly converging to $g$ in $L^1$, and that from the bound on the kinetic energy, problems do not arise from integrating the $v$ variable. 

First of all, from the lower semicontinuity of the kinetic energy and from \eqref{eq.boundke}, we have that for any $T > 0$, 
\begin{equation}
\label{eq.liminf}
\int_0^T \int_{\R^d\times\R^d} |v|^2\,g_t\,dx\,dv\,dt \leq \liminf_{n\to\infty}\int_0^T \int_{\R^d\times\R^d} |v|^2\,g_t^n\,dx\,dv\,dt\leq CT,
\end{equation} 
for some $C$ that depends only on the initial bound of the kinetic energy. Let us check \eqref{eq.limdens}. Let us consider, for each $m \in \N$, a nonnegative function $\xi_m\in C^\infty_c(B_{m+1})$ such that $\xi_m  \equiv 1$ in $B_m$ and $0\leq \xi_m\leq 1$ in $\R^d$, and compute
\begin{align*}
\left|\int_0^T \int_{\R^d} \varphi(t, x) \left(\rho_t^n-\rho_t\right)\,dx\,dt \right|& \leq \left| \int_0^T \int_{\R^d\times\R^d} \varphi(t, x) \left(g_t^n-g_t\right)\xi_m(v)\,dv\,dx\,dt\right|\\
& ~~+\int_0^T \int_{B_m^c \times\R^d} |\varphi(t, x) | g_t^n\left(1-\xi_m(v)\right)\,dv\,dx\,dt\\
&~~ +\int_0^T \int_{B_m^c \times\R^d} |\varphi(t, x) | g_t\left(1-\xi_m(v)\right)\,dv\,dx\,dt.
\end{align*}
 Now we take the $\liminf$ in both sides. Note that,
\begin{align*}
\liminf_{n\to \infty} \int_0^T \int_{B_m^c \times\R^d} & |\varphi(t, x) | g_t^n\left(1-\xi_m(v)\right)\,dv\,dx\,dt\leq \\
& \leq \frac{\|\varphi\|_{L^\infty}}{m^2}\liminf_{n\to \infty} \int_0^T \int_{\R^d \times\R^d} |v|^2 g_t^n \,dv\,dx\,dt  \leq CT \frac{\|\varphi\|_{L^\infty}}{m^2}
\end{align*}
for every $m \in \N$, thanks to \eqref{eq.liminf}, and the same occurs for the last term. This, together with the weak convergence of $g^n$ to $g$ in $L^1$, gives the desired result, \eqref{eq.limdens}. We, therefore, have that, up to subsequences and for every $T > 0$,
\begin{equation}
\label{eq.convdens}
\int \rho^n\varphi \to \int \rho\,\varphi \quad\textrm{ for all } \varphi\in L^\infty_c([0, T]\times\R^d).
\end{equation}

Combining this with the fact that $\rho_t^{n,o} = {\rm sgn}(x_1)\rho_t^n$ and $\rho_t^{o} = {\rm sgn}(x_1)\rho_t$, we also get that 
\begin{equation}
\label{eq.convdensodd}
\int \rho^{n,o}\varphi \to \int \rho^o\,\varphi \quad\textrm{ for all } \varphi\in L^\infty_c([0, T]\times\R^d).
\end{equation}
The key point in reaching this conclusion has been the avoidance of accumulation of mass for $\rho$ around $x_1 = 0$, thanks to the bound on the kinetic energy. 
\\[0.2cm]
{\bf Step 5: Limiting vector fields.} Define the limiting electric field $E_t$ as 
\begin{equation}
E_t(x) = -{\rm sgn}(x_1) (\rho_t^o * \nabla H)(x) = {\rm sgn}(x_1) (\rho_t^o * K)\quad\textrm{ for } x \in \R^d, 
\end{equation}
where $K ( x) = c_dx/|x|^d$. At this point, we would like to apply the stability results (analogous to \cite[Theorem II.7]{DL89b}) to each bounded function $\bar g^k$ (defined in \eqref{eq.gk}) to check that they are distributional solutions of the continuity equation with vector field $b_t = (v, E_t)$ and with initial datum $\bar g^k_0$ (defined in \eqref{eq.g0k}). That is, we have to check that 
\begin{equation}
\label{eq.blim}
b^n \rightharpoonup b\quad\textrm{ weakly in } L^1_{\rm loc}((0, \infty)\times\R^{2d}; \R^{2d})
\end{equation}
and 
\begin{equation}
\label{eq.l1conv}
E_t^n (x+h)\to E_t^n (x) \quad\textrm{ as } |h| \to 0,\textrm{ in }L^1_{\rm loc} ((0, \infty); L^1_{\rm loc}(\R^d)),
\end{equation}
uniformly in $n$. To prove it, we proceed along the lines of \cite[Theorem 2.6]{ACF17}. In this step we will prove \eqref{eq.blim}, and in the next step we will prove \eqref{eq.l1conv}.

Recall that the sequence $(H_n)_{n\in \N}$ is formed by terms of the form $H_n(x) = (\psi_n * H)(x)$, for some sequence of convolution kernels in $\R^d$, $\psi_n(x) = \psi_n(|x|)$, converging to a Dirac delta. Let us call $K_n (x) = \nabla H_n(x) = -(\psi_n*K)(x)$, and we start by checking that $\{E^n\}_{n\in\N}$ is bounded, for $1\leq p < \frac{d}{d-1}$, in $L^p_{\rm loc} ((0, \infty)\times\R^d; \R^d)$ uniformly in $n$; which will yield that $b^n$ has a weak limit.

Indeed, applying twice the local version of Young's inequality for convolutions introduced in the first part of the proof of Lemma~\ref{lem.testfunctions},
\begin{align*}
\|E_t^n\|_{L^p(B_R)} & = \|\bar s_n (x_1)  \big(K_n * \rho^{n,o}_t \big)(x) \|_{L^p(B_R)} \leq  \|\psi_n * K * \rho^{n,o}_t \|_{L^p(B_R)}\\
& \leq \|\rho^{n,o}_t \|_{L^1(\R^d)} \|\psi_n * K \|_{L^p(B_R)} \leq \|\rho^{n,o}_t \|_{L^1(\R^d)} \|\psi_n \|_{L^1(\R^d)}\|K \|_{L^p(B_R)}\\
 & \leq \|\rho^{n,o}_0 \|_{L^1(\R^d)} \|K \|_{L^p(B_R)},
\end{align*}
which is bounded independently of $n$ for every $R > 0$. We have proceeded as in the proof Lemma~\ref{lem.testfunctions}, by using that $\psi$ is rotationally invariant and decreasing with respect to $|x|$, and the same occurs with $\psi_n*K$.

Thus, $\{b^n\}_{n\in \N}$ converges weakly in $L^p_{\rm loc}((0, \infty)\times\R^{2d};\R^{2d})$, and we want to check that the limit is, indeed, $b_t = (v, E_t)$. That is, we will show
\begin{equation}
\label{eq.weaklimvf}
\lim_{n \to \infty} \int_0^\infty \int_{\R^d} E_t^n \varphi \, dx\, dt =\int_0^\infty \int_{\R^d} E_t \varphi \, dx\, dt,\quad\textrm{ for all }\varphi \in C^\infty_c((0, \infty)\times\R^d).
\end{equation}

Let
\[
\left|\int_0^\infty \int_{\R^d} (E_t^n-E_t) \varphi \, dx\, dt \right| \leq I_n + II_n + III_n,
\]
with
\begin{align*}
I_n & = \left|\int_0^\infty \int_{\R^d} \left\{({\rm sgn}(x_1)\,\varphi)\,K * \rho_t^{n,o} -({\rm sgn}(x_1)\,\varphi)\,K * \rho_t^{o}\right\}   \, dx\, dt \right|\\
& = \left|\int_0^\infty \int_{\R^d} \left\{\rho_t^{n,o}\,K *({\rm sgn}(x_1)\,\varphi)  - \rho_t^{o}\,K * ({\rm sgn}(x_1)\,\varphi)\right\}   \, dx\, dt \right|,
\end{align*}
\begin{align*}
II_n & =  \left|\int_0^\infty \int_{\R^d} \left\{({\rm sgn}(x_1)\, \varphi) K_n * \rho_t^{n,o} -({\rm sgn}(x_1)\,\varphi) K * \rho_t^{n,o}\right\}  \, dx\, dt \right|\\
& =  \left|\int_0^\infty \int_{\R^d} \left\{ \rho_t^{n,o}\, K_n * ({\rm sgn}(x_1)\, \varphi) -\rho_t^{n,o} K * ({\rm sgn}(x_1)\,\varphi) \right\}  \, dx\, dt \right|,
\end{align*}
and
\begin{align*}
III_n & =  \left|\int_0^\infty \int_{\R^d} \left\{(\bar s_n \varphi)\, K_n * \rho_t^{n,o} -({\rm sgn}(x_1)\varphi)\, K_n * \rho_t^{n,o} \right\}  \, dx\, dt \right|\\
& =   \left|\int_0^\infty \int_{\R^d} \left\{\,\rho_t^{n,o} K_n * (\bar s_n \varphi) -\, \rho_t^{n,o} ~K_n * ({\rm sgn}(x_1)\varphi) \right\}  \, dx\, dt \right|,
\end{align*}
where we have used standard convolution properties and the fact that $K(x) = -K(-x)$ and $K_n (x) = -K_n(-x)$.

In order to bound the first term, notice that $K *({\rm sgn}(x_1)\,\varphi)$ is bounded and continuous (being the convolution of an $L^1$ function and an $L^\infty$ function), decaying at infinity and with compact support in time; thus, from the weak convergence \eqref{eq.convdensodd} we get 
\[
I_n \to 0 \quad\textrm{ as }n \to \infty.
\]

For the second and third term, we start by claiming that, for any fixed $\varphi \in C^\infty_c((0, \infty)\times\R^d)$,
\begin{equation}
\label{eq.claimpf}
\lim_{n\to \infty} \left\|K*(\varphi\,\bar s_n(x_1)) - K*(\varphi\,{\rm sgn}(x_1))\right\|_{L^\infty((0, \infty)\times\R^d)} = 0,
\end{equation}
and 
\begin{equation}
\label{eq.claimpf_new}
\lim_{n\to \infty} \left\|K_n*(\varphi\,\bar s_n(x_1)) - K_n*(\varphi\,{\rm sgn}(x_1))\right\|_{L^\infty((0, \infty)\times\R^d)} = 0,
\end{equation}
and in particular, $(\varphi\,{\rm sgn}(x_1))*K$ is continuous.

Indeed, denoting $\R^d\Supset D_n = \{-r_n \leq x_1 \leq r_n \} \cap \left(\cup_{t>0}\,{\rm supp}~\varphi_t\right)$, with $r_n$ from \eqref{eq.rn}, then 
\begin{align*}
\Big|\int_{\R^d} &( \bar s_n(y_1) - {\rm sgn}(y_1))  \varphi(t, y) K_n(x-y)\,dy\Big| \leq\\
& \leq  2 \|\varphi\|_{L^\infty((0, \infty)\times\R^d)} \int_{\R^d} \psi_n(z) \int_{D_n} K(x-y-z) dy \,dz\to 0\quad\textrm{ as } n \to \infty,\textrm{ uniformly in } x \in \R^d.
\end{align*}

It similarly follows that 
\begin{equation}
\label{eq.claimpf_2}
\lim_{n\to \infty} \left\|K_n * ({\rm sgn}(x_1)\, \varphi) - K * ({\rm sgn}(x_1)\, \varphi) \right\|_{L^\infty((0, \infty)\times\R^d)} = 0,
\end{equation}
using that $\left[ \psi_n *({\rm sgn}(x_1)\varphi)\right](y)$ converges to ${\rm sgn}(y_1)\varphi(y)$ whenever $y_1 \neq 0$, and is bounded otherwise.

Thus, in order to bound the second term, $II_n$, we use that $\rho_t^{n,o}$ are uniformly in $L^\infty((0, \infty); L^1(\R^d))$ (with respect to $n$) and that, by \eqref{eq.claimpf_2}, $K_n * ({\rm sgn}(x_1)\, \varphi) $ converges uniformly (in $x$) to $ K * ({\rm sgn}(x_1)\, \varphi) $, so that 
\[
II_n \to 0 \quad\textrm{ as }n \to \infty.
\]

Finally, for the third term we simply use that $\rho_t^{n,o}$ is uniformly in $L^\infty((0, \infty); L^1(\R^d))$ with respect to $n$ together with \eqref{eq.claimpf_new} to get 
\[
III_n \to 0 \quad\textrm{ as }n \to \infty.
\]

In all, we have proved \eqref{eq.weaklimvf}, which at the same time implies \eqref{eq.blim}.
\\[0.2cm]
{\bf Step 6: Proof of the second stability condition.} We now want to prove \eqref{eq.l1conv}, which we rewrite as 
\begin{equation}
\label{eq.l1conv2}
\left( \bar s_n \, \rho_t^{n,o} * K_n \right) (x+h)\to \left(\bar s_n \,\rho_t^{n,o} * K_n\right) (x) \quad\textrm{ as } |h| \to 0,\textrm{ in }L^1_{\rm loc} ((0, \infty); L^1_{\rm loc}(\R^d)),
\end{equation}
uniformly in $n$. 

Let us denote, for $f: \R^d\to \R$, $\delta_h f(x) := f(x+h)-f(x)$. We will prove that, for $B_R\subset\R^d$, 
\begin{equation}
\label{eq.l1conv3}
\int_{B_R} |\delta_h \left( \bar s_n \, \rho_t^{n,o} * K_n \right) (x)|dx \to 0,\quad\textrm{as } |h| \to 0,
\end{equation}
uniformly with respect to $n\in \N$ and $t\in (0, \infty)$. By triangular inequality and the definition of $\bar s_n$, 
\begin{equation}
\label{eq.l1conv4}
|\delta_h \left( \bar s_n \, \rho_t^{n,o} * K_n \right) (x)| \leq |\delta_h \bar s_n(x)|\cdot |\rho_t^{n,o} * K_n (x)|+|\delta_h (\rho_t^{n,o} * K_n) (x)|.
\end{equation}

Let us first prove the convergence of the second term. Given $\alpha \in (0, 1)$ and $p \in \big[1,\frac{d}{d-1+\alpha}\big) $, then $K = c_d x/|x|^d\in W^{\alpha, p}_{\rm loc}(\R^d)$, so that by Young's inequality, 
\[
\|\rho_t^{n,o} * K_n\|_{W^{\alpha, p}(B_R)} = \|\rho_t^{n,o} * \psi_n * K\|_{W^{\alpha, p}(B_R)} \leq C \|\rho_t^{n,o}\|_{L^1(\R^d)} \|\psi_n \|_{L^1(\R^d)} \leq C,
\]
for some constant $C$ that depends on $R$, $d$, $p$, $\alpha$; but is independent of $n$ and $t$. 

Now, from a classical embedding of $W^{\alpha, 1}$ into the Nikolskii space $N^{\alpha, 1}$ (see \cite[Chapter 7]{Ada75}) we obtain 
\begin{equation}
\label{eq.l1conv5}
\int_{B_R} |\delta_h (\rho_t^{n,o} * K_n)(x)|\,dx  \leq C|h|^{\alpha}\to 0,\quad\textrm{as }|h |\to 0,
\end{equation}
for some constant $C$ that depends only on $d$ and $R$, and $|h|\leq R$.

Let us now bound 
\[
\int_{B_R}|\delta_h \bar s_n(x)|\cdot |\rho_t^{n,o} * K_n (x)|\,dx.
\]

Notice that $|\delta_h \bar s_n| \leq \min\{2, 2|h_1|/r_n\}$, and that $\delta_h \bar s_n (x) = 0$ whenever $|x_1|\geq r_n + |h_1|$. Moreover, 
\[
\|\rho_t^{n,o} * \psi_n * K \|_{L^1(\{|x_1|\leq r_n +|h_1|\}\cap B_R)} \leq \|g_0^n\|_{L^1(\R^{2d})} \|K \|_{L^1(\{|x_1|\leq r_n +|h_1|\}\cap B_R)},
\]
and $\|g_0^n\|_{L^1(\R^{2d})} \leq C$ for some constant $C$ that depends only on $g_0$. Let 
\[
\ell_K (\zeta) := \|K \|_{L^1(\{|x_1|\leq \zeta\}\cap B_R)} \to 0,\quad\textrm{as }\zeta \to 0,
\]
thanks to the local integrability of $K$. Putting all together we have that
\begin{equation}
\label{eq.supl1}
\int_{B_R}|\delta_h \bar s_n(x)|\cdot |\rho_t^{n,o} * K_n (x)|\,dx \leq  C\min\{2, 2|h_1|/r_n\}\,\ell_K(r_n + |h_1|).
\end{equation}
We denote
\[
m(h_1) := \min\{m \in \N : r_m \leq |h_1|\} \to \infty,\quad \textrm{as } |h_1|\to 0.
\]
Taking the supremum with respect to $n$ in \eqref{eq.supl1} (separating the cases $|h_1| \geq r_n$ and $|h_1|\leq r_n$), we get 
\begin{equation}
\label{eq.l1conv6}
\int_{B_R}|\delta_h \bar s_n(x)|\cdot |\rho_t^{n,o} * K_n (x)|\,dx \leq  C\left(\ell_K(2|h_1|) + 2\ell_K(r_{m(h_1)})\right) \to 0,\quad\textrm{as } |h|\to 0,
\end{equation}
independently of $n$ and $t$.

Thus, combining \eqref{eq.l1conv4}-\eqref{eq.l1conv5}-\eqref{eq.l1conv6} we get \eqref{eq.l1conv3}, which yields \eqref{eq.l1conv2}, as we wanted to see.
\\[0.2cm]
{\bf Step 7: Conclusion of existence.}
Since conditions \eqref{eq.blim}-\eqref{eq.l1conv} are fulfilled,  we can apply the stability result by DiPerna-Lions, \cite[Theorem II.7]{DL89b} (see also \cite[Proposition 6.5]{ACF15}), to get that the vector fields $b_n$ are converging strongly in $L^1$. Therefore, weakly continuous bounded solutions of the approximating problems converging weakly$^*$ in $L^\infty$ are distributional solutions in the limit. In particular, for every $m \in \N$, $G^m_t = \sum_{k = 0}^m \bar g^k_t$ (recall $\bar g^k_t$ defined in \eqref{eq.gk}) is a distributional solution of the continuity equation with initial datum $G_0^m = \sum_{k = 0}^m \bar g^k_0$; as it is bounded by $m+1$. 

By Theorem~\ref{thm.22} (i), since $G^m$ is bounded, it is a renormalized solution and it is transported by the corresponding Maximal Regular Flow. Since $G^m$ converges to $g_t$ in $L_{\rm loc}^1((0, \infty)\times\R^{2d})$, the limiting $g_t$ is also a renormalized solution; and by Theorem~\ref{thm.22} (ii), it is transported by the Maximal Regular Flow. Moreover, by \cite[Theorem 4.10]{ACF17}, $g_t \in C([0, \infty); L^1_{\rm loc}(\R^{2d}))$.
\\[0.2cm]
{\bf Step 8: Strong $L^1_{\rm loc}$ continuity of density and electric field.} We finally prove that $\rho_t, E_t\in C([0, \infty); L^1_{\rm loc}(\R^d))$. 

Let us start with the physical densities, $\rho_t$. Fix a time $t_\infty\in [0, \infty)$, and let $(t_n)_{n\in \N}$ be a sequence such that $t_n \to t_\infty$. Let $R > 0$ be fixed; then 
\begin{align*}
\int_{B_R} |\rho_{t_n} - \rho_{t_\infty}|\,dx & \leq \int_{B_R}\int_{\R^d}|g_{t_n}-g_{t_\infty}|\, dv\,dx\\
& \leq \int_{B_R}\int_{B_{\overline{R}}}|g_{t_n} - g_{t_\infty}|\,dv\,dx + \int_{B_R}\int_{\R^d\setminus B_{\overline{R}}} \frac{|v|^2}{\overline{R}^2}\left(f_{t_n}+f_ {t_\infty}\right)\,dv\,dx\\
& \leq \int_{B_R}\int_{B_{\overline{R}}}|g_{t_n} - g_{t_\infty}|\,dv\,dx + \frac{C}{\overline{R}^2},\quad \quad\quad\quad\quad\quad\textrm{ for all }\overline{R} >0,
\end{align*}
for some constant $C$ that depends only on the initial bound of the kinetic energy. We have used here the bound on the kinetic energy in the limit, that follows, as in the Step 4, from the lower continuity of the kinetic energy and \eqref{eq.boundke}. Taking limits on both sides, using that $g_t\in C([0, \infty);L^1_{\rm loc}(\R^{2d}))$, and that the previous inequality holds for all $\overline{R}$, we obtain
\[
\lim_{n \to \infty} \int_{B_R} |\rho_{t_n}-\rho_{t_\infty}|\, dx = 0;
\]
that is, $\rho_t$ is strongly continuous in $L^1_{\rm loc}$. In particular, $\rho_t^o = {\rm sgn}(x_1)\rho_t$ is also strongly continuous in $L^1_{\rm loc}$. 

On the other hand, we recall that $E_t = {\rm sgn}\,(x_1) \rho_t^o * K$, with $K = x/|x|^d$; and we want to check the strong continuity of $E_t$ in $L^1_{\rm loc}$. As before, we consider $t_\infty\in [0, \infty)$ and a sequence $(t_n)_{n \in \N}$ with $t_n \to t_\infty$. Let us fix some $R > 0$. Then, we have 
\begin{align*}
\int_{B_R} |E_{t_n} - E_{t_\infty}|\, dx & = \int_{B_R} |K * (\rho_{t_n}^o - \rho_{t_\infty}^o)|\, dx\\
& \leq \int_{B_R}\int_{\R^d} |K(x-y)|\cdot|\rho_{t_n}^o(y) - \rho_{t_\infty}^o(y)| \, dy \, dx\\
 & = \int_{\R^d}|\rho_{t_n}^o(y) - \rho_{t_\infty}^o(y)|Z_R(|y|)\, dy,
\end{align*}
where we have defined, for $y\in \R^d$,
\[
Z_R(|y|) := \int_{B_R} |K(x-y)|\, dx = \int_{B_R(y)} |z|^{-d+1}\, dz.
\]
Note that the previous definition depends only on $|y|$ and not $y$. It trivially holds that $Z$ is bounded and decreasing, going to 0 in the limit $|y|\to\infty$. 

Now, for any $\overline{R} > 0$, and due to the bounds on the densities, $\|\rho_t^o\|_{L^1(\R^d)}\leq C$, we have 
\begin{align*}
\int_{B_R} |E_{t_n} - E_{t_\infty}|\, dx &\leq C\left(  \int_{B_{\overline{R}}}|\rho_{t_n}^o(y) - \rho_{t_\infty}^o(y)| \, dy + Z_R(\overline{R})\right),\quad\textrm{for all }\overline{R}>  0,
\end{align*}
for some constant $C$ that depends only on the initial datum mass, $\|\rho_0\|_{L^1(\R^d)}$. Now, taking first limits as $n$ goes to infinity, using the strong continuity of $\rho_t^o$ in $L^1_{\rm loc}(\R^d)$ and letting $\overline{R}$ go to infinity, we get the desired result,
\[
\lim_{n \to \infty} \int_{B_R} |E_{t_n} - E_{t_\infty}|\, dx = 0;
\]
that is, $E_t$ is strongly continuous in $L^1_{\rm loc}(\R^d)$.
\end{proof}

\section{Proof of main results in the half-space}
\label{sec.mainres}
In this section we prove the main results in the half-space, Theorems~\ref{thm.main1} and \ref{thm.main2}. Throughout the proofs we will be using the equivalence between Problem A \eqref{eq.A} and Problem B \eqref{eq.B} studied in Section~\ref{sec.vphs}. Notice that Problem A \eqref{eq.A} corresponds to \eqref{eq.original}.

\begin{proof}[Proof of Theorem~\ref{thm.main1}]
Extend $f_0$ evenly with respect to $(x_1, v_1)$ to the whole space; that is, consider $g_0(x, v) = f_0(x, v)$ in $\{x_1 \geq 0\}$ and $g_0(x, v) = f_0(x', v')$ in $\{x_1 < 0\}$.

Now notice that Theorem~\ref{thm.main1} corresponds to Theorem~\ref{thm.22} combined with Lemma~\ref{lem.mrfmsf} and Proposition~\ref{prop.AimpB}.  

Indeed, once we have a renormalized solution in the whole space, $g_t$, transported by $(X_t, V_t)$, then, for any  $\phi$ supported in $\overline{\R^d_+}\times\R^d$,
\begin{align*}
\int_{\R^d_+\times\R^d} \phi(x, v) f_t(x, v) \,dx\,dv & = \frac12\int_{\R^{2d}}\phi^e(x, v) g_t(x, v) \,dx\,dv \\
& = \frac12\int_{\R^{2d}}\phi^e(X_t(x, v), V_t(x, v)) g_0(x, v) \,dx\,dv\\
& = \int_{\R^d_+\times\R^d} \phi(\tilde X_t(x, v), \tilde V_t(x, v)) f_0(x, v) \,dx\,dv.
\end{align*}
Here, $\phi^e$ denotes the even extension with respect to $(x, v)$ of $\phi$ ($\phi^e(x', v') = \phi^e(x, v)$), and $(\tilde X_t, \tilde V_t)$ denotes the Maximal Specular Flow as in Lemma~\ref{lem.mrfmsf}. Notice that the hypotheses of Lemma~\ref{lem.mrfmsf} are fulfilled, i.e., the flow has the desired symmetry in its domain of definition, since using the symmetries on $g_t$  for all $t\ge 0$,
\[
\int_{\R^{2d}}\phi^e(X'_t(x, v), V'_t(x, v)) g_0(x, v) \,dx\,dv = \int_{\R^{2d}}\phi^e(X_t(x', v'), V_t(x', v')) g_0(x, v) \,dx\,dv,
\]
for any even test function $\phi^e$. 

The only thing that remains to be checked is that $f_t$ fulfils the commutativity property (Definition~\ref{defi.comm2}). 

That is, we want to check that the weak trace of $g_t$ at $\{x_1 = 0\}$ found in Lemma~\ref{lem.trace} can actually be taken in the strong sense. In order to check the commutativity property it is enough to show (using the same notation as in Lemma~\ref{lem.trace}), 
\begin{equation}
\label{eq.commp}
\lim_{x_1 \to 0} \int \rho \left(\beta(g(t, x_1,\bar x, v)) - \beta(\Gamma(g))\right)dt\,d\bar x\, dv= 0,
\end{equation}
for all $\rho \in C^\infty_c((0, T)\times\R^{d-1}\times\R^d)$ compactly supported in $\{v_1 \neq 0\}$; and for all $\beta\in C^1 \cap L^\infty$. We are assuming that $g_t\in L^\infty$, otherwise take $\arctan(g_t)$ instead. If $x_1$ is small enough, and since in the support of $\rho$, $|v_1|, t \geq \delta(\rho) > 0$ is strictly positive depending only on $\rho$, there exists a flow in $x_1$, 
\[
F_{x_1} = (T_{x_1}, \bar X_{x_1}, V_{x_1}) : D_F\subset (-\eps, \eps)\times (0, T) \times \R^{d-1} \times \R^d \to (0, T)\times\R^{d-1}\times\R^d,
\]
such that $\Gamma(g) = g(x_1, F_{x_1}) := g(T_{x_1}, x_1, \bar X_{x_1}, V_{x_1})$. That is, we can look at the flow as a function of $x_1$ instead of $t$, if $x_1$ is small enough, and then $g$ is flowed in $x_1$ through the path $(T_{x_1}, x_1, \bar X_{x_1}, V_{x_1})$. This flow in $x_1$ can be taken thanks to the inverse function theorem applied to the standard flow in time for $g$, since in the domain of $\rho$, $|v_1| > 0$ allows us to invert the flow with respect to $x_1$ (the derivative of the $X_1$ component in the original flow, $V_1$, does not vanish in the domain). 

Now, simply use that for $x_1$ small enough, $\Gamma(\beta\circ g) = \beta\circ g(x_1, F_{x_1}) = \beta(\Gamma(g))$, and \eqref{eq.commp} follows from the fact that we already know that $\Gamma(g)$ is the trace of $g$ in the weak sense.
\end{proof}

\begin{proof}[Proof of Theorem~\ref{thm.main2}]
This directly follows again by switching between Problem A \eqref{eq.A} and Problem B \eqref{eq.B} and using Theorem~\ref{thm.mainws}. That is, combine Lemma~\ref{lem.mrfmsf} and Proposition~\ref{prop.BimpA} with Theorem~\ref{thm.mainws} to get the result. The commutativity property follows as in the proof of Theorem~\ref{thm.main1}. 

One also needs to notice that
\[
\int_{\R^d}H*\rho_t^o \,\rho_t^o\,dx = 2\int_{\R^d_+\times\R^d_+} (H(x-y)-H(x'-y))\rho_t(x)\rho_t(y)\,dx\,dy,
\]
and the term $H(x-y)-H(x'-y)$ corresponds to the Green function in the half-space. 
\end{proof}

\section{General domains}
\label{sec.gendom}

We want to use the ideas developed for the problem in the half-space to this problem. To do so, we notice that the regularity theory for the existence and uniqueness of a Maximal Regular Flow developed in \cite{ACF15} is completely local. Thus, we can consider small open sets around the boundary, change variables, and encounter a situation close to the half-space solution. If a local Maximal Regular Flow exists in each of such open sets, we have a flow in the whole domain.

Let $T > 0 $, and let us suppose that we have $f_t$ a renormalized solution to
\begin{equation}
\label{eq.original_Om_pp}
  \left\{ \begin{array}{ll}
  \de_t f + v\cdot \nabla_x f_t + E_t\cdot\nabla_v f_t=0 & \textrm{ in } (0,T)\times\Omega\times\R^d\\
  \rho_t(x)=\int_{\R^d} f_t(x, v) dv & \textrm{ in } (0,T)\times\Omega\\
  E_t(x) = -\int_{\Omega} \nabla_x G_\Omega(x, z)\rho_t(z)\,dz & \textrm{ in } (0,T)\times\Omega \\
  f_t(x, v) = f_t(x, R_x v) & \textrm{ on } (0, T)\times\de\Omega\times\R^d.\\
  \end{array}\right.
\end{equation}

Suppose that the domain $\Omega$ fulfils the exterior and interior ball condition uniformly at each boundary point with balls of radius $2$ (otherwise, we can rescale). Let us also assume, without loss of generality, that 
\begin{equation}
\label{eq.domcond}
0\in \de\Omega~~~\textrm{   and   }~~~n(0) = e_1.
\end{equation}

Let us now consider a change of variables (to be determined) to go to a half-space situation. Let 
\[
y = \phi (x) = \phi :  B_2 \to \R^d
\]
be such that $\phi(\Omega\cap B_2) \cap B_1 = \{x_1 > 0\}\cap B_1$. We also define the inverse,
\[
x= \psi(y) = \psi := \phi^{-1} : B_1 \to B_2.
\]

Let $J$ be the Jacobian of the change of variables, 
\[
J(x) := \begin{bmatrix}
    \frac{\de\phi_1}{\de x_1} & \dots    & \frac{\de\phi_1}{\de x_d}  \\
    \vdots& \ddots & \vdots  \\
    \frac{\de\phi_d}{\de x_1}  & \dots & \frac{\de\phi_d}{\de x_d} 
\end{bmatrix},
\]
and we can assume that there is a constant $C_d\ge 1$ depending only on $d$ such that
\begin{equation}
\label{eq.cd}
\frac{1}{C_d}\le \textrm{det}\,J(x) \leq C_d,\quad\textrm{in }\quad B_1. 
\end{equation}
We change the velocities accordingly, so that the change of variables becomes,
\begin{equation}
\label{eq.chvar}
(x, v) \mapsto (y, w) = (\phi(x), J(x) v).
\end{equation}

Let 
\begin{equation}
\label{eq.chvar2}
g_t(y, w) = f_t(x, v).
\end{equation}

One can formally check that, under this situation, $g_t$ fulfills the following equation
\begin{equation}
\label{eq.eqtransf}
\de_t g_t(y, w) +  w\cdot \nabla_y g_t(y, w) + \left\{\left[(J^{-1}(x))^T\frac{\de w}{\de x}\right]^T w + J(x) E(x)\right\}\cdot\nabla_w g_t(y, w) = 0,
\end{equation}
for $(y, w)\in \left(B_1\cap \{y_1 > 0\}\right) \times \R^d$, $x = \psi(y)$, and 
\[
\frac{\de w}{\de x} := \begin{bmatrix}
    \frac{\de w_1}{\de x_1} & \dots    & \frac{\de w_d}{\de x_1}  \\
    \vdots& \ddots & \vdots  \\
    \frac{\de w_1}{\de x_d}  & \dots & \frac{\de w_d}{\de x_d} 
\end{bmatrix}.
\]

Notice that we can also rewrite the term containing $\frac{\de w}{\de x}$ in terms of the Hessian of the change of variables as
\begin{equation}
\left[(J^{-1}(x))^T\frac{\de w}{\de x}\right]^T w = v^T D^2\phi\,v,
\end{equation}
in the sense 
\[
(v^T D^2\phi\, v)_i = v^T D^2\phi_i\, v,\quad\quad\textrm{ for all } i \in \{1,\dots, n\}.
\]

If the specular reflection at the boundary was conserved, we could repeat the arguments from Proposition~\ref{prop.AimpB} and construct a renormalized solution in $B_1$ with respect to a reflected vector field. 

We claim that the following change of variables preserves the reflection, which will be seen in the proofs below. In particular, we will be using equations \eqref{eq.jacob1} and \eqref{eq.jacob2}.

Consider a diffeomorphism $\Phi:\R^d\to \R^d$ such that $\Phi(\de \Omega) \subset \{x_1 = 0\}$. In order to construct $\phi$ sending $\Omega$ to a half-space, we simply consider the orthogonal projection onto $\de \Omega$, use $\Phi$, and translate in the $e_1$ direction the same distance we projected. That is, if $x\in \Omega$ and $\pi_\Omega : \Omega \to \de\Omega$ is the orthogonal projection onto $\de \Omega$, then 
\begin{equation}
\label{eq.change}
\phi(x) = \Phi(\pi_\Omega (x) ) + \textrm{dist}(x, \pi_\Omega(x)) e_1.
\end{equation}

Notice that such operation is only a diffeomorphism in a uniform neighbourhood of $\de \Omega$, which we will denote $\Omega_\pi$, and we can extend by any other diffeomorphism in the rest of the domain $\Omega\cap B_2$.

Notice that, in particular, we have for any $x\in \de\Omega$,
\begin{equation}
\label{eq.jacob1}
J n(x) = e_1,
\end{equation}
and
\begin{equation}
\label{eq.jacob2}
v\cdot n(x) = J v \cdot e_1.
\end{equation}

Equation \eqref{eq.jacob2} above follows as in \cite[Section 2]{Hwa04}. Equation \eqref{eq.jacob1} follows by noticing that points of the form $x+tn(x)$ go to points of the form $\phi(x) +t e_1$ for all $x\in \de\Omega$. 
Similarly, the previous equalities also hold for points in $\Omega_\pi$. That is, if we denote by $N(x)$ the unit vector pointing inwards $\Omega$ in the direction of the projection ($N(x) = \nabla{\rm dist}(x, \de\Omega)$), then \eqref{eq.jacob1}-\eqref{eq.jacob2} also hold for $N(x)$ in $\Omega_\pi$. We remark that the size of the neighbourhood $\Omega_\pi$ depends only on the domain $\Omega$ and its local $C^{1, 1}$ norm. 

In the next proposition we show that this change of variables allows an even extension of the solution to solve a new distributional problem, constructed analogously to the half-space situation. 

\begin{prop}
\label{prop.renB1}
Let $T > 0$, let $\Omega$ be a $C^{1, 1}$ domain as described above, \eqref{eq.domcond}, and let $f_t\in L^1_{\rm loc}([0, T]\times \Omega\times\R^d)$ be a renormalized solution to \eqref{eq.VPOm} (see Definition~\ref{defi.HS_Om}). Under the above change of variables, \eqref{eq.chvar}-\eqref{eq.chvar2}-\eqref{eq.change}, define the even extension with respect to $(y_1, w_1)$ of $g_t$ in $B_1$, 
\begin{equation}
\label{eq.gte}
g_t^e(y, w) =  \left\{ \begin{array}{ll}
  g_t(y, w) & \textrm{ if } y_1 \ge 0 \\
  g_t(y', w') & \textrm{ if } y_1< 0.\\
  \end{array}\right.
\end{equation}
Let $j(y) := {\rm det}\,J(\psi(y))$, and define also $j_e(y)$ as the even extension with respect to $y_1$ of $j(y)$; $j_e(y) = j(y)$ if $y_1 \ge 0$ and $j_e(y) = j(y')$ otherwise. 

For any $\beta\in C^1\cap L^\infty(\R)$, let
\begin{equation}
\label{eq.gjbeta}
g^{j,\beta}_t(y, w) := \beta(g_t^e(y, w)) j^{-2}_e(y).
\end{equation}
Then, $g^{j,\beta}_t$ is a distributional solution to the continuity equation 
\begin{equation}
\label{eq.gerenorm}
\de_t g^{j,\beta}_t(y, w) + {\rm div}_{y, w} \left(\tilde b_t (y, w) g^{j,\beta}_t(y, w)\right) = 0, \textrm{ in } (0, T)\times B_1\times\R^d,
\end{equation}
where $\tilde b_t (y, w) = (w, b^{2, o}_t (y, w)) : (0, T)\times B_1\times\R^d\to \R^d\times\R^d$,
\begin{equation}
\label{eq.gerenorm2}
b_t^2(y, w)  = \left[(J^{-1}(x))^T\frac{\de w}{\de x}\right]^T w + J(x) E_t(x),
\end{equation}
for $(y, w)\in \left(B_1\cap \{y_1 > 0\}\right) \times \R^d$, $x = \psi(y)$, and where $b_t^{2, o}$ is defined as 
\begin{equation}
\label{eq.gerenorm3}
b_t^{2, o}(y, w) =  \left\{ \begin{array}{ll}
  b_t^2(y, w) & \textrm{ if } y_1 \ge 0 \\
  \left(b_t^2(y', w')\right)' & \textrm{ if } y_1< 0,\\
  \end{array}\right.
\end{equation}
that is, the odd extension with respect to $(y_1, w_1)$ to $y_1 < 0$.
\end{prop}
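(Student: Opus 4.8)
The plan is to mimic the strategy of Proposition~\ref{prop.AimpB} (the half-space case $\text{Problem A}\Rightarrow\text{Problem B}$), now carried out in the ball $B_1$ for the transformed equation \eqref{eq.eqtransf}. First, as in the earlier proofs, I would reduce to the bounded case: it suffices to show that for each fixed $\beta\in C^1\cap L^\infty(\R)$ the function $g^{j,\beta}_t$ is a distributional solution, so I may write $\beta(g_t^e)$ and, relabelling, pretend $f_t\in L^\infty$ and drop $\beta$ from the notation. The heart of the argument is the change-of-variables computation: starting from the renormalized formulation of \eqref{eq.VPOm} in $\Omega\cap B_2$ (Definition~\ref{defi.HS_Om}), I push forward test functions $\varphi\in\mathcal T_{\Omega\cap B_2}$ through the diffeomorphism $(x,v)\mapsto(y,w)=(\phi(x),J(x)v)$ defined by \eqref{eq.change}. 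The Jacobian of this map on phase space is $\det J(x)\cdot\det J(x)=j(y)^2$ (the $w$-block has Jacobian $\det J$ again), which explains the weight $j^{-2}_e$ appearing in \eqref{eq.gjbeta}: the combination $g_t^e\,j^{-2}_e$ is exactly the density that transforms correctly, i.e.\ $\int \Psi\, g^{j,\beta}_t\,dy\,dw = \int (\Psi\circ(\phi,J))\,\beta(f_t)\,dx\,dv$ up to the even reflection. Then one checks that the transport operator $\de_t\varphi + \nabla_{x,v}\varphi\cdot b_t$ becomes, after the change of variables, $\de_t\Psi + \nabla_{y,w}\Psi\cdot \tilde b_t$ with $\tilde b_t=(w,b^2_t)$ and $b^2_t$ given by \eqref{eq.gerenorm2}; this is the formal computation already recorded in \eqref{eq.eqtransf}, now done rigorously against test functions. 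Note $b^2_t$ need \emph{not} be divergence-free, which is why we pass to the divergence form \eqref{eq.gerenorm} rather than the transport form, and why the weight $j^{-2}_e$ (rather than $g_t^e$ alone) is the natural unknown—the reader should compare Remark~\ref{rem.nondivfree} and Remark~\ref{rem.distr.gen}.

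Second, I would handle the reflection/symmetrisation step exactly as in Proposition~\ref{prop.AimpB}. By Lemma~\ref{lem.testfunctions} (which applies here since, by the claim \eqref{eq.claimE}-type estimate, $J E_t\in L^\infty_tL^p_{y,\mathrm{loc}}$ for some $p>1$ and the first-order coefficient $(v^TD^2\phi\,v)$ is bounded), it is enough to test \eqref{eq.gerenorm} against $\phi\in C^1_c$ with support avoiding $\{y_1=0\}$ and $\{t=0\}\times\{y_1=0\}\times\R^d$ and the tangential set $\gamma^0$. Given such a $\phi$, split $\varphi^+=\phi\mathbf 1_{\{y_1\ge0\}}$ and $\overline\varphi^-(t,y,w)=\phi(t,y',w')\mathbf 1_{\{y_1\le 0\}}$, both of which can be regarded as elements of $\mathcal T_{\Omega\cap B_2}$ after pulling back by $\psi$. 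Apply the renormalized equation for $f_t$ to each; the boundary terms on $\gamma^+$ involve $v\cdot n(x)\big(\varphi_t(x,v)-\varphi_t(x,R_xv)\big)f^+_t$. Here the key geometric input is \eqref{eq.jacob1}-\eqref{eq.jacob2}: the change of variables \eqref{eq.change} sends $n(x)$ to $e_1$ and $v\cdot n(x)$ to $(Jv)\cdot e_1=w_1$, and the specular reflection $R_xv$ transforms to the flat reflection $w\mapsto w'$. Consequently the two boundary contributions cancel exactly as in Proposition~\ref{prop.AimpB} (using $\overline\varphi^-_t(x,v)=\varphi^+_t(x,v')$ on the boundary), leaving $A(\varphi^+,f_t)+A(\overline\varphi^-,f_t)=0$, which after change of variables is precisely $\tilde A(\phi,g^{j,\beta}_t)=0$. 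The symmetry $(b^2_t(y',w'))'=b^2_t(y,w)$, i.e.\ that $b^{2,o}_t$ really is the odd extension, also has to be verified, and this again reduces to \eqref{eq.jacob1}-\eqref{eq.jacob2} together with the evenness of $j_e$ and the parity structure of $\phi$ near $\de\Omega$ (the orthogonal-projection construction \eqref{eq.change} is designed so that $D^2\phi$ has the right parity).

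The step I expect to be the main obstacle is the rigorous change-of-variables identity for the \emph{weighted} density: verifying that, after transforming, the renormalized equation for $f_t$ against $\varphi=\Psi\circ(\phi,J)$ produces exactly the divergence-form equation \eqref{eq.gerenorm} for $g^{j,\beta}_t=\beta(g_t^e)j^{-2}_e$ with coefficient \eqref{eq.gerenorm2} and \emph{no extra zeroth-order term}. This requires carefully tracking how $\nabla_{x,v}\varphi\cdot b_t$ transforms—in particular the interplay between the $x$-derivatives hitting $J(x)$ inside $w=J(x)v$ and the chain rule for $\psi$—and checking that the would-be divergence correction is absorbed precisely by the $j^{-2}_e$ weight (so that testing against $\Psi$ and testing against $\Psi/j^2_e$ are compatible). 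A secondary technical point is that the construction \eqref{eq.change} is only a diffeomorphism in the collar $\Omega_\pi$, so strictly speaking one works in a slightly smaller ball $B_r$ with $r$ depending on the local $C^{1,1}$ norm of $\de\Omega$; this is harmless since the whole argument is local, but it should be stated. Everything else—the reduction to $\beta$, the application of Lemma~\ref{lem.testfunctions}, the symmetrisation—is a direct transcription of the half-space proofs.
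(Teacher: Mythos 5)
Your proposal matches the paper's proof: change variables in the renormalized formulation via \eqref{eq.jacob1}--\eqref{eq.jacob2}, which sends $\varphi_t\mapsto\eta_t(y,w)=\varphi_t(x,v)$, $dx\,dv\mapsto j^{-2}(y)\,dy\,dw$, and $d\sigma_x\,dv\mapsto j^{-2}(y)\,d\sigma_y\,dw$ (no stretching normal to $\de\Omega$), landing on the half-space continuity formulation \eqref{eq.defi_HS_Om_4} for $g^{j,\beta}_t$; then symmetrize exactly as in Proposition~\ref{prop.AimpB}, using Remark~\ref{rem.moregen} of Lemma~\ref{lem.testfunctions} to restrict to $\mathcal T_T$. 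The ``no extra zeroth-order term'' point you flag as the main obstacle in fact resolves by the direct chain-rule identity $\de_t\varphi+\nabla_{x,v}\varphi\cdot b_t=\de_t\eta+\nabla_{y,w}\eta\cdot\tilde b_t$, with the Jacobian factor entirely absorbed into the $j^{-2}_e$ weight on the density.
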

\begin{proof}
The proof follows along the lines of Proposition~\ref{prop.AimpB}. Let us assume for simplicity that $f_t\in L^\infty((0, T); L^\infty(\Omega\times\R^d))$, so that we can forget about the function $\beta$, and denote $g_t^j = g_t^ej_e^{-2}$. We know that there exists $f_{t}^+\in L^\infty_{\rm loc}(\gamma^+_{\Omega,T})$ such that for every $\varphi \in \mathcal{T}_{\Omega,T}$,
\begin{align}
\nonumber \int_{ \Omega\times \R^{d}}& \varphi_0(x, v) f_0(x, v) \,dx\, dv\, +  \\
\label{eq.defi_HS_Om_2}& + \int_0^T \int_{\Omega\times\R^{d}} [\de_t \varphi_t(x, v) + \nabla_{x, v} \varphi_t(x, v)\cdot  b_t(x, v) ] f_t(x, v)\,dt \,dx\,dv 
\\ \nonumber & + \int_{\gamma^+_{\Omega,T}} v\cdot n(x) \big(\varphi_t( x, v) - \varphi_t(x, R_x v)\big) f^+_{t}(x, v)\, dt\,d\sigma^\Omega_x\,dv = 0.
\end{align}

We now perform the change of variables, $(x, v) \mapsto (y, w) = (\phi(x), J(x)v)$, with Jacobian determinant $j^{-2}(y)$, that is $dx\,dv \mapsto j^{-2}(y)dy\,dw$. Let $\eta_t(y, w) = \varphi_t(x, v)$, and let us suppose that $\eta_t(y, w)$ is supported in $[0, T)\times \left(B_1\cap \{y_1 > 0\}\right) \times \R^d$. Thanks to \eqref{eq.jacob2}, $\eta_t \in \mathcal{T}_T$, the space defined in \eqref{eq.testf_Om}. Similarly, due to \eqref{eq.jacob2}, we also have that $\gamma^+_{\Omega, T}$ becomes $\gamma^+_T$ (see Definition~\ref{defi.spaces_Om}). 

Proceeding as in \eqref{eq.eqtransf} by changing variables in \eqref{eq.defi_HS_Om_2} we get 
\begin{align}
\nonumber &\int_{ \{y_1 \geq 0\}\times \R^{d}} \eta_0(y, w) g_0(y, w) j^{-2}(y) \,dy\, dw\, +  \\
\label{eq.defi_HS_Om_3}& ~~+ \int_0^T \int_{\{y_1 \geq 0\}\times\R^{d}} [\de_t \eta_t(y, w) + \nabla_{y, w} \eta_t(y, w)\cdot  \tilde b_t(y, w) ] g_t(y, w)j^{-2}(y) \,dt \,dy\,dw 
\\ \nonumber & ~~+ \int_{\gamma^+_{T}} v\cdot n(x) \big(\eta_t(y, w) - \varphi_t(\psi(y), R_x J^{-1} w)\big) f^+_{t}(\psi(y), J^{-1} w)j^{-2}(y) \, dt\,d\sigma_y\,dw = 0.
\end{align}
We have also used here that the \emph{boundary} measure $d\sigma_x\, dv \mapsto j^{-2}(y) d\sigma_y\,dw$, since in the change of variables there is no stretching in the direction normal to $\de\Omega$.

From \eqref{eq.jacob1}-\eqref{eq.jacob2} we also get $\phi_t(\psi(y), R_xJ^{-1} w) = \eta_t(y, w')$ and $v\cdot n(x) = w_1$. Calling $g^+_{t}(y, w):= f^+_{t}(\psi(y), J^{-1} w)$, we notice that \eqref{eq.defi_HS_Om_3} corresponds to the definition distributional solution to the continuity equation \eqref{eq.gerenorm} in the half-space with specular reflection, for test functions spatially supported in $B_1\cap \{y_1 \ge 0\}$, (see Remark~\ref{rem.distr.gen}). That is, we have
\begin{align}
\nonumber &\int_{ \{y_1 \geq 0\}\times \R^{d}} \eta_0(y, w) g_0^j(y, w)  \,dy\, dw\, +  \\
\label{eq.defi_HS_Om_4}& ~~+ \int_0^T \int_{\{y_1 \geq 0\}\times\R^{d}} [\de_t \eta_t(y, w) + \nabla_{y, w} \eta_t(y, w)\cdot  \tilde b_t(y, w) ] g_t^j(y, w) \,dt \,dy\,dw 
\\ \nonumber & ~~+ \int_{\gamma^+_{T}} w_1 \big(\eta_t(y, w) - \eta(y, w')\big) g^{j,+}_{t}(y, w) \, dt\,d\sigma_y\,dw = 0,
\end{align}
where $g^{j,+}_{t}(y, w) = g^{+}_{t}(y, w) j^{-2}(y)$.

To finish the proof, we proceed as in the proof of Proposition~\ref{prop.AimpB}, by noticing that there we only used the symmetry of the vector field $\tilde b_t(y, w)$, i.e., $b_t^{2, o}(y', w') = \left(b_t^{2, o}(y, w)\right)'$. Notice that we are also using here that by Remark~\ref{rem.moregen} from Lemma~\ref{lem.testfunctions} it is enough to check it for test functions in $\mathcal{T}_T$.
\end{proof}

In the previous proposition, Proposition~\eqref{prop.renB1}, we were interested in producing a solution to a continuity equation, namely \eqref{eq.gerenorm}. We would also like yet another result regarding the problem solved by the even extension, $g_t^e$. In this case, thus, we obtain an analogous result but now for a transport equation. Notice that, after changing variables the vector field is no longer divergence-free, and therefore continuity and transport equations are no longer equivalent. This results will be useful in the next pages.

\begin{prop}
\label{prop.renB1_T}
Let $T > 0$, let $\Omega$ be a $C^{1, 1}$ domain as described above, \eqref{eq.domcond}, and let $f_t\in L^1_{\rm loc}([0, T]\times \Omega\times\R^d)$ be a renormalized solution to \eqref{eq.VPOm} (see Definition~\ref{defi.HS_Om}). Under the above change of variables, \eqref{eq.chvar}-\eqref{eq.chvar2}-\eqref{eq.change}, define the even extension with respect to $(y_1, w_1)$ of $g_t$ in $B_1$, 
\begin{equation}
\label{eq.gte_T}
g_t^e(y, w) =  \left\{ \begin{array}{ll}
  g_t(y, w) & \textrm{ if } y_1 \ge 0 \\
  g_t(y', w') & \textrm{ if } y_1< 0.\\
  \end{array}\right.
\end{equation}

Then, $g^e_t$ is a renormalized solution to the transport equation
\begin{equation}
\label{eq.gerenorm_T}
\de_t g^{e}_t(y, w) + \tilde b_t (y, w) \cdot \nabla_{y, w} g^{e}_t(y, w) = 0, \textrm{ in } (0, T)\times B_1\times\R^d,
\end{equation}
where $\tilde b_t (y, w)$ is defined by \eqref{eq.gerenorm2}-\eqref{eq.gerenorm3}.
\end{prop}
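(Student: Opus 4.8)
## Proof proposal for Proposition~\ref{prop.renB1_T}

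The plan is to reduce this statement, which concerns a \emph{transport} equation for $g_t^e$, to the previously established Proposition~\ref{prop.renB1}, which produced a \emph{continuity} equation for $g_t^{j,\beta} = \beta(g_t^e)\,j_e^{-2}$. The link between the two is the weight $j_e^{-2}$ and the identity relating the divergence of $\tilde b_t$ to derivatives of $j$. Indeed, since the original vector field $b_t(x,v) = (v, E_t(x))$ is divergence-free on $\Omega\times\R^d$, the change of variables \eqref{eq.chvar}-\eqref{eq.chvar2} transforms the continuity equation with a density-preserving flow into one with the Jacobian weight; equivalently, the transformed vector field $\tilde b_t$ satisfies ${\rm div}_{y,w}\,\tilde b_t = \tilde b_t \cdot \nabla_{y,w}\log j^2$ on $\{y_1 > 0\}\times\R^d$ (this is the standard fact that pushing forward a divergence-free field through a diffeomorphism produces a field whose divergence is controlled by the Jacobian). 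By the symmetry properties built into $b_t^{2,o}$ and $j_e$, the same relation holds on all of $B_1\times\R^d$ in the distributional sense, with no singular contribution across $\{y_1 = 0\}$ — this last point needs the argument of Lemma~\ref{lem.testfunctions} / Remark~\ref{rem.moregen} to rule out boundary concentration, exactly as in Proposition~\ref{prop.renB1}.

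Concretely, I would proceed as follows. First, fix $\beta \in C^1\cap L^\infty(\R)$ and recall from Proposition~\ref{prop.renB1} that $g_t^{j,\beta} = \beta(g_t^e)j_e^{-2}$ is a distributional solution of $\de_t g_t^{j,\beta} + {\rm div}_{y,w}(\tilde b_t\, g_t^{j,\beta}) = 0$ in $(0,T)\times B_1\times\R^d$. Second, I would compute, for a test function $\eta \in C^1_c([0,T)\times B_1\times\R^d)$, the identity
\[
\int_0^T\!\!\int_{B_1\times\R^d}\big[\de_t\eta + \tilde b_t\cdot\nabla_{y,w}\eta\big]\beta(g_t^e)\,dt\,dy\,dw + \int_{B_1\times\R^d}\eta_0\,\beta(g_0^e)\,dy\,dw,
\]
and show it vanishes by choosing $\tilde\eta := \eta\, j_e^{2}$ as a test function in the continuity-equation formulation for $g_t^{j,\beta}$; here one uses that $j_e^2$ is Lipschitz (since $\Omega$ is $C^{1,1}$, so $\phi \in C^{1,1}$ and $j \in C^{0,1}$) and bounded above and below by \eqref{eq.cd}, so $\tilde\eta$ is an admissible Lipschitz test function, and by Remark~\ref{rem.moregen} it suffices to test against such functions. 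Expanding $\nabla_{y,w}\tilde\eta = j_e^2\nabla_{y,w}\eta + \eta\,\nabla_{y,w}(j_e^2)$ and using $g_t^{j,\beta}j_e^2 = \beta(g_t^e)$, the terms involving $\nabla_{y,w}(j_e^2)$ that appear when expanding ${\rm div}_{y,w}(\tilde b_t\, g_t^{j,\beta})\cdot\tilde\eta$ against $\beta(g_t^e)$ cancel against the $\eta\,\nabla(j_e^2)$ term precisely because of the divergence identity above; what remains is exactly the transport formulation \eqref{eq.gerenorm_T}. Third, since this holds for every $\beta$, by Definition~\ref{defi.WS} (the whole-space renormalized solution notion, here localized to $B_1$) $g_t^e$ is a renormalized solution of \eqref{eq.gerenorm_T}.

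The main obstacle I anticipate is \textbf{bookkeeping the weight $j_e^{-2}$ cleanly and verifying that no singular term is created on $\{y_1 = 0\}$}. The algebraic cancellation between ${\rm div}_{y,w}(\tilde b_t\, g^{j,\beta}_t)$ tested against $\eta j_e^2$ and the transport operator tested against $\beta(g^e_t)$ is routine but must be done carefully, tracking that $\tilde b_t^{2,o}$ has the correct parity and that $j_e$ is even in $y_1$ so that $\tilde b_t\cdot\nabla_{y,w}(j_e^2)$ also has the right parity — this is what makes the odd/even extension consistent and what guarantees that the distributional identity on $\{y_1>0\}\times\R^d$ glues with its reflection to the whole of $B_1\times\R^d$ without a surface term. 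A secondary point is that $j \in C^{0,1}$ only (not $C^1$), so $\nabla(j_e^2)$ is merely $L^\infty$; one should note that this is harmless since $\tilde b_t$ is already in $L^1_{\rm loc}$ (its force component involves $E_t$, which by the argument of \eqref{eq.claimE} is in $L^\infty_t L^p_{{\rm loc},x}$ for some $p>1$, plus the bounded term $v^TD^2\phi\,v$), so all products appearing are integrable against the compactly supported test function. Once these two points are handled, the statement follows by the same cancellation structure already used implicitly in Proposition~\ref{prop.renB1}.
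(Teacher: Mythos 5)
Your proposal is correct, but takes a genuinely different route from the paper. The paper proves Proposition~\ref{prop.renB1_T} directly from Definition~\ref{defi.HS_Om}: it takes a domain test function $\varphi \in \mathcal{T}_{\Omega,T}$, sets $\eta_t = \varphi_t\, j^{-2}$, performs the change of variables, and lands on \eqref{eq.defi_HS_Om_3_T} with the ${\rm div}_w b^2_t$ term appearing naturally on the right; it then reruns the reflection argument of Proposition~\ref{prop.AimpB}. You instead treat Proposition~\ref{prop.renB1} as a black box and obtain Proposition~\ref{prop.renB1_T} by the test-function substitution $\tilde\eta = \eta\, j_e^2$ in the weak formulation of the continuity equation, invoking the pointwise identity ${\rm div}_{y,w}\tilde b_t = \tilde b_t\cdot\nabla_{y,w}\log j_e^2$ — which is exactly the content of Remark~\ref{rem.incomp} in the paper (there stated as $2j^{-1}\nabla_x j\cdot v = {\rm div}_w F_1$). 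Your approach avoids repeating the change-of-variables computation and the reflection step, making the relation between the two propositions explicit — something the paper only gestures at in Remark~\ref{rem.incomp} (``we are going from a continuity equation form to a transport equation form''). The costs are: (i) you must verify that the divergence identity holds with no singular contribution on $\{y_1 = 0\}$ — which is fine, since ${\rm div}_{y,w}\tilde b_t$ involves only $w$-derivatives of $b^{2,o}_t$ and $y$-derivatives of $w$, neither of which picks up the jump of $b^{2,o}_t$ across the interface; (ii) $\tilde\eta = \eta\, j_e^2$ is only Lipschitz since $\Omega$ is $C^{1,1}$ and hence $j_e\in C^{0,1}$, which is handled by a standard density argument for $L^1_{\rm loc}$ vector fields rather than by Remark~\ref{rem.moregen}, which you cite — that remark addresses the separate issue of test functions vanishing near $\{y_1=0\}\cap\{w_1=0\}$, not Lipschitz regularity. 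A small wording point: the extra term $\int\eta\,\tilde b_t\cdot\nabla(j_e^2)\, g^{j,\beta}_t$ does not literally cancel; via the divergence identity it becomes $\int\eta\,\beta(g^e_t)\,{\rm div}\tilde b_t$, which is precisely the right-hand side required by Remark~\ref{rem.distr.gen}, so the formulation closes. With those details tightened, your argument is a valid and cleaner alternative to the paper's proof.
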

\begin{proof}
The proof again follows along the lines of Proposition~\ref{prop.AimpB}. As before, we assume for simplicity that $f_t\in L^\infty((0, T); L^\infty(\Omega\times\R^d))$. We know that there exists $f_{t}^+\in L^\infty_{\rm loc}(\gamma^+_{\Omega,T})$ such that for every $\varphi \in \mathcal{T}_{\Omega,T}$,
\[
\begin{split}
 \int_{ \Omega\times \R^{d}}& \varphi_0(x, v) f_0(x, v) \,dx\, dv\, + \int_0^T \int_{\Omega\times\R^{d}} [\de_t \varphi_t(x, v) + \nabla_{x, v} \varphi_t(x, v)\cdot  b_t(x, v) ] f_t(x, v)\,dt \,dx\,dv\,+
\\  &~~~~~~~~~~~~~~~~~~~~~~~~~~~~~~~~~~~~~ + \int_{\gamma^+_{\Omega,T}} v\cdot n(x) \big(\varphi_t( x, v) - \varphi_t(x, R_x v)\big) f^+_{t}(x, v)\, dt\,d\sigma^\Omega_x\,dv = 0.
\end{split}
\]

Let $\eta_t(y, w) = \varphi_t(x, v)j^{-2}(x)$ as before, where $j(x) = {\rm det}(J(x))$ the Jacobian determinant, and let us suppose that $\eta_t(y, w)$ is supported in $[0, T)\times \left(B_1\cap \{y_1 > 0\}\right) \times \R^d$. Performing the change of variables, in \eqref{eq.defi_HS_Om_2} we get 
\begin{align}
\nonumber &\int_{ \{y_1 \geq 0\}\times \R^{d}} \eta_0(y, w) g_0(y, w) \,dy\, dw\, +  \\
\label{eq.defi_HS_Om_3_T}& ~~+ \int_0^T \int_{\{y_1 \geq 0\}\times\R^{d}} [\de_t \eta_t(y, w) + \nabla_{y, w} \eta_t(y, w)\cdot  \tilde b_t(y, w) ] g_t(y, w)\,dt \,dy\,dw 
\\ \nonumber & ~~+ \int_{\gamma^+_{T}} v\cdot n(x) \big(\eta_t(y, w) - j^{-2}(x)\varphi_t(\psi(y), R_x J^{-1} w)\big) f^+_{t}(\psi(y), J^{-1} w)\, dt\,d\sigma_y\,dw = 
\\ \nonumber & ~~~~~~~~~~~~~~~~~~~~~~~~~~~  = -\int_0^T \int_{\{y_1 \geq 0\}\times\R^{d}} \eta_t(y, w) g_t(y, w) {\rm div}_w b_t^2(y, w)\,dt\,dy\,dw.
\end{align}
From \eqref{eq.jacob1}-\eqref{eq.jacob2} we also get $j^{-2}(x)\phi_t(\psi(y), R_xJ^{-1} w) = \eta_t(y, w')$ and $v\cdot n(x) = w_1$. Calling $g^+_{t}(y, w):= f^+_{t}(\psi(y), J^{-1} w)$, we notice that \eqref{eq.defi_HS_Om_3} corresponds to the definition of normalized solution in the half-space, for test functions spatially supported in $B_1\cap \{y_1 \ge 0\}$, and for vector fields not necessarily divergence free (see Remark~\ref{rem.nondivfree}).

To finish the proof, we proceed as in the proof of Proposition~\ref{prop.AimpB}, by noticing that there we only used the symmetry of the vector field $\tilde b_t(y, w)$, i.e., $b_t^{2, o}(y', w') = \left(b_t^{2, o}(y, w)\right)'$. Moreover, we also need to use that $({\rm div}_w b_t^2)(y', w') = {\rm div}_w b_t^2 (y, w)$.
\end{proof}
\begin{rem}
\label{rem.incomp}
In the previous proof, we are actually using a distributional proof of the fact that the following equality holds, 
\[
2j^{-1}(x) \nabla_x j(x) \cdot v = {\rm div}_w \left\{\left[(J^{-1}(x))^T\frac{\de w}{\de x}\right]^T w\right\},
\]
which can also be directly checked. In a way, we are going from a continuity equation form $\partial_t u+{\rm div}(bu)=cu$ to a transport equation form $\partial_t u +b\nabla u=(c-{\rm div} b)\,u $. 
\end{rem}

\begin{prop}
\label{prop.renB1_T2}
Let $T > 0$, let $\Omega$ be a $C^{1, 1}$ domain as described above, \eqref{eq.domcond}; and consider the above change of variables, \eqref{eq.chvar}-\eqref{eq.chvar2}-\eqref{eq.change}.  

Suppose $g_t^e \in L^1_{\rm loc}([0, T]\times B_2\times\R^d)$ is an even function ($g_t(y', w') = g_t(y, w)$), and define $f_t \in L^1_{\rm loc}([0, T]\times \Omega\cap B_1 \times\R^d)$ as the restriction of $g_t^e$ to $\{y_1 \ge 0\}$ after changing variables. That is, 
\[
f_t(x, v) = g_t^e(y, w), \quad\textrm{ for } (t, x, v) \in [0, T]\times \Omega\cap B_1\times\R^d.
\]
Assume that $g_t^e$ is a renormalized solution to the transport equation,
\begin{equation}
\label{eq.gerenorm_T2}
\de_t g^{e}_t(y, w) + \tilde b_t (y, w) \cdot \nabla_{y, w} g^{e}_t(y, w) = 0, \textrm{ in } (0, T)\times B_1\times\R^d,
\end{equation}
where $\tilde b_t (y, w)$ is defined by \eqref{eq.gerenorm2}-\eqref{eq.gerenorm3} via $f_t$. Then, $f_t$ is a renormalized solution to 
\[
\left\{ \begin{array}{ll}
  \de_t f_t + b_t\cdot \nabla_{x,v} f_t=0 & \textrm{ in } (0,\infty)\times\Omega\times\R^d\\
  f_t(x, v) = f_t(x, R_x v) & \textrm{ in } (0,\infty)\times\de\Omega\times\R^d,
  \end{array}\right.
  \]
  according to Definition~\ref{defi.HS_Om}.
\end{prop}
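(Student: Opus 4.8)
The plan is to reverse the argument of Proposition~\ref{prop.renB1_T}, following the scheme of Proposition~\ref{prop.BimpA} (the $B\Rightarrow A$ implication in the half-space) combined with the change of variables already set up in \eqref{eq.chvar}--\eqref{eq.chvar2}--\eqref{eq.change}. Since the proposition allows a general $L^1_{\rm loc}$ solution, I would first reduce to the bounded case by working with $\beta(g_t^e)$ for $\beta\in C^1\cap L^\infty(\R)$ (or with $\arctan(g_t^e)$, or any smooth bounded continuation of the identity when $g_t^e$ is bounded, per the convention of Definition~\ref{defi.HS_Om}); the remark that $g_t^e$ is a renormalized solution means exactly that each such $\beta(g_t^e)$ solves the corresponding transport equation with the $L^1_{\rm loc}$ source term coming from ${\rm div}_{y,w}\tilde b_t$ (see Remark~\ref{rem.nondivfree} and Remark~\ref{rem.incomp}).

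The first real step is a trace lemma for $g_t^e$ on $\{y_1=0\}$ analogous to Lemma~\ref{lem.trace}. Because $\Omega$ is $C^{1,1}$ the matrix $D^2\phi$ is in $L^\infty_{\rm loc}$, so the first summand of $b_t^2$ in \eqref{eq.gerenorm2} is bounded on sets compact in $w$, while $J(x)E_t(x)\in L^\infty((0,T);L^p_{\rm loc})$ for some $p>1$ by the local Young inequality estimate used in Lemma~\ref{lem.testfunctions}; hence $\tilde b_t\in L^\infty((0,T);L^p_{\rm loc}(B_1\times\R^d))$ and ${\rm div}_{y,w}\tilde b_t\in L^1_{\rm loc}$. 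With this integrability in hand the argument of Lemma~\ref{lem.trace} goes through essentially verbatim: test \eqref{eq.gerenorm_T2} against functions of the form $\rho/w_1$ supported in $\{w_1\ne 0\}$, deduce that the partial average $G_\rho(y_1)$ lies in $W^{1,1}_{\rm loc}$ in the $y_1$-direction, and extract a trace $\Gamma(g_t^e)\in L^\infty_{\rm loc}(\gamma_T^+\cup\gamma_T^-)$ (in the $(y,w)$ variables) satisfying a Green formula with boundary term $\int w_1\,\eta_t\,\Gamma(g_t^e)$; the non-divergence-free character of $\tilde b_t$ only contributes a harmless $L^1_{\rm loc}$ term.

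Next, exactly as in Propositions~\ref{prop.BimpA} and \ref{prop.renB1_T}, I would exploit the evenness $g_t^e(y',w')=g_t^e(y,w)$ together with the symmetries $b_t^{2,o}(y',w')=(b_t^{2,o}(y,w))'$ and $({\rm div}_w b_t^2)(y',w')={\rm div}_w b_t^2(y,w)$. Given $\varphi\in\mathcal T_{\Omega,T}$, change variables to $\eta_t(y,w)=\varphi_t(\psi(y),J^{-1}(\psi(y))w)\,j^{-2}(y)$, supported in $B_1\cap\{y_1\ge 0\}$ and in $\mathcal T_T$ by \eqref{eq.jacob2}; extend $\eta$ to a compactly supported $C^1$ function $\zeta$ on $B_1\times\R^d$, set $\zeta^-=\zeta\,1_{\{y_1\le 0\}}$, $\overline{\zeta}^-_t(y,w)=\zeta^-_t(y',w')$, and apply the trace Green formula to $\zeta^+=\zeta\,1_{\{y_1\ge 0\}}$ and to $\overline{\zeta}^-$. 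Adding the two identities and using that the whole-ball integral vanishes — by the evenness of $g_t^e$, the change-of-variables identity \eqref{eq.eqtransf}, and the fact that the source term is even — yields $\int_{\gamma^+_T\cup\gamma^-_T} w_1\,\eta_t(y,w)\,(\Gamma(g_t^e)(y,w)-\Gamma(g_t^e)(y,w'))\,dt\,d\sigma_y\,dw=0$, which on $\gamma^-_T$ is the distributional reflection condition of Remark~\ref{rem.defi.HS}. Transporting back with $v\cdot n(x)=w_1$ and the transformation $d\sigma_x\,dv\mapsto j^{-2}(y)\,d\sigma_y\,dw$ of the boundary measure (no stretching normal to $\de\Omega$), the trace defines $f_t^+(x,v):=\Gamma(g_t^e)(\phi(x),J(x)v)$ on $\gamma^+_{\Omega,T}$, and the two Green-formula identities become precisely those of Remark~\ref{rem.defi.HS}; by Lemma~\ref{lem.testfunctions}/Remark~\ref{rem.moregen} it suffices to test against $\varphi\in\mathcal T_{\Omega,T}$, so $f_t$ is a renormalized solution in the sense of Definition~\ref{defi.HS_Om}. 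I expect the main obstacle to be this trace lemma: one must check that Lemma~\ref{lem.trace}, designed for the divergence-free Problem~B in the whole space, genuinely adapts to the transport form \eqref{eq.gerenorm_T2} with a merely $L^p_{\rm loc}$ vector field and a nonzero (but $L^1_{\rm loc}$) divergence, and that the resulting trace carries the correct $j^{-2}$ weight so that, after undoing the change of variables, it is exactly the object $f^+_{\beta,t}$ required by Definition~\ref{defi.HS_Om}.
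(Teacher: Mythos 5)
Your proposal is correct and follows exactly the route the paper itself takes: the paper's own proof of this proposition consists of a two-sentence remark that one repeats Proposition~\ref{prop.BimpA} and Lemma~\ref{lem.trace} for $g_t^e$, using the symmetry of $\tilde b_t$ and then undoing the change of variables. Your write-up simply fills in the details the paper leaves implicit — the $L^p_{\rm loc}$ integrability of $\tilde b_t$ needed for the trace lemma, the handling of the $L^1_{\rm loc}$ divergence via Remark~\ref{rem.nondivfree}/Remark~\ref{rem.incomp}, the $j^{-2}$ factor in the test-function transformation, and the boundary-measure transformation — all of which are the intended ingredients.
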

\begin{proof}
The proof is the same as Proposition~\ref{prop.BimpA} and Lemma~\ref{lem.trace} for $g_t^e$, just using the symmetries of the vector field $\tilde b_t(y, w)$. A change of variables then yields the desired result. 
\end{proof}

Let us now state a theorem, relating renormalized and Lagrangian solutions. We show here that flowing a function $f_0$ via a Maximal Specular Flow produces a renormalized solution to a continuity equation. This corresponds to \cite[Theorem 4.10]{ACF17}, and in this case we want to focus on what is occurring at the boundary.

Before stating the theorem, let us state the following lemma regarding the incompressibility of the Maximal Specular Flow of divergence-free vector fields, $b = (v, E_t(x))$, in $\Omega\times\R^d$. We say that a flow is incompressible if the push-forward through the flow at any time $t$ of the Lebesgue measure, is the Lebesgue measure. Namely, the local Maximal Regular Flow is incompressible if condition (ii) in Definition~\ref{defi.mrf_A} holds with an equality and $C = 1$. Analogously, the Maximal Specular Flow is incompressible if condition (iii) in Definition~\ref{defi.msf_dom} holds with an equality and $C = 1$. 

Under enough regularity, incompressibility is equivalent to a divergence-free vector field. In this setting, this also holds. The Maximal Regular Flow of a divergence-free vector field is incompressible thanks to \cite[Theorem 4.3]{ACF17}. And using it, we can also show that adding the specular reflection condition at the boundary cannot add compression (or expansion) of the flow; namely, the Lebesgue measure is still preserved. The sketch of the proof of the following lemma is done at the end of the next section, where we will have introduced the reflection technique after changing variables in the setting of general domains.

\begin{lem}
\label{lem.incompress}
Let $T > 0$, and let $b : (0, T)\times\Omega\times\R^d\to \Omega\times\R^d$ be a divergence-free vector field, $b = (v, E_t(x))$, with $E\in L^\infty((0, T);L^p(\Omega))$ for $p > 1$; and let $(X(t, s, x, v), V(t, s, x, v))$ be its Maximal Specular Flow (see Definition~\ref{defi.msf_dom}). Then $(X, V)$ is incompressible; namely, condition (iii) of Definition~\ref{defi.msf_dom} holds with an equality and $C = 1$. 
\end{lem}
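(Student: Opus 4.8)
The strategy is to reduce the claim to the already-known incompressibility of the Maximal Regular Flow of a divergence-free field in the whole space, \cite[Theorem 4.3]{ACF17}, by using the change of variables of Section~\ref{sec.gendom} together with the reflection (image-charge) technique. Since the statement is local at the boundary, I would first fix an interior reference: away from $\de\Omega$, a Maximal Specular Flow coincides with a Maximal Regular Flow associated to $b=(v,E_t(x))$ (by Definition~\ref{defi.msf_dom}(ii), the singular part of $D_tV$ is supported on $X(\cdot,s,x,v)\in\de\Omega$), so in any ball $B\Subset\Omega\times\R^d$ the incompressibility with $C=1$ follows directly from \cite[Theorem 4.3]{ACF17}, because $b$ is divergence-free and, as in \eqref{eq.claimE}, $E\in L^\infty((0,T);L^p_{\rm loc})$ for some $p>1$ gives assumption {\bf (A1)}. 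It remains to understand what happens for trajectories that hit $\de\Omega$.

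For the boundary region I would localize: cover $\de\Omega$ by finitely many charts as in \eqref{eq.domcond}–\eqref{eq.change}, apply the change of variables $(x,v)\mapsto(y,w)=(\phi(x),J(x)v)$, and then perform the even reflection across $\{y_1=0\}$ exactly as in Proposition~\ref{prop.renB1_T}, replacing the Maximal Specular Flow in $\{y_1>0\}\times\R^d$ by a genuine Maximal Regular Flow $(\hat X,\hat V)$ in $B_1\times\R^d$ associated to the reflected vector field $\tilde b_t(y,w)=(w,b_t^{2,o}(y,w))$. By Lemma~\ref{lem.mrfmsf} (and its converse use, cf.\ the proof of Theorem~\ref{thm.main1}), the Maximal Specular Flow in the half-space chart is precisely the push-down of this even Maximal Regular Flow under $\tilde X_1=|\hat X_1|$. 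Now $\tilde b_t$ is \emph{not} divergence-free (see Remark~\ref{rem.incomp}): ${\rm div}_{y,w}\tilde b_t = {\rm div}_w b_t^{2,o} = 2 j^{-1}(y)\nabla_y j(y)\cdot w$ on $\{y_1>0\}$, extended evenly. So \cite[Theorem 4.3]{ACF17} does not apply verbatim; instead I would invoke the transport-side statement, observing that the weight $j_e^{-2}(y)$ from Proposition~\ref{prop.renB1} is exactly an invariant density for $\tilde b_t$, i.e.\ ${\rm div}_{y,w}(j_e^{-2}(y)\,\tilde b_t)=0$ in the distributional sense. Hence the Maximal Regular Flow $(\hat X,\hat V)$ transports the measure $j_e^{-2}(y)\,dy\,dw$ (again by \cite[Theorem 4.3, Theorem 4.10]{ACF17}, now applied to the divergence-free field $j_e^{-2}\tilde b_t$ after the standard reparametrization, or equivalently by the continuity-equation uniqueness Theorem~\ref{thm.charact} combined with the superposition principle). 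Undoing the change of variables, $j^{-2}(y)\,dy\,dw = dx\,dv$, so the Maximal Specular Flow in the original variables transports $\mathscr L^{2d}$ on $\{x_1>0\}$-type charts with constant $1$; patching the charts together with the interior estimate gives condition (iii) of Definition~\ref{defi.msf_dom} with an equality and $C=1$.

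The main technical obstacle I anticipate is the bookkeeping of trajectories that touch $\de\Omega$ on a set of positive measure of times, or touch it infinitely often: one must check that the reflection map $\hat X_1\mapsto|\hat X_1|$ does not destroy the no-concentration/incompressibility bound when many reflections accumulate, and that the null set of "bad" initial data (tangential hits, i.e.\ trajectories meeting $\gamma^0_{\Omega,T}$, or trajectories hitting $\de\Omega$ on a set of positive Lebesgue measure in time) is genuinely $\mathscr L^{2d}$-negligible. The first point is handled by the BV structure of $D_tV$ forced in Definition~\ref{defi.msf_dom}(ii) (the total mass of $D_t^sV$ is controlled by $\|\de_{tt}h\|_{L^1}$ on compact time intervals, as in Lemma~\ref{lem.mrfmsf}); the second follows from the incompressibility in the even picture, which forces the set $\{X(\cdot,s,x,v)\in\de\Omega \text{ on a positive-measure time set}\}$ to be null, since $\de\Omega\times\R^d$ is $\mathscr L^{2d}$-null and the even flow preserves $j_e^{-2}\mathscr L^{2d}$ (hence null sets). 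Once these two points are in place, the change-of-variables computation $j^{-2}(y)\,dy\,dw=dx\,dv$ and the surface-measure identity $d\sigma_x\,dv = j^{-2}(y)\,d\sigma_y\,dw$ (no normal stretching, as used in Proposition~\ref{prop.renB1}) close the argument.
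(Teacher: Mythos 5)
Your route is the paper's: localize, handle the interior via the divergence-free case of \cite[Theorem 4.3]{ACF17}, and near $\de\Omega$ change variables to the half-space chart, reflect to a Maximal Regular Flow for $\tilde b_t$, and push the conclusion back. The genuine refinement is your observation that ${\rm div}_{y,w}(j_e^{-2}\tilde b_t)=0$, so $j_e^{-2}$ is a time-independent solution of the continuity equation for $\tilde b_t$: uniqueness (Theorem~\ref{thm.charact_2}, after localizing in $(y,w)$) plus the superposition/push-forward identification of \cite[Theorem 4.10]{ACF17} then force the reflected flow to transport $j_e^{-2}\,dy\,dw$ exactly, and $j^{-2}\,dy\,dw = dx\,dv$ undoes the change of variables. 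This delivers $C=1$ in one stroke, whereas the paper's sketch argues more softly that boundedness of ${\rm div}_{y,w}\tilde b_t$ rules out singular mass production at $\{y_1=0\}$ and then infers incompressibility from the interior being divergence-free. Two small points of bookkeeping: the parenthetical ``after the standard reparametrization'' is misleading --- you never reparametrize time, you simply use that $j_e^{-2}$ is a stationary solution of the continuity equation for the unreparametrized $\tilde b_t$, which is what your alternative phrasing already says; and the uniqueness input here should be Theorem~\ref{thm.charact_2} (general domain), not Theorem~\ref{thm.charact}. Your handling of tangential hits and of trajectories meeting $\de\Omega$ on a positive-measure time set is on point, via the $BV$ bound from Definition~\ref{defi.msf_dom}(ii) and the fact that the reflected flow preserves null sets.
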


\begin{thm}
\label{thm.LagimpRen}
Let $T > 0$, and let $b : (0, T)\times\Omega\times\R^d\to \Omega\times\R^d$ be a divergence-free vector field, $b = (v, E_t(x))$, with $E\in L^\infty((0, T);L^p(\Omega))$ for $p > 1$; and let $(X(t, s, x, v), V(t, s, x, v))$ be its Maximal Specular Flow (see Definition~\ref{defi.msf_dom}). Let $f_0\in L^1(\Omega\times\R^d)$, and define 
\[
f_t := (X(t, 0, \cdot, \cdot), V(t, 0, \cdot, \cdot))_\# (f_0 \mres \{t_{0, X, V}^+ > t\}),\quad t\in[0, T).
\]
Then, $f_t$ is a renormalized solution of the continuity equation with specular reflection \eqref{eq.VPOm} according to Definition~\ref{defi.HS_Om}, fulfilling the commutativity property (Definition~\eqref{defi.comm2}). Moreover, the map $t\mapsto f_t$ is strongly continuous on $[0, T)$ in $L^1_{\rm loc}$. 
\end{thm}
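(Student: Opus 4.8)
The plan is to reduce the statement, by a partition-of-unity argument and the local change of variables \eqref{eq.change}, to the half-space situation already understood, and then to invoke the corresponding whole-space result of Ambrosio--Colombo--Figalli. First I would work locally: fix a boundary point, assume \eqref{eq.domcond}, and note that the definition of renormalized solution (Definition~\ref{defi.HS_Om}) is local in the $x$-variable, so it suffices to check the defining identity \eqref{eq.defi_HS_Om} for test functions $\varphi\in\mathcal T_{\Omega,T}$ supported in $B_1\times\R^d$ (plus the trivial interior region, where no boundary term appears and where one can directly apply \cite[Theorem 4.10]{ACF17} to the Maximal Regular Flow). After the change of variables \eqref{eq.chvar}--\eqref{eq.chvar2}, the flow $(X,V)$ becomes a Maximal Specular Flow $(\tilde X,\tilde V)$ in $\R^d_+\times\R^d$ associated to the transformed (no longer divergence-free) vector field $\tilde b_t$ of \eqref{eq.gerenorm2}; by \eqref{eq.jacob1}--\eqref{eq.jacob2} the specular reflection is preserved, and by Lemma~\ref{lem.incompress} the flow is incompressible. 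The even extension $(\hat X,\hat V)$ obtained by reflecting across $\{y_1=0\}$ (reversing the sign of the first space and first velocity coordinates) is then a \emph{Maximal Regular Flow} in the whole space for the odd-extended vector field $\tilde b_t^{o}$; this is exactly the content of Lemma~\ref{lem.mrfmsf} read in reverse, together with the symmetry properties recorded after \eqref{eq.gerenorm3}.

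Second, I would push forward $f_0$ (suitably extended evenly) through this whole-space flow. Since $\tilde b_t^{o}$ is merely $L^1_{\rm loc}$ (not divergence-free, because of the Jacobian factors), I cannot quote \cite[Theorem 4.10]{ACF17} verbatim; instead I would use its variant for general vector fields together with the density correction $g^{j,\beta}_t$ of \eqref{eq.gjbeta}: pushing $f_0\, j_e^{-2}$ forward along the incompressible-after-correction flow produces a distributional solution of the \emph{continuity equation} \eqref{eq.gerenorm}, while Proposition~\ref{prop.renB1_T} / Proposition~\ref{prop.renB1_T2} translate between the continuity form (with the $j_e^{-2}$ weight) and the transport form \eqref{eq.gerenorm_T2} satisfied by the un-weighted even extension $g_t^e$. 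Applying the already-proven whole-space renormalization result to $g_t^e$ (or rather to its truncations $1_{\{k\le g_0<k+1\}}g_0^e$, exactly as in Step~7 of the proof of Theorem~\ref{thm.mainws}) and then changing variables back via Proposition~\ref{prop.renB1_T2} yields that $f_t$ defined by the specular push-forward is a renormalized solution of \eqref{eq.VPOm} in $\Omega\cap B_1$. Gluing the local pieces over a finite cover of $\partial\Omega$ together with the interior region gives the global statement.

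Third, for the commutativity property (Definition~\ref{defi.comm2}) I would argue as in the proof of Theorem~\ref{thm.main1}: in the half-space picture, on the set where $|v_1|$ is bounded away from zero one may parametrize the flow by the $x_1$-coordinate instead of time (the first velocity component does not vanish, so the flow is invertible in $x_1$ by the inverse function theorem), and along such a reparametrized trajectory $\beta$ commutes with the pointwise value; since $\Gamma(g^e_t)$ is already the weak trace, this forces $f^+_{\beta,t}=\beta(f^+_t)$ for a common measurable $f^+_t$, after changing variables back (recalling \eqref{eq.jacob2}, so that the sign of $v\cdot n$ is preserved). Finally, strong $L^1_{\rm loc}$ continuity of $t\mapsto f_t$ follows from \cite[Theorem 4.10]{ACF17} (or \cite[Theorem 4.3]{ACF17}) applied to the truncated levels and the fact that a convergent series of strongly continuous maps, with a uniform tail bound coming from $\|f_t\|_{L^1}\le\|f_0\|_{L^1}$, is strongly continuous.

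**Main obstacle.** The delicate point is \emph{the interface $\{y_1=0\}$ after the change of variables}: one must verify that reflecting the \emph{non-divergence-free} vector field $\tilde b_t$ still yields a legitimate Maximal Regular Flow in the whole space, i.e.\ that the $V_1$-component of the reflected flow is exactly $BV$ with singular part of the prescribed specular form (Remark~\ref{rem.msf}) and that the incompressibility constant remains $1$ after the $j_e^{-2}$ weighting. This is where Lemma~\ref{lem.mrfmsf}, Lemma~\ref{lem.incompress}, and the symmetry identities \eqref{eq.jacob1}--\eqref{eq.jacob2} must be combined carefully; everything else is a translation of the whole-space machinery of \cite{ACF15, ACF17} through the diffeomorphism \eqref{eq.change}.
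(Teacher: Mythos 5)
Your proposal is coherent, but it takes a genuinely different route from the paper. The paper's proof is a direct Lagrangian argument in the physical variables: starting from the incompressibility of the Maximal Specular Flow (Lemma~\ref{lem.incompress}) one gets the pointwise transport relation $f_t\circ Z_t = f_0$, then tests against $\varphi\in\mathcal{T}_{\Omega,T}$ multiplied by a cutoff $\phi_\eps$ that vanishes on $\de\Omega$, and integrates by parts in time along trajectories. The key observation is that the velocity jump terms produced by the $BV$ part of $V$ at boundary hit-times $t_i$ are multiplied by $\phi_\eps(X_{t_i})=0$ and disappear; what survives in the limit $\eps\downarrow 0$ is identified as the boundary trace term by an argument patterned on Lemma~\ref{lem.trace}, and the required reflection identity for the trace follows directly from condition (ii) of Definition~\ref{defi.msf_dom}. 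No change of variables or even extension enters the core computation.

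Your route instead translates everything to the half-space and whole-space framework: localize, change variables via \eqref{eq.change}, reflect evenly to get a candidate whole-space Maximal Regular Flow, push forward the $j_e^{-2}$-weighted density, invoke a variant of \cite[Theorem 4.10]{ACF17} for non-divergence-free vector fields, and then return via Propositions~\ref{prop.renB1_T}/\ref{prop.renB1_T2} and a covering argument. This can be made to work, but it leans on two facts that the paper does not prove in the form you need. First, the ``reverse'' of Lemma~\ref{lem.mrfmsf} -- that the sign-alternating continuation of a Maximal Specular Flow across $\{y_1=0\}$ is $AC$ (not merely $BV$) in its first velocity component -- is only used informally in the proof sketch of Lemma~\ref{lem.incompress}, and deserves a careful verification (in particular one must handle tangential hits, where $V_1$ vanishes, and possible accumulation of reflection times). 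Second, you need a version of \cite[Theorem 4.10]{ACF17} for vector fields with bounded but nonzero divergence, since the reflected field acquires an $O(1)$ divergence from the Jacobian; the paper does assert that this variant exists (Step~2 of the proof of Theorem~\ref{thm.main1_Om}), but it is not written out. The paper's cutoff argument is shorter precisely because it avoids both of these and uses only Lemma~\ref{lem.incompress} and a half-space trace lemma. For commutativity and strong $L^1_{\rm loc}$ continuity, your argument and the paper's are essentially the same.
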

\begin{proof}
Consider $s = 0$, and denote $Z_t(x, v) = (X_t(x, v), V_t(x, v)), := (X(t, 0, x, v), V(t, 0, x, v))$, $t_Z:= t_{0, X, V}^+$. Proceeding as in \cite[Theorem 4.10]{ACF17} just using that the flow is incompressible, Lemma~\ref{lem.incompress}, we get
\begin{equation}
\label{eq.pointwisetransport}
f_t(Z_t(x, v)) = f_0(x, v),\quad\textrm{for } \mathscr{L}^{2d}\textrm{-a.e. } (x, v)\in \{t_Z > t\}.
\end{equation}

In particular, we also have the same for $\beta\circ f_t$ for any $\beta \in C^1\cap L^\infty$, and $\beta\circ f_t$ are distributional solutions of the continuity equation in the interior of $\Omega$, by \cite[Theorem 4.10]{ACF17}. We still have to check the trace condition. 

Take any test function $\varphi = \varphi_t(x, v)\in \mathcal{T}_{\Omega, T}$. Let us now compute 
\[
\begin{split}
\int_0^T\int_{\Omega\times\R^d} \left(\de_t\varphi + b\cdot \nabla \varphi\right) \beta(f_t) \,dx\,dv\,dt & = \int_0^T\int_{\{t_Z > t\}} \left[\left(\de_t\varphi + b\cdot \nabla \varphi\right)\circ Z_t\right]\, \beta(f_0)\,dx\,dv\,dt\\
& = \int_{\{t_Z > t\}} \beta(f_0) \int_0^T \frac{d}{dt}\left(\varphi\circ Z_t\right)\,dt\,dx\,dv
\end{split}
\]
where we have used here the incompressibility of the flow and \eqref{eq.pointwisetransport}. The problem in the temporal domain integral appears only when $Z_t$ approaches the boundary of $\Omega$, where the temporal derivatives has jumps in the velocity component. To avoid that, let us take $\phi_\eps(x)$ a test function defined by 
\[
\phi_\eps(x) = 0, \textrm{ if } x\notin \Omega,\quad \phi_\eps (x) = \min\{\eps^{-1} {\rm dist}(x, \de\Omega), 1\},\textrm{ if } x\in \Omega.
\]
Then, proceeding as before, 
\[
\begin{split}
\iint \left(\de_t\varphi + b\cdot \nabla \varphi\right) \beta(f_t) \,dx\,dv\,dt & = \lim_{\eps \to 0} \iint \left(\de_t\varphi + b\cdot \nabla \varphi\right) \beta(f_t) \phi_\eps(x) \,dx\,dv\,dt\\
& = \lim_{\eps\to 0} \int_{\{t_Z > t\}} \beta(f_0) \int_0^T \frac{d}{dt}\left(\varphi\circ Z_t\right) \phi_\eps(X_t) \,dt\,dx\,dv.
\end{split}
\]
Let us denote, for a.e. $(x, v)\in \Omega\times\R^d$, $\{t_i\}_{i\in I(x, v)}$ for $t_i\in [0, t_Z)$ the set of times such that $X_{t_i}(x, v) \in \de\Omega$. We also denote $Z_t^\pm = \lim_{\eps \downarrow 0} Z_{t\pm\eps}$. Using integration by parts for absolutely continuous functions (notice that $Z_t$ is AC when $X$ is not on the boundary $\de\Omega$) we have 
\[
\begin{split}
& \int_0^T \frac{d}{dt}\left(\varphi\circ Z_t\right) \phi_\eps(X_t) \,dt  = \\
&~~~~~~~~ = \sum_{i\in I(x, v)} \left(  \varphi_{t_i}\circ Z_{t_i}^- -\varphi_{t_i}\circ Z_{t_i}^+\right)\phi_\eps(X_{t_i}) - \varphi_0(x, v) \phi_\eps(x) - \int_0^T\varphi\circ Z_t\,\nabla\phi_\eps(X_t)\cdot V_t\, dt.
\end{split}
\]
Notice that the sum for $i\in I(x, v)$ is actually equal to 0, since the term $\phi_\eps(X_{t_i}) = 0$ whenever $X_{t_i}\in \de\Omega$. 
Putting all together, and denoting $N(x) = \nabla{\rm dist}(x, \de\Omega)$ (note that $N|_{\de\Omega} = n$) we have 
\[
\begin{split}
& \iint \left(\de_t\varphi + b\cdot \nabla \varphi\right) \beta(f_t) \,dx\,dv\,dt +\int_{\Omega\times\R^d} \beta(f_0)\varphi \, dx\,dv =
\\ &~~~~~~~~~~~~~~~~~~~~ = - \lim_{\eps \to 0} \int_{\{t_Z > t\}} \beta(f_0) \int_0^T\varphi\circ Z_t\,\nabla\phi_\eps(X_t)\cdot V_t \,dt\,dx\,dv\\
&~~~~~~~~~~~~~~~~~~~~ = - \lim_{\eps \to 0}\eps^{-1} \int_0^T \int_{D_\eps} \beta(f_t)\varphi\,N(x)\cdot v \,dt\,dx\,dv,
\end{split}
\]
where in the last step we are using again the incompressibility of the flow and \eqref{eq.pointwisetransport}. Here, we have denoted $D_\eps = \left( \{x\in \Omega : {\rm dist}(x, \de\Omega) < \eps\}\times\R^d \right)\cap \left(Z_t(\cdot, \cdot)(\{t_Z > t\})\right) $.

And we claim that 
\begin{equation}
\label{eq.sepdom}
\lim_{\eps \to 0}\frac1\eps \int_0^T \int_{D_\eps} \beta(f_t)\varphi\,N(x)\cdot v \,dt\,dx\,dv = \int_{\gamma_{\Omega, T}^+\cup \gamma_{\Omega, T}^-} n(x) \cdot v\, \varphi \, \Gamma(\beta(f_t))(x, v)\, dt\,d\sigma^\Omega_x\, dv,
\end{equation}
for some trace function $\Gamma(\beta(f_t))\in L^\infty(\gamma_{\Omega, T}^+\cup \gamma_{\Omega, T}^-)$. This follows exactly as the proof of \eqref{eq.bimpa2} in Lemma~\ref{lem.trace}. We can also directly apply the lemma, by first changing variables and having a half-space situation, where the function $\beta \circ g_t$ is a distributional solution to a continuity equation in the interior of $(B_1\cap \R^d_+)\times\R^d$. 

From the transport structure \eqref{eq.pointwisetransport} and condition (ii) of the definition of Maximal Specular Flow (Definition~\ref{defi.msf_dom}), we must have 
\begin{equation}
\label{eq.gammarefl}
\Gamma(\beta\circ f_t)(x, v) = \Gamma(\beta\circ f_t)(x, R_x v),\quad \textrm{ for }\sigma^\Omega\otimes\mathscr{L}^d\textrm{-a.e.}~ (x, v) \in \de\Omega\times\R^d.
\end{equation}
The result immediately follows from here by splitting the integral in \eqref{eq.sepdom} into the domains $\gamma_{\Omega, T}^+$ and $\gamma_{\Omega, T}^-$. Alternatively, notice that Remark~\ref{rem.defi.HS} holds by \eqref{eq.sepdom} and \eqref{eq.gammarefl}.

Moreover, again thanks to the transport structure, the commutativity property holds proceeding as in \eqref{eq.commp} within the proof of Theorem~\ref{thm.main1}.

Finally, we check the strong continuity in $L^1_{\rm loc}$; and in order to simplify things we check it at $t = 0$. For any $\delta > 0$, fix $\psi\in C_c^\infty(\overline{\Omega}\times\R^d) $ such that $\|\psi - f_0\|_{L^1(\Omega\times\R^d)} \le \delta$. Then,
\[
\int_{\Omega\times\R^d} |f_t-\psi |\, dx\, dv \le \int_{Z_t(\cdot, \cdot)(\{t < t_Z\})} |f_t - \psi|\, dx\, dv + \int_{Z_t(\cdot, \cdot)(\{0 < t_Z\le t\})} |\psi|\,dx\, dv.
\] 
From $t_Z > 0$ a.e., the last term vanishes as $t\downarrow 0$. Therefore, using the incompressibility and \eqref{eq.pointwisetransport} as before, we have 
\[
\limsup_{t\downarrow 0}\int_{\Omega\times\R^d}|f_t - \psi|\, dx\,dv  \le \limsup_{t \downarrow 0} \int_{\{t < t_Z\}}|f_0 - \psi\circ Z_t|\, dx\,dv \le \int_{\Omega\times\R^d}|f_0-\psi|,
\]
where in the last inequality we are using that the set of discontinuity of $Z_t$ as $t\downarrow 0$ has measure zero. Hence, 
\[
\limsup_{t\downarrow 0}\int_{\Omega\times\R^d}|f_t - f_0|\, dx\,dv  \le\limsup_{t\downarrow 0}\int_{\Omega\times\R^d}|f_t - \psi|\, dx\,dv +\int_{\Omega\times\R^d}|f_0 - \psi|\, dx\,dv \le 2\delta,   
\]
and by the arbitrariness of $\delta > 0$, we are done. 
\end{proof}

\section{Uniqueness of the continuity equation and Theorem~\ref{thm.main1_Om}}

\label{sec.uniq}
As shown in \cite{ACF15}, there exist conditions analogous to {\bf (A1)} and {\bf (A2)} from Section~\ref{sec.4} that suffice to show existence and uniqueness of local Maximal Regular Flows. In this case, keeping the original notation in \cite[Section 3]{ACF15}, for a given open set $A \subset \R^d$ and a Borel vector field $b: (0, T)\times A\to\R^d$, the following conditions imply existence and uniqueness of local Maximal Regular Flows:
\begin{itemize}
\item[{\bf (a-A)}] $\int_0^T\int_{A'} |b_t(x)|\,dx\,dt < \infty$ for any $A'\Subset A$;
\item[{\bf (b-A)}] for any nonnegative $\bar \mu\in L^\infty_+(A)$ with compact support in $A$ and any closed interval $I = [a, b]\subset[0, T]$, the continuity equation 
\begin{equation}
\label{eq.bA}
\frac{d}{dt}\mu_t + {\rm div}(b\mu_t)  = 0,\quad\textrm{in }\quad (a, b)\times A 
\end{equation}
has at most one weakly* continuous solution $I\ni t \mapsto \rho_t$ such that $\rho_a = \bar\mu$ and $\cup_{t\in [a, b]} {\rm supp}\,\mu_t \Subset A$.
\end{itemize}

We want to check that condition {\bf (b-A)} for the continuity equation \eqref{eq.bA} holds for compactly supported measures in the open set $B_1\times \R^d$ for the vector field constructed in Proposition~\ref{prop.renB1}. That is, we consider a fixed vector field of the form $\tilde b_t (y, w) = (w, b^{2, o}_t (y, w))$,
\begin{equation}
\label{eq.uniqvf}
b_t^2(y, w)  = \left[J^{-1}(x)\frac{\de w}{\de x}\right]^T w + J(x) E_t(x),
\end{equation}
for $(y, w)\in \left(B_1\cap \{y_1 > 0\}\right) \times \R^d$, $x = \psi(y)$, and where $b_t^{2, o}$ is the odd extension with respect to $(y_1, w_1)$ (see \eqref{eq.gerenorm3}), such that the electric field $E_t$ is the one generated by a renormalized solution to \eqref{eq.original_Om}, with $f_t(x, v) \in L^\infty((0, T);L^1(\Omega\times\R^d))$. We recall that when performing the change of variables we are assuming that $0\in \de\Omega$ and that the normal vector at 0 is $e_1$, \eqref{eq.domcond}, while at the same time we assume a domain with an exterior and interior ball condition of radius 2. 

Let us first state a result regarding pointwise estimates of the Green function $G_\Omega(x_1, x_2)$ in regular domains $\Omega$. 

The following is a classical result that can be found, for example, in \cite[Theorem 3.3]{GW82}. 
\begin{lem}
\label{lem.GreenFct}
Let $\Omega\subset \R^d$ satisfy the exterior and interior ball condition uniformly in $\R^d$. Then, the corresponding Green function $G_\Omega(x_1, x_2)$ satisfies the following inequalities,
\begin{enumerate}[(i)]
\item $0\le G_\Omega(x_1, x_2) \leq C|x_1-x_2|^{2-d}$,
\item $|\nabla_{x_1} G_\Omega(x_1, x_2)| \leq C|x_1-x_2|^{1-d}$,
\item $|\nabla_{x_1}\nabla_{x_2} G_\Omega(x_1, x_2)| \leq C|x_1-x_2|^{-d}$,
\item $|D^2_{x_1,x_1} G_\Omega(x_1, x_2)|\leq C|x_1-x_2|^{1-d} \left(\min\{{\rm dist}(x_1, \de\Omega), |x_1-x_2|\}\right)^{-1}$,
\end{enumerate}
for any $x_1, x_2\in \Omega$, and for some constant $C$ depending only on $d$ and $\Omega$. Moreover, for any $x_1, x_2, z\in \Omega\cap B_2$, and for any $\alpha\in (0, 1)$ we have the bounds
\begin{enumerate}[(v)]
\item $|G_\Omega(x_1, z) - G_\Omega(x_2, z)|\le C|x_1-x_2|^\alpha \left(|x_1-z|^{2-d-\alpha}+|x_2-z|^{2-d-\alpha}\right)$,
\item[(vi)] $|\nabla_x G_\Omega(x_1, z) - \nabla_x G_\Omega(x_2, z)|\le C|x_1-x_2|^\alpha \left(|x_1-z|^{1-d-\alpha}+|x_2-z|^{1-d-\alpha}\right)$,
\end{enumerate}
for some constant $C$ depending only on $d$, $\Omega$, and $\alpha$. 
\end{lem}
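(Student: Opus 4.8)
The statement to prove is Lemma~\ref{lem.GreenFct}, concerning pointwise estimates on the Green function of a domain satisfying uniform interior and exterior ball conditions.

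\medskip

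The plan is to appeal to the classical theory of Green functions for elliptic operators in domains with sufficient boundary regularity, since the domain $\Omega$ is $C^{2,1}$ and, in particular, satisfies uniform interior and exterior ball conditions. The starting point is the decomposition $G_\Omega(x_1, x_2) = \Phi(x_1 - x_2) - h_{x_2}(x_1)$, where $\Phi$ is the fundamental solution of the Laplacian in $\R^d$ (so $\Phi(x) = c_d' |x|^{2-d}$) and $h_{x_2}$ is the harmonic corrector solving $\Delta h_{x_2} = 0$ in $\Omega$ with $h_{x_2} = \Phi(\cdot - x_2)$ on $\de\Omega$. For (i) one uses the maximum principle: $0 \le G_\Omega \le \Phi(x_1 - x_2) = c_d'|x_1-x_2|^{2-d}$, since the corrector is nonnegative (being harmonic with nonnegative boundary data) and bounded above by $\Phi$ on $\de\Omega$, hence everywhere. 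The inequalities (ii), (iii), (iv) follow from interior and boundary gradient estimates for harmonic functions applied to the corrector, combined with differentiation of $\Phi$; the key mechanism is that near the pole $x_1 = x_2$ the singular term dominates with the stated homogeneity, while the corrector is controlled by barrier arguments using the uniform ball conditions, which give the extra factor $\min\{\mathrm{dist}(x_1, \de\Omega), |x_1-x_2|\}^{-1}$ in (iv). The estimates (v) and (vi) are $C^\alpha$ interpolation-type bounds: one writes the difference as an integral of the gradient along the segment joining $x_1$ and $x_2$, applies (ii) (resp.\ the second-order bound), and splits into the regimes $|x_1 - x_2| \lesssim |x_1 - z|$ and $|x_1 - x_2| \gtrsim |x_1 - z|$, handling the latter by the triangle inequality and the homogeneity of the kernels.

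\medskip

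Concretely, I would cite \cite[Theorem 3.3]{GW82} (and its proof) directly for (i)--(iv), as the hypotheses there are exactly the uniform two-sided ball conditions, which our $C^{2,1}$ domain satisfies after the rescaling fixed earlier so that the radii equal $2$. For (v) and (vi) I would give the short interpolation argument: for (vi), assume without loss of generality $|x_1 - z| \le |x_2 - z|$; if $|x_1 - x_2| \ge \tfrac12 |x_1 - z|$ then $|\nabla_x G_\Omega(x_1,z)| + |\nabla_x G_\Omega(x_2,z)| \le C(|x_1-z|^{1-d} + |x_2-z|^{1-d}) \le C|x_1-x_2|^\alpha(|x_1-z|^{1-d-\alpha}+|x_2-z|^{1-d-\alpha})$ using (ii) and $|x_1-x_2|^\alpha \ge c|x_1-z|^\alpha$; if $|x_1-x_2| < \tfrac12|x_1-z|$ then the whole segment $[x_1,x_2]$ stays at distance $\gtrsim |x_1-z|$ from $z$, so integrating the second-order estimate (from (iii) or a Schauder estimate for $\nabla_x G_\Omega$) along the segment yields $|\nabla_x G_\Omega(x_1,z)-\nabla_x G_\Omega(x_2,z)| \le C|x_1-x_2|\,|x_1-z|^{-d} \le C|x_1-x_2|^\alpha |x_1-z|^{1-d-\alpha}$. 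The estimate (v) is handled identically using (ii) in place of (iii).

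\medskip

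The main obstacle is ensuring that the constants depend only on $d$, $\Omega$ (through its uniform ball radii and $C^{1,1}$, resp.\ $C^{2,1}$, norm), and $\alpha$, and are genuinely local in the sense required when we later restrict to $\Omega \cap B_2$; this is why the hypotheses are phrased as \emph{uniform} ball conditions rather than mere local regularity. One must also be slightly careful that estimate (iv) — the sharp second-derivative bound with the distance-to-boundary factor — is exactly the content of \cite{GW82} and is not simply interior elliptic regularity; the distance factor reflects the blow-up of second derivatives of the corrector near $\de\Omega$, controlled by the exterior ball barrier. Since all of this is classical and the cited reference covers precisely this setting, the proof reduces to quoting \cite[Theorem 3.3]{GW82} for (i)--(iv) and supplying the elementary segment-integration argument for (v)--(vi).
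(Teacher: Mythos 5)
Your proposal for (i)--(iii) matches the paper exactly: both cite \cite[Theorem 3.3]{GW82}. For (iv) the paper does not cite \cite[Theorem 3.3]{GW82} directly (the pure second-derivative bound with the distance factor is not a stated conclusion there) but derives it by applying the interior gradient estimate \cite[Lemma 3.1]{GW82} to the harmonic function $\nabla_{x_1}G_\Omega(\cdot,x_2)$ in the ball $B_{\frac12\min\{d(x_1),|x_1-x_2|\}}(x_1)$ and then invoking (ii); your heuristic (barrier arguments giving the $\min\{d(x_1),|x_1-x_2|\}^{-1}$ factor) points at the right mechanism, but the clean route is this one-line bootstrap from (ii), which you should spell out.

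The real gap is in your segment-integration argument for (vi). In the regime $|x_1-x_2|<\tfrac12|x_1-z|$ you write
\[
|\nabla_x G_\Omega(x_1,z)-\nabla_x G_\Omega(x_2,z)|\le C\,|x_1-x_2|\,|x_1-z|^{-d},
\]
which requires the pointwise bound $|D^2_{x,x}G_\Omega(x,z)|\le C|x-z|^{-d}$ along the segment. But (iv) only yields $|D^2_{x,x}G_\Omega(x,z)|\le C|x-z|^{1-d}\,d(x)^{-1}$ when $d(x)<|x-z|$, and this degenerates as $x$ approaches $\de\Omega$. Since the segment $[x_1,x_2]$ may lie arbitrarily close to (or, for nonconvex $\Omega$, even exit) the boundary, the integration does not close. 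The same issue is implicit in your parenthetical ``or a Schauder estimate for $\nabla_x G_\Omega$'': an interior Schauder estimate fails near $\de\Omega$, and a boundary Schauder/H\"older estimate is exactly the nontrivial content needed. The paper sidesteps this by quoting \cite[Theorem 3.5]{GW82}, which is precisely a uniform boundary H\"older estimate for $\nabla_x G_\Omega$ obtained by barrier methods rather than segment integration, and then observing that (v) follows by the same technique. Your argument is fine for (v) (since (ii) is uniform up to the boundary, modulo the segment-in-$\Omega$ caveat), but for (vi) you either need to import \cite[Theorem 3.5]{GW82} directly or supply the boundary H\"older estimate for the harmonic part of $\nabla_x G_\Omega$ via the exterior-ball barrier; the naive Lipschitz-to-H\"older interpolation with (iv) is not enough.
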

\begin{proof}
The first three results, (i)-(ii)-(iii), can be found in \cite[Theorem 3.3]{GW82}. The following result, (iv), follows by the same arguments: 
Let us denote $d(x) = {\rm dist}(x, \de\Omega)$. If $d(x_1) \leq |x_1 - x_2|$ we apply \cite[Lemma 3.1]{GW82} to $\nabla_{x_1} G_\Omega(\cdot, x_2)$ in the ball $B = B_{\frac{1}{2}d(x_1)}(x_1)$, to get 
\[
|\nabla_{x_1}\nabla_{x_1} G_\Omega(x_1, x_2)| \leq C (d(x_1))^{-1}\, \sup_{z\in B} |\nabla_{x_1} G_\Omega(z, x_2)| \leq C (d(x_1))^{-1}|x_1-x_2|^{1-d},
\]
where in the last inequality we are using (ii). On the other hand, if $d(x_1) \ge |x_1-x_2|$ we apply \cite[Lemma 3.1]{GW82} to $\nabla_{x_1} G_\Omega(\cdot, x_2)$ in the ball $B' = B_{\frac{1}{2}|x_1-x_2|}(x_1)$, 
\[
|\nabla_{x_1} \nabla_{x_1}G_\Omega(x_1, x_2)| \leq C (d(x_1))^{-1}\, \sup_{z\in B'} |\nabla_{x_1} G_\Omega(z, x_2)| \leq C |x_1-x_2|^{-1}|x_1-x_2|^{1-d},
\]
where again, we used (ii). 

Finally, result (vi) corresponds to \cite[Theorem 3.5]{GW82}, while result (v) follows by using the same methods as for (vi). 
\end{proof}

In particular, notice that the first inequality (i) together with the argument to prove \eqref{eq.claimE} directly implies that condition {\bf (A-1)} holds, and in particular {\bf (a-A)} also holds for any open set $A$.

\begin{thm}
\label{thm.charact_2}
Let $b:(0, T)\times\R^{2d}\to \R^{2d}$ be the vector field given by $b_t(y, w) = \tilde b_t(y, w)$ defined above, \eqref{eq.uniqvf}, and suppose $\Omega\subset\R^d$ is a $C^{2, 1}$ domain.

Then $b$ satisfies assumption {\bf (b-A)} for $A = B_1\times\R^d$, that is, the uniqueness of bounded compactly supported in $A$ nonnegative distributional solutions of the continuity equation. 
\end{thm}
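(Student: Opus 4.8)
\emph{Overall strategy.} I would run the proof of Theorem~\ref{thm.charact} almost verbatim, the only genuinely new points being the extra ``geometric'' terms created by the change of variables and the replacement of the Newtonian kernel by $\nabla_x G_\Omega$. First fix $A=B_1\times\R^d$. Condition {\bf (a-A)} holds at once from Lemma~\ref{lem.GreenFct}(i) and the argument for \eqref{eq.claimE}, so by the localized superposition principle \cite[Theorem 5.1]{ACF17} it suffices (taking $I=[0,T]$, arguing by contradiction as in Theorem~\ref{thm.charact}) to show that any $\eta\in\mathscr P\big(C([0,T);B_{r_0}\times B_R)\big)$ concentrated on integral curves of $\tilde b_t$ with $(e_t)_\#\eta\le C_0\mathscr L^{2d}$ has disintegration $\eta_x$ with respect to $e_0$ a Dirac mass for $({e_0})_\#\eta$-a.e.\ $x$. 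Assuming not, define $\Phi_{\delta,\zeta}$ as in \eqref{eq.PHI} with $\gamma^1,\xi^1$ the $y$-components and $\gamma^2,\xi^2$ the $w$-components; then \eqref{eq.chicont} holds, and differentiating and proceeding exactly as for \eqref{eq.ts1} reduces everything to the bound
\[
\zeta\iiint\frac{\big|b^{2,o}_t(\gamma(t))-b^{2,o}_t(\xi(t))\big|}{\zeta\delta+|\gamma^1(t)-\xi^1(t)|}\,d\mu(x,\xi,\gamma)\ \le\ C\zeta\Big(1+\log\tfrac1{\zeta\delta}\Big),
\]
uniformly in $t\in[0,T]$, with $C=C\big(d,\Omega,R,\|f\|_{L^\infty((0,T);L^1)}\big)$.

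\emph{Splitting the vector field.} Write $b^{2,o}_t(y,w)=Q_y(w)+F^{o}_t(y)$, where $Q_y(w)$ is the odd extension of $\big[(J^{-1})^{T}\tfrac{\partial w}{\partial x}\big]^{T}w=v^{T}D^2\phi\,v$ — quadratic in $w$ with coefficients Lipschitz in $y$ on $B_1\times B_R$, since $\Omega\in C^{2,1}$ — and $F_t(y)=J(\psi(y))E_t(\psi(y))$, with $F^{o}_t$ its odd extension with respect to $(y_1,w_1)$. Writing $x_\gamma=\psi(\gamma^1(t))$, $x_\xi=\psi(\xi^1(t))$: the contribution of $Q_y(w)$ and of the $w$-Lipschitz part of $F^{o}_t$ is absorbed by the Lipschitz bound exactly as in \eqref{eq.ts2} together with the no-concentration estimate $\iiint(\zeta\delta+|\gamma^1-\xi^1|)^{-1}d\mu\le C(1+\log\frac1{\zeta\delta})$; and the term from $|J(x_\gamma)-J(x_\xi)|\,|E_t(x_\gamma)|$ is bounded by $C\iiint|E_t(x_\gamma)|\,d\mu\le CC_0\|E_t\|_{L^1(\psi(B_{r_0}))}\le C$ using $E\in L^\infty((0,T);L^p_{\rm loc})$. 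Using the oddness of $F^{o}_t$ and the reflection $\xi^1\mapsto(\xi^1)'$ as in Theorem~\ref{thm.charact}, the only genuinely singular contributions left are (a) the \emph{same–side} piece $\frac{|E_t(x_\gamma)-E_t(x_\xi)|}{\zeta\delta+|x_\gamma-x_\xi|}$ for $\gamma^1_1\xi^1_1>0$ (both signs symmetric), and (b) the \emph{opposite–side} piece, which (components $j\ge2$ being even, using $|\gamma^1-\xi^1|\ge|\gamma^1-(\xi^1)'|$) reduces to $\frac{|F_{t,1}(\gamma^1)|}{\zeta\delta+|\gamma^1_1|}$ with $\gamma^1_1>0$ and $|\gamma^1-\xi^1|\ge|\gamma^1_1|+|\xi^1_1|$.

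\emph{The two estimates.} For (a) write $E_t=c_d\,K*(\rho_t 1_\Omega)+R_t$ with $R_t(x)=\int_\Omega\big(\nabla_xG_\Omega(x,z)+c_d\tfrac{x-z}{|x-z|^d}\big)\rho_t(z)\,dz$; the kernel part is literally the convolution case treated in \cite[Theorem 4.4]{ACF17}, cf.\ Theorem~\ref{thm.charact}, and produces the $\log\frac1{\zeta\delta}$ term, while for $R_t$ the Hölder estimate Lemma~\ref{lem.GreenFct}(vi) (with the analogous bound for the Newtonian kernel) gives $|R_t(x_1)-R_t(x_2)|\le C|x_1-x_2|^{\alpha}\int_\Omega(|x_1-z|^{1-d-\alpha}+|x_2-z|^{1-d-\alpha})\rho_t(z)\,dz$, and Fubini plus the borderline Riesz-type bound $\int_{B_R}|x-y|^{\alpha-1}|x-z|^{1-d-\alpha}\,dx\le C(1+\log^+|y-z|^{-1})$ makes the corresponding integral $\le C\|\rho_t\|_{L^1}$, uniformly in $\zeta,\delta,t$. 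For (b) the decisive observation is that, by the construction \eqref{eq.change} of $\phi$, $\phi_1=\textrm{dist}(\cdot,\partial\Omega)$ near $\partial\Omega$; hence $y_1=\textrm{dist}(\psi(y),\partial\Omega)$ and $F_{t,1}(y)=\nabla_x\phi_1\cdot E_t=N(\psi(y))\cdot E_t(\psi(y))=-\int_\Omega\partial_{N}G_\Omega(\psi(y),z)\,\rho_t(z)\,dz$, i.e.\ the first rectified component is exactly the normal derivative of the Dirichlet potential. The normal derivative of the Green function is of Poisson–kernel type: $|\partial_{N(x)}G_\Omega(x,z)|\le C\big(|x-z|^{2-d}+\tfrac{d(x)+d(z)}{(d(x)+d(z)+|x-z|)^{d}}\big)$ on $\Omega\cap B_2$, which one obtains by comparison with the explicit half–space Green function using $N(x)\cdot(x-z)=d(x)-d(z)+O(|x-z|^2)$ and the estimates of Lemma~\ref{lem.GreenFct}. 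Both summands have bounded integral over every slice $\{\textrm{dist}(\cdot,\partial\Omega)=a\}\cap B_2$, uniformly in $a$; so after the change of variables
\[
\int_{B_{r_0}\cap\{y_1>0\}}\frac{|F_{t,1}(y)|}{\zeta\delta+y_1}\,dy\ \le\ C\|\rho_t\|_{L^1}\int_0^{r_0}\frac{da}{\zeta\delta+a}\ \le\ C\|\rho_t\|_{L^1}\Big(1+\log\tfrac1{\zeta\delta}\Big),
\]
and the no-concentration bound $(e_t)_\#\eta\le C_0\mathscr L^{2d}$ converts this into the required estimate for (b). Collecting everything yields $\frac{d\Phi_{\delta,\zeta}}{dt}\le C(\frac1\zeta+\zeta+\zeta\log\frac1{\zeta\delta})$; integrating on $[0,t_0]$, choosing $\zeta$ with $Ct_0\zeta$ small and letting $\delta\downarrow0$ contradicts \eqref{eq.chicont}, exactly as at the end of Theorem~\ref{thm.charact}.

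\emph{Main obstacle.} Everything except estimate (b) is bookkeeping on top of Theorem~\ref{thm.charact} and Lemma~\ref{lem.GreenFct}. The hard part is (b): one must recognise that the first coordinate of the rectified electric field is the normal component $E_t\cdot N$, i.e.\ minus the normal derivative of the Green function, and establish for a general $C^{2,1}$ domain the Poisson–kernel–type bound above, whose crucial feature is that the ``slice mass'' of $\partial_N G_\Omega(\cdot,z)$ on surfaces parallel to $\partial\Omega$ is bounded uniformly in the depth. This is precisely what suppresses the spurious extra logarithm that the crude bound $|E_t|\lesssim|\cdot|^{1-d}*\rho_t$ would produce — an extra logarithm that would be fatal for the final contradiction, since there one needs a factor of exactly $\log\frac1{\zeta\delta}$.
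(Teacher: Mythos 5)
Your overall reduction (set up $\Phi_{\delta,\zeta}$, differentiate, split into the $w$-derivative part, the quadratic-in-$w$ part coming from $D^2\phi$, and the electric-field part, then split the latter into a same-side piece (a) and an opposite-side piece (b)) coincides with the paper's, and your treatment of (b) is essentially the paper's: both identify the first rectified component of the field with the normal derivative $\partial_{N}G_\Omega$, estimate it through comparison with the half-space Green function, and turn the uniform boundedness of the ``slice mass'' of that kernel into the bound $\int_0^{r_0}\frac{da}{\zeta\delta+a}\le C\log\frac1{\zeta\delta}$ (you do this via a Poisson-kernel-type estimate and coarea; the paper via the image-charge decomposition, a maximum-principle bound on the harmonic correction, and the dyadic sets $A_k$ — the continuous and dyadic versions of the same estimate).

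The same-side piece (a), however, is where your argument has a genuine gap, and it is precisely the point to which the paper's appendix devotes its heaviest analysis. Your claim that the Hölder estimate for $R_t$ ``plus Fubini plus the borderline Riesz bound'' gives $C\|\rho_t\|_{L^1}$ treats $d\mu=d\eta_x\otimes d\eta_x\otimes d\bar\mu$ as if it were a Lebesgue product $dy_1\,dy_2$. It is not: the no-concentration hypothesis $(e_t)_\#\eta\le C_0\mathscr L^{2d}$ controls only \emph{single-curve} push-forwards $\iint f(\gamma(t))\,d\eta_x(\gamma)\,d\bar\mu(x)$, and gives no pointwise or average information on the disintegrations $\eta_x$ themselves — which could a priori concentrate (this is exactly the scenario one is trying to exclude). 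Consequently $\iiint F(\gamma(t),\xi(t))\,d\mu$ is controllable only when $F$ is a \emph{sum of one-variable functions}. Your Hölder bound, after dividing by $\zeta\delta+|x_\gamma-x_\xi|$, produces $|x_\gamma-x_\xi|^{\alpha-1}\big(g(x_\gamma)+g(x_\xi)\big)$ with $\alpha<1$, which is not separable; the only crude separable majorant, $\tfrac{|x_\gamma-x_\xi|^\alpha}{\zeta\delta+|x_\gamma-x_\xi|}\le C(\zeta\delta)^{\alpha-1}$, yields a bound of order $\zeta^{\alpha}\delta^{\alpha-1}$, which blows up too fast as $\delta\downarrow0$ to contradict \eqref{eq.chicont}. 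Note also that your own decomposition exposes the problem: $R_t$ and the Newtonian kernel part satisfy the \emph{same} Hölder estimate, so if ``Fubini plus Riesz'' sufficed for $R_t$ it would equally dispose of the kernel part and make the \cite[Theorem~4.4]{ACF17} machinery superfluous.

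The paper avoids this by arranging for a separable bound: it writes $G_\Omega=\Gamma(\cdot-x_2)-\Gamma(\cdot-Px_2)-H$, reduces the two $\Gamma$ terms to an honest Newtonian convolution against a reflected density (delegated to \cite[Theorem~4.4]{ACF17}), and for the harmonic correction $H$ uses the maximal-function inequality \eqref{eq.md}, namely $|h(y_1)-h(y_2)|\le c_d|y_1-y_2|\big(M_\lambda Dh(y_1)+M_\lambda Dh(y_2)\big)$: after dividing by $|y_1-y_2|$ this is a sum of one-variable functions, so the no-concentration condition applies directly. Making this work requires $D^2_1H(\cdot,x)\in L^p(B_1\cap\Omega)$ for some $p>1$ uniformly in $x\in\Omega_\pi$, which in turn forces the boundary-trace estimate $\Gamma(\cdot-x_0)-\Gamma(\cdot+x_0)\in W^{2-1/p,p}(\partial\Omega)$, proved in the appendix with fractional Sobolev interpolation and the Runst--Sickel product rule. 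This is the genuine new ingredient; your proposal skips it entirely, and hence misidentifies the hard part: it is (a), not (b), that carries the technical weight.
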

\begin{proof}
We proceed as in Theorem~\ref{thm.charact}, which at the same time is based on the ideas of \cite[Theorem 4.4]{ACF17} and \cite{BBC16}. We divide the proof into three steps.
\\[0.2cm]
{\bf Step 1: Setting of the problem.} We keep the notation from Theorem~\ref{thm.charact}, and in many steps we will refer to that proof to complete them. Again, we do not explicit the time dependence on the vector field $b$. 

Let $B_r \subset B_1\subset \R^d$, and $B_R\subset \R^d$, so that $B_r\times B_R\subset A$. Let $\eta \in \mathscr{P}(C([0, T); B_r\times B_R))$ be concentrated on integral curves of the vector field $b$ with no concentration condition, $(e_t)_{\#} \eta \leq C_0\left(\mathscr{L}^{2d} \mres A\right)$ for any $t\in [0, T]$. As in Theorem~\ref{thm.charact}, to show that {\bf (b-A)} holds it is enough to prove that the disintegration of $\eta$ with respect to the map $e_0$, $\eta_x$, is a Diract delta for $e_{0\#}\eta$-a.e., where we recall $(e_{0\#}\eta)$ represents the initial condition, $\bar\mu$.

Let $\delta, \zeta\in(0, 1)$ be small parameters to be chosen. We define $\Phi_{\delta, \zeta}(t)$ for $\gamma(t) = (\gamma^1(t), \gamma^2(t))\in \R^d\times\R^d$ as in \eqref{eq.PHI}, 
\begin{equation}
\label{eq.PHI_2}
\Phi_{\delta, \zeta}(t) := \iiint \log \left(1+\frac{|\gamma^1(t) -\xi^1(t)|}{\zeta\delta}+\frac{|\gamma^2(t)-\xi^2(t) |}{\delta}\right) d\mu(x, \xi, \gamma),
\end{equation}
where $d\mu (x, \xi, \gamma) := d\eta_x(\gamma)  d\eta_x(\xi) d\bar \mu(x)$. As in the proof of Theorem~\ref{thm.charact}, in particular \eqref{eq.enoughuniq}, it is enough to show that 
\begin{equation}
\label{eq.enoughuniq_2}
\frac{d\Phi_{\delta, \zeta}}{dt}(t) \leq C\left(\frac{1}{\zeta}+\zeta+\zeta\log\left(\frac{1}{\zeta\delta}\right)\right),
\end{equation}
for some constant $C$ independent of $\zeta$ and $\delta$. Let $b(y, w) = (w, F^o(y, w) + \tilde E(y))$, where 
\begin{equation}
\label{eq.uniqvf_2}
\begin{split}
F^o(y, w) & = \left(\left[J^{-1}(x)\frac{\de w}{\de x}\right]^T w\right)^o\\
\tilde E(y) & = \left(J(x) E(x)\right)^o,
\end{split}
\end{equation}
where the superindices $o$ denote odd extensions with respect to $(y_1, w_1)$ in the sense $V^o(y', w') = (V(y, w))'$. We keep using the notation $x = \psi(y)$, and we recall $E(x)$ is the electric field generated by a renormalized solution $f\in L^1(\Omega\times\R^d)$. 

From \eqref{eq.PHI_2} we separate the temporal derivative in three parts,
\begin{equation}
\label{eq.tempder}
\frac{d\Phi_{\delta, \zeta}}{dt}(t) \leq I + II + III,
\end{equation}
with 
\[
I = \iiint \frac{|\gamma^2(t) -\xi^2(t)|}{\zeta\delta + |\gamma^1(t)-\xi^1(t)| + \zeta|\gamma^2(t) - \xi^2(t)|} d\mu(x, \xi, \gamma),
\]
\[
II = \iiint \frac{|F^o(\gamma^1(t), \gamma^2(t)) - F^o(\xi^1(t), \xi^2(t))|}{\delta + \zeta^{-1}|\gamma^1(t)-\xi^1(t)| + |\gamma^2(t) - \xi^2(t)|} d\mu(x, \xi, \gamma),
\]
and 
\[
III = \iiint \zeta \frac{|\tilde E(\gamma^1(t)) - \tilde E(\xi^1(t))|}{\zeta\delta + |\gamma^1(t)-\xi^1(t)|} d\mu(x, \xi, \gamma).
\]
We can now proceed to bound each one of the three previous terms independently. 
\\[0.2cm]
{\bf Step 2: Bound on $I$ and $II$.} The bound on $I$ follows as in Theorem~\ref{thm.charact}.

Let us now bound the second term, $II$. We proceed by triangular inequality:
\[
II \le \iiint  \frac{|F^o(\gamma^1(t), \gamma^2(t)) - F^o(\gamma^1(t), \xi^2(t))|}{|\gamma^2(t) - \xi^2(t)|} d\mu + \iiint \frac{\zeta |F^o(\gamma^1(t), \xi^2(t)) - F^o(\xi^1(t), \xi^2(t))|}{\zeta\delta + |\gamma^1(t)-\xi^1(t)|} d\mu.
\]
The first term is bounded since, for each $\gamma^1(t)$ fixed, $F^o(\gamma^1(t), \cdot)$ is Lipschitz, with Lipschitz constant given by the maximum of $\left\| (J^{-1})^{T}\frac{\de^2 \phi}{\de x^2} J^{-1}\right\|$ (which is bounded because $\phi$ is $C^{1, 1}$).

For the second term, we notice that if $\gamma^1(t)$ and $\xi^1(t)$ are on the same side ($\gamma_1^1(t)\xi_1^1(t) \ge 0$) then for each fixed $\xi^2(t)\in B_R$, the incremental quotient can be bounded using that $F^o$ is Lipschitz on each side, and for each $\xi^2(t)$. This follows from the fact that the change of variables $\phi$ is $C^{2, 1}$, since we are dealing with $C^{2, 1}$ domains. 

On the other hand, if $\gamma^1(t)$ and $\xi^1(t)$ are on opposite sides, we have to bound 
\begin{align*}
\iiint_{\gamma_1^1(t)\xi_1^1(t) < 0} & \frac{ |F^o(\gamma^1(t), \xi^2(t)) - F^o(\xi^1(t), \xi^2(t))|}{\zeta\delta + |\gamma^1(t)-\xi^1(t)|} d\mu \le \\
& \le \iiint_{\gamma_1^1(t)\xi_1^1(t) < 0} \frac{C d\mu}{\zeta\delta + |\gamma^1_1(t)|}  + \iiint_{\gamma_1^1(t)\xi_1^1(t) < 0} \frac{C d\mu}{\zeta\delta + |\xi^1_1(t)|},
\end{align*}
for some constant $C$ bounding the $L^\infty$ norm of $F^o$ in $B_r\times B_R$, and thus, proportional to $R$. Using the no-concentration condition, $(e_t)_{\#} \eta \leq C_0\left(\mathscr{L}^{2d} \mres A\right)$, it follows
\[
\iiint_{\gamma_1^1(t)\xi_1^1(t) < 0} \frac{C d\mu}{\zeta\delta + |\gamma^1_1(t)|} \le C\int_{B_r} \frac{dx}{\zeta\delta + |x_1|} \leq C\log\left(\frac{1}{\zeta\delta}\right),
\]
so that the bound for $II$ holds. 
\\[0.2cm]
{\bf Step 3: Bound for $III$.} We refer to the appendix to bound the term $III$, since it involves a technical computation that follows analogously to Theorem~\ref{thm.charact}. 
\end{proof}

We can now prove Theorem~\ref{thm.main1_Om}.

\begin{proof}[Proof of Theorem~\ref{thm.main1_Om}]
We proceed from the result in Proposition~\ref{prop.renB1}, using similar ideas to those in the half-space situation. 
\\[0.2cm]
{\bf Step 1.} Take any ball $B_2(x_0)$ with $x_0\in \de\Omega$, and change variables such that, after a rotation and translation, we encounter the situation from Proposition~\ref{prop.renB1}. That is, we are dealing with a vector field $\tilde b_t(y, w) = (w, b_t^{2, o}):(0, T)\times B_1\times \R^d\to \R^d\times\R^d$ given by \eqref{eq.gerenorm2}-\eqref{eq.gerenorm3}. 

Thanks to Theorem~\ref{thm.charact_2}, $\tilde b_t$ fulfills conditions {\bf (a-A)} and {\bf (b-A)} which ensure, by \cite[Theorem 5.2]{ACF15}, the local existence of a Maximal Regular Flow $\Y(t, s, y, w)$ (see Definition~\ref{defi.mrf_A}) in $B_1\times\R^d$. In particular we have 
\[
\partial_t \Y(t, s, y, w) = \tilde b_t(\Y(t, s, y, w)),
\]
for a.e. $t > 0$ whenever it is defined. We claim (and prove in the next step) that this flow is transporting the solution $g_t^{ j} := g_t^e j^{-2}$ from Proposition~\ref{prop.renB1} (analogously, $g_t^{j, \beta} := \beta(g_t^e(y, w))j_e^{-2}(y)$ if the solution is not bounded, for $\beta\in C^1\cap L^\infty$). That is, if without loss of generality we assume $s = 0$ and $t\ge 0$, then
\[
\Y(t, \cdot, \cdot)_\# g_0^{j} \,dy\,dw= g_t^{j} dy\,dw,
\]
where we are denoting $\Y(t, y, w) = \Y(t, 0, y, w)$ and we recall that the push-forward of measures satisfies (see \eqref{eq.pf_cv})
\[
\int_{\R^d\times\R^d} \varphi(\Y(t, y, w)) g_0^{j}(y, w) \,dy\,dw = \int_{\R^d\times\R^d} \varphi(y, w)g_t^{j}(y, w)\,dy\,dw,
\]
for all $\varphi\in C_c^\infty(B_1\times\R^d)$. Thank to Lemma~\ref{lem.mrfmsf} the flow $\Y$ induces a local Maximal Specular Flow in the half space, $\bar \Y$, with vector field given by $(w, b_t^2(y, w))$ (see \eqref{eq.gerenorm2})\footnote{We are using here the natural definition of local Maximal Specular Flow, which arises analogously to that of local Maximal Regular Flow.}. Thanks to the symmetry of $\bar \Y$ and $g_t^{j}$ we have 
\[
\int_{\R^d_+\times\R^d} \varphi(\bar\Y(t, y, w)) g_0(y, w) j^{-2}(y)\,dy\,dw = \int_{\R^d_+\times\R^d} \varphi(y, w)g_t(y, w) j^{-2}(y)\,dy\,dw,
\]
for all $\varphi\in C_c^\infty(\overline{B_1^+}\times\R^d)$. Now, if we change variables back $\Phi(x, v) = (\phi(x), J(x)v)$ and $\Psi = \Phi^{-1}$ and denote $\mathcal{X}(t, x, v) = \Psi\circ \bar \Y(t, \Phi(x, v))$, then 
\[
\int_{\Omega\times\R^d} \bar \varphi(\mathcal{X}(t, x, v)) f_0(x, v) \,dx\,dv = \int_{\Omega\times\R^d} \bar \varphi(x, v)f_t(x, v) \,dx\,dv,
\]
for any $\bar\varphi \in C^\infty_c(\overline{\Omega\cap B_1(x_0)}\times\R^d)$. That is, the solution $f_t$ is transported by a specular flow $\mathcal{X}$. It is easy to check that $\mathcal{X}$ is the flow generated by the vector field $b_t$ by changing variables, and the specular condition at the boundary (condition (ii) in Definition~\ref{defi.msf_dom}) follows using \eqref{eq.jacob1}-\eqref{eq.jacob2} together with the fact that $\bar\Y$ was a local Maximal Specular Flow in the half-space. Therefore, $\mathcal{X}$ is a local Maximal Specular Flow in $\Omega\times\R^d$ transporting the solution.
Now, by a covering argument, gluing together the Maximal Specular Flows (see \cite[Lemma 4.2]{ACF17}), the result follows. 
\\[0.2cm]
{\bf Step 2.} The proof of the claim follows from the proof of \cite[Theorem 5.1]{ACF17}, where the authors show that renormalized (or bounded) solutions to a continuity equation are Lagrangian. In order to be able to apply the proof, we use Proposition~\ref{prop.renB1}. 

The modification of their proof is as follows:

The claim follows from \cite[Theorem 4.7]{ACF17} and \cite[Theorem 5.1]{ACF17} by noticing that the divergence-free condition on their proofs can be substituted by a vector field with locally bounded divergence. Then, Steps 3 and 4 of the proof of \cite[Theorem 5.1]{ACF17} also hold using that $g_t^{\beta, j}$ is a bounded distributional solution of the continuity equation and $\sum_{k \ge 0} g_t^{\beta_k, j} \mathscr{L}^{2d} = g_t^{j} \mathscr{L}^{2d}$, where $\beta_k$ is defined as in \cite[Theorem 5.1]{ACF17} via 
\begin{equation}
\label{eq.betak}
\beta_k(s) =  \left\{ \begin{array}{ll}
0 & \textrm{ if } s\le k, \\
  s-k & \textrm{ if } l\le s\le k+1,\\
    1& \textrm{ if } s\ge k+1.\\
  \end{array}\right.
\end{equation}

To finish the proof, if we are in situation (i) then the solution is transported, and in particular, it is renormalized and fulfils the commutativity property by Theorem~\ref{thm.LagimpRen}
\end{proof}

To finish the section, let us give a sketch of the proof of the incompressibility of the Maximal Specular Flow.

\begin{proof}[Proof of Lemma~\ref{lem.incompress}]
Notice that we can localize the problem, and just check that locally the push-forward of the Lebesgue measure is again the Lebesgue measure. On the other hand, away from the boundary, the vector field is divergence-free, so that the flow is incompressible there. We just need to check that there is no divergence being produced at the boundary: notice that if there was, we would have a singular part of the divergence of the vector field concentrated on the boundary. 

In order to do that, we just symmetrize as in the proof of Theorem~\ref{thm.charact_2}. Namely, we restrict ourselves to a small neighbourhood of the boundary of the domain (which we can do, by the localization). Then, we change variables, and we become a half-space situation. The reflection of the vector field is a new vector field (given by \eqref{eq.uniqvf_2}), which is no longer divergence-free due to the change of variables. Nonetheless, we have computed the divergence in Remark~\ref{rem.incomp}, which is bounded (given that the domain is $C^{1,1}$). I.e., there is no singular part of the divergence of the vector field concentrated on the boundary. On the other hand, after changing variables to a half-space situation, the new Maximal Specular Flow still preserves the specular reflection, and we can symmetrize it to get a Maximal Regular Flow (by taking the continuation across the boundary instead of the specular reflection). This symmetrised flow does not have instant mass destruction or production at the boundary, due to the boundedness of the divergence of the vector field.

This shows that there is no divergence production on the boundary of the domain, and thus the Maximal Specular Flow is incompressible. 
\end{proof}

\section{Proof of Theorems~\ref{thm.main2_d} and \ref{thm.main3_D}}
\label{sec.Main}
\subsection{A regularised problem in domains} In this section we proceed analogously to Subsection~\ref{ssec.regpb} by proving the existence of solutions to a regularised problem of the Vlasov--Poisson system in a $C^{2, 1}$ domain $\Omega\subset\R^d$. We also prove a result regarding conservation of energy. 

The result will follow as Theorem~\ref{thm.regvp}, via a fixed point argument. In this case, however, we must consider a regularised problem with no electric field near the boundary in order to prove the convergence of flows with jumps in the velocity coordinate. The lack of electric field near the boundary is because, a priori, the electric field obtained after regularization of the Green function could still have a Lipschitz constant degenerating when approaching $\de\Omega$.

Let us start by showing how we construct the electric field of the regularised problem. Let $G_\Omega(x_1, x_2)$ denote the Green function of the domain $\Omega$. Let $\bar r\in C^\infty([0, \infty))$ be a monotone function, such that 
\begin{equation}
\label{eq.barr}
\bar r \equiv 0 ~\textrm{  in  }~ [0, 1],\quad \bar r \equiv 1 ~\textrm{  in  }~ [2, \infty),\quad 0\le \bar r' \leq 2~\textrm{  in  }~ [0, \infty).
\end{equation}

For any $\zeta > 0$, we define $\bar r_\Omega^\zeta : \Omega\to [0, 1]$ by 
\begin{equation}
\label{eq.barrom}
\bar r_\Omega^\zeta(x) = \bar r\left(\zeta^{-1} {\rm dist}(x, \de \Omega)\right),\quad\textrm{ for } x \in \Omega.
\end{equation}
Notice that for $\zeta$ small enough depending on the domain, $\bar r_\Omega^\zeta$ is regular (at least $C^2$). We also define $G_\Omega^\delta: \Omega\times\Omega \to [0, \infty)$ by 
\begin{equation}
\label{eq.Gomdelt}
G_\Omega^\delta(x_1, x_2) = \bar r\left(\frac{|x_1-x_2|}{\delta}\right) G_\Omega(x_1, x_2).
\end{equation}
With these previous definitions we can now define a regularised electric field with respect to a density $\rho \in L^1(\Omega)$ by 
\begin{equation}
\label{eq.Eomzetdelta}
E_\Omega^{\zeta, \delta}(x) = -\bar r_\Omega^\zeta(x) \int_\Omega \nabla_{x} G^\delta_\Omega(x, z) \rho(z) \,dz.
\end{equation}

We now claim the following. 
\begin{lem}
\label{lem.claim.E}
The electric field defined as \eqref{eq.Eomzetdelta} is Lipschitz, with Lipschitz constant depending only on $\delta$, $\zeta$, the dimension $d$, and the $L^1$ norm of $\rho$. 
\end{lem}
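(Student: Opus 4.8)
The plan is to show that $E_\Omega^{\zeta,\delta}$ is Lipschitz by bounding $|\nabla_x E_\Omega^{\zeta,\delta}(x)|$ uniformly in $x\in\Omega$, using the pointwise Green function estimates from Lemma~\ref{lem.GreenFct} together with the cutoffs $\bar r_\Omega^\zeta$ and $G_\Omega^\delta$. First I would write out the gradient by the product rule:
\[
\nabla_x E_\Omega^{\zeta,\delta}(x) = -\nabla_x\bar r_\Omega^\zeta(x)\int_\Omega \nabla_x G_\Omega^\delta(x,z)\rho(z)\,dz - \bar r_\Omega^\zeta(x)\int_\Omega \nabla_x\nabla_x G_\Omega^\delta(x,z)\rho(z)\,dz.
\]
For the first term, $|\nabla_x\bar r_\Omega^\zeta|\le 2/\zeta$, and since $G_\Omega^\delta(x,z)$ vanishes for $|x-z|\ge 2\delta$, the inner integral is supported on $|x-z|\le 2\delta$; using Lemma~\ref{lem.GreenFct}(ii) (and the fact that $|\nabla \bar r(|x-z|/\delta)|\le 2/\delta$ contributes a term bounded by $C|x-z|^{2-d}/\delta$, which is even more integrable) one gets $\int_{|x-z|\le 2\delta}|x-z|^{1-d}\,dz\le C\delta$, so this term is bounded by $\frac{C}{\zeta}\delta\|\rho\|_{L^1}\le C(\delta,\zeta,d)\|\rho\|_{L^1}$.

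The second term is the delicate one. Here $\bar r_\Omega^\zeta(x)$ is nonzero only when ${\rm dist}(x,\de\Omega)\ge\zeta$, so we may restrict attention to such $x$. The second derivative $\nabla_x\nabla_x G_\Omega^\delta(x,z)$ splits, by the product rule applied to $\bar r(|x-z|/\delta)G_\Omega(x,z)$, into three pieces: one with two derivatives on $\bar r$ (bounded by $C\delta^{-2}|x-z|^{2-d}$ on $\delta\le|x-z|\le 2\delta$), one with a derivative on $\bar r$ and one on $G_\Omega$ (bounded by $C\delta^{-1}|x-z|^{1-d}$ on that same annulus, using (ii)), and one with both derivatives on $G_\Omega$, bounded by Lemma~\ref{lem.GreenFct}(iv) by $C|x-z|^{1-d}(\min\{{\rm dist}(x,\de\Omega),|x-z|\})^{-1}$. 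On the set where $\bar r_\Omega^\zeta(x)\ne 0$ we have ${\rm dist}(x,\de\Omega)\ge\zeta$, hence $\min\{{\rm dist}(x,\de\Omega),|x-z|\}\ge\min\{\zeta,|x-z|\}$, and moreover the $G_\Omega^\delta$ factor forces $|x-z|\le 2\delta$, so the worst case is controlled by $C|x-z|^{1-d}(\min\{\zeta,|x-z|\})^{-1}$. Integrating over $z\in\Omega$: on $|x-z|\le\zeta$ we bound by $\zeta^{-1}\int_{|x-z|\le\zeta}|x-z|^{1-d}\,dz\le C$, and on $\zeta\le|x-z|\le 2\delta$ we bound by $\int |x-z|^{-d}\,dz\le C\log(2\delta/\zeta)$ (if $\delta>\zeta/2$; otherwise this region is empty). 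The annular terms from the $\bar r$-derivatives integrate to $C\delta^{-2}\cdot\delta^2 = C$ and $C\delta^{-1}\cdot\delta = C$ respectively. Altogether $|\nabla_x E_\Omega^{\zeta,\delta}(x)|\le C(\delta,\zeta,d)\|\rho\|_{L^1}$, which is the claim.

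The main obstacle is keeping track of where the cutoff $\bar r_\Omega^\zeta$ genuinely saves the day: the second-derivative estimate (iv) degenerates like $1/{\rm dist}(x,\de\Omega)$ near the boundary, and it is exactly the presence of $\bar r_\Omega^\zeta(x)$ — which is identically zero within distance $\zeta$ of $\de\Omega$ — that converts this into a harmless factor $1/\zeta$. One should also remark that $\nabla_x\bar r_\Omega^\zeta$ exists and is bounded by $2/\zeta$ because, for $\zeta$ small relative to the $C^{1,1}$ geometry of $\Omega$, the distance function ${\rm dist}(\cdot,\de\Omega)$ is $C^{1,1}$ in the $\zeta$-neighbourhood of $\de\Omega$ (and on the region where ${\rm dist}\ge 2\zeta$ the cutoff is constant, so no regularity of the distance is needed there); this is where the $C^{2,1}$, or at least $C^{1,1}$, hypothesis on $\Omega$ enters. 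A brief verification that $\bar r_\Omega^\zeta$ is indeed $C^2$ for $\zeta$ small, and that all the integrals above are finite because $d\ge 3$ makes $|x-z|^{2-d}$, $|x-z|^{1-d}$ locally integrable, completes the argument. I would present the bound cleanly as a single chain of inequalities rather than grinding each constant.
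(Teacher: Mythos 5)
There is a genuine gap, and it stems from reading the cutoff $\bar r$ in the wrong direction. From \eqref{eq.barr} one has $\bar r\equiv 0$ on $[0,1]$ and $\bar r\equiv 1$ on $[2,\infty)$, so $G_\Omega^\delta(x,z)=\bar r(|x-z|/\delta)\,G_\Omega(x,z)$ vanishes for $|x-z|\le\delta$ and equals $G_\Omega(x,z)$ for $|x-z|\ge 2\delta$ --- exactly the opposite of what you assert. The point of the truncation is to kill the singularity of $G_\Omega$ at the diagonal, not to localize the kernel to a small ball. So $\nabla_x G_\Omega^\delta(x,z)$ and $D^2_x G_\Omega^\delta(x,z)$ are supported on $\{|x-z|\ge\delta\}$ (with the $\bar r$-derivative pieces concentrated on the annulus $\delta\le|x-z|\le 2\delta$), and the correct use of the Green-function estimates is to turn this into a \emph{pointwise} bound, uniform in $z$: $|\nabla_x G_\Omega^\delta(x,z)|\le C\delta^{1-d}$ and, on $\{{\rm dist}(x,\de\Omega)\ge\zeta\}$, $|D_x^2 G^\delta_\Omega(x,z)|\le C(\delta^{-d}+\delta^{1-d}\min\{\zeta,\delta\}^{-1})$. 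Integrating against $\rho$ then directly produces $\|\rho\|_{L^1}$, which is how the paper proceeds.

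Beyond the reversed cutoff, there is a second, independent logical gap: even if the support were $\{|x-z|\le 2\delta\}$ as you claim, bounding $\int_{|x-z|\le 2\delta}|x-z|^{1-d}\,dz\le C\delta$ (the $L^1_z$ norm of the kernel) does not give $\bigl|\int K(x,z)\rho(z)\,dz\bigr|\le C\delta\|\rho\|_{L^1}$. To extract the $L^1$ norm of $\rho$ you must bound the kernel in $L^\infty_z$ on its support, which on $\{|x-z|\le 2\delta\}$ would be $+\infty$ because of the singularity at $z=x$. This is precisely the failure that the truncation is designed to avoid, and it only avoids it because the truncation excises a \emph{neighbourhood of the diagonal}. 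Your observation that the spatial cutoff $\bar r_\Omega^\zeta$, which vanishes within distance $\zeta$ of $\de\Omega$, is what neutralizes the degeneration of Lemma~\ref{lem.GreenFct}(iv) near the boundary is correct and is the right idea; but the rest of the argument needs to be rebuilt around the correct direction of the $\delta$-truncation and pointwise (rather than integral-in-$z$) kernel bounds.
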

\begin{proof}
Let us simply bound $\|\nabla_x E_\Omega^{\zeta, \delta}\|_{L^\infty(\Omega)}$:
\begin{equation}
\label{eq.Eomineq}
|\nabla_x E_\Omega^{\zeta, \delta}(x)|\leq C\zeta^{-1}\int_\Omega |\nabla_{x}G^\delta_\Omega(x, z)|\rho(z)\,dx_2+1_{\{{\rm dist}(x, \de\Omega) \ge \zeta\}}\int_\Omega |D^2_x G_\Omega^\delta(x, z)|\rho(z)\,dz,
\end{equation}
where we have used that $|\nabla_x \bar r^\zeta_\Omega(x)| \leq C\zeta^{-1}$ for some constant $C$ depending only on $\bar r$ and $\Omega$. 

Now notice that, thanks to Lemma~\ref{lem.GreenFct} (i)-(ii),
\begin{equation*}
\begin{split}
|\nabla_x G^\delta_\Omega(x, z)|& \le C 1_{\{|x-z|\geq \delta\}}  \left( \delta^{-1} |G_\Omega(x, z)| + |\nabla_x G_\Omega(x, z)|\right)\\
& \leq  C 1_{\{|x-z|\geq \delta\}}  \left(\delta^{-1} |x-z|^{2-d} + |x-z|^{1-d}\right) \le C \delta^{1-d},
\end{split}
\end{equation*}
for some constant $C$ depending only on $d$ and $\Omega$ (and the function $\bar r$ chosen). Similarly, using Lemma~\ref{lem.GreenFct} (iv), 
\begin{equation*}
\begin{split}
|D_x^2 G^\delta_\Omega(x, z)|& \le C 1_{\{|x-z|\geq \delta\}}  \left( \delta^{-2} |G_\Omega(x, z)| + \delta^{-1}|\nabla_x G_\Omega(x, z)| + |D^2_x G_\Omega(x, z)|\right)\\
& \leq  C\delta^{-d} +  C1_{\{|x-z|\geq \delta\}}  |x-z|^{1-d}\left(\min\{{\rm dist}(x, \de\Omega), |x-z|\}\right)^{-1} \\
& \le C \left( \delta^{-d} + \delta^{1-d}\left(\min\{{\rm dist}(x, \de\Omega), \delta\}\right)^{-1}\right).
\end{split}
\end{equation*}
But now notice that the second term in the right-hand side of \eqref{eq.Eomineq} is non-zero on the region $\{{\rm dist}(x, \de\Omega)\ge \zeta\}$, so that putting all together we obtain 
\[
|\nabla_x E_\Omega^{\zeta, \delta}(x)|\leq C\left(\zeta^{-1}\delta^{1-d} + \delta^{-d}\right)\int_\Omega \rho(z) dz = C\delta^{1-d}\left(\zeta^{-1}+\delta^{-1}\right) \|\rho\|_{L^1(\Omega)},
\]
for some constant $C$ depending only on $d$ and $\Omega$.
\end{proof}

The following theorem is analogous to Theorem~\ref{thm.regvp} in regular domains $\Omega\subset\R^d$, with the specular reflection boundary condition. 

\begin{thm}
\label{thm.regvp_d}
Let $\Omega\subset \R^d$ be a $C^{2, 1}$ domain. Let $h_0(x, v) = h_0\in C^\infty_c(\overline{\Omega}\times \R^d)$ be such that for all $\bar x\in \de\Omega$, $v\in R^d$, $h_0(\bar x, v) = h_0(\bar x, R_{\bar x} v)$. Let $\bar r$, $\bar r^\zeta_\Omega$, and $G_\Omega^\delta$ be as above, \eqref{eq.barr}-\eqref{eq.barrom}-\eqref{eq.Gomdelt}, for some $\delta, \zeta > 0$. Then, the following problem has a distributional solution $\bar f_t= \bar f(t, x, v)$,
\begin{equation}
\label{eq.B_reg_d}
  \left\{ \begin{array}{ll}
  \de_t \bar f_t + v\cdot \nabla_x \bar f_t + E_\Omega^{\zeta, \delta}\cdot\nabla_v \bar f_t=0 & \textrm{ in } (0,\infty)\times\Omega\times\R^d\\
    \bar \rho_t(x)=\int_{\R^d} \bar f_t(x, v) dv, & \textrm{ in } (0,\infty)\times\Omega,\\
E_\Omega^{\zeta, \delta}(x) = -\bar r_\Omega^\zeta(x) \int_\Omega \nabla_{x} G^\delta_\Omega(x, z) \bar \rho_t(z) \,dz,  & \textrm{ in } (0,\infty)\times\Omega,\\
\bar f_t(x, v) = \bar f_t(x, R_x v), & \textrm{ on }(0, T)\times\de\Omega\times\R^d,\\
\bar f_0 = h_0& \textrm{ in } \Omega\times\R^d.\\
  \end{array}\right.
\end{equation}
Moreover,
\begin{align}
\label{eq.energy_d}&\int_{\Omega\times\R^d}  |v|^2 \bar f_t(x, v)\,dx\,dv  +  \int_{\Omega\times\Omega} G^\delta_\Omega(x, z)\bar\rho_t(z) \bar \rho_t(x)\,dz\,dx = \\
& ~~\nonumber = \int_{\Omega\times\R^d} |v|^2 \bar f_0(x, v)\,dx\,dv  +  \int_{\Omega\times\Omega} G^\delta_\Omega(x, z)\bar\rho_0(z) \bar \rho_0(x)\,dz\,dx + \int_0^t \mathcal{K}^\Omega_\tau(\delta,\zeta,h_0) d\tau
\end{align}
 for any $t >0$, where $\mathcal{K}_\tau^\Omega$ is defined as
 \begin{equation}
 \label{eq.mk_d}
 \mathcal{K}_\tau^\Omega(\delta,\zeta,h_0) = 2 \int_{\Omega\times\R^d} \big(1- \bar r^{\zeta}_\Omega(x) \big) \,v\cdot \left[\int_\Omega \nabla_x G_\Omega^\delta(x, z) \bar \rho_\tau(z)\,dz\right]\bar f_\tau \,dx\,dv,
 \end{equation}
for $\bar f_\tau$ the solution to \eqref{eq.B_reg_d} with initial datum $h_0$. 
\end{thm}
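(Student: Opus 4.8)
The plan is to reproduce the two-step argument used for Theorem~\ref{thm.regvp}, the only genuine difference being that the dynamics now takes place inside $\Omega$ with a specular reflection at $\de\Omega$. What makes the scheme go through is the cut-off $\bar r^\zeta_\Omega$ in \eqref{eq.Eomzetdelta}: inside the collar $\{\mathrm{dist}(\cdot,\de\Omega)<\zeta\}$ the regularised field is identically zero, so near the boundary the motion is pure free flight with specular reflections off a $C^{2,1}$ hypersurface (cf.\ \cite{Mis99}) --- a billiard flow that is well-behaved and, decisively, independent of the iteration index --- whereas the field-dependent part of the motion is confined to $\{\mathrm{dist}(\cdot,\de\Omega)\ge\zeta\}$, where by Lemma~\ref{lem.claim.E} the vector field is globally Lipschitz and the classical Cauchy--Lipschitz theory applies verbatim.

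For the existence part I would run the same Picard iteration as in the proof of Theorem~\ref{thm.regvp}: set $\mu^0\equiv h_0$ and, inductively, let $E^n_t$ be the field \eqref{eq.Eomzetdelta} generated by $\rho^n_t=\int\mu^n_t\,dv$ and $\mu^{n+1}_t=Z_n(t)_\#h_0$, where $Z_n$ is the specular-reflection flow of the vector field $b^n_t=(v,E^n_t)$. This field is divergence-free (the force depends on $x$ only) and, by Lemma~\ref{lem.claim.E}, globally Lipschitz in $x$ with constant depending only on $\delta,\zeta,d$ and $\|h_0\|_{L^1}$; hence $Z_n$ is well defined for $h_0$-a.e.\ datum and all times (no finite-time blow-up, as $\|E^n_t\|_{L^\infty}\le C\delta^{1-d}\|h_0\|_{L^1}$ forces at most exponential growth of $|Z_n|$), it is measure preserving by the billiard Liouville theorem --- equivalently by Lemma~\ref{lem.incompress} --- and its push-forward automatically satisfies $\mu^{n+1}_t(x,v)=\mu^{n+1}_t(x,R_xv)$ on $\de\Omega$. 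The convergence $\mu^n\to\bar f$ then follows from the $W_1^{T_0}$-contraction estimate of Theorem~\ref{thm.regvp}: near $\de\Omega$ all the $Z_n$ coincide and the billiard map is bi-Lipschitz along transversal trajectories, so the discrepancy $Z_n-Z_{n-1}$ builds up only where the field acts, and there the estimate is exactly the one already established; the required Lipschitz dependence of $E^{\zeta,\delta}_\Omega$ on its source in $W_1$ holds because, after the truncation in \eqref{eq.Gomdelt} and by Lemma~\ref{lem.GreenFct}(ii)--(iii), the kernel $\bar r^\zeta_\Omega(x)\nabla_xG^\delta_\Omega(x,z)$ is Lipschitz in $z$ uniformly in $x$. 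This gives a contraction on a time $T_0$ independent of $n$, hence $L^1$-convergence of the flows; passing to the limit in the fields shows $\bar f_t$ solves \eqref{eq.B_reg_d}, and uniqueness for the linear problem propagates the specular condition. (An alternative is to reflect evenly after the change of variables of Section~\ref{sec.gendom}, reducing locally to a Lipschitz transport problem in the full space; the direct billiard picture seems cleaner here.)

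For the energy identity \eqref{eq.energy_d} I would differentiate the left-hand side in time. Because $\bar f_t$ is the push-forward of the smooth, compactly supported $h_0$ by a measure-preserving specular flow, it is compactly supported in $(x,v)$ and smooth along characteristics away from the $\mathscr{L}^{2d}$-negligible reflection set, which legitimates the integrations by parts (alternatively, test the equation against $|v|^2$ times suitable cut-offs). The kinetic part yields $2\int v\cdot E^{\zeta,\delta}_\Omega\,\bar f_t$ plus a boundary integral over $\de\Omega$ which cancels under the substitution $v\mapsto R_xv$, since this preserves $|v|^2$ and $\bar f_t(x,\cdot)$ while reversing the sign of $v\cdot n(x)$; no boundary term arises from the $v$-integration. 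The potential part, using $\de_t\bar\rho_t=-\mathrm{div}_x\!\int v\bar f_t\,dv$ and the symmetry of $G^\delta_\Omega$, yields $2\int v\cdot\big[\int\nabla_xG^\delta_\Omega(x,z)\bar\rho_t(z)\,dz\big]\bar f_t$, the $\de\Omega$ boundary term vanishing because $G^\delta_\Omega$ carries the factor $G_\Omega(x,z)$, which is zero for $x\in\de\Omega$. Adding the two and using $E^{\zeta,\delta}_\Omega=-\bar r^\zeta_\Omega\int\nabla_xG^\delta_\Omega\bar\rho_t$ collapses the sum to exactly the quantity $\mathcal{K}^\Omega_t(\delta,\zeta,h_0)$ of \eqref{eq.mk_d}; integrating in $t$ gives \eqref{eq.energy_d}.

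I expect the main obstacle to be the existence step, precisely the robustness of the $W_1$-contraction against the reflecting boundary: one needs that the billiard map depends bi-Lipschitzly on transversal data and that near-tangential trajectories form a null set, so that reflections do not blow up the gap between consecutive iterates. It is exactly to shield the argument from a possible degeneration of this dependence (and of the Lipschitz constant of the unregularised field near $\de\Omega$) that the cut-off $\bar r^\zeta_\Omega$ is built into the regularised problem, making the boundary dynamics index-independent free flight.
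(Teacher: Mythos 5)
Your proposal takes essentially the same route as the paper's proof: a Picard iteration with push-forwards through regular flows with specular jumps, exploiting the observation that the cut-off $\bar r^\zeta_\Omega$ kills the field inside a collar of $\de\Omega$ so that the boundary dynamics is pure, index-independent free-flight billiards; then the energy identity by integration by parts with the specular reflection killing the spatial boundary terms. Two small remarks. First, where you invoke bi-Lipschitz dependence of the billiard map on transversal data (and flag it as the potential weak point), the paper sidesteps this entirely: it works on a short interval $T_0$ chosen, via the uniform speed bound, so small that trajectories starting within the half-collar $\{\mathrm{dist}(\cdot,\de\Omega)<\zeta/2\}$ stay inside $\{\mathrm{dist}(\cdot,\de\Omega)<\zeta\}$ and trajectories starting outside $\{\mathrm{dist}(\cdot,\de\Omega)<\zeta\}$ never reach the boundary at all; this makes $f_t^{n+1}$ literally $n$-independent near $\de\Omega$ for $t\le T_0$, so no new reflections and no billiard-stability lemma are needed, and the $W_1$-contraction from Theorem~\ref{thm.regvp} is applied only to the interior part and then iterated in time. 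You should record this reduction explicitly rather than leaning on the billiard map, whose Lipschitz constant does degenerate at grazing incidence. Second, for the potential boundary term you use $G_\Omega(x,z)=0$ when $x\in\de\Omega$, which is correct and arguably cleaner; the paper instead appeals to the specular symmetry of $\bar f_t$ after integrating in $v$. Both are valid and the two arguments are interchangeable.
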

\begin{proof}
We divide the proof into two steps.
\\[0.2cm]
{\bf Step 1: Existence.} 
We start by proving the existence of a solution for time in $[0, T]$ for $T>0$, using the same approach as in Theorem~\ref{thm.regvp}. Let us define $\bar f_t^n: (0, T)\times\Omega\times\R^d$ for $n\in\N\cup\{0\}$ as the solution to 
\begin{equation}
\label{eq.B_reg_d_n}
  \left\{ \begin{array}{ll}
  \de_t \bar f_t^{n+1} + v\cdot \nabla_x \bar f_t^{n+1} + E_{n,\Omega}^{\zeta, \delta}\cdot\nabla_v \bar f_t^{n+1}=0 & \textrm{ in } (0,\infty)\times\Omega\times\R^d\\
    \bar \rho_t^n(x)=\int_{\R^d} \bar f_t^n(x, v) dv, & \textrm{ in } (0,\infty)\times\Omega,\\
E_{n,\Omega}^{\zeta, \delta}(x) = -\bar r_\Omega^\zeta(x) \int_\Omega \nabla_{x} G^\delta_\Omega(x, z) \bar \rho_t^n(z) \,dz,  & \textrm{ in } (0,\infty)\times\Omega,\\
\bar f_t^{n+1}(x, v) = \bar f_t^{n+1}(x, R_x v), & \textrm{ on }(0, T)\times\de\Omega\times\R^d,\\
\bar f_0^{n+1} = h_0& \textrm{ in } \Omega\times\R^d,\\
  \end{array}\right.
\end{equation}
that is, we have created a sequence of functions iteratively solving the Vlasov--Poisson system with specular reflection in $\Omega$ where the electric field is generated by the density of the previous element of the sequence. Notice that, thanks to Lemma~\ref{lem.claim.E}, by standard Cauchy-Lipschitz theory we can build a flow with jumps transporting the solution $\bar f^1_t$ in $[0, T]$, and proceeding inductively (since the $L^1$ norm is conserved), we can do the same for each element of the sequence. That is, if $\bar b_t^n = (v, E^{\zeta, \delta}_{n, \Omega})$, then there exists a regular flow with jumps $\bar Z_n = (\bar X_n, \bar V_n) : [0, T]\times\overline{\Omega}\times\R^d\to \overline{\Omega}\times\R^d$ such that 
\begin{equation}
\label{eq.ODEsolve}
  \left\{ \begin{array}{ll}
\frac{d}{dt}\bar Z_n(t) = \bar b_t^n(\bar Z_n(t))& \quad \textrm{for}\quad (t, \bar X_n(t), \bar V_n(t)) \in (0, T)\times\Omega\times\R^d,\\
    \bar Z_n(0)(x, v) = (x, v),& \quad \textrm{in}\quad \overline{\Omega}\times\R^d,\\
  \end{array}\right.
\end{equation}
and for $t_*\in (0, T)$ such that $\bar X_n(t_*)\in \de\Omega$, $\bar V_n(t_*^+) = R_{\bar X_n(t_*)} \bar V_n(t_*^-)$, where $t^+_*$ and $t^-_*$ denote the upper and lower temporal limits at $t_*$; and $f_t^{n+1}$ is given by $f_t^{n+1} = \bar Z_n(t)_\# h_0$. Alternatively, since the vector field is divergence free and specular reflection does not produce divergence (see Lemma~\ref{lem.incompress}) one can write $f_t^{n+1}(\bar Z_n(t)) = h_0$.

Notice that we solve the equation \eqref{eq.B_reg_d_n} classically in the interior (since the vector flow is smooth), and the boundary conditions hold noticing that around the boundary the electric field is 0. 

Indeed, take any $(t, x, v)$ with $t\in (0, T)$, $x\in \de\Omega$, and $v\cdot n(x) \le 0$, so that we are looking at velocities \emph{hitting} the boundary instead of \emph{leaving} it. By solving \eqref{eq.ODEsolve} (with jumps) backwards, we obtain some $x_\circ, v_\circ$ such that $(x, v) = \bar Z_n(t)(x_\circ, v_\circ)$. Notice that, from the structure of the flow, since we are in a region where there is no electric field (around $x\in \de \Omega$), for $\eps $ small enough we have that $\bar Z_n(t+\eps)(x_\circ, v_\circ) = (x+\eps R_x v, R_x v)$ (where we also used that $v$ was pointing towards the boundary, so that there is a specular reflection being produced immediately). Now notice
\[
f^{n+1}_{t+\eps}(x+\eps R_x v, R_x v) = f^{n+1}_{t+\eps}(\bar Z_n(t+\eps)(x_\circ, v_\circ))= h_0(x_\circ, v_\circ) = f_t^{n+1}(\bar Z_n(t) (x_\circ, v_\circ)) = f_t^{n+1}(x, v)
\]
for every $\eps > 0$. Take $\eps\downarrow 0$, and we recover the specular reflection condition.

We would like to pass the flows to the limit and proceed as in Theorem~\ref{thm.regvp}. Let $R > 0$ be such that ${\rm supp }~h_0 \subset \R^d\times B_R$, and notice $\|E^{\zeta, \delta}_{n,\Omega}\|_{L^\infty(\Omega)} \le C(\delta)$ for some constant $C(\delta)$ depending only on $\delta$, $d$, $\Omega$, and $\|h_0\|_{L^1(\Omega\times\R^d)}$. From 
\[
\frac{d}{dt}|\bar Z_n(t)|\le |\bar Z_n(t)| + C(\delta),
\]
we obtain ${\rm supp }~\bar f_t^n\subset \R^d\times B_{R+2C(\delta) e^t}(0)$ for $t \ge 0$, and for all $n\in\N\cup\{0\}$. In particular, the vector field $\bar b_t^n(\bar Z_n(t))$ is bounded by some constant $C(\delta, T)$ in the interval $[0, T]$ independently of $n$, and therefore, $\frac{d}{dt}|\bar Z_n|$ is bounded independently of $n$ but depending on $T$. 

On the other hand, notice that the vector fields $\bar b_t^n$ are independent of $n$ in a $\zeta$-neighbourhood of $\de\Omega$, where $\bar b_t^n = (v, 0)$. Now, since $f_t^{n+1} = \bar Z_n (t)_\# h_0$, $\frac{d}{dt}|\bar Z_n|$ is bounded, and $\bar Z_n(t)$ is independent of $n$ in a $\zeta$-neighbourhood of $\de\Omega$, we obtain that up to a sufficiently small time $T_0>0$ independent of $n$, but depending on $\zeta$ and $\delta$, $f_t^{n+1}$ is independent of $n$ in a $\frac{\zeta}{2}$-neighbourhood of $\de\Omega$.

We can now proceed taking limits as in Theorem~\ref{thm.regvp}, by noticing that we only care about the interior of $\Omega$ and therefore the strategy there presented also works here up to minor modifications. This yields a solution up to time $T_0$, but repeating the procedure we can obtain a solution up to time $T$. Since $T>0$ is arbitrary, this gives the desired result.
\\[0.2cm]
{\bf Step 2: Conservation of energy.} Let us compute
\begin{equation}
\label{eq.energy_dt_d} \frac{d}{dt} \left( \int_{\Omega\times\R^d}  |v|^2 \bar f_t(x, v)\,dx\,dv  +  \int_{\Omega\times\Omega} G^\delta_\Omega(x, z)\bar\rho_t(z) \bar \rho_t(x)\,dz\,dx \right).
\end{equation}

We use that
\begin{equation}
\label{eq.conserv_1_d}
 \de_t \bar f_t  = - v\cdot \nabla_x \bar f_t- E_\Omega^{\zeta, \delta}\cdot\nabla_v \bar f_t = - {\rm div}_x (v\bar f_t )- {\rm div}_v (E_\Omega^{\zeta, \delta}\bar f_t ),\quad\textrm{ in } (0, \infty)\times\Omega\times\R^d,
\end{equation}
in the distributional sense, from which,
\begin{equation}
\label{eq.conserv_3_d}
 \de_t \bar \rho_t  = - \int_{\R^d} {\rm div}_x (v\bar f_t )\,dv.
\end{equation}
On the other hand,
\begin{equation}
\label{eq.conserv_2_d}
\begin{aligned}
\frac{d}{dt} \int_{\Omega\times\Omega} G^\delta_\Omega(x, z)\bar\rho_t(z) \bar \rho_t(x)\,dz\,dx   = 2 \int_{\Omega\times\Omega} G^\delta_\Omega(x, z)\bar\rho_t(z) \de_t \bar \rho_t(x)\,dz\,dx \end{aligned},
\end{equation}
so that, plugging in \eqref{eq.conserv_3_d},
\begin{equation}
\label{eq.firstderi_d_2}
\frac{d}{dt} \int_{\Omega\times\Omega} G^\delta_\Omega(x, z)\bar\rho_t(z) \bar \rho_t(x)\,dz\,dx   = -2 \int_{\Omega\times\Omega\times\R^d} G^\delta_\Omega(x, z)\bar\rho_t(z) {\rm div}_x (v\bar f_t(x, v))\,dx\,dz\,dv.
\end{equation}
The divergence theorem yields
\begin{equation*}
\begin{split}
\int_{\Omega} G^\delta_\Omega(x, z)\bar\rho_t(z) {\rm div}_x (v\bar f_t(x, v))\,dx &  = \int_{\de \Omega} G^\delta_\Omega(x, z)\bar\rho_t(z) \bar f_t(x, v)\,v\cdot\nu_\Omega(x) \,d\sigma(x)\,\, \\
& ~~~~~~~- \int_{\Omega} \nabla_x G^\delta_\Omega(x, z)\cdot v\,\bar\rho_t(z) \bar f_t(x, v)\,dx,
\end{split}
\end{equation*}
where $\nu_\Omega(x)$ denotes the outer unit normal of $\de\Omega$ at $x$, and $d\sigma$ is the standard measure at the boundary. Integrating with respect to $v$ and using that $\bar f_t(x, v) = \bar f_t(x, R_x v)$ for $x\in \de\Omega$, the first term vanishes. Therefore, we obtain 
\begin{equation}
\label{eq.firstderi_d}
\frac{d}{dt} \int_{\Omega\times\Omega} G^\delta_\Omega(x, z)\bar\rho_t(z) \bar \rho_t(x)\,dz\,dx   = 2 \int_{\Omega\times\Omega\times\R^d} \nabla_x G^\delta_\Omega(x, z)\cdot v\, \bar\rho_t(z) \bar f_t(x, v)\,dx\,dz\,dv.
\end{equation}

Finally, using \eqref{eq.conserv_1_d}, we obtain
\[
\frac{d}{dt} \int_{\Omega\times\R^d} |v|^2  \bar f_t(x, v)  \,dx\,dv = -\int_{\Omega\times\R^d} |v|^2{\rm div}_x(v\bar f_t)\,dx\,dv -\int_{\Omega\times\R^d} |v|^2{\rm div}_v(E^{\zeta, \delta}_\Omega \bar f_t)\,dx\,dv.
\]
From the divergence theorem, and arguing as before, the first term vanishes. Applying the divergence theorem on the second term, the boundary integral will also vanish, since $E^{\zeta, \delta}_\Omega \equiv 0$ in the neighbourhood of $\de\Omega$. Therefore, 
\[
\frac{d}{dt} \int_{\Omega\times\R^d} |v|^2  \bar f_t(x, v)  \,dx\,dv = 2\int_{\Omega\times\R^d} v\cdot E^{\zeta, \delta}_\Omega \bar f_t \,dx\,dv.
\]
Combining this with \eqref{eq.firstderi_d} and using the definition of $E^{\zeta, \delta}_\Omega$ we obtain the desired result. 
\end{proof}

\subsection{Proof of Theorem~\ref{thm.main2_d}}
Let us now prove Theorem~\ref{thm.main2_d}. We recall $G_\Omega(x_1, x_2)$ denotes the Green function of the domain $\Omega$.

\begin{proof}[Proof of Theorem~\ref{thm.main2_d}]
The proof follows analogously to the proof of Theorem~\ref{thm.mainws}. We divide the proof into several steps.
\\[0.2cm]
{\bf Step 1: Approximating sequence.} Let us construct an approximating sequence based on the regularised solution constructed in Theorem~\ref{thm.regvp_d}. Let $(f_0^n)_{n\in \N}$ with $f_0^n\in C_c^\infty(\overline{\Omega}\times\R^d)$ be a sequence such that for all $x\in \de\Omega$, $v\in \R^d$, $f_0^n(x, v) = f_0^n(x, R_x v)$ and approximating $g_0$ in $L^1$ norm,
\[
f_0^n\rightarrow f_0\quad\textrm{ in } L^1(\Omega\times\R^d).
\]
Consider also $f_t^n$ the sequence of solutions to \eqref{eq.B_reg_d} built in Theorem~\ref{thm.regvp_d} with $\delta = \delta_n$ and $\zeta = \zeta_n$ for some sequences $(\delta_n)_{n\in \N}$, and $(\zeta_n)_{n\in \N}$ with $\delta_n, \zeta_n \rightarrow 0$ to be determined. We recall that we are using the definition of renormalized electric field introduced in \eqref{eq.Eomzetdelta}-\eqref{eq.barrom}-\eqref{eq.Gomdelt}. For the sake of readability we will denote the corresponding vector field to the $n$-th element of the sequence as $b_t(x, v) = (v, E_t^n(x))$, as defined in \eqref{eq.Eomzetdelta}. 

By standard Cauchy-Lipschitz theory we can build an incompressible flow with specular jumps at the boundary transporting the solution, $Z^n(t):\overline{\Omega}\times\R^d\to \overline{\Omega}\times\R^d$ such that
\[
f_t^n = f_0^n \circ Z^n(t)^{-1},\quad\textrm{for}~t\in (0, \infty),
\]
and since they are incompressible,
\begin{equation}
\|\rho_t^n\|_{L^1(\Omega)} = \|f_t^n\|_{L^1(\Omega\times\R^d)} = \|f_0^n\|_{L^1(\Omega\times\R^{d})}, 
\end{equation}
where we recall that $\rho_t^n(x) = \int_{\R^d } f_t^n(x, v) \, dv$ is the physical density. As in \eqref{eq.ei} there exists a sequence $(\eps_m)_{m\in \N}$ depending only on the initial datum $f_0$ with $\eps_n \downarrow 0$ as $m \to \infty$ such that 
\begin{equation}
\label{eq.ei_d}
\int_{\Omega\times\R^d} f_t^n 1_{\{f_t^n > m\}} \,dx\, dv = \int_{\Omega\times\R^d} f_0^n 1_{\{f_0^n > m\}} \,dx\, dv \leq \eps_m\to 0,\quad\textrm{as}~m\to \infty,
\end{equation}
for all $t\in (0, \infty)$ and  $n\in \N$. That is, $f_t^n$ are equiintegrable independently of $n\in \N$ and $t$. 
\\[0.2cm]
{\bf Step 2: Choice of the approximating sequence.} The procedure follows in Step 2 of the proof of Theorem~\ref{thm.mainws} can be repeated here. Notice that, again, proceeding as in \cite[Lemma 3.1]{ACF17} there exists a sequence $f_0^n\in C_c^\infty(\overline{\Omega}\times\R^d)$ and $G_\Omega^{k_n^{-1}}$ such that 
\begin{equation}
\label{eq.lem31_d}
\begin{split}
& \lim_{n\to \infty}\left(\int_{\Omega\times\R^d} |v|^2 f_0^n(x, v)\,dx\,dv  + \int_{\Omega\times\Omega} G^{k_n^{-1}}_\Omega(x, z)\rho_0^n(z)\rho_0^n(x)\,dz\,dx \right) = \\
&~~~~~~~~~~~~~~~~~~~~~~~~~~~~~~= \int_{\Omega\times\R^d} |v|^2 f_0(x, v)\,dx\,dv  + \int_{\Omega\times\Omega} G_\Omega(x, z)\rho_0(z)\rho_0(x)\,dz\,dx.
\end{split}
\end{equation}
In order to prove it we have to use that whenever $\rho_0^n$ is bounded and compactly supported converging to $\rho_0$ then $\int_\Omega G_\Omega(x, z)\rho_0^n(z)\,dz$ converges locally in every $L^p_{\rm loc}$ to $\int_\Omega G_\Omega(x, z)\rho_0(z)\, dz$ by dominated convergence. We also have to use that for every $n\in \N$ fixed 
\[
\lim_{k\to \infty}\int_{\Omega\times\Omega} G_\Omega^{1/k}(x, z)\rho_0^n(x)\rho_0^n(z)\, dz\,dx = \int_{\Omega\times\Omega} G_\Omega(x, z)\rho_0^n(x)\rho_0^n(z)\,dz\,dx
\]
by monotone convergence. 

At this point, proceeding as in Step 2 of the proof of Theorem~\ref{thm.mainws} one can show that there exist sequences $\delta_n, \zeta_n\to 0$ such that 
\begin{equation}
\label{eq.Ktom}
|\mathcal{K}_\tau^\Omega(\delta_n, \zeta_n, h_0^n)| \to 0\quad\textrm{ as } n \to \infty,
\end{equation}
uniformly for $\tau \in (0, t)$, for every $t > 0$. We recall that $\mathcal{K}_\tau^\Omega$ is given by \eqref{eq.mk_d}. 
In particular, again as in Theorem~\ref{thm.mainws}, we obtain a uniform bound for the kinetic energy, 
\begin{equation}
\label{eq.boundke_d}
\liminf_{n\to\infty} \int_{\Omega\times\R^d} |v|^2 f_t^n(x, v)\,dx\,dv  \leq C,
\end{equation}
for some constant $C$ independent of $n$ and $t$. 
\\[0.2cm]
{\bf Step 3: Limiting solution.} Transporting level sets as in Step 3 of Theorem~\ref{thm.mainws} we can construct a sequence of functions $f^{n, k}_t$ bounded by $k+1$ transporting level sets of the initial datum, with 
\[
\|f_t^{n, k}\|_{L^1(\Omega\times\R^d} = \|f_0^{n, k}\|_{L^1(\Omega\times\R^d},
\]
and such that 
\begin{equation}
\label{eq.gk_d}
f^{n,k}\rightharpoonup \bar f^k\quad\quad \textrm{weakly}^*\textrm{ in } L^\infty((0, \infty)\times\Omega\times\R^{d}) \textrm{ as } n\to \infty,\quad\textrm{for all }k\in \N.
\end{equation}
Defining 
\begin{equation}
\label{eq.gk2_d}
f := \sum_{k = 0}^\infty \bar f^k \quad \textrm{ in } (0, \infty)\times\Omega\times\R^{d},
\end{equation}
then, 
\begin{equation}
\label{eq.convseries_d}
\|f_t\|_{L^1(\Omega\times\R^{d})} \leq \|f_0\|_{L^1(\Omega\times\R^{d})}\quad\textrm{ for a.e. } t\in [0, \infty),
\end{equation}
and 
\begin{equation}
\label{eq.weakg_d}
f^n \rightharpoonup f\quad\textrm{ weakly in }L^1_{\rm loc}([0, T]\times\Omega\times\R^{d}),
\end{equation}
for every $T > 0$.
\\[0.2cm]
{\bf Step 4: Limiting densities.} Again, as in Theorem~\ref{thm.mainws}, if we define the limiting density $\rho_t(x) = \int_{\R^d} f_t(x, v)\, dv$, one can prove using the bound on the kinetic energy \eqref{eq.boundke_d} that 
\begin{equation}
\label{eq.convdens_d}
\rho^n\rightharpoonup \rho\quad\textrm{ weakly in } L^1_{\rm loc}([0, T]\times\Omega),
\end{equation}
that is, the densities weakly converge to the density of the limit, up to subsequences. 
\\[0.2cm]
{\bf Step 5: Limiting vector fields.} We would like now to apply the results analogous to the Di Perna and Lions in \cite{DL89b} to get that the limiting solution is actually a renormalized solution. In order to do that, we have to get rid of the specular reflection condition, by invoking the change of variables and the odd reflections of the electric field already introduced in the previous sections. 

That is, let us fix any point on the boundary and after a translation and rotation let us assume $0\in \de\Omega$ and $n(0) = e_1$. After a rescaling we also assume that the domain $\Omega$ fulfils the exterior and interior ball condition at each boundary point with balls of radius 2. We will first prove that the limiting solution is actually renormalized in $B_1$, from which it will be renormalized everywhere (by a covering argument and taking further subsequences), and therefore it will be transported by a Maximal Specular Flow in $\Omega\times\R^d$.  

We perform the change of variables from Section~\ref{sec.gendom}, and we will keep using the notation there introduced; $(x, v)\mapsto(y, w) = (\phi(x), J(x)v)$. Let us define 
\[
g_t^n(y,w ) = f_t^n(x, v),
\]
and $g_t^{e, n}(y, w)$ its even extension with respect to $(y_1, w_1)$; that is, $g_t^{e, n}(y, w) = g_t^{n}(y, w)$ if $y_1 \ge 0$ and $g_t^{e, n}(y, w) = g_t^{n}(y', w')$ otherwise. 

We analogously define $\bar g^{e, k}_t(y, w)$ and $g^e_t(y, w)$ in $(0, T)\times B_1\times\R^d$ from $\bar f^k_t(x, v)$ and $f_t$, the limits in \eqref{eq.gk_d}-\eqref{eq.gk2_d}; so that
\begin{equation}
\label{eq.gbardom}
g_t^e = \sum_{k = 0}^\infty \bar g^{e, k}_t.
\end{equation} 

By Proposition~\ref{prop.renB1_T} we have that $g_t^{e, n}$ are renormalized solutions in $B_1\times \R^d$ to the transport equation
\begin{equation}
\label{eq.eqtransf_2}
\de_t g_t^{e, n}(y, w) +  w\cdot \nabla_y g_t^{e, n}(y, w) + F^n(y, w)\cdot\nabla_w g_t^{e, n}(y, w) = 0,
\end{equation}
where 
\[
F^n(y, w) := F_1^o(y, w) + F_2^{n,o}(y)
\]
and $F_1^o, F_2^{n,o}$ are the odd extensions with respect to $(y_1, w_1)$ of
\[
F_1(y, w) := \left[(J^{-1}(x))^T\frac{\de w}{\de x}\right]^T w = v^T D^2\phi\, v
\]
and 
\[
F_2^n(y) := J(x) E^n(x),
\]
that is, $F_1^o(y, w) = F_1(y, w)$, $F_2^{o, n}(y) = F_2^n(y)$, if $y_1\ge 0$, and $F_1^o(y, w) = (F_1(y', w'))'$, $F_2^{o, n}(y) = (F_2^n(y'))'$, otherwise. By means of the same reasoning as in Theorem~\ref{thm.mainws} we want to show that the even extension, $g_t^e$, to
\[
g_t(y, w) = f_t(y, w),
\]
is a distributional solution in $B_1\times\R^d$ to 
\begin{equation}
\label{eq.eqtransf_3}
\de_t g_t^{e}(y, w) +  w\cdot \nabla_y g_t^{e}(y, w) + F(y, w)\cdot\nabla_w g_t^{e}(y, w) = 0,
\end{equation}
where now $F = F_1^o + F_2^o$, and $F_2^o$ is the odd extension with respect to $(y_1, w_1)$ of $J(x)E(x)$, with 
\[
E(x) = -\int_{\Omega}\nabla_x G_\Omega(x, z)\rho(z)\,dz.
\]

In order to do that, as in Theorem~\ref{thm.mainws}, it will be enough to show 
\begin{equation}
\label{eq.1stab}
F_2^{n, o}\rightharpoonup F_2^o\quad\textrm{weakly in }~L^1_{\rm loc}((0, \infty)\times B_1; \R^d),
\end{equation}
and 
\begin{equation}
\label{eq.2stab}
F_2^{n, o}(y+h)\to F_2^n(y)\quad\textrm{as }|h|\to 0,~\textrm{ in } L^1_{\rm loc}((0, \infty); L^1_{\rm loc}(B_1))
\end{equation}
uniformly in $n$.
\\[0.2cm]
{\bf Step 6: Proof of the first and second stability condition.} We refer to the appendix to show that \eqref{eq.1stab}-\eqref{eq.2stab} actually hold, since it is a technical computation. 
 \\[0.2cm]
{\bf Step 7: Conclusion of existence.}
Since conditions \eqref{eq.1stab}-\eqref{eq.2stab} are fulfilled, by the same arguments as in \cite[Theorem II.7]{DL89b} the vector fields $(w, F^n)$ are converging strongly in $L^1$. Therefore, weakly continuous bounded solutions of the approximating problems converging weakly$^*$ in $L^\infty$ are distributional solutions in the limit (notice that the divergence of the vector field is constant in $n$). In particular, for every $m \in \N$, $G^m_t = \sum_{k = 0}^m \bar g^{e,k}_t$ (recall $\bar g^{e,k}_t$ from \eqref{eq.gbardom}) is a distributional solution of the continuity equation in $(0, T)\times B_1\times\R^d$ with vector field $(w, F)$ and with initial datum $G_0^m = \sum_{k = 0}^m \bar g^{e,k}_0$; as it is bounded by $m+1$. 

Then, by Proposition~\ref{prop.renB1_T2} (which is based on Proposition~\ref{prop.BimpA}) we have that $\mathcal{F}^m_t = \sum_{k = 0}^m \bar f_t^k$ are distributional solutions to \eqref{eq.original_Om} in $(0, T)\times (B_{1/2}\cap \Omega)\times\R^d$ according to Definition~\ref{defi.HS_Om}, and in particular, by a covering argument, they are distributional solutions in $(0, T)\times\Omega\times\R^d$. 

By Theorem~\ref{thm.main1_Om} (i), since $\mathcal{F}^m$ is bounded, it is a renormalized solution and it is transported by the corresponding Maximal Specular Flow. Since $\mathcal{F}^m$ converges to $f_t$ in $L_{\rm loc}^1((0, \infty)\times\Omega\times\R^{d})$, the limiting $f_t$ is also a renormalized solution; and by Theorem~\ref{thm.main1_Om} (ii), it is transported by the Maximal Specular Flow. Moreover, by Theorem~\ref{thm.LagimpRen}, $f_t \in C([0, \infty); L^1_{\rm loc}(\Omega\times\R^{d}))$ and fulfils the commutativity property.
\\[0.2cm]
{\bf Step 8: Strong $L^1_{\rm loc}$ continuity of density and electric field.} The strong continuity of the densities $\rho \in C([0, \infty); L^1_{\rm loc}(\Omega))$ follows exactly as in Step 8 of the proof of Theorem~\ref{thm.mainws}; and the strong continuity of the electric fields, $E\in  C([0, \infty); L^1_{\rm loc}(\Omega))$, also follows like in Step 8 of the proof of Theorem~\ref{thm.mainws} combined with the estimates on the Green function from Lemma~\ref{lem.GreenFct}. 
\end{proof}

\subsection{Proof of Theorem~\ref{thm.main3_D}} We can now prove the result regarding the bound of the total energy for positive times.

\begin{proof}[Proof of Theorem~\ref{thm.main3_D}]
We divide the proof into four steps. We will be using the notation from the proof of Theorem~\ref{thm.main2_d}, where we built a sequence of functions $f_t^n$ converging weakly to $f_t$. 
\\[0.2cm]
{\bf Step 1: Weak uniform equicontinuity in time of densities.} We start by proving the weak equicontinuity in time of the densities of the approximating sequence. Notice that, in the interior of $\Omega$,
\[
\de_t\rho_t + {\rm div}_x \left(\int_{\R^d} v\, f_t(x, v) \, dv\right) = 0,
\]
and the same holds for each $f_t^n$ and $\rho_t^n$. Let us now prove that $t\mapsto \int_{\R^d}\rho_t^n \xi $ are equicontinuous for any $\xi \in C^\infty_c(\overline{\Omega})$:
\[
\begin{split}
\left|\int_\Omega (\rho_r^n- \rho_s^n)\xi \right|&  = \left|\int_s^r \int_\Omega \de_t \rho_\tau^n \xi \, dx\, d\tau\right| 
= \left|\int_s^r\int_{\Omega} {\rm div}_x \left(\int_{\R^d} v f_\tau^n(x, v)\,dv \right)\,\xi\,dx\,d\tau\right|\\
 & \leq \|\nabla \xi\|_{L^\infty} \left|\int_s^r\int_{ \Omega\times\R^{d}}  |v|f_\tau^n(x, v) \,dx\,dv\,d\tau\right| +\\
 & ~~~~~~~~~~~+ \left|\int_s^r\int_{\de\Omega\times\R^d} n(x)\cdot v\,f_\tau^n(x, v) \xi(x) \, d\sigma^\Omega_x \,dv\, d\tau \right|.
\end{split}
\]
Using the symmetry of $g_\tau^n$ in the boundary, $g_\tau^n(x, v) = g_\tau^n(x, R_x v)$, and integrating first in the $v$ variable, the second term vanishes. By H\"older inequality, we have 
\[
\left(\int_{\Omega\times\R^d} |v| f_\tau^n(x, v)\, dx\, dv\right)^2 \le \left(\int_{\Omega\times\R^d} |v|^2 f_\tau^n(x, v)\, dx\, dv\right) \cdot \int_{\Omega}\rho_\tau^n(x)\, dx,
\]
which is bounded by the uniform bound in $n$ of the kinetic energy, \eqref{eq.boundke_d}, and the uniform bound on the $L^1$ norm of $f_t^n$. Therefore, we have 
\[
\left|\int_\Omega (\rho_r^n- \rho_s^n)\xi \right| \le C\|\nabla \xi \|_{L^\infty} |r- s|,
\]
the weak equicontinuity in time of the densities. This, together with the weak$^*$ convergence of measures \eqref{eq.convdens_d}, implies
\begin{equation}
\label{eq.unifwcmt_d}
\lim_{n\to \infty}\sup_{t\in [0, T]}\left|\int_\Omega \xi(x)(\rho_t^n - \rho_t)(x)\, dx\right| = 0,
\end{equation}
the uniform convergence in $t$ of $\int_\Omega\rho_t^n\xi$.
\\[0.2cm]
{\bf Step 2: Weak lower semicontinuity of the potential term and bound on energy.} We prove that, for any nonnegative $\phi \in C^\infty_c((0, \infty))$ we have 
\begin{equation}
\label{eq.lscpe_d}
\begin{split}
\int_0^\infty\phi(t)&  \int_{\Omega\times\Omega} G_\Omega(x, z) \rho_t(x)\rho_t(z)\, dx\, dz\, dt  \\
& ~~~~~~~~~~\le \liminf_{n \to \infty}\int_0^\infty\phi(t) \int_{\Omega\times\Omega} G^{k_n^{-1}}_\Omega(x, z) \rho_t^n(x)\rho_t^n(z)\, dx\, dz\, dt.
\end{split}
\end{equation}
Thanks to \eqref{eq.unifwcmt_d} and Fubini's theorem we have that $\rho_t^n(x) dx \otimes\rho_t^n(z) dz\otimes dt\in \mathcal{M}((0, T)\times\Omega\times\R^d)$ converge against continuous functions to $\rho_t(x) dx \otimes\rho_t(z) dz\otimes dt$. In particular, for any $m\in \N$ fixed, we have that 
\[
\begin{split}
\int_0^\infty\phi(t) \int_{\Omega\times\Omega} G_\Omega^{k_m^{-1}}(x, z) \rho_t(x)\rho_t(z)\,dx dt & \le  \liminf_{n \to \infty}\int_0^\infty\phi(t) \int_{\Omega\times\Omega} G_\Omega^{k_m^{-1}}(x, z) \rho_t^n(x)\rho_t^n(z)\,dx\,dz\, dt\\
&\le  \liminf_{n \to \infty}\int_0^\infty\phi(t) \int_{\Omega\times\Omega} G_\Omega^{k_n^{-1}}(x, z) \rho_t^n(x)\rho_t^n(z)\,dx\,dz\,dt,
\end{split}
\]
where in the last inequality we have used that $G_\Omega^{k_m^{-1}}$ is increasing in $m\in \N$ and nonnegative. For the same reason, the left-hand side converges (by monotone convergence theorem), and in particular we obtain the desired result, \eqref{eq.lscpe_d}. 

Finally, from the lower semicontinuity of the kinetic energy, \eqref{eq.boundke_d}, we immediately have that 
\[
\int_{0}^\infty\int_{\Omega\times\R^d} \phi(t) |v|^2 f_t \,dx\,dv\, dt\leq \liminf_{n \to \infty} \int_{0}^\infty\int_{\R^d\times\R^d} \phi(t) |v|^2 f_t^n \,dx\,dv\, dt;
\]
which combined with \eqref{eq.energy_d}-\eqref{eq.Ktom}-\eqref{eq.lscpe_d}-\eqref{eq.lem31_d} yields that, for any $\phi\in C_c^\infty((0, \infty))$,
\begin{align*}
& \int_0^\infty\phi(t) \left\{\int_{\Omega\times\R^d}|v|^2f_t \, dx\, dv + \int_{\Omega\times\Omega} G_\Omega(x, z) \rho_t(x) \,\rho_t(z) \, dx\,dz \right\}\, dt \\
& ~~~~~~~~~~\leq \liminf_{n \to \infty} \int_0^\infty\phi(t) \left\{\int_{\Omega\times\R^d}|v|^2f_t^n \, dx\, dv + \int_{\Omega\times\Omega} G_\Omega^{k_n^{-1}}(x, z) \rho_t^n(x)\rho_t^n(z)\,dx\,dz \right\}\, dt\\
& ~~~~~~~~~~\leq \liminf_{n \to \infty} \int_0^\infty\phi(t) \left\{\int_{\Omega\times\R^d}|v|^2f_0^n \, dx\, dv + \int_{\Omega\times\Omega} G_\Omega^{k_n^{-1}}(x, z) \rho_0^n(x)\rho_0^n(z)\,dx\,dz \right\}\, dt\\
& ~~~~~~~~~~= \int_0^\infty\phi(t) \left\{\int_{\Omega\times\R^d}|v|^2f_0 \, dx\, dv + \int_{\Omega\times\Omega} G_\Omega(x, z) \rho_0(x) \,\rho_0(z) \, dx\,dz \right\}\, dt
\end{align*}

In particular, \eqref{eq.boundenergy__D} holds for a.e. $t \geq 0$. The boundedness of the energy for all times follows form the fact that  $f_t\in C([0, \infty); \Omega\times\R^d)$ and $\rho_t\in C([0, \infty); \Omega)$.
\\[0.2cm]
{\bf Step 3: Bound on $L^2$ norm of electric field.} We will show that for any nonnegative $L^1$ function $\xi$, then 
\begin{equation}
\label{eq.pecond_d}
\int_{\Omega\times\Omega} G_\Omega(x, z)\, \xi(x)\, \xi(z) \, dx\,dz \ge \int_{\Omega} \left|\nabla \int_\Omega G_\Omega(x, z)\xi(z)\, dz\right|^2\, dx.
\end{equation} 
For $\xi\in C^\infty_c(\R^d)$ and $R > 0$, extending $G$ by $0$ outside $\Omega$, we have
\[
\begin{split}
\int_{B_R\times \Omega} G_\Omega(x, z) \xi(x)\xi(z) & = \int_{B_R}\left|\nabla \int_{\Omega} G_\Omega(x, z)\xi(z)  \right|^2 \\
&~~~~ - \int_{(\de B_R)\cap \Omega} \int_\Omega G_\Omega(x, z)\xi(z) \nabla \int_\Omega G_\Omega(x, z)\xi(z) \cdot \nu_{B_R}\\
&~~~~ + \int_{\de \Omega\cap B_R}\int_\Omega G_\Omega(x, z)\xi(z) \nabla \int_\Omega G_\Omega(x, z)\xi(z) \cdot n(x).
\end{split}
\]
Since $G_\Omega(x, z)\le C|x-z|^{2-d}$ and $|\nabla_x G_\Omega(x, z) |\le C|x-z|^{1-d}$, it is easy to check that the second term above tends to 0 as $R \to \infty$. The third term is equal to 0 exactly, since $G_\Omega(x, z) = 0$ for $x\in \de\Omega$. By approximation the same holds for $\xi\in L^\infty_c(\R^d)$. Finally, taking $\xi_k = \min\{1_{B_k} \xi(x), k\}$ and by monotone convergence we have that
\[
\begin{split}
\int_{\Omega\times\Omega} G_\Omega(x, z)\, \xi(x)\, \xi(z) \, dx\,dz  & = \lim_{k\to\infty} \int_{\Omega\times\Omega} G_\Omega(x, z)\, \xi_k(x)\, \xi_k(z) \, dx\,dz \\
& = \liminf_{k\to\infty} \int_{\Omega} \left|\nabla \int_\Omega G_\Omega(x, z)\xi_k(z)\, dz\right|^2\, dx\\
& \ge \int_{\Omega} \left|\nabla \int_\Omega G_\Omega(x, z)\xi(z)\, dz\right|^2\, dx,
\end{split}
\]
where in the last step we have used the lower semicontinuity of the $L^2$ norm with respect to the weak convergence of $\xi_k$ to $\xi$.
\\[0.2cm]
{\bf Step 4: Proof of (ii).} Let us prove the no-blow up criterion for $f_0$-a.e. $(x, v)\in \Omega\times\R^d$ of the Maximal Specular Flow, in $d = 3, 4$. 

Let call $Z_t^s(x, v) = Z(t, s, x, v) := (X(t, s, x, v), V(t, s, x,v))$ the Maximal Specular Flow, and similarly $t_{s, Z}^\pm (x, v) = t_{s, X, V}^\pm (x, v)$. Take $T > 0$, so that $(t_{s, Z}^-, t_{s, Z}^+)\subset [0, T]$. Let us show that for $f_s$-a.e. $(x, v)$ we can take $t_{s, Z}^- = 0$ and $t_{s, Z}^+ = T$, for $s\in[0, T]$.

In particular, it will be enough to show
\begin{equation}
\label{eq.toshownoblowup}
\sup_{t_{s, Z}^- \le r, \tau \le t_{s, Z}^+} |\log\log (2+|Z_r^{s}|) - \log\log(2+|Z_\tau^{s}|)| <  \infty,
\end{equation}
for $f_s$-a.e. $(x, v) \in \Omega\times\R^d$. Let us proceed,
\[
 \begin{split}
 \int_{\Omega\times\R^d} |\log\log & (2+|Z_r^{s}|) - \log\log(2+|Z_\tau^{s}|)| f_{s_n}\, dx\,dv\le\\
 & \le  \int_{\Omega\times\R^d} \int_{t_{s, Z}^-}^{t_{s, Z}^+} \left|\frac{d}{dt}\log\log  (2+|Z_t^{s}|)\right|\, dt f_{s}\, dx\,dv\\  
 & \le  \int_{t_{s, Z}^-}^{t_{s, Z}^+} \int_{\Omega\times\R^d} \frac{|b_t(Z_t^{s})|}{(1+|Z_t^{s}|) \log  (2+|Z_t^{s}|)} \, f_{s}\, dx\,dv\, dt\\
  & \le  \int_{0}^{T} \int_{\Omega\times\R^d} \frac{|b_t(x, v)|}{(1+|(x, v)|) \log  (2+|(x, v)|)} \, f_{t}(x, v)\, dx\,dv\, dt <\infty.\\
 \end{split}
\]

We have used here the incompressibility of the flow and the transport structure. The last inequality follows as in \cite[Theorem 2.3]{ACF17} by means of \eqref{eq.pecond_d}, choosing $\xi = \rho_t$ and noticing that the right-hand side of \eqref{eq.pecond_d} is now precisely the $L^2$ norm of the electric field $E_t$. Now, taking $s = 0$ shows that trajectories do not blow up in finite time for $f_0$-a.e. $(x, v)\in \Omega\times\R^d$; and since for every $s\in [0, T]$ we can set $t_{s, Z}^- = 0$ and $t_{s, Z}^+ = T$ for $f_s$-a.e. $(x, v)\in \Omega\times\R^d$, there is no appearance of mass from infinity at any time (given that the flow can be extended back up to $t_{s,Z}^- = 0$), and $f_t$ is the image of $f_0$ through an incompressible flow. 
\end{proof}

\section*{Appendix}

We include in this section the technical computations from the work. We start with the proof of Step 3 in Theorem~\ref{thm.charact_2}.

\subsection*{Proof of Step 3 Theorem~\ref{thm.charact_2}}

We have to bound the term, $III$. We divide this proof into three further steps.
\\[0.2 cm]
{\it Step 1.} We approach the problem parallelly to what we did for the half space, although now the electric field is not given by the convolution against an $L^1$ function. We consider two different cases, according to whether $\gamma^1(t)$ and $\xi^1(t)$ are on the same side or not. That is, we let 
\[
III\le III_+ + III_-, 
\]
with
\[
III_\pm = \iiint_{\pm\gamma_1^1(t)\xi_1^1(t) \ge 0 } \zeta \frac{|\tilde E(\gamma^1(t)) - \tilde E(\xi^1(t))|}{\zeta\delta + |\gamma^1(t)-\xi^1(t)|} d\mu(x, \xi, \gamma).
\]

Let us start focusing on $III_+$, and without loss of generality we will assume that both $\gamma^1(t), \xi^1(t)\in \{y_1 \ge 0\}$ (otherwise we use the symmetry of the field). 

Notice that, using a triangular inequality as in previous steps and the fact that $J(x)$ is Lipschitz, we only need to bound 
\begin{equation}
\label{eq.bdiiip}
\iiint_{\gamma_1^1(t)\xi_1^1(t) \ge 0 } \zeta \frac{|E\left(\psi(\gamma^1(t))\right) - E\left(\psi(\xi^1(t))\right)|}{\zeta\delta + |\gamma^1(t)-\xi^1(t)|} d\mu(x, \xi, \gamma).
\end{equation}

We recall that, from \eqref{eq.greenv},
\[
E(x) = \int_\Omega \nabla_x G_\Omega(x, x_2)\rho(x_2) dx_2,
\]
for some $\rho\in L^1(\Omega)$ by assumption. We also recall that the Green function satisfies
\[
\left\{ \begin{array}{ll}
  -\Delta_{x_1} G_\Omega(x_1, x_2) = -\Delta_{x_2} G_\Omega(x_1, x_2) = \delta(x_1 - x_2), & \textrm{ for } x_1, x_2 \in \Omega \\
  G_\Omega(x_1, x_2) = 0 & \textrm{ otherwise},\\
  \end{array}\right.
\]
where we have extended it by 0 outside the domain $\Omega$, in the whole $\R^d\times\R^d$. On the other hand, we define by $\Gamma(x_1 - x_2)$ the fundamental solution in the whole $\R^d$ for $d \ge 3$, that is, $\Gamma(x) = w_d^{-1} |x|^{2-d}$ for some dimensional constant $w_d$.

From the integrability of $E$ (see Lemma~\ref{lem.GreenFct}) it is easy to see that in \eqref{eq.bdiiip} we are only interested in the cases where $|\gamma^1(t)-\xi^1(t)|$ is small. On the other hand, if both $\gamma^1(t)$ and $\xi^1(t)$ are uniformly far from the boundary $\{y_1 = 0\}$ (that is, $\psi(\gamma^1(t))$ and $\psi(\xi^1(t))$ are uniformly far from $\de \Omega$), then $D_\Omega(x_1, x_2) := G_\Omega(x_1, x_2) - \Gamma(x_1 - x_2)$ is harmonic in $x_1$ and with uniform bounds in the boundary, thus smooth in the interior. Hence, if we denote $z_\xi = \psi(\xi^1(t))$ and $z_\gamma = \psi(\gamma^1(t))$, and $\nabla_1$ denotes the gradient with respect the first $d$ coordinates, we have
\begin{align*}
|E\left(z_\gamma\right) - E\left(z_\xi\right)| \leq & \left| \int_\Omega \big( \nabla_1 D_\Omega \left(z_\gamma, x \right) - \nabla_1 D_\Omega \left(z_\xi , x\right) \big)\rho(x) dx \right|
\\ & + \left| \int_{\R^d\setminus \Omega} \big( \nabla \Gamma \left(z_\gamma- x \right) - \nabla \Gamma \left(z_\xi - x\right) \big)\rho(x) dx \right|
\\ &~~~+ \left| \int_{\R^d} \big( \nabla \Gamma \left(z_\gamma- x \right) - \nabla \Gamma \left(z_\xi - x\right) \big)\rho(x) dx \right|.
\end{align*}

Notice that the first two terms in the previous expression are bounded when plugged into \eqref{eq.bdiiip} if $z_\gamma$ and $z_\xi$ are uniformly far from $\de \Omega$, due to the smoothness of $D_\Omega$ and $\Gamma$ in the corresponding integration areas. On the other hand, the last term corresponds to the case dealt in \cite[Theorem 4.4]{ACF17}, the convolution of a singular integral (given by the fundamental solution in $\R^d$) against an $L^1$ function. 

We can, therefore, assume that $z_\gamma$ and $z_\xi$ are close to the boundary. In particular, we will assume that they have unique projections, so that in the expression 
\begin{align*}
|E\left(z_\gamma\right) - E\left(z_\xi\right)| \leq & \left| \int_{\Omega_\pi} \big( \nabla_1 G_\Omega \left(z_\gamma, x \right) - \nabla_1 G_\Omega \left(z_\xi , x\right) \big)\rho(x) dx \right|
\\ & + \left| \int_{\Omega\setminus\Omega_\pi} \big( \nabla_1 G_\Omega \left(z_\gamma, x \right) - \nabla_1 G_\Omega \left(z_\xi , x\right) \big)\rho(x) dx \right|,
\end{align*}
the second term is immediately bounded due to the regularity of $G_\Omega$ in the integration domain (we recall $\Omega_\pi$ denotes the domain of unique projection). Thus, when computing the electric fields, we only care about the contribution of the densities close to the boundary. 

Let us define, for $x \in \Omega_\pi$, $Px$ as the reflected point with respect to $\de \Omega$. That is, 
\[
Px = x + 2(\pi(x)-x) \in \R^d\setminus \Omega.
\]
We analogously define the same operator for the points on $(\R^d\setminus \Omega)_\pi$. The sets of unique projection from either side are comparable, since we have exterior and interior ball condition for $\Omega$.

Using the same ideas as in Theorem~\ref{thm.charact}, we can consider, for $x_2\in \Omega_\pi$,
\[
G_\Omega(x_1, x_2) = \Gamma(x_1 - x_2) - \Gamma(x_1 - Px_2) - H(x_1, x_2),
\]
where $H$ fulfills, for each $x_2 \in \Omega_\pi$,
\begin{equation}
\label{eq.Hff}
\left\{ \begin{array}{ll}
  \Delta_{x_1} H(x_1, x_2) = 0, & \textrm{ for } x_1 \in \Omega \\
  H(x_1, x_2) = \Gamma(x_1 - x_2) - \Gamma(x_1 - Px_2) & \textrm{ for } x_1 \in \de \Omega.\\
  \end{array}\right.
\end{equation}

Putting all together we have
\begin{align*}
 \int_{\Omega_\pi} & \nabla_1 G_\Omega \left(z_\gamma, x \right) \rho(x) dx  =  \\ 
 & = \int_{\Omega_\pi} \nabla \Gamma \left(z_\gamma- x \right) \rho(x) dx + \int_{\Omega_\pi} \nabla \Gamma \left(z_\gamma- Px \right) \rho(x) dx + \int_{\Omega_\pi} \nabla_1 H \left(z_\gamma, x \right) \rho(x) dx.
\end{align*}
Let us now denote $\rho_\pi(x) = \rho(x) 1_{\{x\in \Omega_\pi\}} + j_P(x)\rho(Px) 1_{\{x\in P\Omega_\pi\}}$, where $P\Omega_\pi$ is the reflected of the $\Omega_\pi$ with respect to $\de\Omega$, and $j_P(x)$ is the Jacobian determinant of the change of variables $x\mapsto Px$.  Notice that the $L^1$ norm of $\rho_\pi$ is bounded by the $L^1$ norm of $\rho(x)$. If we change variables in the previous expression we have 
\[
 \int_{\Omega_\pi}  \nabla_1 G_\Omega \left(z_\gamma, x \right) \rho(x) dx   = \int_{\R^d} \nabla \Gamma \left(z_\gamma- x \right) \rho_\pi(x) dx + \int_{\Omega_\pi} \nabla_1 H \left(z_\gamma, x \right) \rho(x) dx.
\]

As can be seen, the first term is again of the form treated in \cite[Theorem 4.4]{ACF17}, a convolution of the gradient of the fundamental solution against an $L^1$ function. Putting it back in \eqref{eq.bdiiip} the corresponding bound follows. Therefore, we have reduced the bound on $III_+$ to finding a bound for 
\begin{equation}
\label{eq.bdiiip_2}
\iiint_{\gamma_1^1(t)\xi_1^1(t) \ge 0 } \zeta \frac{|E_H\left(\psi(\gamma^1(t))\right) - E_H\left(\psi(\xi^1(t))\right)|}{\zeta\delta + |\gamma^1(t)-\xi^1(t)|} d\mu(x, \xi, \gamma),
\end{equation}
where $E_H$ is given by 
\[
E_H(x_1) = \int_{\Omega_\pi} \nabla_1 H \left(x_1, x \right) \rho(x) dx.
\]
for some function $\rho\in L^1(\Omega)$, and $H$ is the solution to \eqref{eq.Hff}. 
\\[0.2cm]
{\it Step 2: Bound for $III_+$.} Let us denote by $M_\lambda h$ the local Maximal Function of a locally finite measure $\tilde \mu$ for $\lambda > 0$; that is, 
\[
M_\lambda \tilde \mu (x) = \sup_{0 < s < \lambda } \frac{1}{|B_s|}\int_{B_s(x)} d|\tilde\mu|(y),\quad\quad x \in \R^d.
\]
If $\tilde \mu = h\mathscr{L}^d$ we will denote $M_\lambda h$ instead. From standard theory for local Maximal Functions (see \cite{Ste70}) we know that if $h\in BV(\R^d)$ then there exists some set $N$ with $\mathscr{L}^d(N) = 0$ such that 
\begin{equation}
\label{eq.md}
|h(x) - h(y)| \le c_d |x-y| \left(M_\lambda Dh(x) + M_\lambda Dh(y)\right)
\end{equation}
for $x, y \in \R^d\setminus N$ and $|x-y| \leq \lambda$. It is also well known that, for any $p > 1$, 
\begin{equation}
\label{eq.mp}
\left\| M_\lambda h \right\|_{L^p(B_s)} \le c_{d, p} \left\| h \right\|_{L^p(B_{s+\lambda})} ,
\end{equation}
for any $s > 0$ and for a constant $c_{d, p}$ depending only on $d$ and $p$, which blows-up as $p \downarrow 1$. 

Let us now first combine \eqref{eq.md} with \eqref{eq.bdiiip_2}, to get 
\begin{align*}
&\iiint_{\gamma_1^1(t)\xi_1^1(t) \ge 0 } \int_{\Omega_\pi} \zeta \frac{|\nabla_1H\left(\psi(\gamma^1(t)), x\right) - \nabla_1H\left(\psi(\xi^1(t)), x\right)|}{ |\gamma^1(t)-\xi^1(t)|} \rho(x)\,dx\, d\mu \leq \\
&~~~~~\zeta \int_{\Omega_\pi} \iiint_{\gamma_1^1(t)\xi_1^1(t) \ge 0 }  |M_\lambda D^2_1H\left(\psi(\gamma^1(t)), x\right) + M_\lambda D^2_1H\left(\psi(\xi^1(t)), x\right)|\,d\mu\,\rho(x) dx \le\\
& ~~~~~~~~~~~~~~~~~~~~~~~~~~~~~~~~~~~~~~~~~~~~~~~~~~~~~~~~\le C\zeta \int_{\Omega_\pi} \int_{B_r} |M_\lambda D^2_1H\left(z, x\right)| \,dz \rho(x) \,dx,
\end{align*}
where in the last inequality we are using the no-concentration condition, $(e_t)_{\#} \eta \leq C_0\left(\mathscr{L}^{2d} \mres A\right)$, we are taking $\lambda$ small independently of the other parameters (say, for example, $\lambda = 1/8$), and we changed variables while using that the determinant of the Jacobian is bounded, \eqref{eq.cd}. Now, thanks to \eqref{eq.mp}, we can bound 
\begin{equation}
\label{eq.boundH}
\zeta \int_{\Omega_\pi} \int_{B_r} |M_\lambda D^2_1H\left(z, x\right)| \,dz \rho(x) \,dx \leq C_p\zeta \int_{\Omega_\pi} \|D_1^2 H(\cdot, x)\|_{L^p(B_1)} \rho(x) \,dx.
\end{equation}

Therefore, in order to complete the bound, it is enough to show that $H(\cdot, x) \in W^{2, p}(B_1\cap \Omega)$ uniformly for every $x\in \Omega_\pi$ and for some $p > 1$, since $\rho$ is in $L^1$. But since $H$ solves the Laplace equation \eqref{eq.Hff}, by standard elliptic theory (see, e.g., \cite{Mar87}) it is enough to show that $H(\cdot, x) = \Gamma(\cdot- x)- \Gamma(\cdot- Px)\in W^{2-1/p, p}(\de \Omega)$ uniformly for every $x\in \Omega_\pi$ and for some $p > 1$.

Let us suppose, after a rotation and translation, that $x_0 = (\xi,0,\dots,0)$, $\pi(x_0) = 0$, and $\xi>0$ is small independently of the other parameters. With this setting, $Px_0 = -x_0 = (-\xi,0,\dots,0)$, and from the regularity of the domain $\Omega$ we have that 
\begin{equation}
\label{eq.regdom}\de\Omega\cap B_1 \subset \{|x_1|\le |x_2|^2+\dots|x_d|^2 = |x'|^2\},
\end{equation} 
where we will also be using the notation $x = (x_1, x') \in \R\times\R^{d-1}$. We want 
\begin{equation}
\label{eq.wtp}
\Psi_{x_0} := \Gamma(\cdot- x_0)- \Gamma(\cdot+ x_0) = C_d\left\{ \frac{1}{|\cdot-x_0|^{d-2}} - \frac{1}{|\cdot+x_0|^{d-2}}\right\} \in W^{2-1/p, p}(\de \Omega),
\end{equation}
with a bound independent of $x_0$ and for some $p > 1$.

We start by claiming that, for any $x\in \de\Omega\cap B_1$, and for any $k \in \N$,  
\begin{equation}
\label{eq.claimb}
\left|\frac{1}{|x-x_0|^k} - \frac{1}{|x+x_0|^k} \right|\le \frac{C}{|x+x_0|^{k-1}},
\end{equation}
for some constant $C$ depending only on $d$. Indeed, since $|x-x_0|$ and $|x+x_0|$ are always comparable, we can assume $|x-x_0|\le |x+x_0|$ and compute 
\[
\frac{1}{|x-x_0|^k} - \frac{1}{|x+x_0|^k} \leq C\frac{|x+x_0| - |x-x_0|}{|x-x_0|^{k+1}}\leq C\frac{|x+x_0|^2 - |x-x_0|^2}{|x-x_0|^{k+2}}.
\]

Let us suppose, without loss of generality, that $x = (s^2, s, 0,\dots,0)$, for some $s \in (0, 1)$. Notice that
\[
|x+x_0|^2-|x-x_0|^2 = (s^2+\xi)^2 + s^2 - (s^2-\xi)^2 - s^2 = 4s^2\xi.
\]
Now notice that $s \leq |x-x_0|$ and that $\xi \leq |x-x_0|$, which yields the claim, \eqref{eq.claimb}. 

Thanks to \eqref{eq.claimb} together with \eqref{eq.wtp}, it is clear that $\Psi_{x_0} \in L^p_{\rm loc}(\de\Omega)$ whenever $p<\frac{d-1}{d-3}$.

Let us next prove that $\Psi_{x_0} \in W^{1, p}_{\rm loc}(\de \Omega)$. It is enough to show that tangential derivatives to $\de\Omega$ of $\Psi_{x_0}$ are in $L^p$. Let $D_\tau \Psi_{x_0}(x) = \tau(x)\cdot\nabla \Psi_{x_0}(x)$ denote any tangential derivative, for $x\in \de\Omega$ and $\tau(x)\in S^{d-1}$ tangent to $\de\Omega$ at $x$. From the $C^{1, 1}$ regularity of the domain, it follows that for some constant $C$ depending only on the dimension, $|\tau_1(x)| \le C|x'|$, and therefore, 
\begin{equation}
\label{eq.Dtau}
|D_\tau \Psi_{x_0}(x)| \le C\left( |x'||\de_{x_1} \Psi_{x_0}(x)| +  |\de_{x_2} \Psi_{x_0}(x)| +\dots+|\de_{x_d} \Psi_{x_0}(x)| \right),
\end{equation}
for any tangential derivative. Let us know compute , for $i \in \{2,\dots,d\}$,
\[
|\de_{x_i} \Psi_{x_0}(x)| = C|x_i| \left|\frac{1}{|x-x_0|^d} - \frac{1}{|x+x_0|^d}\right|\leq \frac{C}{|x-x_0|^{d-2}} ,
\]
where we used \eqref{eq.claimb} together with the fact that $|x_i|\le |x-x_0|$.

On the other hand, 
\begin{align*}
|x'||\de_{x_1} \Psi_{x_0}(x)|& = |x'|\left(\frac{x_1-\xi}{|x-x_0|^{d}} - \frac{x_1+\xi}{|x+x_0|^{d}} \right)\\
& = x_1|x'|\left(\frac{1}{|x-x_0|^{d}} - \frac{1}{|x+x_0|^{d}} \right) - \frac{2\xi|x'|}{|x-x_0|^d} \leq \frac{C}{|x-x_0|^{d-2}},
\end{align*}
where we combined again \eqref{eq.claimb} with $\xi\le |x-x_0|$ and $|x'|\le |x-x_0|$. In all, we have that , putting it back in \eqref{eq.Dtau},
\begin{equation}
\label{eq.Dtau_b}
|D_\tau \Psi_{x_0}(x)| \le  \frac{C}{|x-x_0|^{d-2}} \le  \frac{C}{|x|^{d-2}}\in L^p(\de\Omega), \textrm{ for } 1\le p < \frac{d-1}{d-2},
\end{equation}
and thus, $\Psi_{x_0}\in W^{1, p}_{\rm loc}(\de \Omega)$ for $p < \frac{d-1}{d-2}$.

Let us denote $D^2_{\tau_1\tau_2}$ second derivatives along $\de\Omega$, for $\tau_1, \tau_2\in S^{d-1}$ tangent vectors to $\de\Omega$. Similarly to derivation of \eqref{eq.Dtau_b} it follows that 
\[
|D^2_{\tau_1\tau_2} \Psi_{x_0}(x)| \le  \frac{C}{|x-x_0|^{d-1}}\le \frac{C}{|x|^{d-1}},
\]
which, unfortunately, does not belong to any $L^p$ space for $p > 1$ in $\de\Omega$.

We define $F_\tau^{x_0}(x) := |x|D_\tau\Psi_{x_0}$, so that using the previous inequalities one can show 
\[
\frac{C}{|x|^{d-3}} \ge |F_\tau^{x_0}| \in L^{r_1}(\de\Omega)\cap W^{1, r_2}(\de\Omega) \subset W^{s, r_s}(\de\Omega),\textrm{ for } r_1 < \frac{d-1}{d-3}, r_2 < \frac{d-1}{d-2},
\]
where $s$ and $r_s$ follow from the interpolation property between fractional Sovolev spaces (see \cite[Theorem 6.4.5]{BL76}) and fulfill
\[
r_s = \left(\frac{1-s}{r_1}+ \frac{s}{r_2}\right)^{-1}<\frac{d-1}{d-3+s}.
\]

On the other hand, it is also known that if $h_1\in W^{s, p_1}\cap L^{q_1}(\de\Omega\cap B_1)$, and $h_2\in W^{s, q_2}\cap L^{p_2}(\de\Omega\cap B_1)$, then $h_1h_2\in W^{s, p}(\de\Omega\cap B_{3/4})$, with $s\in (0, 1)$ and $1>\frac{1}{p} = \frac{1}{p_1} + \frac{1}{p_2} = \frac{1}{q_1} + \frac{1}{q_2}>0$; with a bound 
\begin{equation}
\label{eq.prodrule}\left\|h_1h_2\right\|_{W^{s, p}(\de\Omega\cap B_{3/4})} \le C\left(  \left\| h_1 \right\|_{W^{s, p_1}(\de\Omega\cap B_1)}   \left\|h_2\right\|_{L^{p_2}(\de\Omega\cap B_1)} +  \left\|h_1\right\|_{L^{q_1}(\de\Omega\cap B_1)}\left\| h_2 \right\|_{W^{s, q_2}(\de\Omega\cap B_1)}   \right) .
\end{equation}
This result is a local version of the Runst--Sickel lemma, that can be found, for example, in \cite[Section 5.3.7]{RS96} or in \cite[Theorem 3]{Gat02}. We use \eqref{eq.prodrule} with $h_1 = F_\tau^{x_0}$ and $h_2 = \frac{1}{|x|} \in W^{t, r_t}_{\rm loc} (\de\Omega)$ for $(1+t)r_t <d-1$, and therefore $h_1h_2 = D_\tau \Psi_{x_0}$. Putting all together in \eqref{eq.prodrule} we should have 
\[
p_1 < \frac{d-1}{d-3+s},~~~~p_2 < d-1,~~~~ q_1 < \frac{d-1}{d-3}, ~~~~ q_2 < \frac{d-1}{1+s},
\]
so that
\[
\frac{1}{p} > \frac{d-2+s}{d-1}.
\]

If we want $s = 1-\frac{1}{p}$ we must have $p < \frac{d}{d-1}$, and in this case we have a bound for $\|D_\tau \Psi_{x_0}\|_{W^{1-1/p, p}(\de\Omega\cap B_1)}$ independent of $x_0$. Therefore, 
\[
\|\Psi_{x_0}\|_{W^{2-1/p, p}(\de\Omega\cap B_1)}\leq C,
\]
from which $H(\cdot, x) \in W^{2, p}(B_1\cap \Omega)$ and \eqref{eq.boundH} can be bounded by $C\zeta$. We have, therefore, shown that a bound of the type \eqref{eq.enoughuniq_2} holds for the term $III_+$.
\\[0.2cm]
{\it Step 3: Bound for $III_-$.} We finally have to bound the remaining term, $III_-$, 
\[
III_- = \iiint_{\gamma_1^1(t)\xi_1^1(t) < 0 } \zeta \frac{|\tilde E(\gamma^1(t)) - \tilde E(\xi^1(t))|}{\zeta\delta + |\gamma^1(t)-\xi^1(t)|} d\mu(x, \xi, \gamma).
\]

As in the proof of Theorem~\ref{thm.charact}, thanks to the symmetries of the vector field $\tilde E$ and the fact that now $\gamma^1(t)$ and $\xi^1(t)$ are on opposite sides, we are only required to bound 
\[
\iiint_{\gamma_1^1(t)\xi_1^1(t) < 0 } \zeta \frac{|\tilde E_1(\gamma^1(t)) |}{\zeta\delta + |\gamma^1_1(t)|} d\mu\leq C\zeta\int_{B_r} \frac{|((JE)_1\circ \psi)(y)|}{\zeta\delta + |y_1|} dy,
\]
where we have used again the no-concentration condition no-concentration condition, $(e_t)_{\#} \eta \leq C_0\left(\mathscr{L}^{2d} \mres A\right)$, and where the subindex 1 denotes the first coordinate of the vector field. 

If we define $A_k := [0, 2^{-k}]\times B_r^{(d-1)}$ as in the proof of Theorem~\ref{thm.charact}, and we recall that $\nabla_1$ is the gradient in the first $d$ components, we just have to bound 
\begin{align*}
\|(JE)_1\circ \psi\|_{L^1(A_k, \mathscr{L}^d)}& = \int_{A_k} |(JE)_1\circ \psi)(y)| dy\\
&\le \int_{A_k} \int_\Omega \left|\left\{(J(\psi(y))\nabla_1 G_\Omega(\psi(y), z)\right\}_1\rho(z)\right|\,dz\,dy\\
& =   \int_\Omega \rho(z) \int_{A_k}\left| e_1\cdot (J(\psi(y)) \nabla_1 G_\Omega(\psi(y), z)\right| \,dy\,dz.
\end{align*}
Since $\psi(y)$ is close to the boundary, $ J^T(\psi(y)) e_1 = \nabla_x \phi_1(\psi(y)) = n(\psi(y)) $, and thus, $n(\psi(y))\cdot \nabla_1$ is the derivative in the direction normal to $\de\Omega$, which we will denote $\de_n^1$. That is,  
\[
\|(JE)_1\circ \psi\|_{L^1(A_k, \mathscr{L}^d)}\le  \int_\Omega \rho(z) \int_{A_k}\left| \de_n^1 G_\Omega(\psi(y), z)\right| \,dy\,dz.
\]
Following as in \eqref{eq.charact1} and \eqref{eq.charact2} from Theorem~\ref{thm.charact}, it is enough to show
\begin{equation}
\label{eq.charactenough}
\|(JE)_1\circ \psi\|_{L^1(A_k, \mathscr{L}^d)} \le C2^{-k}. 
\end{equation}
Let us actually prove that, for any $z \in \Omega$, 
\begin{equation}
\label{eq.charactenough2}
\int_{A_k}\left| \de_n^1 G_\Omega(\psi(y), z)\right| \,dy \le C2^{-k}.
\end{equation}
Notice that if $z$ is far from $\psi(y)$, from the regularity of the Green function the previous result follows immediately. Thus, we can assume that $z$ is close to $\psi(y)$, and for $k$ large, in particular, $z, \psi(y)\in \Omega_\pi$.

Let us start by bounding, for $x \in \Omega_\pi$, $\de_{n}^1 G_\Omega(x, z)$. Keeping the notation from Step 2, we write the Green function as 
\[
G_\Omega(x_1, x_2) = \Gamma(x_1 - x_2) - \Gamma(Px_1 - x_2) - \tilde H(x_1, x_2),
\]
where now $\tilde H(x_1, x_2)$ fulfills 
\begin{equation}
\label{eq.Hff2}
\left\{ \begin{array}{ll}
  \Delta_{x_2} H(x_1, x_2) = 0, & \textrm{ for } x_2 \in \Omega \\
  H(x_1, x_2) = \Gamma(x_1 - x_2) - \Gamma(x_1 - Px_2) & \textrm{ for } x_2 \in \de \Omega.\\
  \end{array}\right.
\end{equation}

In particular, we can compute $\de_{n}^1 G_\Omega(x, z)$ as 
\begin{equation}
\label{eq.defiG}
\de_{n}^1 G_\Omega(x, z) = \frac{n(x)\cdot (x- z)}{|x-z|^d} - \frac{n(x)\cdot (Px- z)}{|Px-z|^d} - H_n(x, z),
\end{equation}
where $H_n$ solves 
\begin{equation}
\label{eq.Hff3}
\left\{ \begin{array}{ll}
  \Delta_{z} H_n(x, z) = 0, & \textrm{ for } z \in \Omega \\
  H_n(x, z) = \frac{n(x) \cdot (x-z)}{|x-z|^d} - \frac{n(x) \cdot (Px-z)}{|Px-z|^d}& \textrm{ for } z \in \de \Omega.\\
  \end{array}\right.
\end{equation}
Notice, on the one hand, we have $|x-z|, |Px - z| \ge |x|$ and on the other hand, $|n(x) \cdot (x-z)|, |n(x)\cdot (Px-z)| \le 2\delta(x)$ for $z \in \de\Omega\cap B_1$ due to the regularity of the domain $\de\Omega$. Here, and it what comes, $\delta(x) := \textrm{dist}(x, \de\Omega)$. By the maximum principle, therefore, we have
\begin{equation}
\label{eq.Hn}
|H_n(x, z)| \le C\frac{\delta(x)}{|x|^d}.
\end{equation}

Now, using the properties of the change of variables together with the fact that $\psi(y)$ is close to the boundary, we have that 
\[
|H_n(\psi(y), z)| \le C\frac{|y_1|}{|y|^d}.
\]

On the other hand, if we denote $\phi(z) = \tilde z$, we have that $|\psi(y) - z| = |\psi(y) - \psi(\tilde z)| \ge c |y - \tilde z|$, and using \eqref{eq.jacob2} and the closeness of $z$ to $\psi(y)$ we also have $|n(\psi(y))\cdot (\psi(y) - \psi(\tilde z))| \le C|(y - \tilde z)\cdot e_1|$. Putting all together in \eqref{eq.defiG}, and using the analogous strategy for the term containing $Px - z$, we get 
\[
|\de_{n}^1 G_\Omega(\psi(y), z) |\le \sup_{\tilde z \in A_k} C\frac{|y_1 - \tilde z_1|}{|y-\tilde z|^d},
\]
and as in Theorem~\ref{thm.charact} we have, maybe for a bigger constant, 
\[
\int_{A_k}\left| \de_n^1 G_\Omega(\psi(y), z)\right| \le C \int_{A_k} \frac{|y_1|}{|y|^d}\, dy\le C2^{-k}.
\]
The last inequality now follows from \eqref{eq.K1ineq}. This proves the bound \eqref{eq.enoughuniq_2} following as in \eqref{eq.charact2} for the remaining term $III_-$; and the theorem is proved. 

\subsection*{Proof of Step 6 Theorem~\ref{thm.main2_d}}
We divide it into two parts, the proof of \eqref{eq.1stab} and \eqref{eq.2stab}.
\\[0.2cm]
{\it Step 1: Proof of \eqref{eq.1stab}:} We start by proving \eqref{eq.1stab}. Notice that the weak convergence in $L^1$ is not affected by the odd reflection and the change of variables. It is enough to show that
\[
E^n\rightharpoonup E\quad\textrm{weakly in }~L^1_{\rm loc}((0, \infty)\times (\Omega\cap B_2); \R^d),
\]
that is, it is enough to show that 
\begin{equation}
\label{eq.lim1stab}
\lim_{n\to \infty} \left|\int_0^\infty\int_{\Omega} (E_t^n-E_t)\varphi\,dx\,dt \right| = 0,\quad\textrm{ for all }\varphi \in L^\infty_c((0, \infty)\times\overline{\Omega\cap B_2}),
\end{equation}
where $L^\infty_c$ denotes the set of bounded functions with compact support. Let
\[
\left|\int_0^\infty\int_{\Omega} (E_t^n-E_t)\varphi\,dx\,dt \right|\le I_n+II_n+III_n,
\]
with 
\[
I_n = \left|\int_0^\infty\int_\Omega \left(\int_\Omega\nabla_x G_\Omega(x, z)[\rho^n_t(z)-\rho_t(z)]\,dz\right)\varphi\,dx\,dt\right|,
\]
\[
II_n = \left|\int_0^\infty\int_\Omega \left(\int_\Omega\left[\nabla_x G_\Omega(x, z)-\nabla_x G_\Omega^{\delta_n}(x, z)\right]\rho^n_t(z)\,dz\right)\varphi\,dx\,dt\right|,
\]
and 
\[
III_n = \left|\int_0^\infty\int_\Omega \left(1-\bar r_\Omega^{\zeta_n}(x)\right)\left(\int_\Omega\nabla_x G_\Omega^{\delta_n}(x, z)\rho^n_t(z)\,dz\right)\varphi\,dx\,dt\right|.
\]
We start with $I_n$. By Fubini we have 
\[
I_n = \left|\int_0^\infty\int_\Omega \left(\int_\Omega\nabla_x G_\Omega(x, z)\varphi(x)\, dx \right)[\rho^n_t(z)-\rho_t(z)]\,dz\,dt\right|.
\]
Now notice that, by the bounds on the Green function,
\[
\left|\int_\Omega\nabla_x G_\Omega(x, z)\varphi(x)\, dx \right|\leq \|\varphi\|_{L^\infty}\int_{B_2} \frac{dx}{|x|^{d-1}}\leq C\|\varphi\|_{L^\infty},
\]
for some constant $C$ depending only on the dimension $d$. Thus, we can use the weak convergence \eqref{eq.convdens_d} to get that $I_n \to 0$ as $n\to \infty$.

On the other hand, we have
\begin{equation}
\label{eq.Gdeltabound}
|\nabla_x G^\delta_\Omega(x, z)|  \le \frac{1}{\delta} \bar r '\left(\frac{|x-z|}{\delta}\right)|G_\Omega(x, z)| + \bar r \left(\frac{|x-z|}{\delta}\right)|\nabla_x G_\Omega(x, z)|.
\end{equation}
From the bounds on the Green function $G_\Omega$ and the definition of $\bar r$ it is easy to check that, in particular, we have 
\begin{equation}
\label{eq.Gdeltabound2}
|\nabla_x G^\delta_\Omega(x, z)|  \le  C|x-z|^{1-d},
\end{equation}
for some $C$ depending only on the dimension $d$ and $\Omega$, but independent of $\delta$.

Let us now bound $III_n$. We denote $\Omega_{\zeta} := \{ x\in \Omega : {\rm dist}(x, \de\Omega) <\zeta\}$. By Fubini, the bound \eqref{eq.Gdeltabound2}, and the definition of $\bar r_\Omega^{\zeta_n}$, we have
\[
III_n\le  C\|\varphi\|_{L^\infty} \left|\int_0^\infty\int_\Omega \left(\int_{\Omega_{2\zeta_n}\cap B_2}|x-z|^{1-d}\,dx\right)\rho^n_t(z)\,dz\,dt\right|.
\]
Now, since $\rho_t^n$ has $L^1$ norm bounded independently of $n$ and $t$, $|x|^{1-d}$ is locally integrable and $|\Omega_{2\zeta_n}\cap B_2|\to 0$ as $n \to \infty$, we have that $III_n \to 0 $ as $n \to \infty$. 

We can finally bound $II_n$. Proceeding as in \eqref{eq.Gdeltabound}
\[
\begin{aligned}
II_n \le \int_0^\infty\int_\Omega&  \left(\int_\Omega\left|1-\bar r\left(\frac{|x-z|}{\delta_n}\right)\right|\cdot |\nabla_x G_\Omega(x, z)| \rho^n_t(z)\,dz\right)|\varphi|\,dx\,dt,\\
& + \int_0^\infty\int_\Omega  \left(\int_\Omega\frac{1}{\delta_n} \bar r'\left(\frac{|x-z|}{\delta_n}\right) | G_\Omega(x, z)| \rho^n_t(z)\,dz\right)|\varphi|\,dx\,dt.
\end{aligned}
\]

Using Fubini, the bounds on $G_\Omega$, and the definition of $\bar r$ we obtain 
\[
\begin{aligned}
II_n & \le C\|\varphi\|_{L^\infty}  \int_0^\infty\int_\Omega  \left(\int_{B_{2\delta_n}(z)} |x-z|^{1-d}\,dx \right)\rho_t^n(z)\,dz\,dt, + \\
& ~~~~~~~~~~+C\|\varphi\|_{L^\infty} \delta_n^{-1} \int_0^\infty\int_\Omega  \left(\int_{B_{2\delta_n}(z)} |x-z|^{2-d}\,dx \right)\rho_t^n(z)\,dz\,dt\\
& \le C\|\varphi\|_{L^\infty} \|\rho_t^n\|_{L^1} \int_{B_{2\delta_n}} \left(|x|^{1-d} + \delta_n^{-1}|x|^{2-d}\right)\,dx = C\|\varphi\|_{L^\infty} \|\rho_t^n\|_{L^1} \delta_n,
\end{aligned}
\]
therefore, since $\delta_n \to 0$, $II_n\to 0$ as $n\to \infty$. This proves \eqref{eq.lim1stab}, as we wanted to see.
\\[0.2cm]
{\it Step 2: Proof of \eqref{eq.2stab}.} Let us now show that the second stability condition, \eqref{eq.2stab}, holds. 

As in Theorem~\ref{thm.mainws} it is enough to show that 
\begin{equation}
\label{eq.enough_dom}
[F_2^{n,o}]_{W^{\alpha, p}(B_{1/2})} \le C,
\end{equation}
for some $C$ independent of $n$ and $t$, and for some $\alpha > 0$, $p > 1$. Notice that $(F_2^{n, o})_i(y)$ can be expressed as the sum of two terms in each component $i\in \{1,\dots,n\}$; namely, $(F_2^n)_i(y) + (F_2^n)_i(y')$ for $2\le i \le n$ and $(F_2^n)_1(y) - (F_2^n)_1(y')$. Now, by the subadditivity of the seminorm, if one can show
\begin{equation}
\label{eq.enough_dom_2}
[F_2^{n}]_{W^{\alpha, p}(B_{1/2})} \le C,
\end{equation} 
we are done. Notice that we are considering the natural extension of the vector field to $y_1 < 0$ by $0$.

We are going to use the product rule result (\cite[Section 5.3.7]{RS96} or  \cite[Theorem 3]{Gat02}) that already appeared in \eqref{eq.prodrule} several times throughout the proof. Let us restate it here:

If $h_1\in W^{s, p_1}\cap L^{q_1}(B_1)$, and $h_2\in W^{s, q_2}\cap L^{p_2}(B_1)$, then $h_1h_2\in W^{s, p}( B_{3/4})$, with $s\in (0, 1)$ and $1>\frac{1}{p} = \frac{1}{p_1} + \frac{1}{p_2} = \frac{1}{q_1} + \frac{1}{q_2}>0$; with a bound
\begin{equation}
\label{eq.prodrule_2}\left\|h_1h_2\right\|_{W^{s, p}( B_{3/4})} \le C\left(  \left\| h_1 \right\|_{W^{s, p_1}( B_1)}   \left\|h_2\right\|_{L^{p_2}( B_1)} +  \left\|h_1\right\|_{L^{q_1}(B_1)}\left\| h_2 \right\|_{W^{s, q_2}( B_1)}   \right) .
\end{equation}

Thanks to this result and the regularity of the Jacobian matrix $J$, it immediately follows that it is enough to bound the $W^{\alpha, p}(B_{3/4})$ norm of $E^n(\psi(y))$ for some $\alpha >0$, $p > 1$. We recall 
\[
E^n(\psi(y)) = E^{\zeta_n, \delta_n}_\Omega(y) = -\bar r^{\zeta_n}_\Omega(\psi(y)) \int_\Omega \nabla_x G^{\delta_n}_\Omega(\psi(y), z)\rho_t^n(z)\, dz =:  -\bar r^{\zeta_n}_\Omega(\psi(y)) \bar E^{\delta_n}(y).
\]

By definition of the change of variables, notice that $\bar r_\Omega^{\zeta_n}(\psi(y)) = \bar r(\zeta_n^{-1} y_1)$, and it is a sequence in $n$ approximating the Heaviside step function in the direction $y_1$. In particular, it is easy to check that 
\[
[\bar r(\zeta_n^{-1}\cdot) ]_{W^{\bar s, \bar p}(B_{3/4})} \le C,
\]
for some $C$ independent of $n$, provided $\bar s \bar p < 1$. On the other hand, by Young's inequality and \eqref{eq.Gdeltabound2} it is easy to check that $\bar E^{\delta_n}\in L^\gamma_{\rm loc}$ for $\gamma < \frac{d}{d-1}$. Thus, putting all together and using \eqref{eq.prodrule_2}, we can check that it will be enough to bound $[\bar E^{\delta_n}]_{W^{s, p}(B_{3/4})}$ for some $0 <s <\frac{1}{d}$ and $1 < p < \frac{d}{d-1+sd}$.

A simple computation shows that, if $H(y) = \int_\Omega K(y, z)\rho(z) \,dz$, then 
\begin{equation}
\label{eq.simplebound}
[H]_{W^{s, p}(B)} \le \|\rho\|_{L^p(\Omega)}\, \sup_{z\in \Omega}\,[K(\cdot, z)]_{W^{s, p}(B)}.
\end{equation}
Indeed, using H\"older's inequality and Fubini's theorem,
\begin{align*}
[H]^p_{W^{s, p}(B)}& \le \int_{B\times B} \frac{\left|\int_\Omega |K(y_1, z)-K(y_2, z)|\rho(z)\, dz\right|^p}{|y_1 - y_2|^{d+sp}}\, dy_1\,dy_2\\
& \le \int_{B\times B} \|\rho\|^{p-1}_{L^p(\Omega)} \frac{\int_\Omega |K(y_1, z)-K(y_2, z)|^p \rho(z)\, dz}{|y_1 - y_2|^{d+sp}}\, dy_1\,dy_2 \le \|\rho\|^p_{L^p(\Omega)}\, \sup_{z\in \Omega}\,[K(\cdot, z)]^p_{W^{s, p}(B)}.
\end{align*}

Notice that 
\begin{equation}
\label{eq.barEdeltan}
\begin{aligned}
\bar E^{\delta_n}(y) & = \int_\Omega \bar r\left(\frac{|\psi(y)-z|}{\delta_n}\right)\nabla_x G_\Omega(\psi(y), z) \rho_t^n(z)\, dz+\\
&~~~~+\int_\Omega \frac{1}{\delta_n}\bar r '\left(\frac{|\psi(y)-z|}{\delta_n}\right)\frac{\psi(y)-z}{|\psi(y)-z|}G_\Omega(\psi(y), z) \rho_t^n(z)\, dz.
\end{aligned}
\end{equation}
We start by taking $K(y, z) = \bar r\left(\frac{|\psi(y)-z|}{\delta_n}\right)\nabla_x G_\Omega(\psi(y), z)$ in \eqref{eq.simplebound}. Using again \eqref{eq.prodrule_2} and noticing that the first term involving $\bar r$ belongs to $W^{\bar s, \bar p}(B_1)$ independently of $n$ for $\bar s\bar p < 1$ (as before for the approximations of the Heaviside function), we need to bound 
\[
[\nabla_x G_\Omega(\psi(\cdot), z)]_{W^{s, p}(B_{1})}
\] 
independently of $z$, for some $s > 0$ and $p > 1$. By the equivalence of Sobolev and Besov spaces for fractional order derivatives we have 
\begin{align*}
[\nabla_x  G_\Omega(\psi(\cdot), z)]_{W^{s, p}(B_1)} &  \le C[\nabla_x G_\Omega(\psi(\cdot), z)]_{B^{s}_{p, p}(B_1)}\\
& \le C\left(\int_0^1 t^{-sp-1}\sup_{|h|\le t} \left[\int_{B_1} |\nabla_x G_\Omega(\psi(y+h), z)- \nabla_x G_\Omega(\psi(y), z)|^p\,dy\right] dt\right)^{\frac{1}{p}}.
\end{align*}
Now, thanks to Lemma~\ref{lem.GreenFct} (vi) and the bound on the Jacobian of the change of variables, for any $\alpha \in (0, 1)$ there is a constant $C$ depending only on $d$, $\Omega$, and $\alpha$, such that
\begin{align*}
[\nabla_x & G_\Omega(\psi(\cdot), z)]_{W^{s, p}(B_1)}  \le \\
& \le C\left(\int_0^1 t^{-sp-1} t^{p\alpha}\sup_{|h|\le t} \left[\int_{B_1} \left(|\psi(y+h)-z|^{1-d-\alpha}+|\psi(y)-z|^{1-d-\alpha}\right)^p\,dy\right] dt\right)^{\frac{1}{p}}\\
& \le C\left(\int_0^1 t^{(\alpha-s)p-1} \left[\int_{B_1} |y|^{p-pd-p\alpha}\,dy\right] dt\right)^{\frac{1}{p}},
\end{align*}
which is going to be bounded if $1 > \alpha > s$ and $p < \frac{d}{d+\alpha - 1} < \frac{d}{d+s-1}$.

On the other hand, we can also take $K(y, z) = \frac{1}{\delta_n}\bar r '\left(\frac{|\psi(y)-z|}{\delta_n}\right)\frac{\psi(y)-z}{|\psi(y)-z|}G_\Omega(\psi(y), z)$ to complete the bound from \eqref{eq.barEdeltan}. In this case, proceeding via the Besov seminorm as before, and by means of the triangular inequality, we obtain
\begin{align*}
&[K (\cdot, z)]_{W^{s, p}(B_1)} \le \\
&~\le C\delta_n^{-1} \bigg(\int_0^1 t^{-sp-1}\sup_{|h|\le t} \bigg[\int_{B_1} \bigg|\bar r '\bigg(\frac{|\psi(y+h)-z|}{\delta_n}\bigg) - \bar r '\bigg(\frac{|\psi(y)-z|}{\delta_n}\bigg) \bigg|^p |G_\Omega(\psi(y+h), z)|^p\,dy\bigg] dt\bigg)^{\frac{1}{p}}\\
& ~~~~~+ C\delta_n^{-1} \bigg(\int_0^1 t^{-sp-1}\sup_{|h|\le t} \bigg[\int_{B_1} \bigg| \bar r '\bigg(\frac{|\psi(y)-z|}{\delta_n}\bigg) \bigg|^p |G_\Omega(\psi(y+h), z)-G_\Omega(\psi(y), z)|^p\,dy\bigg] dt\bigg)^{\frac{1}{p}}\\
&~= I_n + II_n.
\end{align*}
In order to bound $I_n$ we start by noticing that $\bar r '$ is smooth and, in particular, $C^\beta$ for $\beta\in [0, 1]$. For a $\beta$ to  determined, and using the bounds on the Green function, we have 
\begin{align*}
I_n & \le C\delta_n^{-1} \left(\int_0^1 t^{-sp-1}\sup_{|h|\le t} |h|^{p\beta}\delta_n^{-p\beta}\int_{D_{\delta_n}} |G_\Omega(\psi(y), z)|^p\,\,dy\, dt\right)^{\frac{1}{p}}\\ 
& \le C\delta_n^{-1-\beta} \left(\int_0^1 t^{(\beta-s)p-1}\int_{B_{2\delta_n}}|y|^{-dp+2p}\,dy\, dt\right)^{\frac{1}{p}}\le C\delta_n^{1-d+d/p-\beta}\left(\int_0^1t^{(\beta-s)p-1}\, dt\right)^{\frac1p},
\end{align*}
where $D_{\delta_n}$ denotes the set where $\bar r '\left(\delta_n^{-1}|\psi(y+h)-z|\right) - \bar r '\left(\delta_n^{-1}|\psi(y)-z|\right) $ is non-zero, and where we are using all the time that the change of variables $\psi$ is regular. If we want this last term to be bounded we need $\beta > s$ and $1-d+d/p-\beta \ge 0$; that is, $p\le \frac{d}{d-1+\beta}$. By choosing $s<\beta<1$ we can choose $p > 1$ and we are done.

Using a similar method, in this case via Lemma~\ref{lem.GreenFct} (v), we can bound $II_n$ as
\begin{align*}
II_n & \le C\delta_n^{-1} \left(\int_0^1 t^{-sp-1}\sup_{|h|\le t} |h|^{p \alpha } \int_{B_{2\delta_n}} |y|^{(2-d-\alpha)p}\,\,dy\, dt\right)^{\frac{1}{p}}\\
& \le C\delta_n^{1-d-\alpha+d/p} \left(\int_0^1 t^{(\alpha-s)p-1}\, dt\right)^{\frac{1}{p}},
 \end{align*}
 which is going to be bounded if $\alpha >s $ and $p <\frac{d}{d-1+\alpha}$. As before, it is enough for us to choose $s < \alpha < 1$ to get the desired result. This completes the proof of the second stability condition, \eqref{eq.2stab}.

\end{document}